\def\R {\mathbb{R}}
\def\N{\mathbb{N}}
\def\eps{\varepsilon}
\def\Sph{\mathbb{S}^{d-1}}
\def\DimH{\mathrm{dim}_{\mathcal{H}}}
\def\HausM{\mathcal{H}^{d-1}}
\def\Epi{\mathrm{Epi}}
\def\Hyp{\mathrm{Hyp}}
\def\En{\mathcal{E}}
\def\EnAC{\mathcal{E}_{AC}^+}
\def\EnAP{\mathcal{E}_{AP}^+}
\def\FamPhi{\{\varphi_t\}_{t\in(-1,1)}}
\def\MPhiT{\mathcal{M}[\varphi_t]}
\def\HOne{H^1(B_1)}
\def\Diri{|\nabla u|^2}
\def\Dac{d^*_{AC}}
\def\Dap{d^*_{AP}}
\def\PosS{\{u>0\}}
\def\NegS{\{u<0\}}
\def\ConS{\{u=0\}}
\def\M{\mathcal{M}}
\def\GU{\Gamma(u)}
\def\Sing{\mathrm{Sing}}
\def\SingU{\mathrm{Sing}(u)}
\def\SingUT{\mathrm{Sing}(u_t)}
\def\RedGU{\Gamma^*(u)}
\def\RedSing{\mathcal{S}^*}
\def\hem{\hspace{0.5em}}
\def\vem{\vspace{0.6em}}
\newtheorem{thm}{Theorem}[section]
\newtheorem{prop}[thm]{Proposition} 
\newtheorem{ass}{Assumption} 
\newtheorem{cor}[thm]{Corollary}
\newtheorem{lem}[thm]{Lemma}
\theoremstyle{definition}
\newtheorem{defi}[thm]{Definition}
\numberwithin{equation}{section}
\theoremstyle{remark}
\newtheorem{rem}[thm]{Remark}
\title[Generic properties]{Generic properties in free boundary problems} 
\author{Xavier Fern\'andez-Real}
\address{Institute of Mathematics, \'Ecole Polytechnique F\'ed\'erale de Lausanne, Lausanne, Switzerland}
\email{xavier.fernandez-real@epfl.ch}
\author{Hui Yu}
\address{Department of Mathematics,	National University of Singapore, Singapore}
\email{huiyu@nus.edu.sg}
\keywords{Generic uniqueness, Generic regularity, Alt-Caffarelli, Alt-Phillips.}
\subjclass[2010]{35R35, 35A02}
\thanks{X. F. was supported by the Swiss National Science Foundation (SNF grants 200021\_182565 and PZ00P2\_208930),   by the Swiss State Secretariat for Education, Research and Innovation (SERI) under contract number MB22.00034, and by the AEI project PID2021-125021NA-I00 (Spain).}
\begin{document}

\begin{abstract}
In this work, we show the generic uniqueness of minimizers for a large class of energies, including the Alt-Caffarelli and Alt-Phillips functionals.

We then prove the generic regularity of free boundaries for minimizers of the one-phase Alt-Caffarelli and Alt-Phillips functionals, for a monotone family of boundary data $\FamPhi$. More precisely, we show that for a  co-countable subset of $\FamPhi$, minimizers have smooth free boundaries in $\R^5$ for the Alt-Caffarelli and in $\R^3$ for the Alt-Phillips functional. In general dimensions, we show that  the singular set is one dimension smaller than expected  for almost every boundary datum in $\FamPhi$.
\end{abstract}

\maketitle
\section{Introduction}
In this work, we study minimizers of the following energy in free boundary problems
\begin{equation}
\label{EqnFreeBoundaryEnergy}
\En_{\pm}(u):=\int_{B_1}\left(|\nabla u|^2+F_+(u^+)\chi_{\PosS}+F_-(u^-)\chi_{\{u<0\}}\right),
\end{equation}
 where $a^+ = {\rm max}\{a, 0\}$ and $a^- = {\rm max}\{-a, 0\}$  denote the positive and  negative parts of a real number $a$. For the nonlinearities $F_\pm\in C([0,+\infty))\cap C^1((0,+\infty))$, we assume that
\begin{equation}
\label{EqnConditionOnFpm}
 F_\pm\ge0, \hem F_\pm'\ge 0\hem \text{ and }\hem F_\pm \text{ are concave on }(0,+\infty).
\end{equation} 
Among this family of energies, two examples are the \textit{Alt-Caffarelli functional}
$$
\En_{AC}(u):=\int_{B_1}\left(\Diri+\lambda_+\chi_{\PosS}+\lambda_-\chi_{\NegS}\right),
$$
and the  \textit{Alt-Phillips functional}
$$
\En_{AP}(u):=\int_{B_1}\left(\Diri+\lambda_+(u^+)^\gamma+\lambda_-(u^-)^\gamma\right).
$$
Here $\lambda_{\pm}$ are non-negative constants, and $\gamma$ is a parameter in $(0,1)$.\footnote{The Alt-Phillips functional leads to interesting free boundary problems for $\gamma\in(0,2)$. 
However, $\gamma\in(0,1)$ is the range relevant to the study of uniqueness of minimizers. For $\gamma\ge1$, the Alt-Phillips functional becomes convex, and hence, minimizers are unique. 

When $\gamma\ge1$, the regularity of free boundaries  has a different flavor. See, for instance,  \cite{B} and   \cite{WY}.}
In this context, the \textit{free boundary} refers to the set
\begin{equation}
\label{EqnFreeBoundary}
\Gamma(u):=\partial\{u\neq0\}.
\end{equation}

These functionals received intense attention in the past few decades. For classical results, the readers may consult \cite{AC, ACF, AP,P}. For interesting recent developments, we refer to \cite{D1, DJS, DS, EdSV, EFeY, ESV, JS}. On the one-phase Alt-Caffarelli functional, two wonderful resources are the monograph \cite{CS} and the lecture notes \cite{V}.

Under reasonable assumptions, the existence of a minimizer follows by standard methods. Much subtler are questions concerning the uniqueness of the minimizer and the regularity of the free boundary. Even for smooth boundary data, since the functionals are non-convex, the minimizer might not be unique; and singularities on free boundaries are in general inevitable. 

In this work, we show that \textit{generically these issues do not occur}. In particular, under small perturbations of the boundary data,  minimizers are unique, and the free boundary is smooth in one dimension more than expected. 

\subsection{Generic uniqueness}
 In this work, we discuss several different energies. If $\En(\cdot)$ is one of them, we denote  the sets of minimizers  with boundary datum $\varphi\in H^1 (B_1)$ by
\begin{equation}
\label{EqnSetOfMin}
 \mathcal{M}[\En,\varphi]:=\{u\in H^1_0(B_1) + \varphi : \hem \En(u)\le \En(v) \text{ for all }v\in H^1_0(B_1)+\varphi\}.\footnote{When there is no ambiguity, we omit $\En$ from the notation. When it is unnecessary to mention the boundary data, we write $u\in\mathcal{M}[u]$ to say that $u$ is a minimizer with respect to its boundary value. }
\end{equation}

Under our assumption  \eqref{EqnConditionOnFpm}, it is not difficult to show that $\mathcal{M}[\En_\pm,\varphi]$ is non-empty.  In general,   the set $\mathcal{M}[\En_\pm,\varphi]$ might contain multiple elements. Accommodating more than one minimizer, however, imposes extra restrictions on the boundary data. As a result, the  non-uniqueness of minimizers should not happen frequently. For a family of boundary data $\FamPhi$, we should expect that for most of $\FamPhi$ the set $\mathcal{M}[\varphi_t]$ is a singleton. 

To make this precise,  we impose the following conditions on the family $\FamPhi$:
\begin{ass}[Assumptions on boundary data for generic uniqueness]
\label{AssGU}
We assume that 
\begin{enumerate}
\item{Each $\varphi_t$ is Lipschitz continuous;}
\item{For any $t\ge s$, we have 
$$
\varphi_t\ge\varphi_s \text{ on $\partial B_1$};
$$  }
\item{\label{it:333} For any $t>s$, and for any connected component $\mathcal{C}$ of the sets $\{\varphi_s>0\}\cap\partial B_1$ or $\{\varphi_t < 0\}\cap\partial B_1$, we have 
$$
\varphi_t(x_0)>\varphi_s(x_0) \text{ for some $x_0\in\mathcal{C}$;}
$$}
\end{enumerate}
\end{ass}

\begin{rem}
\label{RemAssGU}
The assumptions are qualitative only.  The Lipschitz assumption is used to show the continuity of minimizers  in $\overline{B_1}$. For this purpose, weaker assumptions could suffice, but we did not pursue this direction for simplicity. Moreover, we allow the data to change sign and to have zero patches on $\partial B_1$. Such boundary data are meaningful in  two-phase free boundary problems when the free boundary touches the fixed boundary. 

We remark that the role of condition  \eqref{it:333} is   necessary, to avoid situations in which two disconnected components do not see each other. 
\end{rem} 

Under these assumptions, our result on the generic uniqueness of minimizers reads as follows (recall \eqref{EqnSetOfMin}).  Such a generic uniqueness type result is new even for the one-phase Alt-Caffarelli functional.
\begin{thm}
\label{ThmGU}
  Let $\En_{\pm}$ be the energy in \eqref{EqnFreeBoundaryEnergy}-\eqref{EqnConditionOnFpm}, and let $\FamPhi$ satisfy Assumption~\ref{AssGU}. Then, there is a countable set $I\subset(-1,1)$ such that 
$
\mathcal{M}[\En_{\pm}, \varphi_t]$ is a singleton for all $t\in (-1,1)\backslash I$. 
\end{thm}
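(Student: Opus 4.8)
The plan is to exploit monotonicity of minimizers in the parameter $t$, and to deduce uniqueness at all but countably many $t$ from a monotonicity-plus-measurability argument of the type used for monotone functions. First I would establish a comparison/ordering principle: under Assumption~\ref{AssGU}(2), if $t\ge s$ then every minimizer $u_t\in\M[\varphi_t]$ and every minimizer $u_s\in\M[\varphi_s]$ satisfy $u_t\ge u_s$ in $B_1$. This should follow from the standard lattice property of these energies — given two minimizers with ordered boundary data, $\max\{u_s,u_t\}$ and $\min\{u_s,u_t\}$ are admissible competitors (with the right boundary data, using (2)), and the concavity hypothesis \eqref{EqnConditionOnFpm} on $F_\pm$ makes $\En_\pm(\max)+\En_\pm(\min)\le \En_\pm(u_s)+\En_\pm(u_t)$; since each term on the left is $\ge$ the corresponding minimum, all inequalities are equalities, which forces the ordering (strict ordering, or at least ordering, after invoking the strong maximum principle and condition (3) to rule out degenerate overlaps of components). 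In particular, for $t\ge s$, $\min\M[\varphi_t]\ge\max\M[\varphi_s]$ in a suitable sense.

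The second step is to extract a single real-valued monotone quantity. I would fix a reference point or, better, use the energy itself: define $m(t):=\inf\{\En_\pm(u):u\in\M[\varphi_t]\}=\En_\pm(u_t)$ (the common value of the energy on all minimizers). Alternatively, and more robustly, pick a countable dense set of balls $B\subset\subset B_1$ and consider $t\mapsto \int_B u_t$; by the ordering from Step~1 this is monotone nondecreasing in $t$ and independent of the choice of minimizer $u_t$ only where the minimizer is unique. A cleaner route: for each $t$ let $\underline u_t=\inf\M[\varphi_t]$ and $\overline u_t=\sup\M[\varphi_t]$ (both are themselves minimizers, by closedness of $\M[\varphi_t]$ under monotone limits and the lattice property); then $t\mapsto\int_{B_{1/2}}\overline u_t\,dx$ and $t\mapsto\int_{B_{1/2}}\underline u_t\,dx$ are monotone, hence each has at most countably many discontinuities, and from Step~1 one gets $\int_{B_{1/2}}\overline u_s\le \int_{B_{1/2}}\underline u_t$ for $s<t$. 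At a point $t$ of continuity of both monotone functions, squeezing forces $\int_{B_{1/2}}\overline u_t=\int_{B_{1/2}}\underline u_t$, and since $\overline u_t\ge\underline u_t$ pointwise this gives $\overline u_t=\underline u_t$, i.e. $\M[\varphi_t]$ is a singleton. The exceptional set $I$ is contained in the (countable) union of the discontinuity sets, hence countable.

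The third step is to make sure the two monotone functions of $t$ are genuinely comparable across \emph{all} pairs $s<t$, not merely that each is individually monotone — this is where Assumption~\ref{AssGU}(3) enters, preventing a component of $\{\varphi_s>0\}$ or $\{\varphi_t<0\}$ from being "frozen" so that the strict inequality $\varphi_t>\varphi_s$ is never felt in the interior. Concretely, I expect to need: if $s<t$ and $u_s=u_t$ on an open set, then by unique continuation / the structure of the Euler–Lagrange equation (harmonicity in $\{u\ne0\}$, the one-phase condition on $\Gamma(u)$) this propagates, and combined with (3) forces $\varphi_t=\varphi_s$ on a full component, contradicting (3); so for $s<t$ we actually get $\underline u_t>\overline u_s$ somewhere, upgrading monotonicity of the integral functionals to the form needed.

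The main obstacle I anticipate is Step~1 done \emph{cleanly for the two-phase problem}: the lattice/comparison argument is textbook for the one-phase Alt-Caffarelli energy, but with sign-changing data and both $F_+,F_-$ present, one must check that $\max\{u_s,u_t\}$ and $\min\{u_s,u_t\}$ really are admissible (boundary data ordered by (2)) and that the energy splits correctly across $\PosS$ and $\NegS$ under max/min — the cross terms involving $\chi_{\PosS}$ versus $\chi_{\NegS}$ need the concavity in \eqref{EqnConditionOnFpm} and a careful case analysis on the six sign patterns of $(u_s,u_t)$. A secondary subtlety is regularity up to $\partial B_1$: continuity of minimizers on $\overline{B_1}$ (needed to even talk about boundary values of competitors and to run the comparison near $\partial B_1$) relies on the Lipschitz hypothesis (1), and I would invoke the standard barrier construction for that. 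Once these are in place, the countability of $I$ is immediate from the classical fact that a monotone function on an interval has at most countably many discontinuities.
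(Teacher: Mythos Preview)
Your approach is correct and takes a genuinely different (though closely related) route from the paper. Both arguments rest on the same comparison principle: for $s<t$, every minimizer in $\M[\varphi_s]$ lies below every minimizer in $\M[\varphi_t]$; this is Proposition~\ref{PropComparison} in the paper. (A small correction to your Step~1: the cut-and-paste identity $\En_\pm(u\vee v)+\En_\pm(u\wedge v)=\En_\pm(u)+\En_\pm(v)$ is an \emph{equality} and requires no concavity---concavity of $F_\pm$ enters only afterwards, in the strong maximum principle used to upgrade ``$u_t\vee u_s$ is a minimizer'' to ``$u_t\ge u_s$'', precisely where you invoke condition~(3). Your Step~3 is therefore already subsumed in Step~1 and is not needed separately.) From the comparison, the paper proceeds geometrically: for each $t$ with $\M[\varphi_t]$ non-singleton it places a nontrivial ball in $B_1\times\R$ between the graphs of two distinct minimizers, and the comparison forces these balls to be pairwise disjoint across distinct $t$, whence countability. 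You instead package the comparison into the scalar monotone functions $t\mapsto\int \overline u_t$ and $t\mapsto\int\underline u_t$, observe that $\int\overline u_s\le\int\underline u_t$ for $s<t$, and read off countability of $\{t:\int\overline u_t>\int\underline u_t\}$ from the jump set of a monotone function. Your route is the more classical one and does not require the epigraph picture; the paper's route is more direct and sidesteps the construction of $\overline u_t,\underline u_t$ as extremal minimizers. One practical note: integrate over all of $B_1$ (or over a countable dense family of subballs, as you mention) rather than just $B_{1/2}$, so that $\int\overline u_t=\int\underline u_t$ forces $\overline u_t=\underline u_t$ throughout $B_1$.
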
 

As a corollary,  the non-uniqueness of minimizers can be perturbed away:
\begin{cor}
\label{CorGU}
  Let $\En_{\pm}$ be the energy in \eqref{EqnFreeBoundaryEnergy}-\eqref{EqnConditionOnFpm}, and let $\varphi\in H^{1/2}(\partial B_1)$ (resp. $\varphi\in C(\partial B_1)$).  Then, for any $\eps>0$ we can find $\tilde{\varphi}$ such that 
$$
\|\tilde{\varphi}-\varphi\|_{H^{1/2}(\partial B_1)}<\eps\quad\left(\text{resp.}\ \|\tilde{\varphi}-\varphi\|_{L^\infty(\partial B_1)}<\eps\right),\qquad\text{and}\qquad \mathcal{M}[\En_{\pm}, \tilde{\varphi}] \text{ is a singleton}.
$$
\end{cor}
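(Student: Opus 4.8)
The plan is to reduce the corollary to Theorem~\ref{ThmGU} by embedding the given datum $\varphi$ into a monotone one-parameter family $\FamPhi$ satisfying Assumption~\ref{AssGU}, in such a way that the family moves through $\varphi$ at unit ``speed'' in the relevant norm. First I would reduce to the case $\varphi$ Lipschitz (or even smooth): by density of Lipschitz functions in $H^{1/2}(\partial B_1)$ (resp.\ uniform density of smooth functions in $C(\partial B_1)$), pick $\varphi_0$ Lipschitz with $\|\varphi_0-\varphi\|<\eps/2$ in the appropriate norm, and it then suffices to perturb $\varphi_0$ by less than $\eps/2$. So from now on assume $\varphi=\varphi_0$ is Lipschitz.

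Next I would define the family by a simple vertical translation together with a fixed increasing profile: set
$$
\varphi_t := \varphi_0 + t\,\psi, \qquad t\in(-1,1),
$$
for a suitable fixed Lipschitz function $\psi$ on $\partial B_1$. The simplest choice is $\psi\equiv 1$; then each $\varphi_t$ is Lipschitz, and for $t\ge s$ one has $\varphi_t-\varphi_s=(t-s)\ge 0$, so conditions (1) and (2) of Assumption~\ref{AssGU} hold. For condition \eqref{it:333}, with $\psi\equiv1$ the difference $\varphi_t-\varphi_s=(t-s)>0$ is strictly positive \emph{everywhere} on $\partial B_1$, so in particular at some point of every connected component of $\{\varphi_s>0\}\cap\partial B_1$ or $\{\varphi_t<0\}\cap\partial B_1$; hence \eqref{it:333} holds trivially. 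Theorem~\ref{ThmGU} then produces a countable set $I\subset(-1,1)$ such that $\mathcal{M}[\En_\pm,\varphi_t]$ is a singleton for all $t\notin I$. Since $I$ is countable, we may choose $t_\ast\notin I$ with $|t_\ast|<\eps/(2\|\psi\|)$ — here $\|\psi\|$ is $\|\psi\|_{H^{1/2}(\partial B_1)}$ or $\|\psi\|_{L^\infty(\partial B_1)}$ according to the case, and with $\psi\equiv1$ this is just a fixed dimensional constant (or $1$, respectively). Then $\tilde\varphi:=\varphi_{t_\ast}=\varphi_0+t_\ast\psi$ satisfies $\|\tilde\varphi-\varphi_0\|=|t_\ast|\,\|\psi\|<\eps/2$, hence $\|\tilde\varphi-\varphi\|<\eps$, and $\mathcal{M}[\En_\pm,\tilde\varphi]$ is a singleton, as desired.

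The only genuine point requiring care — and the step I expect to be the main (minor) obstacle — is the bookkeeping needed so that the perturbed datum $\tilde\varphi$ lies in the correct function space with the correct norm bound: the family in Theorem~\ref{ThmGU} is indexed over the open interval $(-1,1)$, so one must verify that values of $t_\ast$ arbitrarily close to $0$ are admissible (they are, since $I$ is countable and $0$ is interior) and that the resulting norm of the perturbation scales linearly and controllably in $t_\ast$ (immediate for the affine family $\varphi_t=\varphi_0+t\psi$). One should also note that Theorem~\ref{ThmGU} is stated for $H^1(B_1)$ boundary data via trace, which is compatible with both an $H^{1/2}(\partial B_1)$ datum (extend harmonically) and a continuous datum (a continuous Lipschitz approximation has an $H^1$ extension); this is where the initial reduction to Lipschitz $\varphi_0$ is used. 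Everything else is a direct invocation of Theorem~\ref{ThmGU} together with the fact that a countable subset of $(-1,1)$ cannot contain a neighborhood of $0$.
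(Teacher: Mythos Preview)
Your proposal is correct and follows essentially the same approach as the paper: the paper's proof is the one-line remark that the result ``follows directly from Theorem~\ref{ThmGU} by regularizing the boundary datum in the corresponding space and perturbing it by an arbitrary small constant,'' which is precisely your two-step scheme (approximate $\varphi$ by a Lipschitz $\varphi_0$, then take $\varphi_t=\varphi_0+t$ and pick $t_\ast\notin I$ small). Your verification of Assumption~\ref{AssGU} for the family $\varphi_0+t$ and the norm bookkeeping are accurate and simply make explicit what the paper leaves to the reader.
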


When the nonlinearities $F_\pm$ are not convex, the Euler-Lagrange equations corresponding to the energy  \eqref{EqnFreeBoundaryEnergy} do not enjoy the comparison principle. Minimizers are in general not unique. Nevertheless, we are able to show that under our assumptions, the minimizers are ordered. That is, suppose that $u_t\in\mathcal{M}[\varphi_t]$ and $u_s\in\mathcal{M}[\varphi_s]$ for $t>s$, then 
\begin{equation}
\label{EqnComparisonBetweenMinimizers}
u_t\ge u_s \text{ in }B_1,
\end{equation}
(see Proposition~\ref{PropComparison}). For this comparison, our method is inspired by De Silva-Jerison-Shahgholian \cite{DJS} and Edelen-Spolaor-Velichkov \cite{ESV}, where the authors study minimizers lying on one side of a cone for the one-phase Alt-Caffarelli functional. 

The proof of Theorem~\ref{ThmGU} is now the following: suppose that $\mathcal{M}[\varphi_t]$ is not a singleton, so that we  find distinct $\overline{u}_t, \underline{u}_t\in\MPhiT$. We can then squeeze a non-trivial ball $\mathcal{B}(t)\subset B_1\times\R$ between the graphs of $\overline{u}_t$ and $\underline{u}_t$. Denote by $I$ the collection of $t$ for which this happens. Then, the comparison in \eqref{EqnComparisonBetweenMinimizers} guarantees 
$$
\mathcal{B}(t)\cap\mathcal{B}(s)=\emptyset \hem \text{ for $t,s\in I$ and $t\neq s$.}
$$ 
This implies that $I$ is  countable, since the number of non-overlapping balls in Euclidean spaces are countable.

\subsection{Generic regularity of the free boundary}
Regarding the regularity of the free boundary, not much is known for general energies of the form \eqref{EqnFreeBoundaryEnergy}. We focus on two models: the \textit{one-phase Alt-Caffarelli functional}
\begin{equation}
\label{EqnAC}
\En_{AC}^+(u):=\int_{B_1}\left(\Diri+\chi_{\PosS}\right),
\end{equation} 
and the \textit{one-phase Alt-Phillips functional}
\begin{equation}
\label{EqnAP}
\En_{AP}^+(u):=\int_{B_1}\left( \Diri+u^\gamma\right).
\end{equation}
Here we assume $u\ge0$  and $\gamma\in(0,1)$.

\subsubsection{The Alt-Caffarelli functional}

The energy in \eqref{EqnAC} was first studied by Alt-Caffarelli \cite{AC}. For a minimizer $u$,  they addressed its regularity and nondegeneracy, which imply that the positive set $\PosS$ is locally a set of finite perimeter (see \cite{M} for a monograph on sets of finite perimeter).

Consequently,  the free boundary (see \eqref{EqnFreeBoundary}) decomposes into 
\begin{equation}
\label{EqnFBDecomposition}
\Gamma(u)=\Gamma^*(u)\cup \SingU,
\end{equation}
where $\Gamma^*(u)$ denotes the reduced boundary, and $\SingU$ is the remaining \textit{singular part} of the free boundary; and the reduced boundary $\Gamma^*(u)$ is smooth \cite{AC} (the same result was obtained for viscosity solutions by De Silva \cite{D1}).

After this, the task is to  estimate the size of $\SingU$, for instance, in terms of its Hausdorff dimension. With the monotonicity formula by Weiss \cite{W}, it suffices to estimate the following
\begin{equation}
\label{EqnCriticalAC}
d^*_{AC}:=\max\{d\in\N:\hem \text{homogeneous minimizers are rotations of } (x_d)^+ \text{ in }\R^d\}\footnote{While its exact value remains unknown, we currently know that $d_{AC}^*\ge 4$ \cite{CJK,JS} and that $d^*_{AC}\le 6$ \cite{DJ}.}.
\end{equation} 
For a minimizer $u$ of $\EnAC(\cdot)$ in $B_1\subset\R^d$, it is known that $\SingU=\emptyset$ if $d\le\Dac$, and that $\SingU$ is locally discrete if $d= \Dac+1$. For general $d$, the Hausdorff dimension of $\SingU$ can be estimated as 
$$
\DimH(\SingU)\le d-\Dac-1.
$$
(See \cite{W, V}.)

We show that generically, these estimates can be improved by $1$. To be precise, we study minimizers with respect to a one-parameter family of boundary data $\FamPhi$, under the following assumptions: 
\begin{ass}[Assumptions on boundary data for generic regularity]
\label{AssGR}
For the family $\FamPhi$, we assume the following
\begin{enumerate}
\item{For all $t\in(-1,1)$, we have $\varphi_t\ge0$ in $ B_1$;}
\item{\label{it:33_2}There is $L<+\infty$ such that for all $t\in(-1,1)$, we have
$$
\varphi_t+|\nabla\varphi_t|\le L \text{ in } B_1;
$$}
\item{\label{it:44_2}There is $\theta>0$ such that for all $-1 < s < t < 1$, we have
$$
\varphi_t-\varphi_s\ge\theta(t-s) \text{ in }  B_1.
$$}
\end{enumerate}
\end{ass}
\noindent Assumptions \eqref{it:33_2} and \eqref{it:44_2} are quantified versions of their counterparts in Assumption \ref{AssGU}.
\begin{rem}\label{RemContAlmEverywhere}
Observe that, since the map $t\mapsto  \varphi_t$ is monotone increasing and $\varphi_t$ is uniformly Lipschitz for $t\in (-1, 1)$, then $t\mapsto \varphi_t$ is continuous for all $t\in (-1, 1)\setminus I_D$, where $I_D$ is countable.
\end{rem}

Our result on the generic regularity of the free boundary for the Alt-Caffarelli functional  reads:  
\begin{thm}
\label{ThmGRAC}
For each $t\in(-1,1)$, suppose that $u_t\in\mathcal{M}[\En_{AC}^+,\varphi_t]$ for a family $\FamPhi$ satisfying Assumption \ref{AssGR}.

For $d=\Dac+1$, there is a countable subset $J\subset(-1,1)$ such that
$$
\SingUT=\emptyset\hem \text{ for all } t\in(-1,1)\backslash J.
$$

For $d\ge\Dac+2$, we have
$$
\DimH(\SingUT)\le d-\Dac-2 \hem\text{ for almost every }t\in(-1,1).
$$
\end{thm}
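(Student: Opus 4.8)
The plan is to exploit the ordering of minimizers (Proposition~\ref{PropComparison}, giving $u_t \ge u_s$ for $t>s$ together with the quantitative gap in Assumption~\ref{AssGR}\eqref{it:44_2}) to control how often, and at which points, a minimizer $u_t$ can have a singular free boundary point. The guiding principle, following the generic-regularity philosophy of Figalli–Ros-Oton–Serra and the monotonicity-formula machinery of Weiss~\cite{W}, is that at a singular point $x_0 \in \SingUT$ the Weiss energy density $W(u_t, x_0)$ equals the density of the singular homogeneous cone, which is \emph{strictly larger} than the density of the half-plane solution; and the monotonicity of $t \mapsto u_t$ forces the function $t \mapsto \int u_t$ (or an appropriate boundary-adjusted version of it) to be monotone, hence differentiable a.e., which pins down how many parameters $t$ can "carry" a fat singular set.

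First I would set up the precise monotone quantity. For a minimizer $u_t$, consider the map $t \mapsto \mathcal{D}(t) := \int_{B_1}\big(|\nabla u_t|^2 + \chi_{\{u_t>0\}}\big)$, or more robustly the volume $t \mapsto |\{u_t > 0\} \cap B_r|$ on a fixed ball $B_r \Subset B_1$. By Proposition~\ref{PropComparison} and Assumption~\ref{AssGR}\eqref{it:44_2}, $t\mapsto u_t$ is monotone and moves at a definite rate, so $t \mapsto |\{u_t>0\}\cap B_r|$ is monotone nondecreasing, hence continuous outside a countable set and differentiable for a.e. $t$. The key geometric input is a \emph{clean-up / dimension-reduction} lemma: if $x_0 \in \SingU(u_t)$, then by Weiss monotonicity the blow-up of $u_t$ at $x_0$ is a homogeneous global minimizer that is not a half-plane, so (using the definition of $\Dac$ in \eqref{EqnCriticalAC} and the stratification of the singular set) one has $\DimH(\SingU(u_t)) \le d - \Dac - 1$ for \emph{every} $t$, with $\SingU(u_t)$ discrete when $d = \Dac+1$.

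For the first statement ($d = \Dac+1$): here $\SingU(u_t)$ is a discrete (locally finite) set for every $t$. I would argue that a singular point is "costly" for the volume function: comparing $u_t$ with a competitor obtained from $u_s$ ($s<t$) plus a small bump supported near $x_0$, the quantitative separation $u_t - u_s \ge \theta(t-s)$ combined with nondegeneracy of minimizers at free boundary points forces $|\{u_t>0\}\cap B_r| - |\{u_s>0\}\cap B_r|$ to jump by a definite amount whenever $u_t$ has a singular point that "persists" — or, more cleanly, that the set of $t$ with $\SingU(u_t)\neq\emptyset$ can be covered by countably many $t$'s using an argument analogous to the ball-packing argument for Theorem~\ref{ThmGU}: to each such $t$ one associates (via the singular point and nondegeneracy) a definite-size region in $B_1\times\R$ lying between the graphs of $u_s$ for $s$ slightly below and slightly above $t$, and these regions are essentially disjoint by ordering, hence $J$ is countable.

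For the second statement ($d \ge \Dac+2$): I would run a Federer-type dimension reduction \emph{with the parameter}. Consider the set $\Sigma := \{(t,x) : x \in \SingU(u_t)\} \subset (-1,1)\times B_1$. Using upper semicontinuity of the Weiss density jointly in $(t,x)$ (which follows from continuity of $t \mapsto u_t$ outside $I_D$ and standard compactness of minimizers) together with the stratification, one shows each $t$-slice has dimension $\le d - \Dac - 1$. The gain of one dimension comes from the fact that the "bad" parameters — those $t$ for which the slice $\SingU(u_t)$ actually attains dimension $d-\Dac-1$ rather than $d-\Dac-2$ — must be exceptional: at such a $t$, the monotone volume function $t \mapsto |\{u_t>0\}\cap B_r|$ would have to absorb a $(d-\Dac-1)$-dimensional family of nondegenerate singular points, and one shows (again via the nondegeneracy-plus-separation comparison) that this can only happen on a set of $t$ of measure zero, since a monotone function is differentiable a.e. and the "singular flux" across such slices is a non-negative measure that is purely singular, hence concentrated on a Lebesgue-null set of $t$. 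Quantitatively: for a.e. $t$, the derivative $\frac{d}{dt}|\{u_t>0\}\cap B_r|$ exists and is finite, and a covering argument shows this forces $\mathcal{H}^{d-\Dac-1}(\SingU(u_t)) = 0$, i.e. $\DimH(\SingU(u_t)) \le d - \Dac - 2$.

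The main obstacle I anticipate is making the "singular flux is Lebesgue-null in $t$" step rigorous: one needs a clean way to extract, from each $(d-\Dac-1)$-dimensional singular stratum of $u_t$, a quantitative contribution to the monotone quantity $t \mapsto |\{u_t>0\}\cap B_r|$ (or to $t\mapsto \mathcal{D}(t)$) that is uniform in scale, so that a Vitali/Besicovitch covering converts "$\mathcal{H}^{d-\Dac-1}$ positive on a positive-measure set of $t$" into "$t\mapsto$ volume has infinite derivative on a positive-measure set", contradicting monotonicity. This requires combining the nondegeneracy of minimizers at free boundary points, the strict density gap at singular points, and a careful interchange of the parameter variable $t$ with the spatial dimension-reduction — essentially a parametrized version of Weiss's argument, where the replacement of $u_s$ by $u_t$ plays the role that a competitor plays in the classical proof.
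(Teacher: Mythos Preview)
Your proposal identifies several correct ingredients (ordering of minimizers, the Weiss density gap, linear retraction of the free boundary) but the mechanism you propose does not work. The core gap is this: a discrete (or $(d-\Dac-1)$-dimensional) singular set on the free boundary does \emph{not} produce any detectable jump in a volume-type quantity $t\mapsto|\{u_t>0\}\cap B_r|$. That function is monotone and a.e.\ differentiable regardless of whether $u_t$ has singular points; singular points are codimension $\ge 2$ on the free boundary and are invisible to volume. Your ball-packing idea for $d=\Dac+1$ fails for the same reason: there is no gap between the graphs of $u_s$ and $u_{s'}$ at a singular point of $u_t$ (the minimizer is unique and continuous in $t$ outside a countable set, and $u_t$ vanishes at the singular point just as at any free boundary point). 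The ``covering argument'' you sketch for $d\ge\Dac+2$ would need the free boundary to move \emph{faster} near singular points than near regular points, but Lemma~\ref{LemRetractionAC} gives the same linear rate everywhere.

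The paper's argument is structurally different and hinges on an ingredient you do not have: a classification of \emph{monotone} minimizing cones. One performs a blow-up on the space-time singular set $\mathcal S$: if $(x_k,t_k)\in\mathcal S^*$ accumulates at $(0,0)\in\mathcal S^*$ with matching Weiss density, rescale $u_{t_k}$ around $x_k$ and $u_0$ around $0$ at scale $|x_k|$. The ordering $u_{t_k}\ge u_0$ survives and yields two ordered homogeneous cones; an eigenvalue argument (Lemma~\ref{LemOrderedConesAC}) forces them to coincide, so the limit cone is monotone along the direction $\lim x_k/|x_k|$. The key new step is Lemma~\ref{LemMonotoneConesAC}: a monotone minimizing cone is either translation-invariant or a half-plane, and this relies on the De~Silva--Jerison theorem on graphical solutions (Theorem~\ref{ThmGraphicalFBAC}). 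For $d=\Dac+1$ this immediately gives $\mathcal S^*=\emptyset$ (Proposition~\ref{PropEmptySingInCriticalDimAC}); for $d\ge\Dac+2$ one iterates to show (Lemma~\ref{LemConeMultiMonotoneAC}, Proposition~\ref{PropDimensionOfSpaceTimeSingAC}) that $\DimH(\pi_x(\mathcal S))\le d-\Dac-1$, and only then does the linear retraction feed into the abstract slicing Lemma~\ref{LemGenericReduction} to drop one more dimension for a.e.\ $t$. The step you flagged as the ``main obstacle'' is exactly where this monotone-cone classification does the work; without it the argument does not close.
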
 
Recall the parameter $\Dac$ from \eqref{EqnCriticalAP}, the Alt-Caffarelli functional $\En_{AC}^+$ from \eqref{EqnAC}, the singular set $\SingU$ from \eqref{EqnFBDecomposition}, and the set of energy minimizers $\mathcal{M}[\cdot,\cdot]$ from \eqref{EqnSetOfMin}.

As a corollary, singularities on free boundaries can be perturbed away.
\begin{cor}
\label{CorGRAC}
  Let   $\varphi\in H^{1/2}(\partial B_1)$ (resp. $\varphi\in C(\partial B_1)$), $\varphi\ge 0$ on $\partial B_1$.  Then, for any $\eps>0$ we can find $\tilde{\varphi}$ such that 
$$
\|\tilde{\varphi}-\varphi\|_{H^{1/2}(\partial B_1)}<\eps\quad\left(\text{resp.}\ \|\tilde{\varphi}-\varphi\|_{L^\infty(\partial B_1)}<\eps\right),
$$
and for $u\in \mathcal{M}[\En_{AC}^+, \tilde\varphi]$,
$$
\begin{array}{rcll}
\SingU&=&\emptyset\hem   &  \text{ if }d=\Dac+1, \\
\DimH(\SingU)&\le& d-\Dac-2   & \text{ if }d\ge\Dac+2.
\end{array}
$$
\end{cor}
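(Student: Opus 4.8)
The plan is to deduce the corollary from Theorem~\ref{ThmGRAC} by embedding the given datum $\varphi$ into a one-parameter family $\FamPhi$ satisfying Assumption~\ref{AssGR}, and then invoking the conclusion of the theorem for a parameter arbitrarily close to the one corresponding to $\varphi$. First I would reduce to the case of a smooth, nonnegative datum: given $\varphi\in H^{1/2}(\partial B_1)$ (resp.\ $\varphi\in C(\partial B_1)$) with $\varphi\ge 0$, approximate it in the $H^{1/2}$ (resp.\ $L^\infty$) norm by a function $\psi\ge 0$ which is, say, the trace of a nonnegative $C^\infty(\overline{B_1})$ function, using a standard mollification together with a truncation of the negative part (which only decreases distance to the nonnegative $\varphi$); it suffices to find the desired $\tilde\varphi$ within $\eps/2$ of $\psi$. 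So from now on assume $\varphi$ itself is (the trace of) a nonnegative smooth bounded function on $\overline{B_1}$.

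Next I would build the family. Set, for $t\in(-1,1)$,
\[
\varphi_t := \varphi + (t+1)\,\theta_0,
\]
for a fixed constant $\theta_0>0$ to be chosen small. Each $\varphi_t$ is nonnegative, so condition (1) of Assumption~\ref{AssGR} holds; $\varphi_t+|\nabla\varphi_t| = \varphi+(t+1)\theta_0 + |\nabla\varphi|$ is bounded by $L:=\|\varphi\|_{L^\infty}+\|\nabla\varphi\|_{L^\infty}+2\theta_0<\infty$ uniformly in $t$, giving (2); and $\varphi_t-\varphi_s=(t-s)\theta_0$, so (3) holds with $\theta=\theta_0$. Thus $\FamPhi$ satisfies Assumption~\ref{AssGR}, and Theorem~\ref{ThmGRAC} applies: in dimension $d=\Dac+1$ there is a countable $J\subset(-1,1)$ with $\SingUT=\emptyset$ for $t\notin J$, and in dimension $d\ge\Dac+2$ one has $\DimH(\SingUT)\le d-\Dac-2$ for a.e.\ $t$. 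In either case the "good" set $(-1,1)\setminus J$ (resp.\ the full-measure set) is dense in $(-1,1)$ and in particular accumulates at $t=-1$; pick such a good $t$ with $(t+1)\theta_0<\eps/2$ — possible by first fixing $\theta_0$ (any value works) and then choosing $t$ close enough to $-1$ inside the good set — and set $\tilde\varphi := \varphi_t|_{\partial B_1} = \varphi + (t+1)\theta_0$. Then $\|\tilde\varphi-\varphi\|_{H^{1/2}(\partial B_1)}\le (t+1)\theta_0\,\|1\|_{H^{1/2}(\partial B_1)}$ (resp.\ $\|\tilde\varphi-\varphi\|_{L^\infty(\partial B_1)}=(t+1)\theta_0$), which can be made $<\eps/2$, and hence $<\eps$ after combining with the first reduction; moreover every $u\in\mathcal{M}[\En_{AC}^+,\tilde\varphi]=\mathcal{M}[\En_{AC}^+,\varphi_t]$ enjoys the stated bound on $\SingU$ by the choice of $t$.

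The only genuinely delicate point is the initial approximation step for $\varphi$ merely in $H^{1/2}(\partial B_1)$: one must produce a nonnegative smooth bounded extension whose trace is $H^{1/2}$-close to $\varphi$, so that the family $\FamPhi$ satisfies the \emph{pointwise} bound $\varphi_t+|\nabla\varphi_t|\le L$ of Assumption~\ref{AssGR}\eqref{it:33_2}. This is handled by extending $\varphi$ harmonically (or by any bounded linear extension) into $B_1$, mollifying the extension at a small scale inside a slightly larger ball and restricting, truncating the (small) negative part — which, since $\varphi\ge 0$, does not increase the $H^{1/2}$ distance to $\varphi$ — and noting that the resulting function is smooth up to $\overline{B_1}$ with all the required uniform bounds. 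With the remaining continuous-datum case being entirely parallel (uniform approximation by a nonnegative smooth function via mollification and truncation), the corollary follows. I expect no serious obstacle beyond bookkeeping; the substance is entirely in Theorem~\ref{ThmGRAC}.
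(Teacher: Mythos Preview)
Your proposal is correct and follows essentially the same approach as the paper, which simply says the corollary ``follows directly from Theorem~\ref{ThmGRAC}'' by regularizing the boundary datum and perturbing it by an arbitrarily small constant (cf.\ the proof of Corollary~\ref{CorGU}). You have merely spelled out the bookkeeping that the paper leaves implicit.
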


In particular, we have that for almost every solution, the free boundary is smooth in~$\R^d$ for $d\le 5$.  

\subsubsection{The Alt-Phillips functional} The energy $\EnAP(\cdot)$ in \eqref{EqnAP} was first studied by Phillips \cite{P}. In Alt-Phillips \cite{AP}, the authors established the regularity and nondegeneracy of a minimizer $u$. This allows the decomposition of the free boundary from \eqref{EqnFreeBoundary} as
$$
\GU=\RedGU\cup\SingU,
$$
where $\RedGU$ is the reduced boundary and is $C^{1,\alpha}$ \cite{AP}, and $\SingU$ is the  singular part of the free boundary. Similar results were obtained for viscosity solutions by De Silva-Savin \cite{DS}.

Due to the nonlinearity, much less is known about the singular part in this context. Nevertheless, to estimate the dimension of $\SingU$, it suffices to estimate the following
\begin{equation}
\label{EqnCriticalAP}
\Dap:=\max\{d\in\N:\hem \text{homogeneous minimizers are rotations of } [(x_d/\beta)^+]^\beta \text{ in }\R^d\}\footnote{While we know $\Dap\ge 2$ \cite{AP}, the exact value of $\Dap$ remains unknown.},
\end{equation} 
where $\beta=\frac{2}{2-\gamma}$.

For a minimizer $u$ of $\EnAP(\cdot)$ in $B_1\subset\R^d$, we show the following:
\begin{thm}
\label{ThmSingAP}
Suppose that $u\in\mathcal{M}[\EnAP,u]$ in $B_1\subset\R^d$, then we have
\begin{enumerate}
\item{ If $d\le\Dap$, then $\SingU=\emptyset$;}
\item{ If $d=\Dap+1$, then $\SingU$ is locally discrete;}
\item{ For $d\ge\Dap+2$, $\DimH(\SingU)\le d-\Dap-1$.}
\end{enumerate}
\end{thm}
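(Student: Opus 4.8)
\textbf{Proof proposal for Theorem~\ref{ThmSingAP}.}
The plan is to run the standard blow-up analysis together with Federer's dimension reduction, with the homogeneity exponent $\beta=\frac{2}{2-\gamma}$ forced by the scaling of $\EnAP$. First I would assemble the toolkit. From the regularity and nondegeneracy of minimizers (Alt--Phillips \cite{AP}; see also De Silva--Savin \cite{DS}), a minimizer $u$ satisfies the optimal two-sided estimate $c\,\mathrm{dist}(\cdot,\GU)^\beta\le u\le C\,\mathrm{dist}(\cdot,\GU)^\beta$ near the free boundary and is uniformly $C^{1,\alpha}$ up to $\GU$ on compact subsets. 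Combined with a Caccioppoli-type inequality, this gives compactness of the rescalings $u_{x_0,r}(x):=r^{-\beta}u(x_0+rx)$ in $C^{1,\alpha}_{loc}\cap H^1_{loc}$, and --- crucially --- any limit of such a sequence is again a minimizer of $\EnAP$ in $\R^d$; here one uses the continuity of $t\mapsto t^\gamma$ together with \emph{strong} $H^1_{loc}$-convergence of the rescalings to pass minimality to the limit. Next I would record a Weiss-type monotonicity formula adapted to $\beta$ (cf.\ \cite{W}): for a minimizer the quantity
\begin{equation*}
W(u,x_0,r):=r^{2-d-2\beta}\int_{B_r(x_0)}\bigl(\Diri+u^\gamma\bigr)-\beta\,r^{1-d-2\beta}\int_{\partial B_r(x_0)}u^2\,d\HausM
\end{equation*}
is nondecreasing in $r$, and is constant on an $r$-interval precisely when $u$ is $\beta$-homogeneous about $x_0$ there. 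Hence $\Theta_u(x_0):=\lim_{r\to0^+}W(u,x_0,r)$ exists and is upper semicontinuous --- both in $x_0$ and under the blow-up convergence above --- and every blow-up of $u$ at a free boundary point is a $\beta$-homogeneous global minimizer with Weiss density $\equiv\Theta_u(x_0)$.

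The second input is $\eps$-regularity at regular points: if some blow-up of $u$ at $x_0\in\GU$ is a rotation of the half-space solution $[(x_d/\beta)^+]^\beta$, then $\GU$ is a $C^{1,\alpha}$ hypersurface near $x_0$, i.e.\ $x_0\in\RedGU$; this is the improvement-of-flatness theory of \cite{AP} (or \cite{DS}). Let $\Theta_{1/2}$ be the (dimension-independent) Weiss density of the half-space solution. A compactness argument then yields the density gap: there is $\delta_0=\delta_0(d,\gamma)>0$ such that every $\beta$-homogeneous global minimizer $v$ in $\R^d$ with $W(v)<\Theta_{1/2}+\delta_0$ is a rotation of the half-space solution. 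Consequently $\SingU=\{x_0\in\GU:\Theta_u(x_0)\ge\Theta_{1/2}+\delta_0\}$, which is relatively closed in $\GU$.

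With these in hand, the three conclusions follow by Federer's dimension reduction, exactly as for the Alt-Caffarelli functional (cf.\ \cite{W,V}). If $d\le\Dap$, every blow-up at a free boundary point is a $\beta$-homogeneous global minimizer in $\R^d$, hence a half-space solution by the very definition of $\Dap$, so every free boundary point is regular and $\SingU=\emptyset$. In general, a singular $\beta$-homogeneous global minimizer $v$ in $\R^d$ has spine (maximal subspace of translation invariance) of dimension at most $d-\Dap-1$: if the spine were $V$ with $\dim V=k$, writing $v=\bar v\circ\pi$ with $\pi\colon\R^d\to V^\perp\cong\R^{d-k}$ would produce a $\beta$-homogeneous global minimizer $\bar v$ in $\R^{d-k}$ with $\Sing(\bar v)\neq\emptyset$, forcing $d-k\ge\Dap+1$. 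Federer's scheme then yields $\DimH(\SingU)\le d-\Dap-1$ for $d\ge\Dap+2$, and, in the borderline case $d=\Dap+1$, that the $0$-dimensional stratum $\SingU$ is locally finite, i.e.\ discrete.

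The main obstacle is setting up the blow-up analysis in a self-contained way for $\EnAP$: the Weiss monotonicity formula with the correct exponent $\beta$, and --- more substantially --- the $\eps$-regularity/flatness-implies-smoothness step together with the attendant density gap. Once these are available, the dimension-reduction bookkeeping is routine. A secondary technical point, flagged above, is that for the nonlinearity $u^\gamma$ (which is not $C^1$ at $u=0$) one must verify that minimality is stable under blow-up limits, which needs the strong $H^1_{loc}$-convergence of the rescalings rather than mere weak convergence.
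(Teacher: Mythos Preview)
Your proposal is correct and follows essentially the same route as the paper's Appendix~\ref{AppSingAP}: blow-up compactness for the $\beta$-rescalings, Weiss monotonicity (Theorem~\ref{ThmWeissAP}), $\eps$-regularity (Lemma~\ref{LemEpsRegAP}), and Federer-type dimension reduction via cone splitting (Lemma~\ref{LemConeSplittingAP}) and the induced lower-dimensional minimizer (Lemma~\ref{LemAppLowerDimension}). The paper packages the closedness/stability of $\SingU$ directly through Proposition~\ref{PropStabSingAP} rather than a density-gap characterization, and runs the reduction by explicit induction on $d$ with density points (Lemma~\ref{LemExistenceOfDensityPoints}) and upper semicontinuity of $\mathcal{H}^s(\Sing)$ (Lemma~\ref{LemAppUSCHausM}), but the underlying mechanism is the same as your spine/stratification formulation. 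One minor inaccuracy: the Weiss density $\Theta_{1/2}$ of the half-space solution is \emph{not} dimension-independent (by Lemma~\ref{LemWeissRepAP} it equals $(1-\gamma/2)\int_{B_1}[(x_d/\beta)^+]^{\beta\gamma}$, which depends on $d$); this is harmless for your argument since all density comparisons occur within a fixed ambient dimension.
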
 
Analogous to their counterparts for the Alt-Caffarelli functional, these estimates have not yet been rigorously proved in the literature. We give a proof in the Appendix \ref{AppSingAP}.

Our result on the generic regularity of the free boundary states that, generically, these estimates can be improved by $1$:
\begin{thm}
\label{ThmGRAP}
For each $t\in(-1,1)$, suppose that $u_t\in\mathcal{M}[\En_{AP}^+,\varphi_t]$ for a family $\FamPhi$ satisfying Assumption \ref{AssGR}.

For $d=\Dap+1$, there is a countable subset $J\subset(-1,1)$ such that
$$
\SingUT=\emptyset \hem \text{ for all } t\in(-1,1)\backslash J.
$$

For $d\ge\Dap+2$, we have
$$
\DimH(\SingUT)\le d-\Dap-2\hem \text{ for almost every }t\in(-1,1).
$$
\end{thm}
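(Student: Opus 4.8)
The plan is to adapt the proof of the Alt-Caffarelli generic regularity theorem (Theorem~\ref{ThmGRAC}) essentially verbatim, replacing the critical dimension $\Dac$ by $\Dap$ and the Alt-Caffarelli structure by the Alt-Phillips structure, using Theorem~\ref{ThmSingAP} as the non-generic baseline estimate. The engine is the comparison between ordered minimizers: by Proposition~\ref{PropComparison}, with Assumption~\ref{AssGR} in force (in particular the strict monotonicity $\varphi_t-\varphi_s\ge\theta(t-s)$), the family $\{u_t\}$ is strictly ordered, so the graphs $\mathrm{Graph}(u_t)\subset B_1\times\R$ are pairwise disjoint and foliate an open region. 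The key local object at a free boundary point $x_0\in\SingUT$ is the Weiss-type monotonicity quantity for the Alt-Phillips energy (with the homogeneity exponent $\beta=\frac{2}{2-\gamma}$, so blow-ups are $\beta$-homogeneous): one shows that if $x_0\in\SingUT$ then the blow-up of $u_t$ at $x_0$ is a singular homogeneous minimizer, which by definition of $\Dap$ cannot exist in dimension $\le\Dap$, giving the first bullet's emptiness statement once we gain one dimension generically.

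First I would set up the dimension-reduction / Federer argument exactly as in the Alt-Caffarelli case: for fixed $t$ outside the countable set where $t\mapsto\varphi_t$ is discontinuous (Remark~\ref{RemContAlmEverywhere}), and outside a further exceptional set, the singular set $\SingUT$ in $\R^d$ is controlled by the ``expected'' bound $d-\Dap-1$ from Theorem~\ref{ThmSingAP}. The improvement by one dimension comes from the observation that a singular point of $u_t$ persisting along the foliation would force a lower-dimensional ``tube'' of singular points in the $(x,t)$-space $B_1\times(-1,1)$, and by a covering/Fubini argument (projecting onto the $t$-axis) the set of $t$ for which $\dim_{\mathcal H}(\SingUT)>d-\Dap-2$ must have measure zero. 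Concretely, one studies the map that sends a singular point $x_0$ of $u_t$ to the slice parameter, using that at such a point the minimizer $u_t$ touches from below (resp. above) a nearby minimizer in the family only to higher order, which via the comparison and the strict gap $\theta(t-s)$ produces a quantitative ``stratification in $t$''; this is precisely the mechanism of \cite{ESV} and of the companion Alt-Caffarelli argument, and I would cite it. For $d=\Dap+1$, the expected bound is $d-\Dap-1=0$, i.e. $\SingUT$ is locally discrete; the foliation argument upgrades ``discrete for all $t$'' to ``empty for all but countably many $t$'', giving the set $J$.

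The two structural ingredients that need to be checked for Alt-Phillips rather than merely quoted are: (i) the Weiss monotonicity formula and the resulting upper semicontinuity of the density, together with the classification of blow-ups as $\beta$-homogeneous minimizers — these are exactly what underlie Theorem~\ref{ThmSingAP} and are recorded in the Appendix~\ref{AppSingAP}, so they are available; and (ii) the fact that the comparison principle for ordered minimizers of Proposition~\ref{PropComparison} applies to $\En_{AP}^+$, which it does since $\En_{AP}^+$ is a special case of $\En_\pm$ under \eqref{EqnConditionOnFpm} for $\gamma\in(0,1)$. Given these, the argument is structurally identical to Theorem~\ref{ThmGRAC}, with $\Dac\rightsquigarrow\Dap$ throughout, so I would present it by stating the necessary lemmas in the Alt-Phillips language and then remarking that the proof of Theorem~\ref{ThmGRAC} carries over word for word.

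The main obstacle I expect is the interaction between the \emph{non-Lipschitz} nature of the Alt-Phillips blow-up profiles $[(x_d/\beta)^+]^\beta$ (which vanish only to order $\beta<1$ when $\gamma>0$ small, or to higher order otherwise) and the quantitative comparison estimate: the ``gain of one dimension'' step requires a uniform modulus of strict separation between $u_t$ and $u_s$ near a singular point, and near the free boundary the minimizers degenerate at the Alt-Phillips rate, not linearly. One must therefore verify that the rescaled comparison survives the blow-up at the correct $\beta$-homogeneity — essentially that the strict ordering $u_t-u_s\ge c(t-s)$ in the interior, combined with the nondegeneracy $\sup_{B_r(x_0)}u_t\gtrsim r^\beta$ from \cite{AP}, still yields a nontrivial separation of blow-up limits. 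This is the one place where the Alt-Phillips exponent genuinely enters; everything else is bookkeeping inherited from the Alt-Caffarelli proof.
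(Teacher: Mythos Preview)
Your proposal misses the two genuinely new ingredients and focuses on the wrong obstacle.

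The retraction/separation issue you flag is not the problem: the proof of Lemma~\ref{LemRetractionAP} goes through almost unchanged from the Alt-Caffarelli version, because the separation $u_t-u_s\ge c(t-s)$ near $\partial B_1$ (obtained by a barrier on an annulus where both solutions are positive) is enough to slide $u_{t_0}$ under $u_t$ and conclude linear growth of the positivity set. The $\beta$-homogeneity plays no adverse role here.

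What does \emph{not} carry over word for word are the two cone lemmas at the heart of the Alt-Caffarelli argument:
\begin{itemize}
\item \textbf{Ordered cones coincide.} Lemma~\ref{LemOrderedConesAC} is proved via the spherical eigenvalue problem $\Delta_{\Sph}u=-(d-1)u$ and the one-dimensionality of the principal eigenspace. For Alt-Phillips the spherical equation is nonlinear, so this argument is unavailable. The paper can only prove the weaker Lemma~\ref{LemOrderedConesAP}: ordered cones \emph{with equal Weiss density} coincide, using the identity $W(u,0,1)=(1-\gamma/2)\int_{B_1}u^\gamma$ from Lemma~\ref{LemWeissRepAP}. This forces the variable-center blow-up (Lemma~\ref{LemVariableCenterAP}) to track $W$ and show that the two limiting cones have the same density $\omega(0,0)$ before one can identify them.
\item \textbf{Monotone cones are half-spaces.} Lemma~\ref{LemMonotoneConesAC} relies on Theorem~\ref{ThmGraphicalFBAC}, the De Silva--Jerison theory of graphical solutions, which is simply missing for $\EnAP$. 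The replacement (Lemma~\ref{LemMonotoneConesCriticalDimAP} and its inductive extension Lemma~\ref{LemConeMultiMonotoneAP}) is built on a new mechanism: pick a boundary direction $e^*$ of the cone of monotonicity, and show that at a point $p\in\RedGU\cap\partial B_1$ where $\nu_p\cdot e^*=0$ one must have $\partial_{e^*}u\equiv 0$. This is Lemma~\ref{LemTangentialFBAP}, and it requires the sharp expansion of $w=u^{1/\beta}$ near a regular point with any H\"older exponent $\alpha<1$ (Appendix~\ref{AppExpansionNearRegularPoint}), so that $\partial_{e^*}u\lesssim r^{\alpha+\beta-1}$ can beat the Hopf lower bound $\partial_{e^*}u\gtrsim r$ by choosing $\alpha+\beta>2$.
\end{itemize}
Without these two pieces your ``word for word'' transfer breaks at the step where, after obtaining $v_\infty(\cdot-e_1)\ge u_\infty$, you must conclude $v_\infty=u_\infty$ and then that the resulting monotone cone has no singular point. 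Neither conclusion follows from the Alt-Caffarelli lemmas you invoke.
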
 
Recall the parameter $\Dap$ from \eqref{EqnCriticalAP}, the Alt-Phillips functional $\En_{AP}^+$ from \eqref{EqnAP}, the singular set $\SingU$ from \eqref{EqnFBDecomposition}, and the set of energy minimizers $\mathcal{M}[\cdot,\cdot]$ from \eqref{EqnSetOfMin}.

As a corollary, singularities on free boundaries can be perturbed away.
\begin{cor}
\label{CorGRAP}
  Let   $\varphi\in H^{1/2}(\partial B_1)$ (resp. $\varphi\in C(\partial B_1)$), $\varphi\ge 0$ on $\partial B_1$.  Then, for any $\eps>0$ we can find $\tilde{\varphi}$ such that 
$$
\|\tilde{\varphi}-\varphi\|_{H^{1/2}(\partial B_1)}<\eps\quad\left(\text{resp.}\ \|\tilde{\varphi}-\varphi\|_{L^\infty(\partial B_1)}<\eps\right),
$$
and for $u\in \mathcal{M}[\En_{AP}^+, \tilde\varphi]$,
$$
\begin{array}{rcll}
\SingU&=&\emptyset\hem   &  \text{ if }d=\Dap+1, \\
\DimH(\SingU)&\le& d-\Dap-2   & \text{ if }d\ge\Dap+2.
\end{array}
$$
\end{cor}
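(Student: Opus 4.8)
The plan is to deduce Corollary~\ref{CorGRAP} from Theorem~\ref{ThmGRAP} in essentially the same way Corollary~\ref{CorGRAC} follows from Theorem~\ref{ThmGRAC}; the only issue is to manufacture, out of a single boundary datum $\varphi$, a one-parameter family $\FamPhi$ satisfying Assumption~\ref{AssGR} whose members are arbitrarily close to $\varphi$ in the relevant norm. First I would reduce to the case $\varphi\in C(\partial B_1)$ (resp. $\varphi\in H^{1/2}(\partial B_1)$), $\varphi\ge 0$, and fix $\eps>0$. Since Lipschitz functions are dense in $C(\partial B_1)$ and in $H^{1/2}(\partial B_1)$, and since truncation at $0$ is continuous in both norms and preserves the sign condition, I can first replace $\varphi$ by a nonnegative Lipschitz function $\psi$ on $\partial B_1$ with $\|\psi-\varphi\|<\eps/2$. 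Extend $\psi$ to a nonnegative Lipschitz function on $\overline{B_1}$ with a uniform bound on $\psi+|\nabla\psi|$ (e.g. the harmonic extension, or a McShane--Whitney extension composed with truncation), still denoted $\psi$.

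Next I would build the family. Set $\varphi_t := \psi + (t+1)\,\delta$ for $t\in(-1,1)$, where $\delta>0$ is a small constant to be chosen, so that $\varphi_{-1}=\psi$ and the family is an affine, strictly increasing translation of $\psi$ by a constant. Then: (1) $\varphi_t\ge \psi\ge 0$ in $B_1$; (2) $\varphi_t+|\nabla\varphi_t| = \psi + (t+1)\delta + |\nabla\psi| \le \|\psi\|_{L^\infty}+2\delta+\mathrm{Lip}(\psi)=:L<\infty$ uniformly in $t$; (3) for $-1<s<t<1$, $\varphi_t-\varphi_s=(t-s)\delta$, so Assumption~\ref{AssGR}\eqref{it:44_2} holds with $\theta=\delta$. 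Thus $\FamPhi$ satisfies Assumption~\ref{AssGR}. Now apply Theorem~\ref{ThmGRAP}: for $d=\Dap+1$ there is a countable $J\subset(-1,1)$ with $\SingUT=\emptyset$ for $t\notin J$, and for $d\ge\Dap+2$ we have $\DimH(\SingUT)\le d-\Dap-2$ for a.e.\ $t\in(-1,1)$. In either case, since $J$ (resp. the exceptional null set) has empty interior, I can pick $t_0\in(-1,1)$ outside it with $(t_0+1)\delta<\eps/(2C)$, where $C$ is the norm of the inclusion of constants into $C(\partial B_1)$ (trivially $1$) or into $H^{1/2}(\partial B_1)$ (a fixed finite constant); choosing $\delta$ small enough at the outset guarantees such $t_0$ exists. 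Set $\tilde\varphi := \varphi_{t_0}|_{\partial B_1} = \psi + (t_0+1)\delta$. Then $\|\tilde\varphi-\psi\| = (t_0+1)\delta\,\|1\| < \eps/2$, hence $\|\tilde\varphi-\varphi\|<\eps$, and every $u\in\mathcal{M}[\En_{AP}^+,\tilde\varphi]$ has the asserted free boundary regularity.

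A small point worth spelling out: Theorem~\ref{ThmGRAP} is phrased for a fixed choice $u_t\in\mathcal{M}[\En_{AP}^+,\varphi_t]$, whereas the corollary claims the conclusion for \emph{every} $u\in\mathcal{M}[\En_{AP}^+,\tilde\varphi]$. Here I would invoke Theorem~\ref{ThmGU}: after possibly enlarging the countable exceptional set $J$ to include the countable set $I$ from Theorem~\ref{ThmGU} (the family $\FamPhi$ above also satisfies Assumption~\ref{AssGU}, being Lipschitz, monotone, and with condition~\eqref{it:333} automatic since $\varphi_t-\varphi_s$ is a positive constant, hence positive on every connected component), the minimizer at level $t_0$ is unique, so "the" minimizer and "every" minimizer coincide. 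Thus the choice of $u_{t_0}$ is immaterial.

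I do not expect a genuine obstacle here — the argument is a soft density-plus-generic-selection scheme — but the step requiring the most care is the bookkeeping of norms: one must ensure that the additive constant perturbation $(t+1)\delta$ is controlled in $H^{1/2}(\partial B_1)$ (not just $L^\infty$), which is immediate since constants lie in $H^{1/2}(\partial B_1)$ with norm proportional to their absolute value, and that the Lipschitz extension used to verify Assumption~\ref{AssGR}\eqref{it:33_2} genuinely has $\varphi_t+|\nabla\varphi_t|$ bounded \emph{inside} $B_1$, which forces the use of an extension with a quantitative gradient bound rather than an arbitrary one. Both are routine, so the proof is essentially a matter of assembling Theorems~\ref{ThmGU} and~\ref{ThmGRAP} with a standard approximation.
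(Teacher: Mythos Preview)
Your proposal is correct and takes essentially the same approach as the paper: approximate $\varphi$ by a nonnegative Lipschitz function, build the family by adding a small constant, and apply Theorem~\ref{ThmGRAP}, which is exactly what the paper's one-line proof (``Follows directly from Theorem~\ref{ThmGRAP}, cf.\ the proof of Corollary~\ref{CorGU}'') is abbreviating. Your additional care in invoking Theorem~\ref{ThmGU} to pass from ``the chosen minimizer'' to ``every minimizer'' is a point the paper leaves implicit but which you handle correctly.
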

\noindent In particular, in the physical space $\R^3$, almost every solution has a regular free boundary.

\vem

The proofs of Theorems~\ref{ThmGRAC} and \ref{ThmGRAP} are inspired by their counterparts for the obstacle problem  done by Monneau \cite{Mo} and Figalli, Ros-Oton, and Serra, \cite{FRS}, and are based on dimension reduction and cleaning lemmas. In particular, we can show that the size of the singular set is the same for a single solution and for the whole family $\bigcup_{t\in (-1, 1)}\Sing(u_t)$.

Apart from the works mentioned, similar results have been obtained by the first author with Ros-Oton in \cite{FeR} and with Torres-Latorre in \cite{FeT}, in the context of the fractional or thin obstacle problem; and by Hardt-Simon \cite{HS} and  Chodosh-Mantoulidis-Schulze \cite{CMS1,CMS2} for minimal surfaces. 
Compared with these, however, much less is known about minimizing cones for our problems \eqref{EqnAC} and \eqref{EqnAP}. As a result, new ideas are required for Theorem \ref{ThmGRAC} and Theorem \ref{ThmGRAP}. 

One crucial step is to rule out minimizing cones that are monotone. For the Alt-Caffarelli functional \eqref{EqnAC}, this is achieved with the theory on monotone solutions \cite{DJ, EFeY}. Unfortunately, this theory is missing for the Alt-Phillips functional \eqref{EqnAP}. Instead, for a minimizer $u$ of the Alt-Phillips functional, we show that if $u$ is monotone along $(d-\Dap)$ independent directions, then $\SingU=\emptyset$. This is obtained by a detailed expansion of the minimizer around a point in $\RedGU$ and an improvement-of-monotonicity argument.

\subsection{Organization of the paper}This work is structured as follows. In the next section, we collect some preliminary properties about the functionals. In Section~\ref{sec:3}, we prove the generic uniqueness of minimizers from Theorem \ref{ThmGU}. In the remaining sections, we study the generic regularity of free boundaries from Theorems~\ref{ThmGRAC} and~\ref{ThmGRAP}. In Section~\ref{SecGRAC}, we establish the result for the Alt-Caffarelli functional. In Section~\ref{sec:5}, we establish the result for the Alt-Phillips functional. On top of these, there are two appendices. In Appendix~\ref{AppSingAP}, we establish Theorem \ref{ThmSingAP}, concerning the size of the singular set in the Alt-Phillips problem. In Appendix~\ref{AppExpansionNearRegularPoint}, we quantify an expansion of the minimizer to the Alt-Phillips function, originally given by De Silva-Savin \cite{DS}.

\section{Preliminaries}
This preliminary section contains four subsections. In the first, we discuss properties of general energy in  \eqref{EqnFreeBoundaryEnergy}. In the next two subsections, we collect results for the one-phase Alt-Caffarelli functional \eqref{EqnAC} and the one-phase Alt-Phillips functional \eqref{EqnAP}, respectively. We conclude this section with some tools from the geometric measure theory. 

\subsection{Properties of general energies}
Recall the energy $\En_\pm$ from  \eqref{EqnFreeBoundaryEnergy}.  We assume that the nonlinearities $F_\pm$ satisfy \eqref{EqnConditionOnFpm}. 


For the set of minimizers $\mathcal{M}[\cdot,\cdot]$ defined in \eqref{EqnSetOfMin}, it is not difficult to see that  $u\in\mathcal{M}[\En_\pm,u]$ satisfies
\begin{equation}
\label{EqnSecondEL}
\Delta u=F'_+(u)/2 \text{ in }\PosS, \text{ and }\Delta u=-F'_-(u^-)/2 \text{ in }\NegS.
\end{equation}

Between $u$ and $v\in H^1(B_1)$, the maximum and minimum  are denoted by
\begin{equation}
\label{EqnMaxMin}
u\vee v:=\max\{u,v\}, \text{ and }u\wedge v:=\min\{u,v\}.
\end{equation} 
It is elementary to establish the  following `cut-and-paste' lemma:
\begin{lem}
\label{LemCAndP}
For $u,v\in H^1(B_1)$, we have
$$
\En_\pm(u\vee v)+\En_\pm(u\wedge v)=\En_\pm(u)+\En_\pm(v).
$$
\end{lem}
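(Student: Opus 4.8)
The plan is to prove Lemma~\ref{LemCAndP} by a pointwise decomposition of the domain $B_1$ according to where $u \geq v$ and where $u < v$. First I would observe that the energy $\En_\pm(w) = \int_{B_1}\left(|\nabla w|^2 + F_+(w^+)\chi_{\{w>0\}} + F_-(w^-)\chi_{\{w<0\}}\right)$ splits as a sum of integrals, so it suffices to check the identity integrand-by-integrand after partitioning $B_1$. Set $A := \{u \geq v\}$ and $A^c := \{u < v\}$ (up to a null set, since $u,v \in H^1(B_1)$). On $A$ we have $u \vee v = u$ and $u \wedge v = v$ (a.e.), while on $A^c$ we have $u \vee v = v$ and $u \wedge v = u$. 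In particular, $\{|\nabla(u\vee v)|^2, |\nabla(u\wedge v)|^2\} = \{|\nabla u|^2, |\nabla v|^2\}$ a.e.\ on each of $A$ and $A^c$ — using the standard fact that $\nabla(u\vee v) = \nabla u \cdot \chi_{\{u \geq v\}} + \nabla v \cdot \chi_{\{u < v\}}$ and symmetrically for $u \wedge v$. Hence $|\nabla(u\vee v)|^2 + |\nabla(u\wedge v)|^2 = |\nabla u|^2 + |\nabla v|^2$ a.e.\ in $B_1$, and integrating gives the gradient part of the identity.

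For the potential part, the key point is the same set-theoretic bookkeeping: at a.e.\ point $x$, the unordered pair of values $\{(u\vee v)(x), (u\wedge v)(x)\}$ equals the unordered pair $\{u(x), v(x)\}$. Since the functional $w \mapsto F_+(w^+)\chi_{\{w>0\}} + F_-(w^-)\chi_{\{w<0\}}$ depends on $w$ only through the value $w(x)$ (it is $G(w(x))$ for the fixed function $G(s) := F_+(s)\chi_{\{s>0\}} + F_-(-s)\chi_{\{s<0\}}$ on $\R$), we get the pointwise identity $G((u\vee v)(x)) + G((u\wedge v)(x)) = G(u(x)) + G(v(x))$ for a.e.\ $x$; this is just the trivial fact that $a+b$ is unchanged under swapping $a$ and $b$. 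Integrating over $B_1$ yields the potential part of the identity, and adding the two parts completes the proof.

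The main thing to be careful about — really the only substantive point — is the chain rule for $\nabla(u\vee v)$ and $\nabla(u\wedge v)$ in $H^1$, i.e.\ that these truncations lie in $H^1(B_1)$ with the claimed almost-everywhere formulas for their gradients; this is the classical Stampacchia-type lemma and I would simply cite it. Everything else is a pointwise tautology about partitioning into $\{u \geq v\}$ and its complement, so there is no real obstacle; the word ``elementary'' in the statement is accurate. One minor subtlety is that on the set $\{u = v\}$ both descriptions agree trivially (all four functions coincide there and $\nabla u = \nabla v$ a.e.\ on $\{u = v\}$), so the choice of putting $\{u = v\}$ into $A$ rather than $A^c$ is immaterial.
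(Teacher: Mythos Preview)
Your proposal is correct and is exactly the elementary pointwise ``cut-and-paste'' argument the paper has in mind; the paper itself omits the proof, simply labeling it elementary. The only substantive ingredient you flag --- the Stampacchia lemma for $\nabla(u\vee v)$ and $\nabla(u\wedge v)$ in $H^1$ --- is the right thing to cite, and nothing further is needed.
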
 
Consequently, we have
\begin{cor}
\label{CorCAndP}
Suppose that $u\in\mathcal{M}[\En_\pm,u]$ and $v\in\mathcal{M}[\En_\pm,v]$ with 
$
u\ge v \text{ on }\partial B_1,
$
then 
$$
u\vee v\in\mathcal{M}[\En_\pm, u]\quad \text{ and }\quad u\wedge v\in\mathcal{M}[\En_\pm, v].
$$
\end{cor}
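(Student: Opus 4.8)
The plan is to combine the cut-and-paste identity of Lemma~\ref{LemCAndP} with the fact that $u\vee v$ and $u\wedge v$ are admissible competitors for the correct boundary data. First I would check the boundary conditions: since $u\ge v$ on $\partial B_1$, on the boundary we have $u\vee v=u$ and $u\wedge v=v$; moreover $u\vee v-u=(v-u)^+\in H^1_0(B_1)$ and $u\wedge v-v=(u-v)^-\in H^1_0(B_1)$, so indeed $u\vee v\in H^1_0(B_1)+u$ and $u\wedge v\in H^1_0(B_1)+v$. In particular both are valid competitors, and by minimality of $u$ and $v$ for their respective boundary data,
\begin{equation}
\label{EqnTwoInequalities}
\En_\pm(u\vee v)\ge\En_\pm(u)\qquad\text{and}\qquad\En_\pm(u\wedge v)\ge\En_\pm(v).
\end{equation}

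Next I would add the two inequalities in \eqref{EqnTwoInequalities} and compare with Lemma~\ref{LemCAndP}: the sum of the left-hand sides equals $\En_\pm(u\vee v)+\En_\pm(u\wedge v)=\En_\pm(u)+\En_\pm(v)$, which is exactly the sum of the right-hand sides. Hence neither inequality in \eqref{EqnTwoInequalities} can be strict, so $\En_\pm(u\vee v)=\En_\pm(u)$ and $\En_\pm(u\wedge v)=\En_\pm(v)$. Since $u\vee v$ attains the minimal energy among competitors with boundary datum $u$, it lies in $\mathcal{M}[\En_\pm,u]$; similarly $u\wedge v\in\mathcal{M}[\En_\pm,v]$, which is the claim.

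There is essentially no obstacle here once Lemma~\ref{LemCAndP} is in hand; the only point needing a little care is the admissibility step, i.e. that truncating preserves membership in the affine space $H^1_0(B_1)+\varphi$, which follows from the standard fact that $w\mapsto w^\pm$ maps $H^1$ to $H^1$ and preserves zero boundary values. One should also note that Lemma~\ref{LemCAndP} applies to \emph{all} pairs in $H^1(B_1)$ (it is a pointwise identity for the integrand, using that $|\nabla(u\vee v)|^2+|\nabla(u\wedge v)|^2=|\nabla u|^2+|\nabla v|^2$ a.e. and that the zero-order terms simply get rearranged according to which of $u,v$ is larger at each point), so no regularity of the minimizers beyond $H^1$ is required.
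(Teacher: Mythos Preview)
Your proposal is correct and follows essentially the same argument as the paper: verify that $u\vee v$ and $u\wedge v$ are admissible competitors for the boundary data $u$ and $v$ respectively, use minimality to get the two inequalities, and then invoke Lemma~\ref{LemCAndP} to force both to be equalities. The paper's proof is simply a terser version of what you wrote.
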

\begin{proof}
We have that $u\vee v\in H^1_0(B_1)+u$ and $u\wedge v\in H^1_0(B_1)+v$, from the ordering $u\ge v$ on $\partial B_1$ (in the trace sense). By minimality, $\En_\pm(u\vee v) \ge \En_\pm(u)$ and $\En_\pm(u\wedge v) \ge \En_\pm(v)$, and from Lemma~\ref{LemCAndP} they are actually equalities. 
\end{proof}

We need minimizers to be continuous in $\overline{B_1}$. Although weaker assumptions could suffice, we choose to work with Lipschitz data for simplicity. We have the following:
\begin{prop}
\label{PropHolder}
For a Lipschitz function $\varphi$ on $\partial B_1$, denote by $A$ and $B$ the following quantities
$$
A:=\|\varphi\|_{L^\infty(\partial B_1)}, \text{ and }B:=\|\nabla\varphi\|_{L^\infty(\partial B_1)}.
$$
Suppose that $u\in\mathcal{M}[\En_\pm,\varphi]$, then $u\in C^{\alpha}(\overline{B_1})$ for any $\alpha\in(0,1)$ with 
$$
\|u\|_{C^\alpha(\overline{B_1})}\le C,
$$
where $C$ depends only on $d$, $\alpha$, $A+B$, and $\sup_{t\in[0,A]}F_\pm(t)$.
\end{prop}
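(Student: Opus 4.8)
The plan is to prove the $C^\alpha(\overline{B_1})$ bound in two stages: an interior estimate, and a boundary estimate, then patch them together. For the interior estimate, the key observation is that a minimizer $u$ is a bounded subsolution/supersolution of a uniformly elliptic equation away from its nodal set. More precisely, I would first establish a global $L^\infty$ bound: truncating $u$ at heights $\pm A$ and using that $F_\pm' \ge 0$ and $F_\pm \ge 0$ together with Lemma~\ref{LemCAndP}-type cut-and-paste comparisons (compare $u$ with $u \wedge A$ and $u \vee (-A)$, noting that these have the same boundary data and that truncation does not increase the energy, since the gradient term decreases and the potential terms are monotone in $u^\pm$), one gets $\|u\|_{L^\infty(B_1)} \le A$. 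Next, on $\PosS$ we have $\Delta u = F_+'(u)/2 \ge 0$ by \eqref{EqnSecondEL} and $F_+'\ge 0$, so $u^+$ is subharmonic in $B_1$ (across the free boundary, because $u^+$ is continuous once we know local continuity, or more robustly because $u^+$ is a subsolution in the distributional sense — this is the standard fact that $\max\{u,0\}$ of a solution to $\Delta u = f \ge 0$ on $\PosS$ is globally subharmonic). Similarly $u^-$ is subharmonic. Subharmonic functions bounded by $A$ satisfy no interior modulus of continuity by themselves, so instead I would use the minimality more quantitatively: $u$ minimizes $\int |\nabla u|^2 + (\text{bounded potential})$, hence by comparison with its own harmonic replacement on balls $B_r(x_0) \Subset B_1$ one gets a Morrey-type decay $\int_{B_r(x_0)} |\nabla u|^2 \le C r^{d-2+2\alpha}$ for every $\alpha<1$ — the defect between $u$ and its harmonic replacement is controlled by the bound $\sup_{[0,A]} F_\pm$ times the volume $r^d$, which is lower order. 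Morrey's embedding then yields interior $C^\alpha$ with the stated dependence.

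For the boundary estimate, the mechanism is a barrier argument. Fix $x_0 \in \partial B_1$. Since $\varphi$ is Lipschitz with constant $B$ and $\|\varphi\|_\infty \le A$, on $\partial B_1$ we have $\varphi(x) \le \varphi(x_0) + B|x - x_0|$. I would construct an explicit harmonic (or mildly superharmonic) supersolution $w$ in $B_1$ with $w \ge \varphi$ on $\partial B_1$, $w(x_0) = \varphi(x_0)$, and $w$ having a Lipschitz modulus at $x_0$ — for the ball this is classical, e.g. $w(x) = \varphi(x_0) + C(A+B)(1 - |x|) + C(A+B)|x-x_0|$ modified to be a genuine supersolution, or one uses the Poisson kernel directly. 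Because $\Delta u = F_+'(u)/2 \ge 0$ on $\PosS$ and $\Delta u \le 0$ on $\NegS$, the function $u$ is itself a subsolution on $\PosS$; comparing $u$ with $w$ there (and using $u \le 0 \le w$ on $\NegS$ near the boundary, since $w \ge \varphi \ge$ the relevant value) gives $u(x) \le \varphi(x_0) + C(A+B)|x-x_0|$ for $x$ near $x_0$. The symmetric barrier from below, using that $-u$ is a subsolution on $\NegS$, gives the matching lower bound. This produces the oscillation bound $|u(x) - \varphi(x_0)| \le C(A+B)|x-x_0|$ near $\partial B_1$, i.e. a Lipschitz (hence $C^\alpha$) bound at boundary points.

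Finally, I would combine the interior $C^\alpha$ estimate and the boundary Lipschitz-type estimate by a standard covering/interpolation argument: for two points $x,y \in \overline{B_1}$, if both are at distance $\ge |x-y|$ from $\partial B_1$ use the interior estimate on a ball around them; if one is close to $\partial B_1$, walk to the nearest boundary point and use the boundary bound together with the interior bound on the remaining interior portion. This yields $\|u\|_{C^\alpha(\overline{B_1})} \le C$ with $C$ depending only on $d$, $\alpha$, $A+B$, and $\sup_{t\in[0,A]} F_\pm(t)$, as claimed. The main obstacle is the interior estimate: subharmonicity alone is insufficient, so one genuinely needs the Morrey-space decay coming from minimality (comparison with harmonic replacement), and one must be careful that the potential contributes only a lower-order error — this is where the hypotheses $F_\pm \ge 0$, $F_\pm$ continuous on $[0,\infty)$, and the bound $\sup_{[0,A]} F_\pm < \infty$ enter. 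A secondary subtlety is justifying that $u^\pm$ are globally (sub)harmonic across the free boundary without circularly invoking the continuity one is trying to prove; this is handled by working with the distributional inequality $\Delta u^+ \ge 0$, which follows from minimality and does not presuppose continuity.
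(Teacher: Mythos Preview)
Your interior argument via Morrey decay from comparison with the harmonic replacement is exactly the mechanism the paper invokes: after the $L^\infty$ bound, the paper records the key inequality $\int_U |\nabla u|^2 \le \int_U |\nabla v|^2 + (\sup_{[0,A]} F_\pm)\,|U|$ for bounded competitors $v$ and then defers to Appendix~B of \cite{EdSV} for the Campanato-type conclusion. Your harmonic-replacement sketch is precisely how that argument runs in the interior.

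Your boundary barrier argument, however, has a gap. The comparison $u \le w$ in $\{u>0\}$ via the maximum principle requires $u \le w$ on the free boundary $\partial\{u>0\}\cap B_1$, where $u = 0$; for this you would need $w \ge 0$ there, which is not implied by $w \ge \varphi$ on $\partial B_1$ (both $\varphi$ and $w$ may be negative). Falling back on the global subharmonicity of $u^\pm$ does not close the gap either: the bounds $u^+ \le h^+$ and $u^- \le h^-$ (with $h^\pm$ the harmonic extensions of $\varphi^\pm$) only give $-h^- \le u \le h^+$, and when, say, $\varphi(x_0) > 0$ the lower bound $u(x) \ge -h^-(x) \ge -C|x-x_0|$ misses the target $\varphi(x_0) - C|x-x_0|$ by the fixed amount $\varphi(x_0)$. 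The clean fix is to run the same Morrey/Campanato decay up to the boundary: on half-balls $B_r(x_0)\cap B_1$ with $x_0 \in \partial B_1$, compare $u$ with its harmonic replacement; the Lipschitz datum $\varphi$ on the flat part controls the replacement's energy, and the potential contributes a defect of order $r^d$ exactly as in the interior. This unified interior-plus-boundary Campanato argument is what the reference \cite{EdSV} supplies, and it avoids barriers entirely.
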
 
\begin{proof}
With \eqref{EqnSecondEL} and \eqref{EqnConditionOnFpm}, we see that $u^\pm$ are subharmonic in $B_1$, implying
$
\|u\|_{L^\infty(B_1)}\le A. 
$

For an open subset $U\subset B_1$, suppose $v\in H^1(U)$ satisfy
\begin{equation}
\label{EqnExtraConditionOnTesting}
v-u\in H^1_0(U),\hem \text{ and }-A \le v\le A \hem\text{ in }U.
\end{equation} 
The minimizing property of $u$ implies
\begin{align*}
\int_U \Diri&\le \int_U|\nabla v|^2+F_+(v^+)\chi_{\{v>0\}}+F_-(v^-)\chi_{\{v<0\}}\\&\le\int_U|\nabla v|^2+\sup_{t\in[0,A]}F_\pm(t)\cdot |U|.
\end{align*}

From here, we can apply the argument in Appendix B of Edelen-Spolaor-Velichkov \cite{EdSV}. Note in that argument the minimizing property is  tested against functions satisfying \eqref{EqnExtraConditionOnTesting}.
\end{proof} 

\subsection{Results on the one-phase Alt-Caffarelli functional}
We begin with some properties of minimizers for the one-phase Alt-Caffarelli functional $\En_{AC}^+(\cdot)$ from \eqref{EqnAC}. We consider only non-negative minimizers in this subsection. 
\begin{prop}[Alt-Caffarelli \cite{AC}]
\label{PropBasicAC}
Suppose that $u\in\mathcal{M}[\En_{AC}^+,u]$, then it satisfies
$$
\Delta u=0 \text{ in }\PosS, \text{ and }|\nabla u|=1 \text{ along }\GU,
$$
and
$$
\Delta u=\mathcal{H}^{d-1}|_{\GU},
$$
where $\mathcal{H}^{d-1}$ denotes the $(d-1)$-dimensional Hausdorff measure. 

Further assume that $x_0\in\GU\cap B_{1/2}$, then for dimensional constants $c,C>0$, we have
$$
\sup_{B_r(x_0)}u\ge cr \text{ for all $0<r<\frac14$},
$$
$$
|\nabla u|\le C \text{ in }B_{3/4}, \hem\text{ and }\mathcal{H}^{d-1}(B_{3/4}\cap\GU)\le C.
$$
\end{prop}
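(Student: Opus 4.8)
\emph{Strategy.} This is the classical package of Alt--Caffarelli \cite{AC}, so the plan is to reprove it along the original lines, organized into three blocks: (i) the interior PDE and the measure $\Delta u$; (ii) nondegeneracy and optimal (Lipschitz) regularity; (iii) the finite-perimeter structure of $\PosS$ together with the free boundary condition $|\nabla u|=1$. Throughout we use that $u$ is continuous on $\overline{B_1}$ by Proposition~\ref{PropHolder}, so $\PosS$ is open.

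\emph{Step 1: interior equation and the defect measure.} Any $W^{1,2}_0$-perturbation supported in $\PosS$ leaves $\chi_{\PosS}$ unchanged, so $u$ minimizes the Dirichlet energy in $\PosS$ and $\Delta u=0$ there. Testing minimality against competitors $u-\eps\psi$ with $\psi\ge0$, $\psi\in C_c^\infty(B_1)$, gives $\Delta u\ge0$ in $B_1$ in the sense of distributions; hence $u$ is subharmonic, and the non-negative measure $\mu:=\Delta u$ is supported on $\GU$ by the harmonicity in $\PosS$. Subharmonicity and boundedness give the global bound $\|u\|_{L^\infty(B_1)}\le\sup_{\partial B_1}u$ used below.

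\emph{Step 2: nondegeneracy and Lipschitz bound.} For the lower bound, fix $x_0\in\GU\cap B_{1/2}$ and $0<r<\tfrac14$, and argue by contradiction: if $\sup_{\partial B_r(x_0)}u$ (equivalently $\fint_{\partial B_r(x_0)}u$, by subharmonicity and Harnack applied in $\PosS$) were smaller than $c r$ for a suitable dimensional $c$, one may build a competitor forced to vanish in $B_{r/2}(x_0)$ and equal to $u$ outside $B_r(x_0)$; the gain in the measure term $|\PosS\cap B_{r/2}(x_0)|\gtrsim r^d$ beats the extra Dirichlet cost $\lesssim (\sup u/r)^2 r^d$, contradicting minimality. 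This yields $\sup_{B_r(x_0)}u\ge cr$. For the upper bound, one first shows $\fint_{B_r(x_0)}u\le Cr$ near $\GU$ by comparing $u$ with its harmonic replacement in $B_r(x_0)$ (the harmonic replacement lies above $u$, and its excess mass is paid for by the measure term only up to order $r^{d-1}$, which after Caccioppoli forces $\sup_{B_{r/2}(x_0)}u\le Cr$). Interior gradient estimates for the harmonic function $u$ in $\PosS$, together with $u=0$ on $\GU$ and the linear bound just obtained, give $|\nabla u|\le C$ in $B_{3/4}$.

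\emph{Step 3: finite perimeter and the Euler--Lagrange condition.} Combining the two-sided linear growth with $u$ being harmonic in $\PosS$ and vanishing on $\GU$, one gets uniform density estimates: there is $\delta>0$ with $\delta\le |\PosS\cap B_r(x_0)|/|B_r|\le 1-\delta$ for all $x_0\in\GU\cap B_{1/2}$, $0<r<\tfrac14$, and moreover $\mu$ is Ahlfors $(d-1)$-regular, $c r^{d-1}\le\mu(B_r(x_0))\le Cr^{d-1}$. The upper Ahlfors bound gives $\HausM(\GU\cap B_{3/4})\le C$ and, via the lower density estimate on $\ConS$ and De Giorgi's structure theorem, that $\PosS$ is a set of locally finite perimeter; one then splits $\GU=\RedGU\cup\SingU$ as in \eqref{EqnFBDecomposition}. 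A domain-variation (inner-variation) computation $\frac{d}{d\eps}\big|_{\eps=0}\En_{AC}^+(u(x+\eps X(x)))=0$ for $X\in C_c^\infty(B_1;\R^d)$ produces the stationarity identity, which on $\RedGU$ reads $|\nabla u|=1$ (equivalently, blow-ups at reduced points are the half-plane solutions $(x\cdot\nu)^+$); feeding this back into $\mu=\Delta u$ and using that $\HausM(\SingU)=0$ (as $\SingU$ has Hausdorff dimension $\le d-\Dac-1$, hence $\le d-5$) yields $\Delta u=\HausM|_{\GU}$.

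\emph{Main obstacle.} The delicate point is the optimal regularity in Step 2: the Lipschitz bound (linear growth \emph{from above}) is the genuine content of Alt--Caffarelli and requires carefully balancing the Dirichlet gain of the harmonic replacement against the measure loss; the other technical hurdle is upgrading $\HausM$-finiteness of $\GU$ to the exact identity $\Delta u=\HausM|_{\GU}$, which relies on the density/Ahlfors-regularity estimates and the structure theorem for finite-perimeter sets. The nondegeneracy, interior harmonicity, and the domain-variation identity are comparatively routine.
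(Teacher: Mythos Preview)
The paper does not prove this proposition; it is stated with attribution to Alt--Caffarelli \cite{AC} and used as a black box. Your outline is therefore being compared to the original \cite{AC} argument rather than to anything in this paper, and in broad strokes it follows that argument correctly.

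There is, however, one genuine circularity in your Step~3. To conclude $\Delta u=\HausM|_{\GU}$ you invoke $\HausM(\SingU)=0$ and justify it by the dimension bound $\DimH(\SingU)\le d-\Dac-1$. But that bound is Theorem~\ref{ThmRegOfFBAC}, which sits downstream of Proposition~\ref{PropBasicAC}: the Weiss monotonicity formula, the blow-up analysis of Proposition~\ref{PropBlowUpAC}, and the Federer-type dimension reduction all presuppose the Lipschitz estimate, nondegeneracy, and the measure identity you are trying to prove. So as written the argument is circular.

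The non-circular route (which is what Alt--Caffarelli actually do) stays entirely inside the density estimates you already obtained. From the two-sided density bound $\delta\le |\PosS\cap B_r(x_0)|/|B_r|\le 1-\delta$ at every $x_0\in\GU$, each free boundary point lies in the \emph{essential} boundary of $\PosS$. Since $\PosS$ has locally finite perimeter, Federer's structure theorem gives $\HausM(\partial^e\PosS\setminus\partial^*\PosS)=0$, hence $\HausM(\GU\setminus\partial^*\PosS)=0$. On $\partial^*\PosS$ the blow-up is a half-space and the inner variation (or a direct blow-up computation) yields $|\nabla u|=1$, so $\mu=\HausM\mres\partial^*\PosS=\HausM\mres\GU$. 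No appeal to $\Dac$ is needed. With this correction your sketch is sound.
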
 
Recall the set of minimizers $\mathcal{M}[\cdot,\cdot]$ defined in \eqref{EqnSetOfMin}, and the free boundary $\GU$ in \eqref{EqnFreeBoundary}. 

We have the following general lemma, about the convergence of minimizers:
\begin{lem}[\protect{\cite[Lemma 6.3]{V}}]\label{LemDiriConvAC}
Let $u_k\in \M[\En_{AC}^+,u_k]$ for $k\in \N$ be a sequence of minimizers uniformly bounded in $H^1(B_1)$. Then, up to a subsequence, $u_k$ converges strongly in $H_{\rm loc}^1(B_1)$ to some $u_\infty\in \M[\En_{AC}^+,u_\infty]$, and the sequence $\chi_{\{u_k > 0\}}$ converges to $\chi_{\{u_\infty > 0\}}$ strongly in $L^1_{\rm loc}(B_1)$ and pointwise almost everywhere in $B_1$. 
\end{lem}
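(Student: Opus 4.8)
The plan is to run the standard compactness/stability argument for minimizers of the one-phase Alt-Caffarelli functional; this is precisely \cite[Lemma 6.3]{V}, so one may simply cite it, but I sketch the proof. Throughout write $\EnAC(u;B_\rho):=\int_{B_\rho}(|\nabla u|^2+\chi_{\{u>0\}})$ for $B_\rho\Subset B_1$. Since $(u_k)$ is bounded in $H^1(B_1)$, after passing to a subsequence it converges weakly in $H^1_{\rm loc}(B_1)$, strongly in $L^2_{\rm loc}$, and pointwise a.e., to some $u_\infty\ge0$ (the sign passes to the limit). Fix $\rho$ with $B_\rho\Subset B_1$; at the end $\rho$ is allowed to range over a full-measure set and we diagonalize over an exhaustion of $B_1$. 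Weak $H^1$ lower semicontinuity gives $\int_{B_\rho}|\nabla u_\infty|^2\le\liminf_k\int_{B_\rho}|\nabla u_k|^2$, and since $u_\infty(x)>0$ forces $u_k(x)>0$ for large $k$, Fatou gives $|\{u_\infty>0\}\cap B_\rho|\le\liminf_k|\{u_k>0\}\cap B_\rho|$; in particular $\EnAC(u_\infty;B_\rho)\le\liminf_k\EnAC(u_k;B_\rho)$.

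Next I prove the matching upper bound $\limsup_k\EnAC(u_k;B_\rho)\le\EnAC(u_\infty;B_\rho)$. Fix $R\in(\rho,1)$ with $B_R\Subset B_1$ and, for $n\in\N$, split $B_R\setminus B_\rho$ into $n$ concentric annuli of width $(R-\rho)/n$; by the pigeonhole principle and $\int_{B_1}|\nabla u_k|^2\le C$, one such annulus $A_k=B_{s_k}\setminus B_{s_k-(R-\rho)/n}$ satisfies $\int_{A_k}|\nabla u_k|^2\le C/n$. With a cutoff $\psi$ equal to $1$ on $B_{s_k-(R-\rho)/n}$, vanishing outside $B_{s_k}$, and $|\nabla\psi|\le Cn/(R-\rho)$, the function $w_k:=\psi\,u_\infty+(1-\psi)u_k$ is an admissible competitor for $u_k$ in $B_R$; using $\{w_k>0\}\subset\{u_\infty>0\}\cup A_k$, $|A_k|\le C/n$, and $\int_{A_k}|\nabla\psi|^2|u_\infty-u_k|^2\le Cn^2\|u_k-u_\infty\|_{L^2(B_R)}^2$, minimality of $u_k$ yields, after subtracting the common contribution on $B_R\setminus B_{s_k}$,
$$
\EnAC(u_k;B_\rho)\le\EnAC(u_k;B_{s_k})\le\EnAC(u_\infty;B_R)+\frac{C}{n}+Cn^2\|u_k-u_\infty\|_{L^2(B_R)}^2 .
$$
Letting $k\to\infty$, then $n\to\infty$, then $R\downarrow\rho$ proves the claim for a.e. $\rho$. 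Combined with the first paragraph, $\EnAC(u_k;B_\rho)\to\EnAC(u_\infty;B_\rho)$; since the two lower-semicontinuous summands add to a convergent sequence, each converges separately: $\int_{B_\rho}|\nabla u_k|^2\to\int_{B_\rho}|\nabla u_\infty|^2$ and $|\{u_k>0\}\cap B_\rho|\to|\{u_\infty>0\}\cap B_\rho|$.

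The first convergence, together with weak convergence and the identity $\int_{B_\rho}|\nabla(u_k-u_\infty)|^2=\int_{B_\rho}|\nabla u_k|^2-\int_{B_\rho}|\nabla u_\infty|^2-2\int_{B_\rho}\nabla u_\infty\cdot\nabla(u_k-u_\infty)$, gives strong $H^1_{\rm loc}$ convergence. For the characteristic functions, write $\int_{B_\rho}|\chi_{\{u_k>0\}}-\chi_{\{u_\infty>0\}}|=|\{u_k>0\}\cap B_\rho|+|\{u_\infty>0\}\cap B_\rho|-2\int_{B_\rho}\chi_{\{u_k>0\}}\chi_{\{u_\infty>0\}}$ and note $\chi_{\{u_k>0\}}\chi_{\{u_\infty>0\}}\to\chi_{\{u_\infty>0\}}$ a.e. (dominated by $1$), so dominated convergence gives $L^1_{\rm loc}$ convergence, hence pointwise a.e. convergence along a further subsequence. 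Finally, $u_\infty\in\M[\EnAC,u_\infty]$: given admissible $v\ge0$ with $v-u_\infty\in H^1_0(B_\rho)$, the glued function $\psi v+(1-\psi)u_k$ (with $\psi=1$ on $B_{\rho-\eps}$, supported in $B_\rho$) competes with $u_k$, and passing to the limit using strong $H^1_{\rm loc}$ convergence (so $\int_{B_\rho\setminus B_{\rho-\eps}}|\nabla u_k|^2\to\int_{B_\rho\setminus B_{\rho-\eps}}|\nabla u_\infty|^2\to0$ as $\eps\to0$) and the boundary Poincaré bound $\int|\nabla\psi|^2|v-u_\infty|^2\le C\int_{B_\rho\setminus B_{\rho-\eps}}|\nabla(v-u_\infty)|^2\to0$ (recall $v-u_\infty$ has zero trace on $\partial B_\rho$) gives $\EnAC(u_\infty;B_\rho)\le\EnAC(v;B_\rho)$.

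The main obstacle is the upper bound step: before strong convergence is available, the transition competitor $w_k$ carries the Dirichlet energy of $u_k$ on the gluing annulus, which is not small for a fixed annulus, so one cannot pass to the limit naively; the pigeonhole choice of a good transition radius (equivalently a coarea/Fubini argument over radii) is exactly what forces the error to vanish in the iterated limit. Everything else — lower semicontinuity, the elementary fact that two liminf-lower-semicontinuous quantities whose sum converges must each converge, and the final gluing for minimality (routine once strong convergence and the boundary Poincaré estimate are at hand) — is soft.
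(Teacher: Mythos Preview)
Your sketch is correct and is precisely the standard compactness argument for one-phase minimizers recorded in \cite[Lemma~6.3]{V}; the paper does not give its own proof of this lemma but simply cites that reference, so there is nothing to compare beyond noting that you have reproduced the expected argument. One tiny imprecision: in the pigeonhole step you should also control $\int_{A_k}|\nabla u_\infty|^2$ (either pigeonhole $|\nabla u_k|^2+|\nabla u_\infty|^2$ simultaneously, or observe that this term is bounded by $\int_{B_R\setminus B_\rho}|\nabla u_\infty|^2$, which vanishes in the final limit $R\downarrow\rho$); with that adjustment your displayed inequality and the iterated limits go through as written.
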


For a function $u\in H^1(B_1)$, a point $x_0\in B_1$ and $r\in(0,1-|x_0|)$, the \textit{Weiss balanced energy} with center $x_0$ and at scale $r$ is defined as
\begin{equation}
\label{EqnWeissAC}
W(u,x_0,r):=\frac{1}{r^{d}}\int_{B_r(x_0)}(\Diri+\chi_{\PosS})-\frac{1}{r^{d+1}}\int_{\partial B_r(x_0)}u^2.
\end{equation} 
When there is no ambiguity, we omit $u$ or $x_0$ from the expression. 

Its usefulness lies in the following theorem:
\begin{thm}[Weiss \cite{W}]
\label{ThmWeissAC}
Suppose that $u\in\mathcal{M}[\En_{AC}^+,u]$ and $x_0\in\ConS\cap B_1$. 

For $0<s<r<1-|x_0|$, we have
$$
W(u,x_0,r)-W(u,x_0,s)=2\int_s^r\frac{1}{\rho^d}\int_{\partial B_\rho(x_0)}\left(\nabla u\cdot\nu-\frac{u}{|x|}\right)^2d\HausM d\rho.
$$
In particular, the function $r\mapsto W(r)$ is non-decreasing, and the following limit is well-defined
$$
W(u,x_0,0+):=\lim_{r\downarrow 0}W(u,x_0,r).
$$ 
If the function $r\mapsto W(r)$ is constant, then $u$ is $1$-homogeneous with respect to $x_0$.
\end{thm}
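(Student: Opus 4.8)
After a translation we may assume $x_0=0$; note that the restriction of $u$ to $B_{1-|x_0|}$ is again a minimizer of $\En_{AC}^+$. The plan is to show that $r\mapsto W(r):=W(u,0,r)$ is absolutely continuous on compact subintervals of $(0,1)$ with
\[
W'(r)=\frac{2}{r^{d}}\int_{\partial B_r}\Big(\nabla u\cdot\nu-\tfrac{u}{r}\Big)^{2}\,d\HausM\qquad\text{for a.e. }r .
\]
Integrating this over $[s,r]$ gives the stated identity (recall $|x|=r$ on $\partial B_r$), the non-negativity of the right-hand side gives monotonicity, and then $W(0+)$ is a well-defined real number since $W$ is non-decreasing and bounded below: writing $W(r)=W(u_r,0,1)$ with $u_r(x):=u(rx)/r$ one has $W(r)\ge-\int_{\partial B_1}u_r^{2}$, and $\|u_r\|_{L^\infty(\partial B_1)}\le C$ uniformly because $u(0)=0$ and $|\nabla u|\le C$ in $B_{3/4}$ by Proposition~\ref{PropBasicAC}. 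Finally, if $W$ is constant then $W'\equiv0$ forces $\nabla u\cdot\nu=u/|x|$ for a.e. $x$, i.e. $\partial_\rho\big(\rho^{-1}u(\rho\omega)\big)=0$, so by continuity of $u$ the function $u$ is $1$-homogeneous about $0$. Two integral identities, valid for a.e. $r$, feed the computation of $W'$.

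The first is the \emph{outer-variation identity}
\[
\int_{B_r}|\nabla u|^{2}=\int_{\partial B_r}u\,\partial_\nu u .
\]
I would derive it from $\Delta u=\HausM|_{\GU}$ (Proposition~\ref{PropBasicAC}): testing against $u\psi$ with a radial cutoff $\psi\uparrow\chi_{B_r}$, the right-hand side $\int_{\GU}u\psi\,d\HausM$ vanishes because $u\equiv0$ on $\GU$, while the left-hand side $-\int\psi|\nabla u|^{2}-\int u\,\nabla u\cdot\nabla\psi$ tends to $-\int_{B_r}|\nabla u|^{2}+\int_{\partial B_r}u\,\partial_\nu u$. (Equivalently, integrate by parts on the finite-perimeter set $\PosS\cap B_r$, using that $u$ is harmonic there and vanishes on $\partial^{*}\PosS$.)

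The second, and the crux, is the \emph{domain-variation (Pohozaev-type) identity}
\[
\int_{B_r}\big[(d-2)|\nabla u|^{2}+d\,\chi_{\PosS}\big]=r\int_{\partial B_r}\big[|\nabla u|^{2}+\chi_{\PosS}-2(\partial_\nu u)^{2}\big]\,d\HausM .
\]
To obtain it I would use the minimality of $u$ under inner variations along $X(x)=\eta(|x|)\,x$ with $\eta$ a smooth cutoff supported in $B_1$: since $\Phi_t=\mathrm{id}+tX$ fixes a neighborhood of $\partial B_1$, the competitor $u\circ\Phi_t^{-1}$ shares the boundary data of $u$, so $\frac{d}{dt}\big|_{t=0}\En_{AC}^+(u\circ\Phi_t^{-1})=0$. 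Differentiating the change-of-variables expression $\En_{AC}^+(u\circ\Phi_t^{-1})=\int_{B_1}\big(|D\Phi_t^{-T}\nabla u|^{2}+\chi_{\PosS}\big)|\det D\Phi_t|$ under the integral sign — legitimate because for each fixed $x$ the integrand, including the $\chi_{\PosS}$ part, is smooth in $t$ — gives the stationarity identity $\int_{B_1}\big[(|\nabla u|^{2}+\chi_{\PosS})\operatorname{div}X-2\,\partial_iu\,\partial_ju\,\partial_iX_j\big]=0$; inserting $X=\eta(|x|)x$ and letting $\eta\uparrow\chi_{B_r}$ (so that $\eta'$ concentrates, with total mass $-1$, on $\partial B_r$) yields the displayed identity at every $r$ that is a Lebesgue point of the relevant radial integrals.

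With both identities available the rest is bookkeeping. Writing $D(r)=\int_{B_r}(|\nabla u|^{2}+\chi_{\PosS})$ and $H(r)=\int_{\partial B_r}u^{2}$ — absolutely continuous, with $D'(r)=\int_{\partial B_r}(|\nabla u|^{2}+\chi_{\PosS})$ and $H'(r)=\tfrac{d-1}{r}H(r)+2\int_{\partial B_r}u\,\partial_\nu u$ — one expands $W(r)=r^{-d}D(r)-r^{-d-1}H(r)$, substitutes $D'(r)$ from the domain-variation identity and $\int_{B_r}|\nabla u|^{2}$ from the outer-variation identity, and after the cancellations arrives at the formula for $W'(r)$ above; integrating then finishes the proof. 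The main obstacle is the rigorous justification of the domain-variation identity — controlling the non-smooth term $\int\chi_{\PosS}$ under the inner variation and extracting its boundary contribution on $\partial B_r$ in the limit $\eta\uparrow\chi_{B_r}$, which is precisely where one uses that $\PosS$ has locally finite perimeter so that $\chi_{\PosS}$ admits an $\HausM$-a.e.-defined trace on almost every sphere — together with the measure-theoretic care required to differentiate $D$ and $H$ and to evaluate the boundary integrals at a.e. $r$; once the two identities hold for a.e. $r$, the remaining conclusions are routine.
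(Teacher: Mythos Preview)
The paper does not supply its own proof of this theorem; it is quoted as a known result of Weiss \cite{W} (see also \cite[Chapter~9]{V}) and used as a black box. Your outline --- derive the outer-variation identity from $\Delta u=\HausM|_{\GU}$ and $u|_{\GU}=0$, derive the Pohozaev-type identity from an inner variation $X=\eta(|x|)x$, and then combine them to compute $W'(r)$ --- is exactly the standard route taken in those references, and the algebra you indicate checks out. So there is nothing to compare: your proposal reconstructs the classical proof that the paper simply cites.
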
 

For $u\in\mathcal{M}[u]$ and $x_0\in\GU$, define the \textit{rescaled solution} at point $x_0$ and scale $r$ as
\begin{equation}
\label{EqnRescaledSolAC}
u_{x_0,r}(x):=\frac{1}{r}u(x_0+rx).
\end{equation}  
When there is no ambiguity, we omit the point $x_0$.

Theorem \ref{ThmWeissAC} allows the following blow-up analysis: 
\begin{prop}[\cite{W}]
\label{PropBlowUpAC}
Suppose that $u\in\mathcal{M}[\En_{AC}^+, u]$ and $x_0\in\GU\cap B_1$, then along a subsequence of $r_k\downarrow 0$, the rescaled solutions $u_{x_0,r_k}$ converge to some $u_{x_0}$ locally uniformly in $\R^d$.

The limit  $u_{x_0}$ is a $1$-homogeneous minimizer\footnote{Homogeneous minimizers are also referred to as minimizing cones.} for the one-phase Alt-Caffarelli functional $\En_{AC}^+(\cdot)$ in $\R^d$.
\end{prop}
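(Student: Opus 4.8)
The plan is to combine the compactness for minimizers (Lemma~\ref{LemDiriConvAC}), the quantitative estimates from Proposition~\ref{PropBasicAC}, and the Weiss monotonicity formula (Theorem~\ref{ThmWeissAC}). First I would check that the rescaled solutions $u_{x_0,r}$ defined in \eqref{EqnRescaledSolAC} are themselves minimizers of $\En_{AC}^+$ in large balls: minimality is scaling-invariant, since $\En_{AC}^+(u_{x_0,r};B_R)=r^{-d}\En_{AC}^+(u;B_{rR}(x_0))$, so $u_{x_0,r}\in\mathcal{M}[\En_{AC}^+,u_{x_0,r}]$ in $B_{(1-|x_0|)/r}$. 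Next, using Proposition~\ref{PropBasicAC} (Lipschitz bound $|\nabla u|\le C$ near $x_0$, together with $u(x_0)=0$ since $x_0\in\ConS$), I get $|u_{x_0,r}(x)|\le C|x|$ on any fixed ball $B_R$ for $r$ small, hence a uniform $H^1(B_R)$ bound. By Lemma~\ref{LemDiriConvAC} and a diagonal argument over an exhaustion $R=1,2,3,\dots$, along a subsequence $r_k\downarrow 0$ the $u_{x_0,r_k}$ converge strongly in $H^1_{\rm loc}(\R^d)$, and (by the uniform Lipschitz bound plus Arzel\`a--Ascoli) locally uniformly, to some limit $u_{x_0}$ which is a minimizer of $\En_{AC}^+$ in every $B_R$, hence in $\R^d$.

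The remaining point is $1$-homogeneity. Here I would invoke Theorem~\ref{ThmWeissAC}: for fixed $\rho>0$, $W(u_{x_0,r_k},0,\rho)=W(u,x_0,r_k\rho)\to W(u,x_0,0+)$ as $k\to\infty$, the limit existing by the monotonicity part of Theorem~\ref{ThmWeissAC}. On the other hand, the strong $H^1_{\rm loc}$ convergence together with the $L^1_{\rm loc}$-convergence $\chi_{\{u_{x_0,r_k}>0\}}\to\chi_{\{u_{x_0}>0\}}$ (again from Lemma~\ref{LemDiriConvAC}) lets one pass to the limit in every term of the Weiss energy \eqref{EqnWeissAC}, giving $W(u_{x_0,r_k},0,\rho)\to W(u_{x_0},0,\rho)$. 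Therefore $W(u_{x_0},0,\rho)=W(u,x_0,0+)$ is independent of $\rho$; applying the rigidity statement in Theorem~\ref{ThmWeissAC} to the minimizer $u_{x_0}$ shows $u_{x_0}$ is $1$-homogeneous about the origin.

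The main obstacle I expect is the passage to the limit in the Weiss functional, specifically the boundary term $\rho^{-d-1}\int_{\partial B_\rho}u^2$ and the measure term $\rho^{-d}\int_{B_\rho}\chi_{\PosS}$: the first requires that traces on $\partial B_\rho$ pass to the limit, which follows from $H^1$ convergence for a.e.\ radius $\rho$ (and this suffices, since one only needs the Weiss energy constant along a sequence of radii accumulating at $0$, or one argues for a.e.\ $\rho$ and uses continuity), while the second is exactly the content of the $L^1_{\rm loc}$ convergence of characteristic functions in Lemma~\ref{LemDiriConvAC}, which crucially uses nondegeneracy (Proposition~\ref{PropBasicAC}) to prevent the positivity set from shrinking in the limit. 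Once these two convergences are in hand, the argument is routine. One should also note that $u_{x_0}\not\equiv 0$: nondegeneracy gives $\sup_{B_1}u_{x_0,r}\ge c$ uniformly, so the same holds for $u_{x_0}$, and in particular $0\in\Gamma(u_{x_0})$.
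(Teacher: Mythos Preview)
The paper does not supply its own proof of this proposition: it is quoted from Weiss~\cite{W} (see also the exposition in~\cite{V}) and stated without argument. Your proposal follows exactly the standard route used in those references---scaling invariance of minimality, uniform Lipschitz bounds from Proposition~\ref{PropBasicAC} to extract a locally uniform/strong $H^1_{\rm loc}$ limit via Lemma~\ref{LemDiriConvAC}, and then the Weiss monotonicity formula (Theorem~\ref{ThmWeissAC}) to deduce that $W(u_{x_0},0,\rho)$ is constant and hence $u_{x_0}$ is $1$-homogeneous. This is correct.

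One small remark on the point you flag as the main obstacle: once you know $W(u_{x_0},0,\rho)=W(u,x_0,0+)$ for \emph{a.e.}\ $\rho$, you do not need to argue separately that this suffices for rigidity. Since $u_{x_0}$ is itself a minimizer with $0\in\{u_{x_0}=0\}$, Theorem~\ref{ThmWeissAC} applies to it and $\rho\mapsto W(u_{x_0},0,\rho)$ is nondecreasing; a monotone function that is constant a.e.\ is constant everywhere, and then the rigidity clause gives $1$-homogeneity on the nose.
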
 

If we blow up a homogeneous minimizer, we have the following `cone-splitting' result:
\begin{lem}[See, e.g., \protect{\cite[Lemma 10.9]{V}}]
\label{LemConeSplittingAC}
Suppose that $u$ is a minimizing cone for the Alt-Caffarelli functional $\EnAC(\cdot)$ in $\R^d$ with $e_1\in\GU$. 

If $v$ is a subsequential limit of the rescaled solutions $u_{e_1,r}$ as in Proposition \ref{PropBlowUpAC}, then 
$
\partial_1v\equiv 0
$
in $\R^d$. 
\end{lem}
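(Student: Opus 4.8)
The plan is to combine the Weiss monotonicity formula (Theorem~\ref{ThmWeissAC}) with the translation invariance of the energy in order to show that the blow-up $v$ of a minimizing cone $u$ at $e_1\in\GU$ is invariant in the $e_1$-direction. First I would record the basic structure: since $u$ is $1$-homogeneous with respect to the origin and $e_1\in\GU\cap\partial B_1$, Proposition~\ref{PropBlowUpAC} applies at $x_0=e_1$ and produces a $1$-homogeneous minimizer $v$ (with respect to $e_1$, which after recentering we may take to be the origin) as a locally uniform subsequential limit of $u_{e_1,r_k}$. The goal is $\partial_1 v\equiv0$.

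The key computation is to track the Weiss energy $W(u,x_0,r)$ as $x_0$ moves along the ray $\{se_1 : s>0\}$. For $s\in(0,1)$, the point $se_1$ lies on $\GU$ because $u$ is homogeneous and $e_1\in\GU$; moreover, by homogeneity (scaling by $1/s$ centered at the origin), one checks the exact identity
$$
W(u,se_1,r) = W(u_{0,1/s}, e_1, r/s) = W(u, e_1, r/s),
$$
using that $u_{0,1/s}=u$ by $1$-homogeneity. Hence $W(u,se_1,0^+)=\lim_{r\downarrow0}W(u,e_1,r/s)=W(u,e_1,0^+)$, so the density $\Theta(x_0):=W(u,x_0,0^+)$ is constant along the ray $\{se_1\}$, in particular $\Theta(0)=W(u,0,0^+)$, since $W(u,0,r)$ is itself constant in $r$ by $1$-homogeneity of $u$ about the origin and $se_1\to 0$ as $s\downarrow 0$. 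Actually the cleanest route: by homogeneity $W(u,0,\cdot)$ is constant, call it $m$, and the identity above gives $W(u,se_1,0^+)=W(u,e_1,0^+)=:m'$ for every $s>0$; letting $s\downarrow 0$ and using upper semicontinuity of $x_0\mapsto W(u,x_0,0^+)$ (a standard consequence of Theorem~\ref{ThmWeissAC}, since $W(u,x_0,0^+)=\inf_r W(u,x_0,r)$ and each $W(u,\cdot,r)$ is continuous) together with $\Theta\le m$ on $B_1$, one obtains $m'\ge \Theta(0)=m\ge m'$, hence $m'=m$.

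Now I would pass to the blow-up $v$ at $e_1$. By the standard consequence of the Weiss formula, $v$ is $1$-homogeneous about the origin with $W(v,0,r)\equiv m'=m$ for all $r$; equivalently $W(v,0,0^+)=m$. Because the rescalings $u_{e_1,r_k}$ converge strongly in $H^1_{\mathrm{loc}}$ and $\chi_{\{u_{e_1,r_k}>0\}}\to\chi_{\{v>0\}}$ in $L^1_{\mathrm{loc}}$ (Lemma~\ref{LemDiriConvAC} applied to the rescaled sequence), the Weiss energies pass to the limit: for any fixed $y$ on the $e_1$-axis, $W(u_{e_1,r_k}, t e_1, \rho) = W(u, e_1 + r_k t e_1, r_k\rho) = W(u, (1+r_k t)e_1, r_k \rho)$, and by the ray-constancy established above this equals $W(u, e_1, \tfrac{r_k\rho}{1+r_k t}) \to W(u,e_1,0^+)=m$ as $k\to\infty$. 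Therefore $W(v, te_1, 0^+)=m$ for every $t\in\R$, i.e.\ the Weiss density of $v$ is equal to its value at the origin at every point of the $e_1$-axis. A point $x_0$ where $W(v,x_0,0^+)$ equals the maximal density forces, via the Weiss monotonicity formula once more, that $v$ is $1$-homogeneous with respect to $x_0$; being $1$-homogeneous with respect to two distinct points $0$ and $te_1$ on a line forces translation invariance along that line, so $\partial_1 v\equiv0$ in $\R^d$. (Alternatively, one invokes the general GMT fact that the set of points of maximal density of a $1$-homogeneous minimizer is a linear subspace along which the minimizer is invariant — the ``spine'' — and the computation above shows the $e_1$-axis lies in this spine.)

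The main obstacle is the bookkeeping in the limit passage: justifying that $W(u_{e_1,r_k}, te_1, \rho)\to W(v,te_1,\rho)$ requires the strong $H^1_{\mathrm{loc}}$ convergence and the $L^1_{\mathrm{loc}}$ convergence of the positivity sets from Lemma~\ref{LemDiriConvAC}, plus care that the boundary integral term $\frac{1}{\rho^{d+1}}\int_{\partial B_\rho} u^2$ converges — this follows from the locally uniform convergence in Proposition~\ref{PropBlowUpAC} and Fubini to pick a.e.\ good radius $\rho$. The other delicate point is the upper semicontinuity / constancy of the density along the ray; this is entirely standard once Theorem~\ref{ThmWeissAC} is in hand, but it is where the homogeneity of $u$ (which makes $W(u,0,\cdot)$ constant and makes the ray identity exact) is essential, so one must be careful not to lose it. Everything else — homogeneity of $v$, the rigidity ``constant Weiss $\Rightarrow$ homogeneous'' — is quoted directly from Theorem~\ref{ThmWeissAC} and Proposition~\ref{PropBlowUpAC}.
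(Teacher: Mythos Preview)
The paper does not supply its own proof of this lemma; it simply cites \cite[Lemma~10.9]{V}. Your argument --- exploiting the identity $W(u,se_1,r)=W(u,e_1,r/s)$ coming from $1$-homogeneity, passing to the limit to obtain $W(v,te_1,\rho)=m'$ for every $t\in\R$ and $\rho>0$, and concluding that $v$ is $1$-homogeneous about every point of the $e_1$-axis --- is the standard density/spine proof and is correct.

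One correction: your claim $m'=m$, i.e.\ $W(u,e_1,0^+)=W(u,0,0^+)$, is \emph{false} in general. If $0\in\SingU$ while $e_1\in\RedGU$ (which occurs, for instance, for any non-flat minimizing cone in $\R^{\Dac+1}$, where $\SingU=\{0\}$ by Theorem~\ref{ThmRegOfFBAC}), then $m'$ is the half-space density and $m'<m$ strictly. Your upper-semicontinuity step only gives $m'\le m$; the inequality ``$m'\ge\Theta(0)$'' you wrote does not follow. Fortunately this is harmless: nothing downstream uses $m'=m$. Your computation already shows $W(v,te_1,\rho)=m'$ for all $t$ and all $\rho$, so $r\mapsto W(v,te_1,r)$ is constant and Theorem~\ref{ThmWeissAC} gives $1$-homogeneity of $v$ about $te_1$ directly --- no appeal to ``maximal density'' is needed. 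You can simply delete the paragraph claiming $m'=m$ and replace every later occurrence of $m$ by $m'$.
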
 
Here and in the remaining of the article, the unit vector along the $j$th coordinate axis is denoted by $e_j$ for $j=1,2,\dots,d$. Along a direction $e\in\Sph$, the directional derivative of a function $f$ is denoted by $\partial_ef$. When $e=e_j$, we simplify the notation to be $\partial_jf$.

The previous lemma is useful when combined with the following:
\begin{lem}[See, e.g., \protect{\cite[Lemma 10.10]{V}}]
\label{LemInduceInLowerDimensionAC}
Suppose that $u$ minimizes the energy $\EnAC(\cdot)$ in $\R^d$, and satisfies
$
\partial_1u\equiv 0 \text{ in }\R^d.
$

Define $\bar{u}:\R^{d-1}\to\R$ by $\bar{u}(x')=u(x',0)$, then $\bar{u}$ minimizers $\EnAC(\cdot)$ in $\R^{d-1}$.
\end{lem}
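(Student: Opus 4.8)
The statement to prove is Lemma~\ref{LemInduceInLowerDimensionAC}: if $u$ minimizes $\EnAC(\cdot)$ in $\R^d$ with $\partial_1 u \equiv 0$, then $\bar u(x') = u(x',0)$ minimizes $\EnAC(\cdot)$ in $\R^{d-1}$.

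\textbf{Plan of proof.} The statement is local, so it suffices to show that for every ball $B_R' \subset \R^{d-1}$ and every competitor $w \in H^1(B_R')$ with $w - \bar u \in H^1_0(B_R')$, one has $\EnAC(\bar u; B_R') \le \EnAC(w; B_R')$. The idea is to lift $w$ back to $\R^d$ and compare against $u$ on the cylinder $Q := B_R' \times (-T,T)$ for large $T$, exploiting the translation invariance in the $x_1$ direction. The main point to handle carefully is that a naive lift $w(x')$ does not agree with $u$ on $\partial Q$ (it matches on the lateral boundary $\partial B_R' \times (-T,T)$ but not on the caps $B_R' \times \{\pm T\}$), so one must interpolate near the caps and then send $T \to \infty$ so that the interpolation error is negligible compared to the gain.

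\textbf{Key steps.} First I would suppose, for contradiction, that $\bar u$ is not a minimizer, so there is $w$ as above with $\EnAC(\bar u; B_R') - \EnAC(w; B_R') =: 3\delta > 0$; without loss of generality $w = \bar u$ near $\partial B_R'$ (by cutting off the difference $w - \bar u$, which only decreases the deficit by an amount controllable, or simply assume this of the given competitor). Second, for a height $T$ and a transition length $1$, define the lift $\tilde w$ on $Q = B_R' \times (-T-1, T+1)$ by $\tilde w(x', x_d) = w(x')$ for $|x_d| \le T$ and, on the transition regions $T \le |x_d| \le T+1$, linearly interpolate in $x_d$ between $w(x')$ at $|x_d| = T$ and $u(x', x_d) = \bar u(x')$ at $|x_d| = T+1$; since $u$ is independent of $x_d$ here, $\tilde w(x',x_d) = \bar u(x')$ throughout the transition, so in fact $\tilde w(x',x_d) = w(x')$ for $|x_d|\le T$ and $= \bar u(x')$ for $|x_d|\ge T$ — no interpolation error at all, and $\tilde w - u \in H^1_0(Q)$ because $w - \bar u \in H^1_0(B_R')$ and both functions are $x_d$-independent. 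Third, compute: by Fubini and the $x_d$-independence of $u$,
\begin{equation}
\EnAC(u; Q) = 2(T+1)\,\EnAC(\bar u; B_R'), \qquad \EnAC(\tilde w; Q) = 2T\,\EnAC(w; B_R') + 2\,\EnAC(\bar u; B_R').
\end{equation}
Fourth, subtract:
\begin{equation}
\EnAC(u;Q) - \EnAC(\tilde w; Q) = 2T\big(\EnAC(\bar u; B_R') - \EnAC(w; B_R')\big) = 6T\delta > 0,
\end{equation}
which contradicts the minimality of $u$ in $\R^d$ as soon as $T > 0$. Hence no such $w$ exists and $\bar u$ is a minimizer.

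\textbf{Main obstacle.} The only subtlety is the admissibility of $\tilde w$: one needs $\tilde w \in H^1_0(Q) + u$, i.e. $\tilde w - u$ vanishes on all of $\partial Q$ in the trace sense, including the caps $B_R' \times \{\pm(T+1)\}$. This is exactly why the lift must equal $\bar u$ (not $w$) near the caps; since $u = \bar u$ there and $\bar u$ is $x_d$-independent while $w - \bar u \in H^1_0(B_R')$, the difference $\tilde w - u$ is supported in $B_R'' \times (-T,T)$ for some $B_R'' \Subset B_R'$ (using the reduction that $w = \bar u$ near $\partial B_R'$), so it lies in $H^1_0(Q)$ with no boundary issues. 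One should also note that $u$ restricted to $\R^{d-1}$ is indeed in $H^1_{\rm loc}$ and $\bar u \ge 0$, which is immediate, and that $\chi_{\{\tilde w > 0\}} = \chi_{\{w>0\}}$ on the slab and $\chi_{\{\bar u > 0\}}$ on the transition, consistent with the Fubini computation above. This completes the argument.
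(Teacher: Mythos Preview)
Your plan is the standard one and matches the paper's proof of the analogous Alt--Phillips statement (Lemma~\ref{LemAppLowerDimension}): lift a putative better competitor to a long cylinder, interpolate near the caps to make it admissible, and observe that the interpolation cost is $O(1)$ while the energy gain is $O(T)$.

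The execution in your ``Key steps'', however, contains a genuine error. After setting up the linear interpolation between $w(x')$ at $|x_d|=T$ and $\bar u(x')$ at $|x_d|=T+1$, you assert that ``since $u$ is independent of $x_d$ here, $\tilde w(x',x_d)=\bar u(x')$ throughout the transition'' and conclude there is ``no interpolation error at all''. This is a non sequitur: the interpolation between $w$ and $\bar u$ equals $\bar u$ only where $w=\bar u$. The piecewise function you then work with, $\tilde w=w$ for $|x_d|\le T$ and $\tilde w=\bar u$ for $|x_d|>T$, has a jump at $|x_d|=T$ and hence is not in $H^1(Q)$; the claim $\tilde w-u\in H^1_0(Q)$ fails, and your energy identity $\EnAC(\tilde w;Q)=2T\,\EnAC(w;B_R')+2\,\EnAC(\bar u;B_R')$ is not valid. (Relatedly, the assertion that ``both functions are $x_d$-independent'' is false for $\tilde w$.)

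The fix is exactly what your own ``Plan of proof'' describes. Keep the genuine interpolation (or a cut-off $\eta_T$ as in the paper's appendix, setting $\tilde w(x',x_d)=\eta_T(x_d)w(x')+(1-\eta_T(x_d))\bar u(x')$), and bound the energy on the two transition slabs by a constant $I$ independent of $T$; then
\[
\EnAC(u;Q)-\EnAC(\tilde w;Q)\ \ge\ 2T\bigl(\EnAC(\bar u;B_R')-\EnAC(w;B_R')\bigr)-I\ \ge\ 2T\delta-I\ >\ 0
\]
for $T$ sufficiently large, contradicting minimality of $u$. In particular, your conclusion that any $T>0$ already yields a contradiction is incorrect; one must take $T$ large.
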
 

With  Proposition \ref{PropBlowUpAC}, we  give the definition of the decomposition $\GU=\RedGU\cup\SingU$ as mentioned in the Introduction. 
\begin{defi}
\label{DefRegSingAC}
For $u\in\mathcal{M}[\EnAC, u]$ with $x_0\in\GU$, we say that $x_0$ is a \textit{regular point}, and write
$$
x_0\in\RedGU,
$$
 if $u_{x_0,r}$ converges, along a subsequence, to a rotation of $(x_d)^+$. 
 
 Otherwise, we say that $x_0$ is a \textit{singular point}, and write
 $$
 x_0\in\SingU.
 $$
\end{defi} 

Being in $\RedGU$ is an `open condition', as described in the following:
\begin{lem}[\cite{D1}]
\label{LemEpsRegAC}
Suppose that $u\in\mathcal{M}[\EnAC, u]$ satisfies 
$$
|u-(x_d)^+|<\eps \text{ in }B_1.
$$
There is a dimensional constant $\bar{\eps}>0$ such that if $\eps<\bar\eps$, then
$$
\GU\cap B_{1/2}\subset\RedGU.
$$
\end{lem}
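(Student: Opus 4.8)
The statement to prove is Lemma~\ref{LemEpsRegAC} ($\varepsilon$-regularity for the one-phase Alt-Caffarelli functional, due to De Silva). Let me think about how I'd prove this.

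The plan is to argue by contradiction and compactness, combined with a linearization/boundary Harnack argument. Suppose the statement fails. Then there is a sequence of minimizers $u_k \in \mathcal{M}[\EnAC, u_k]$ with $\|u_k - (x_d)^+\|_{L^\infty(B_1)} \le \varepsilon_k \downarrow 0$, but with a free boundary point $x_k \in \GU_k \cap B_{1/2}$ that is singular, i.e., $x_k \notin \RedGU_k$. By the uniform-bound-and-convergence lemma (Lemma~\ref{LemDiriConvAC}), up to a subsequence $u_k \to u_\infty$ strongly in $H^1_{\rm loc}$ and $\chi_{\{u_k>0\}} \to \chi_{\{u_\infty>0\}}$ in $L^1_{\rm loc}$, and $u_\infty$ is itself a minimizer. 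Since $\|u_k - (x_d)^+\|_{L^\infty} \to 0$, the limit is $u_\infty = (x_d)^+$. Also $x_k \to x_\infty \in \overline{B_{1/2}}$ with $x_\infty \in \GU_\infty = \{x_d = 0\}$.

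First I would upgrade the convergence: using the gradient bound and the nondegeneracy from Proposition~\ref{PropBasicAC}, the convergence $u_k \to (x_d)^+$ is in fact uniform on $\overline{B_{3/4}}$, and more importantly the free boundaries $\GU_k$ converge in Hausdorff distance to $\{x_d = 0\} \cap B_{3/4}$ (nondegeneracy prevents the free boundary from collapsing; the $L^\infty$ bound plus harmonicity in the positive set prevents it from fattening). So $\GU_k \cap B_{1/2}$ lies in a thin neighborhood $\{|x_d| < \delta_k\}$ with $\delta_k \to 0$. The core of the argument is then the flatness-implies-$C^{1,\alpha}$ mechanism: once the free boundary is trapped in a thin slab and $u_k$ is close to the half-plane solution, an improvement-of-flatness iteration (the heart of De Silva's \cite{D1} partial Harnack / linearization argument) shows that at every free boundary point $y \in \GU_k \cap B_{1/2}$, the rescalings $u_{k,y,r}$ converge, as $r \to 0$, to a rotation of $(x_d)^+$ — i.e., $y$ is a regular point. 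This contradicts $x_k \in \SingU_k$.

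Concretely, the improvement of flatness says: there are dimensional $\bar\varepsilon_0 > 0$ and $\rho \in (0,1)$ such that if $v$ is a minimizer with $(x_d - \varepsilon)^+ \le v \le (x_d + \varepsilon)^+$ in $B_1$, $0 \in \GU_v$, and $\varepsilon \le \bar\varepsilon_0$, then there is a unit vector $\nu$ with $|\nu - e_d| \le C\varepsilon$ such that $(x\cdot\nu - \tfrac{\varepsilon}{2}\rho)^+ \le v \le (x\cdot\nu + \tfrac{\varepsilon}{2}\rho)^+$ in $B_\rho$. Iterating this at a free boundary point gives a geometric decay of the flatness, hence the blow-up at that point is a half-plane solution, hence the point is regular, and moreover $\GU_v$ is $C^{1,\alpha}$ near it. So choosing $\bar\varepsilon$ small enough that the hypothesis $\|u - (x_d)^+\|_{L^\infty(B_1)} < \bar\varepsilon$ (combined with the Hausdorff convergence of free boundaries giving two-sided trapping at small scale around any free boundary point in $B_{1/2}$) allows us to enter this iteration at scale, say, $3/4$, we conclude $\GU \cap B_{1/2} \subset \RedGU$.

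The main obstacle — the genuinely nontrivial ingredient — is the improvement-of-flatness step itself, i.e., De Silva's linearization: showing that the flatness $\varepsilon$ improves by a fixed factor at a smaller scale. This requires a compactness argument on the normalized defects $(v - (x_d)^+)/\varepsilon$, identifying the limit as a solution of a linearized (Neumann-type) problem $\Delta w = 0$ in $\{x_d > 0\}$, $\partial_{x_d} w = 0$ on $\{x_d = 0\}$, and using $C^{1,\alpha}$ estimates for that limit together with a Harnack-type control to transfer regularity back to $v$; one also needs the nondegeneracy and the one-sided bounds (from subharmonicity of $v$ and the $|\nabla u| = 1$ condition) to set up the trapping in the first place. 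Since Lemma~\ref{LemEpsRegAC} is quoted from \cite{D1}, in practice I would cite that reference for the improvement-of-flatness and only carry out the compactness reduction (contradiction + Lemma~\ref{LemDiriConvAC} + Hausdorff convergence of free boundaries) in detail, then invoke the iteration to conclude.
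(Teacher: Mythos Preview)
The paper does not give its own proof of this lemma: it is stated with the citation \cite{D1} and used as a black box. Your outline via De Silva's improvement-of-flatness iteration is exactly the argument in \cite{D1}, so you have correctly identified both the source and the method.

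One remark on your write-up: the opening contradiction-and-compactness paragraph is superfluous and, as written, mildly circular. Knowing only that $u_k\to(x_d)^+$ and $x_k\to x_\infty\in\{x_d=0\}$ does not by itself contradict $x_k\in\mathrm{Sing}(u_k)$; to pass singularity to the limit you would invoke something like Proposition~\ref{PropStabSingAC}, whose proof in the paper \emph{uses} Lemma~\ref{LemEpsRegAC}. The real content, as you correctly note later, is the improvement-of-flatness step and its iteration, which is a direct argument requiring no contradiction. So the clean structure is: (i) use nondegeneracy (Proposition~\ref{PropBasicAC}) and the $L^\infty$ closeness to $(x_d)^+$ to obtain the two-sided trapping $(x_d-C\eps)^+\le u\le(x_d+C\eps)^+$ near any free boundary point in $B_{1/2}$; (ii) apply the improvement-of-flatness lemma and iterate to get geometric decay of flatness at every such point; (iii) conclude that every blow-up is a half-space, hence the point is regular. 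Your second and third paragraphs already say this; just drop the contradiction framing.
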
 
As a result, the singular part $\SingU$ is stable:
\begin{prop}
\label{PropStabSingAC}
For a sequence $u_k\in\mathcal{M}[\EnAC, u_k]$, suppose we have
$u_k\to u$ locally uniformly in $B_1.$

If $x_k\in\Gamma(u_k)$ for each $k$ and $x_k\to x_\infty\in B_1$, then $x_\infty\in\GU$. 

If we further assume that $x_k\in\mathrm{Sing}(u_k)$ for each $k$, then $x_\infty\in\SingU.$ 
\end{prop}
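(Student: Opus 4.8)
The plan is to prove the two assertions of Proposition~\ref{PropStabSingAC} in order, using the convergence results already available. For the first claim, that $x_\infty \in \GU$, I would argue by contradiction on both sides. If $x_\infty \in \PosS$ (the open positivity set of the limit $u$), then $u \geq c > 0$ on a small ball $B_\rho(x_\infty)$; by local uniform convergence $u_k > 0$ on $B_{\rho/2}(x_\infty)$ for large $k$, so eventually $x_k \notin \Gamma(u_k)$, a contradiction. If instead $x_\infty \in \{u = 0\}^\circ$, I would use nondegeneracy from Proposition~\ref{PropBasicAC}: were there a free boundary point $x_k$ of $u_k$ close to $x_\infty$, then $\sup_{B_r(x_k)} u_k \geq cr$ for $r$ up to a fixed scale, and passing to the limit gives $\sup_{B_r(x_\infty)} u \geq cr > 0$, contradicting $u \equiv 0$ near $x_\infty$. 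Hence $x_\infty$ lies on $\partial\PosS = \GU$. (Alternatively one can quote Lemma~\ref{LemDiriConvAC} to know $\chi_{\{u_k>0\}} \to \chi_{\{u>0\}}$ in $L^1_{\rm loc}$, but the direct nondegeneracy argument is cleaner and self-contained.)

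For the second claim I would argue again by contradiction: suppose $x_k \in \SingUT$ for each $k$, but $x_\infty \in \RedGU$. By Definition~\ref{DefRegSingAC}, some blow-up of $u$ at $x_\infty$ is a rotation of $(x_d)^+$; combined with Theorem~\ref{ThmWeissAC} (monotonicity and uniqueness of tangent once $W(\cdot,x_\infty,0+)$ equals that of the half-plane solution, which is the minimal Weiss density), the full rescalings $u_{x_\infty,r}$ converge as $r \downarrow 0$ to that half-plane solution. Fix $r>0$ small enough that $\|u_{x_\infty,r} - (x_d)^+\|_{L^\infty(B_1)} < \bar\eps/2$, with $\bar\eps$ the $\varepsilon$-regularity constant from Lemma~\ref{LemEpsRegAC}. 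Now I would transfer this closeness to $u_k$: since $u_k \to u$ locally uniformly and $x_k \to x_\infty$, the rescalings $(u_k)_{x_k, r}(x) = \tfrac1r u_k(x_k + rx)$ converge uniformly on $B_1$ to $u_{x_\infty,r}$ (using also $|u_k - u| \to 0$ uniformly near $x_\infty$ and local uniform continuity of $u$), so for large $k$ we have $\|(u_k)_{x_k,r} - (x_d)^+\|_{L^\infty(B_1)} < \bar\eps$. Since $(u_k)_{x_k,r}$ is again a minimizer of $\EnAC$ (scaling invariance), Lemma~\ref{LemEpsRegAC} yields $\Gamma((u_k)_{x_k,r}) \cap B_{1/2} \subset \Gamma^*((u_k)_{x_k,r})$; but $0$ is a free boundary point of $(u_k)_{x_k,r}$ corresponding to $x_k$, and regularity is scale-invariant, so $x_k \in \RedGU[u_k]$, contradicting $x_k \in \SingUT$. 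Therefore $x_\infty \in \SingU$.

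I expect the main obstacle to be the justification of the uniform convergence of the rescalings $(u_k)_{x_k,r} \to u_{x_\infty,r}$ on $B_1$: this requires combining the local uniform convergence $u_k \to u$ with the shift $x_k \to x_\infty$ and a modulus of continuity for $u$ near $x_\infty$. The continuity is not an issue — by Proposition~\ref{PropBasicAC} the $u_k$ are uniformly Lipschitz near $x_\infty$ (hence so is $u$), which makes the estimate $|u_k(x_k+rx) - u(x_\infty+rx)| \leq |u_k(x_k+rx) - u(x_k+rx)| + |u(x_k+rx) - u(x_\infty+rx)| \leq \|u_k - u\|_{L^\infty} + C|x_k - x_\infty|$ immediate. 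A secondary subtlety is making sure the fixed scale $r$ can be chosen \emph{before} sending $k \to \infty$; this is fine because $r$ depends only on $u$, $x_\infty$, and the dimensional constant $\bar\eps$. The rest is bookkeeping with the scaling invariance of minimality (Proposition-level fact built into $\EnAC$) and of the regular/singular dichotomy.
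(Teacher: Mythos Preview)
Your proposal is correct and follows essentially the same approach as the paper: nondegeneracy (Proposition~\ref{PropBasicAC}) for the first claim, and a contradiction via the $\eps$-regularity Lemma~\ref{LemEpsRegAC} for the second. The paper's version is slightly more streamlined---it shows $u(x_\infty)=0$ and $\sup_{B_r(x_\infty)}u>0$ directly rather than by a two-sided contradiction, and it does not invoke uniqueness of blow-ups (only the existence of \emph{one} scale $r$ at which $u$ is $\bar\eps$-flat, which follows immediately from Definition~\ref{DefRegSingAC})---but the substance is identical.
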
 

\begin{proof}
With $u_k(x_k)=0$, locally uniform convergence gives $u(x_\infty)=0$. 
Meanwhile, for each $r>0$, we have
$$
\sup_{B_r(x_\infty)}u\ge\sup_{B_r(x_\infty)}u_k-\sup_{B_r(x_\infty)}|u-u_k|.
$$
With $x_k\to x_\infty$ and locally uniform convergence of $u_k\to u$, we have
$$
\sup_{B_r(x_\infty)}u\ge\sup_{B_{r/2}(x_k)}u_k-cr/2\ge cr/2>0,
$$
for $k$ large enough, where $c$ is the constant from Proposition \ref{PropBasicAC}. This being true for all $r>0$, we conclude
$
x_\infty\in\GU.
$

Assume now that $x_k\in\mathrm{Sing}(u_k)$ for each $k$, with $x_k \to x_\infty\in \Gamma(u)$. We want to show that $x_\infty\in\SingU$.

Up to a translation, let us assume $x_\infty = 0$, and suppose that the statement does not hold. That is, $0\in\RedGU$ and, up to a rotation, 
$$
|u-(x\cdot e_d)^+|<\frac14\bar\eps r \hem \text{ in }B_r 
$$
for some $r>0$ small, where $\bar{\eps}$ is the constant from Lemma \ref{LemEpsRegAC}.

With $x_k\to 0$ and $u_k\to u$, we have
$$
|u_k-((x-x_k)\cdot e_d)^+|<\frac12\bar\eps r \text{ in }B_{r/2}(x_k) 
$$
for large $k$. Applying Lemma \ref{LemEpsRegAC}, we conclude 
$
x_k\in\Gamma^*(u_k),
$
a contradiction.
\end{proof} 

We collect the main results on the regularity of the free boundary:
\begin{thm}[See, e.g., \protect{\cite[Theorem 1.2 and Proposition 10.13]{V}}]
\label{ThmRegOfFBAC}
For $u\in\mathcal{M}[\EnAC, u]$ in $B_1\subset\R^d$, the regular part of the free boundary $\RedGU$ is relatively open in $\GU$, and is locally a smooth hypersurface. 

For the singular part $\SingU$, we have 
\begin{enumerate}
\item{ If $d\le\Dac$, then $\SingU=\emptyset$;}
\item{ If $d=\Dac+1$, then $\SingU$ is locally discrete;}
\item{ For $d\ge\Dac+2$, then $\DimH(\SingU)\le d-\Dac-1$.}
\end{enumerate}
\end{thm}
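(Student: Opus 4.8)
The plan is to treat the regular part and the singular part separately, assembling the ingredients already recorded above; since the result is classical (Alt-Caffarelli \cite{AC}, De Silva \cite{D1}, Weiss \cite{W}, Velichkov \cite{V}), the task is mostly one of organization. \textbf{Regular part.} First I would show $\RedGU$ is relatively open in $\GU$: if $x_0\in\RedGU$, then by Definition~\ref{DefRegSingAC} together with Proposition~\ref{PropBlowUpAC} there is $r>0$ with $u_{x_0,r}$ as close as we wish, in $B_1$, to a rotation of $(x_d)^+$, and after performing that rotation Lemma~\ref{LemEpsRegAC} yields $\GU\cap B_{r/2}(x_0)\subset\RedGU$. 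Smoothness then comes from the higher regularity theory for the one-phase problem: flatness at small scales propagates by the improvement-of-flatness iteration of \cite{D1} to give that $\GU$ is $C^{1,\alpha}$ near $x_0$, and then a partial hodograph transform converting $\Delta u=0$ in $\PosS$, $|\nabla u|=1$ on $\GU$ into a uniformly elliptic equation with smooth coefficients, followed by Schauder bootstrap, upgrades this to $C^\infty$ (in fact analytic).

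\textbf{Reduction to minimizing cones.} For the singular part the engine is Federer's dimension reduction. By Theorem~\ref{ThmWeissAC}, the Weiss energy $r\mapsto W(u,x_0,r)$ is non-decreasing with a finite limit, so every blow-up $u_{x_0}$ produced by Proposition~\ref{PropBlowUpAC} is a $1$-homogeneous minimizer. Applying Proposition~\ref{PropStabSingAC} to the sequence $u_{x_0,r_k}\to u_{x_0}$ shows that if $x_0\in\SingU$ then $0\in\Sing(u_{x_0})$, i.e. $u_{x_0}$ is not a rotation of $(x_d)^+$. By the very definition \eqref{EqnCriticalAC} of $\Dac$, no such homogeneous minimizer exists when $d\le\Dac$, which gives item (1).

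\textbf{Dimension of the singular set.} For $d\ge\Dac+1$ I would run the standard stratification. To a minimizing cone $v$ in $\R^m$ associate its spine $S(v)=\{\xi:v(\cdot+\xi)=v\}$, a linear subspace contained in $\{v=0\}$. Lemma~\ref{LemConeSplittingAC} shows that blowing $v$ up at a point $e\in\Gamma(v)$ produces a cone invariant in the direction $e$, hence, if $e\notin S(v)$, with strictly larger spine; and Lemma~\ref{LemInduceInLowerDimensionAC}, after a rotation sending the invariant direction to $e_1$, shows that such a cone restricts to a minimizer in $\R^{m-1}$. Iterating on $\dim S(v)$, one finds that a \emph{singular} minimizing cone — one that is not a half-plane — can occur in $\R^m$ only if $m\ge\Dac+1$, and then $\dim S(v)\le m-\Dac-1$. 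Feeding this bound, together with the compactness of blow-ups (Proposition~\ref{PropBlowUpAC}) and the upper semicontinuity of the singular set (Proposition~\ref{PropStabSingAC}), into the abstract Federer dimension-reduction lemma from the geometric-measure-theory preliminaries yields $\DimH(\SingU)\le d-\Dac-1$, which is item (3).

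\textbf{The borderline dimension and the main obstacle.} When $d=\Dac+1$ the above gives $\DimH(\SingU)\le 0$; to conclude that $\SingU$ is locally discrete (item (2)) I would argue by contradiction: if singular points $y_k\to x_0\in\SingU$ with $y_k\ne x_0$, set $\rho_k=|y_k-x_0|\to 0$ and pass to a subsequential blow-up $u_{x_0,\rho_k}\to v$, a singular cone in $\R^{\Dac+1}$; since $(y_k-x_0)/\rho_k\to\xi\in\Sph$, Proposition~\ref{PropStabSingAC} forces $\xi$, hence the entire half-line $\R_+\xi$, into $\Sing(v)$, so $\dim S(v)\ge 1$ after one further blow-up, and Lemmas~\ref{LemConeSplittingAC}--\ref{LemInduceInLowerDimensionAC} then produce a singular minimizing cone in $\R^{\Dac}$, contradicting \eqref{EqnCriticalAC}. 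The genuinely delicate points are (i) the regularity bootstrap for $\RedGU$, which rests on the full flatness-improvement machinery, and (ii) the bookkeeping in Federer's dimension reduction — setting up the stratification and checking its hypotheses, plus the upgrade from ``$\mathcal{H}^0$-negligible'' to ``locally discrete'' in the borderline case; everything else is a direct assembly of the stated lemmas.
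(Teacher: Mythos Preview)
The paper does not give its own proof of this theorem; it is stated with a citation to \cite[Theorem 1.2 and Proposition 10.13]{V} and no argument is supplied. Your outline is correct and follows the classical route. For comparison, the paper does carry out the analogous argument for the Alt-Phillips functional in Appendix~\ref{AppSingAP} (Propositions~\ref{PropAppCriticalDim} and~\ref{PropAppGeneralDim}), and your sketch matches that treatment closely: the borderline case is handled exactly as you describe (accumulation, blow-up to a cone with a singular point on $\partial B_1$, a second blow-up to split off a direction and drop a dimension), while for the general bound the paper runs a direct induction on $d$ using density points (Lemma~\ref{LemExistenceOfDensityPoints}) and upper semicontinuity of the Hausdorff measure of the singular set under convergence of minimizers, rather than the spine-stratification packaging you invoke---a cosmetic difference only. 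One small inaccuracy: there is no ``abstract Federer dimension-reduction lemma'' among the geometric-measure-theory preliminaries of this paper; the tools recorded there (Lemmas~\ref{LemExistenceOfDensityPoints}--\ref{LemGenericReduction}) serve other purposes, and in Appendix~\ref{AppSingAP} the reduction is done by hand.
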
 
\noindent (Recall the dimension $\Dac$ from \eqref{EqnCriticalAC}.)

In particular, in low dimensions, the free boundary is smooth. In general dimensions, the regularity of free boundaries can be achieved by imposing monotonicity:
\begin{thm}[\cite{DJ}]
\label{ThmGraphicalFBAC}
Suppose that $u\in\mathcal{M}[\EnAC, u]$ with
$$
\partial_1u\ge 0 \text{ in }B_1.
$$
If there is a small $\delta>0$ such that 
$$
u=0 \text{ for }x_1\le-1+\delta, \text{ and }u>0 \text{ for }x_1>1-\delta,
$$
then $\GU\cap B_{\delta}=\RedGU\cap B_{\delta}$. 
\end{thm}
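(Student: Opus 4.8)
The plan is to show that the hypotheses force the blow-up at any free boundary point $x_0 \in \GU \cap B_\delta$ to be a half-plane solution, so that $x_0 \in \RedGU$ by Definition~\ref{DefRegSingAC}; the monotonicity in $x_1$ is what rules out any genuine cone. First I would fix $x_0 \in \GU \cap B_\delta$ and take a blow-up sequence $u_{x_0, r_k} \to u_{x_0}$ as in Proposition~\ref{PropBlowUpAC}, so that $u_{x_0}$ is a $1$-homogeneous minimizer of $\EnAC(\cdot)$ in $\R^d$. Since $\partial_1 u \ge 0$ in $B_1$ and differentiation commutes with the rescaling \eqref{EqnRescaledSolAC}, we also get $\partial_1 u_{x_0} \ge 0$ in $\R^d$. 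The crucial point is that the nondegeneracy and the condition $u = 0$ for $x_1 \le -1+\delta$ together say that near $x_0$ the free boundary is a genuine graph over the $x_1 = \mathrm{const}$ direction: the set $\{u > 0\}$ is, roughly, the super-level region $\{x_1 > g(x')\}$ for some function $g$. More precisely, for $x_0 \in B_\delta$ the segment $\{x_0 + t e_1 : t \in (-1,1)\}$ meets $\{u = 0\}$ for $t$ near $-1$ (since then $x_1 \le -1 + \delta$) and meets $\{u > 0\}$ for $t$ near $+1$; combined with monotonicity in $x_1$, the positivity set restricted to each such line is a half-line, which passes to the blow-up and forces $\{u_{x_0} > 0\}$ to be a half-space-like region monotone in $e_1$.

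Next I would invoke the structure of monotone minimizing cones for the Alt-Caffarelli functional — this is exactly the content recalled in the introduction via \cite{DJ, EFeY} (``rule out minimizing cones that are monotone''). Concretely, one argues by dimension reduction: applying Lemma~\ref{LemConeSplittingAC} and Lemma~\ref{LemInduceInLowerDimensionAC}, one reduces a monotone cone in $\R^d$ to a monotone minimizer in one dimension less, while keeping the monotonicity and nondegeneracy; iterating, and using that in low dimensions $\SingU = \emptyset$ (Theorem~\ref{ThmRegOfFBAC}(1)) together with the fact that the only nondegenerate $1$-homogeneous solution monotone in a direction is the half-plane solution, one concludes that $u_{x_0}$ itself must be a rotation of $(x_d)^+$. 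Hence $x_0 \in \RedGU$.

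Finally, once every point of $\GU \cap B_\delta$ is a regular point, $\GU \cap B_\delta = \RedGU \cap B_\delta$ is immediate, which is the claim. Alternatively — and this is probably the cleanest route to write up — once we know the blow-up at $x_0$ is $(x_d)^+$ up to rotation, we can rescale so that $|u_{x_0,r} - (x_d)^+| < \bar\eps$ in $B_1$ for some small $r$ and apply the $\eps$-regularity Lemma~\ref{LemEpsRegAC} directly to conclude $\GU \cap B_{r/2}(x_0) \subset \RedGU$; covering $\GU \cap B_\delta$ then finishes the proof.

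The main obstacle I expect is the classification step: showing that a monotone, nondegenerate $1$-homogeneous minimizer must be a half-plane solution. The dimension-reduction machinery (Lemmas~\ref{LemConeSplittingAC}--\ref{LemInduceInLowerDimensionAC}) handles the inductive descent, but closing the base case and verifying that monotonicity genuinely survives each reduction — in particular that the splitting direction can always be chosen transverse to $e_1$, so the monotonicity is not destroyed — is the delicate part, and this is precisely where the external input from \cite{DJ} on monotone solutions of the one-phase problem is used. The line-by-line argument that the positivity set is a subgraph in the $e_1$-direction (using the hypothesis $u > 0$ for $x_1 > 1 - \delta$ and $u = 0$ for $x_1 \le -1 + \delta$ together with $\partial_1 u \ge 0$) is elementary but needs care near the fixed boundary and to ensure it is stable under blow-up.
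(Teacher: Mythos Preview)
The paper does not prove this theorem at all: it is quoted as a black-box result from \cite{DJ} (with the remark that \cite{EFeY} extends it to viscosity solutions), so there is no ``paper's own proof'' to compare against.

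Your proposal, however, has a genuine circularity. The step you flag as the main obstacle --- showing that a $1$-homogeneous minimizer with $\partial_1 u_{x_0}\ge 0$ must be a half-plane --- is \emph{exactly} what Theorem~\ref{ThmGraphicalFBAC} is used for in this paper. Indeed, Lemma~\ref{LemMonotoneConesAC} proves precisely this classification, and its proof in the non-invariant case explicitly invokes Theorem~\ref{ThmGraphicalFBAC}. So when you write that ``this is precisely where the external input from \cite{DJ} on monotone solutions of the one-phase problem is used,'' you are invoking the very theorem you are trying to prove. Your alternative dimension-reduction route does not close the gap either: cone-splitting (Lemma~\ref{LemConeSplittingAC}) requires blowing up at a \emph{nonzero} free boundary point, and to propagate singularity you would need that point to be singular; but a monotone cone in $\R^d$ may well have $\Sing(u_\infty)=\{0\}$ (e.g.\ whenever $d=\Dac+1$), leaving nothing to split off. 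Reducing along a regular point just yields a half-plane and gives no contradiction.

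The actual argument behind Theorem~\ref{ThmGraphicalFBAC} is of a different character and is not reproducible from the tools collected here. As the paper itself notes in Section~\ref{sec:5}, the theory of monotone solutions ``is based on the NTA-estimate for $\EnAC(\cdot)$ as in De Silva \cite{D2}'': one first shows that the graphical hypothesis makes $\{u>0\}$ a non-tangentially accessible domain, then uses boundary Harnack-type comparisons between $\partial_1 u$ and $u$ (or between directional derivatives) to force flatness of the free boundary near $x_0$, at which point $\eps$-regularity applies. None of the blow-up/compactness lemmas in Section~2 substitute for that analytic input.
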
 
\begin{rem}
By the results in  \cite{EFeY}, the same conclusion holds for viscosity solutions. 
\end{rem} 
For a point $p\in\RedGU$, we use the following notation
\begin{equation}
\label{EqnUnitNormal}
\nu_p:=\text{ the unit normal to the free boundary, pointing towards }\PosS.
\end{equation} 
\subsection{Results on the one-phase Alt-Phillips functional}
The study of the Alt-Phillips functional $\EnAP(\cdot)$ from \eqref{EqnAP} parallels the study of the Alt-Caffarelli functional. We consider only nonnegative minimizers in this subsection. 

We focus on the case when the parameter $\gamma\in(0,1)$. Corresponding to this $\gamma$, we introduce the parameter $\beta$, which dictates the natural homogeneity of the problem:
\begin{equation}
\label{EqnBeta}
\beta:=\frac{2}{2-\gamma}\in(1,2).
\end{equation} 

We begin with  basic properties of the minimizer:
\begin{prop}[Alt-Phillips \cite{AP}]
\label{PropBasicAP}
Suppose that $u\in\mathcal{M}[\EnAP, u]$,  then it satisfies
$$
\Delta u=\frac{\gamma}{2} u^{\gamma-1}\chi_{\PosS} \text{ in $B_1$.}
$$

If we further assume that $x_0\in\GU\cap B_{1/2}$, then for constants $c,C>0$ depending only on $d$ and $\gamma$, we have
$$
\sup_{B_r(x_0)}u\ge cr^\beta \text{ for all } r\in(0,1/4), 
$$ 
and 
$$\|u\|_{C^{1,\beta-1}(B_{3/4})}\le C.
$$
\end{prop}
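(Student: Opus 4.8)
\medskip\noindent\emph{Sketch of proof.} The plan is to establish the three assertions — the Euler–Lagrange equation, the nondegeneracy, and the $C^{1,\beta-1}$ bound — in that order, using throughout the scaling invariance of $\EnAP$: if $u$ is a minimizer on $B_r$ and $u_r(x):=r^{-\beta}u(rx)$, then $\EnAP(u_r;B_1)=r^{2-2\beta-d}\,\EnAP(u;B_r)$ because $2-2\beta=-\beta\gamma$ for $\beta=\tfrac{2}{2-\gamma}$ as in \eqref{EqnBeta}; hence $u_r$ is a minimizer on $B_1$. First, for the equation, perturbing $u$ to $u+\eps\phi$ with $\phi\in C_c^\infty(\PosS)$ keeps the competitor admissible and positive on $\mathrm{supp}\,\phi$ for small $\eps$, so the first variation vanishes and $\Delta u=\tfrac\gamma2 u^{\gamma-1}$ holds classically in $\PosS$; a Schauder bootstrap gives $u\in C^\infty(\PosS)$. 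To upgrade to all of $B_1$, observe that $u\ge0$ is continuous and subharmonic (subharmonic on the open set $\PosS$ where the source is nonnegative, and vanishing on the complement), hence locally bounded; granting the growth bound from the third assertion one has $u\asymp\mathrm{dist}(\cdot,\GU)^\beta$ near $\GU$, so $u^{\gamma-1}\chi_{\PosS}\in L^1_{\mathrm{loc}}$ precisely because $\beta(1-\gamma)<1\iff\gamma>0$. Since $u\in C^1$ with $\nabla u=0$ on $\{u=0\}\supseteq\GU$, integrating by parts over $\{u>\delta\}$ for a.e.\ regular value $\delta$ (Sard) and letting $\delta\downarrow0$ kills the boundary term — it is $O\big(\delta^{(\beta-1)/\beta}\,\mathcal H^{d-1}(\{u=\delta\})\big)\to0$ by the $C^{1,\beta-1}$ bound — yielding $\Delta u=\tfrac\gamma2 u^{\gamma-1}\chi_{\PosS}$ in $\mathcal D'(B_1)$.

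For nondegeneracy, fix $x_0\in\overline{\PosS}\cap B_{1/2}$ and $r$ small, and suppose $\sup_{B_r(x_0)}u\le\eta r^\beta$. Comparing $u$ with the competitor equal to $u$ outside $B_r(x_0)$ and lowered inside toward $0$ by a mollified copy of the one-dimensional profile $U_0(t)=c_\gamma(t^+)^\beta$ (which solves $U_0''=\tfrac\gamma2 U_0^{\gamma-1}$) — equivalently, rescaling by $u_{x_0,r}$ and arguing by compactness — the decrease of the potential term together with the Dirichlet energy saved in $B_{r/2}(x_0)$ beats the Dirichlet cost in the annulus $B_r(x_0)\setminus B_{r/2}(x_0)$ once $\eta$ is small, using subharmonicity of $u$ to control $u$ in $B_{r/2}(x_0)$ by its mean over $\partial B_r(x_0)$; this contradicts minimality and forces $\sup_{B_r(x_0)}u\ge cr^\beta$ with $c=c(d,\gamma)$.

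Finally, for $\|u\|_{C^{1,\beta-1}(B_{3/4})}\le C$, one first proves the matching upper growth bound $\sup_{B_r(x_0)}u\le Cr^\beta$ for $x_0\in\GU\cap B_{3/4}$: since $\EnAP$ is invariant under $u\mapsto u_{x_0,r}$, a standard dichotomy on dyadic scales works — were the bound to fail, rescaling at a badly behaved scale would produce a minimizer on $B_1$ with arbitrarily large normalized supremum, contradicting the uniform $L^\infty$ bound coming from subharmonicity. With $u\in C^{0,\beta}$ near $\GU$ in hand, for $x\in\PosS\cap B_{3/4}$ with $d_x:=\mathrm{dist}(x,\GU)$ the source satisfies $\tfrac\gamma2 u^{\gamma-1}\le C d_x^{\beta(\gamma-1)}=Cd_x^{\beta-2}$ on $B_{d_x/2}(x)$; rescaling the equation there to unit scale makes it uniformly elliptic with bounded right-hand side, and interior Schauder estimates, after undoing the scaling, give $|\nabla u(x)|\le Cd_x^{\beta-1}$ and $[\nabla u]_{C^{\beta-1}(B_{d_x/4}(x))}\le C$. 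Together with $\nabla u\equiv0$ on $\GU$ and a telescoping argument along segments reaching $\GU$, these interior bounds patch into the global estimate $\|u\|_{C^{1,\beta-1}(B_{3/4})}\le C$.

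I expect the optimal growth bound $\sup_{B_r(x_0)}u\le Cr^\beta$ to be the crux: it is the step that genuinely uses minimality rather than just the equation, and it simultaneously powers the $C^{1,\beta-1}$ regularity up to $\GU$ and, via the resulting $\mathrm{dist}^\beta$ behavior, the $L^1_{\mathrm{loc}}$ integrability needed to close the distributional equation on $B_1$. By contrast, the nondegeneracy, though also comparison-based, is comparatively routine, and the elliptic bootstrap inside $\PosS$ is entirely standard once the growth bound is available.
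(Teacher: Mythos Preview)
The paper does not prove this proposition; it is quoted from Alt--Phillips \cite{AP} without argument, so there is nothing in the paper to compare your sketch against. Your outline follows the classical route and is largely sound, but one step is misidentified.

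For the optimal upper growth $\sup_{B_r(x_0)}u\le Cr^\beta$ --- which, as you correctly say, is the crux --- the contradiction you name, ``the uniform $L^\infty$ bound coming from subharmonicity'', does not close the argument: under the minimality-preserving rescaling $u_r(x)=r^{-\beta}u(x_0+rx)$ the boundary data rescale along with the solution, and subharmonicity gives no bound independent of them. What actually works is to normalize by $S(r):=\sup_{B_r(x_0)}u$. Selecting scales $r_k\to0$ with $S(r_k)/r_k^\beta\to\infty$ and $S(\rho)/\rho^\beta\le S(r_k)/r_k^\beta$ for $\rho\ge r_k$, the functions $v_k(x)=u(x_0+r_kx)/S(r_k)$ satisfy $\sup_{B_R}v_k\le R^\beta$, $\sup_{B_1}v_k=1$, $v_k(0)=0$, and minimize $\int|\nabla v|^2+\mu_k\int v^\gamma$ with $\mu_k=(r_k^\beta/S(r_k))^{2-\gamma}\to0$; the locally uniform limit on $\R^d$ is a nonnegative harmonic function with an interior zero but unit supremum on $\overline{B_1}$, impossible by the strong maximum principle. (Closer to \cite{AP}, one instead shows directly that $w=u^{1/\beta}$ is Lipschitz via Alt--Caffarelli techniques, which is equivalent.) With this correction, the remainder of your sketch --- the energy-comparison nondegeneracy, the interior Schauder estimate rescaled to $d_x=\mathrm{dist}(x,\GU)$ yielding $|\nabla u|\le Cd_x^{\beta-1}$ and $[\nabla u]_{C^{\beta-1}(B_{d_x/4}(x))}\le C$, and the integration by parts over $\{u>\delta\}$ to close the distributional equation --- is the standard argument and is correct.
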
 
Recall the set of minimizers $\mathcal{M}[\cdot,\cdot]$ from \eqref{EqnSetOfMin}.

With this proposition, a uniformly bounded family of minimizers is compact in the $C^1$-topology. As a result, the Dirichlet energy is continuous with respect to the convergence of minimizers:
\begin{lem}\label{LemDiriConvAP}
Let $u_k\in \M[\En_{AP}^+,u_k]$ for $k\in \N$ be a sequence of minimizers uniformly bounded in $H^1(B_1)$. Then, up to a subsequence, $u_k$ converges strongly in $H_{\rm loc}^1(B_1)$ to some $u_\infty\in \M[\En_{AP}^+,u_\infty]$. 
\end{lem}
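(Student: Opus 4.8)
The plan is to follow the same scheme as the classical compactness arguments (e.g.\ \cite[Lemma 6.3]{V} quoted above as Lemma~\ref{LemDiriConvAC}), but to exploit the $C^{1,\beta-1}$ estimate of Proposition~\ref{PropBasicAP} instead of working only at the level of Sobolev norms. First I would fix $B_\rho\Subset B_1$ and note that, since $\En_{AP}^+(u_k)\le \En_{AP}^+(v)$ for competitors $v$ agreeing with $u_k$ near $\partial B_1$, a standard truncation/comparison argument together with the uniform $H^1(B_1)$ bound gives a uniform $L^\infty_{\rm loc}$ bound on $u_k$; then Proposition~\ref{PropBasicAP}, applied on a slightly smaller ball and rescaled, yields a uniform $C^{1,\beta-1}(\overline{B_\rho})$ bound. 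By Arzel\`a--Ascoli and a diagonal argument over an exhaustion $B_{\rho_j}\Subset B_1$, a subsequence (not relabeled) converges in $C^1_{\rm loc}(B_1)$ to some $u_\infty$, and in particular $\nabla u_k\to\nabla u_\infty$ pointwise and locally uniformly, so $u_k\to u_\infty$ strongly in $H^1_{\rm loc}(B_1)$.

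Next I would check that $u_\infty$ is itself a minimizer, i.e.\ $u_\infty\in\M[\En_{AP}^+,u_\infty]$. Fix $B_\rho\Subset B_1$ and a competitor $w$ with $w-u_\infty\in H^1_0(B_\rho)$, $w\ge 0$; I may also assume $\|w\|_{L^\infty}$ is bounded by the same constant as $\|u_\infty\|_{L^\infty(B_\rho)}$ after truncation. Build admissible competitors $w_k$ for $u_k$ on $B_\rho$ by the usual cutoff interpolation in an annulus $B_\rho\setminus B_{\rho-\delta}$: set $w_k := \eta w + (1-\eta)u_k$ with $\eta$ a cutoff equal to $1$ on $B_{\rho-\delta}$, supported in $B_\rho$. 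Since $u_k\to u_\infty$ in $C^1$ on the annulus, $\|\nabla w_k - \nabla w\|_{L^2(B_\rho)}\to 0$ up to an error controlled by $\delta$ plus the annular contribution, and $\int_{B_\rho} w_k^\gamma \to \int_{B_\rho} w^\gamma$ similarly (using $0<\gamma<1$ and dominated convergence against the uniform $L^\infty$ bound, noting $s\mapsto s^\gamma$ is continuous and bounded on the relevant range). Then minimality of $u_k$ gives $\En_{AP}^+(u_k;B_\rho)\le \En_{AP}^+(w_k;B_\rho)$; passing to the limit, using strong $H^1_{\rm loc}$ convergence of $u_k$ and $L^\gamma$-convergence of $u_k^\gamma$ (again by $C^0_{\rm loc}$ convergence and the uniform bound), yields $\En_{AP}^+(u_\infty;B_\rho)\le \En_{AP}^+(w;B_\rho)+o_\delta(1)$; letting $\delta\to 0$ and then $\rho\uparrow 1$ gives the minimality of $u_\infty$.

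The only genuinely delicate point is the treatment of the potential term near the free boundary: one must confirm that $\int_{B_\rho} u_k^\gamma \to \int_{B_\rho} u_\infty^\gamma$ and that $\int u_k^\gamma$ behaves well in the competitor estimate, since $s\mapsto s^\gamma$ is only H\"older at $s=0$. This, however, is harmless: $s\mapsto s^\gamma$ is continuous on $[0,\infty)$ and uniformly bounded on $[0,M]$ with $M = \sup_k\|u_k\|_{L^\infty(B_\rho)}<\infty$, so the uniform convergence $u_k\to u_\infty$ on $\overline{B_\rho}$ immediately gives uniform (hence $L^1$) convergence of $u_k^\gamma$. Thus no fine information on the free boundaries $\Gamma(u_k)$ is needed for this lemma, in contrast with Lemma~\ref{LemDiriConvAC} where the convergence of $\chi_{\{u_k>0\}}$ had to be addressed separately. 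I would close by recording that strong $H^1_{\rm loc}$ convergence is exactly the asserted conclusion, and that the construction shows the limit is a minimizer with respect to its own boundary data on every $B_\rho\Subset B_1$, hence $u_\infty\in\M[\En_{AP}^+,u_\infty]$.
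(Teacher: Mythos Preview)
Your proposal is correct and follows exactly the approach the paper itself indicates: the paper does not write out a proof but simply notes, in the sentence preceding the lemma, that Proposition~\ref{PropBasicAP} gives $C^{1,\beta-1}$ compactness, so the Dirichlet energy is continuous under convergence of minimizers. Your write-up fills in the standard details (Arzel\`a--Ascoli, the cutoff competitor for minimality of the limit, and the observation that $s\mapsto s^\gamma$ is continuous so the potential term passes to the limit without any free-boundary analysis), all of which are routine once the $C^1_{\rm loc}$ convergence is in hand.
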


For the Alt-Phillips problem, the corresponding \textit{Weiss balanced energy} is defined as follows. Since there is no room for ambiguity,  we use the same notation as we used in \eqref{EqnWeissAC}.  

For $u\in\HOne$, $x_0\in B_1$ and $r\in(0,1-|x_0|)$, we define
\begin{equation}
\label{EqnWeissAP}
W(u,x_0,r):=\frac{1}{r^{d+2\beta-2}}\int_{B_r(x_0)}(\Diri+u^\gamma)-\frac{\beta}{r^{d+2\beta-1}}\int_{\partial B_r(x_0)}u^2.
\end{equation} 
We often omit $u$ or $x_0$ from the expression. 

For minimizers, this is a monotone function in $r$:
\begin{thm}[Weiss \cite{W}]
\label{ThmWeissAP}
Suppose that $u\in\mathcal{M}[\En_{AP}^+,u]$ and $x_0\in\ConS\cap B_1$. 

For $0<s<r<1-|x_0|$, we have
$$
W(u,x_0,r)-W(u,x_0,s)=2\int_s^r\frac{1}{\rho^{d+2\beta-2}}\int_{\partial B_\rho(x_0)}\left(\nabla u\cdot\nu-\beta\frac{u}{|x|}\right)^2d\HausM d\rho.
$$
In particular, the function $r\mapsto W(r)$ is non-decreasing, and the following limit is well-defined
$$
W(u,x_0,0+):=\lim_{r\to 0}W(u,x_0,r).
$$ 
If the function $r\mapsto W(r)$ is constant, then $u$ is $\beta$-homogeneous with respect to $x_0$.
\end{thm}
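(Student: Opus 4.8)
The plan is to reduce to the case $x_0=0$ by translation, write $W(r):=W(u,0,r)$, and establish the differential identity
$$
W'(r)=\frac{2}{r^{d+2\beta-2}}\int_{\partial B_r}\Big(\nabla u\cdot\nu-\beta\tfrac{u}{r}\Big)^2\,d\HausM \;\geq\; 0,
$$
from which everything follows: integrating over $(s,r)$ gives the stated formula; monotonicity of $r\mapsto W(r)$ is then immediate; the limit $W(0^+)$ exists because $W$ is monotone and bounded below (the Dirichlet part is nonnegative, and the growth bound $\sup_{B_r}u\lesssim r^\beta$ near $\GU$ keeps $r^{1-d-2\beta}\int_{\partial B_r}u^2$ bounded); and if $r\mapsto W(r)$ is constant on an interval then $W'\equiv0$, forcing $\nabla u\cdot\nu=\beta u/|x|$ a.e., i.e. $\tfrac{d}{dt}\big(t^{-\beta}u(tx)\big)\equiv0$, which is precisely $\beta$-homogeneity of $u$ about $0$.

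Set $e(u):=\Diri+u^\gamma$, $D(r):=\int_{B_r}e(u)$, $H(r):=\int_{\partial B_r}u^2$, so $W(r)=r^{2-d-2\beta}D(r)-\beta\,r^{1-d-2\beta}H(r)$. The coarea formula gives $D'(r)=\int_{\partial B_r}e(u)$, and writing $H(r)=r^{d-1}\int_{\Sph}u(r\omega)^2\,d\omega$ gives $H'(r)=\tfrac{d-1}{r}H(r)+2\int_{\partial B_r}u\,\partial_\nu u$. The two inputs from minimality are: \emph{(i)} multiplying the Euler--Lagrange equation $\Delta u=\tfrac{\gamma}{2}u^{\gamma-1}\chi_{\PosS}$ from Proposition~\ref{PropBasicAP} by $u$ and integrating by parts on $B_r$, which yields
$$
\int_{B_r}\Diri=\int_{\partial B_r}u\,\partial_\nu u-\tfrac{\gamma}{2}\int_{B_r}u^\gamma;
$$
and \emph{(ii)} the Pohozaev--type identity obtained by multiplying the equation by $x\cdot\nabla u$ and integrating on $B_r$, which yields
$$
2r\int_{\partial B_r}(\partial_\nu u)^2=r\int_{\partial B_r}e(u)-(d-2)\int_{B_r}\Diri-d\int_{B_r}u^\gamma.
$$
In \emph{(ii)} the only subtle point is that the integration by parts produces no contribution along $\GU$; this holds because $\beta>1$ forces $\nabla u=0$ on $\GU$ (alternatively, one tests the first inner variation of $\EnAP$ against $x\mapsto\zeta_\eps(|x|)\,x$ with $\zeta_\eps\uparrow\chi_{[0,r)}$ and lets $\eps\to0$, using minimality).

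Substituting $D'(r)$ and $H'(r)$ into the derivative of $W(r)$, the terms involving $H$ combine using $(d+2\beta-1)-(d-1)=2\beta$, giving
$$
W'(r)=r^{2-d-2\beta}D'(r)-(d+2\beta-2)\,r^{1-d-2\beta}D(r)+2\beta^2 r^{-d-2\beta}H(r)-2\beta\,r^{1-d-2\beta}\!\int_{\partial B_r}u\,\partial_\nu u.
$$
Expanding the square in the target identity, the $H$-terms match, and it remains only to check that $r D'(r)-(d+2\beta-2)D(r)=2r\int_{\partial B_r}(\partial_\nu u)^2-2\beta\int_{\partial B_r}u\,\partial_\nu u$. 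Inserting \emph{(ii)} on the right-hand side and then \emph{(i)} to remove $\int_{\partial B_r}u\,\partial_\nu u$, the whole expression collapses to $\big(1-\beta+\tfrac{\beta\gamma}{2}\big)\int_{B_r}u^\gamma$, which vanishes because $\beta\big(1-\tfrac{\gamma}{2}\big)=1$ — this is exactly why the homogeneity exponent must be $\beta=\tfrac{2}{2-\gamma}$. This proves the differential identity, hence the theorem.

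The main obstacle is technical rather than conceptual: justifying, for a.e.\ $r$, the integrations by parts on $B_r$ (especially \emph{(ii)} across the free boundary) and the absolute continuity of $r\mapsto D(r)$ and $r\mapsto H(r)$ needed to integrate $W'$. These are dispatched using the optimal regularity $u\in C^{1,\beta-1}(\overline{B_{3/4}})$ from Proposition~\ref{PropBasicAP} together with $\nabla u\equiv0$ on $\GU$ (valid since $\beta>1$), which makes $u^\gamma$ Lipschitz across $\GU$ with $\nabla(u^\gamma)=\gamma u^{\gamma-1}\nabla u\,\chi_{\PosS}\in L^\infty_{\mathrm{loc}}$; in this respect the argument is actually simpler than the Alt-Caffarelli analogue, where $|\nabla u|=1$ on $\GU$ forces the Pohozaev computation onto $\PosS$ and requires a careful analysis near the free boundary.
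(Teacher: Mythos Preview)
The paper does not give its own proof of this statement: Theorem~\ref{ThmWeissAP} is quoted from Weiss~\cite{W} and used as a black box. So there is nothing in the paper to compare your argument against.

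Your proof is correct in substance and follows the standard route (differentiate $W$, use the Euler--Lagrange identity and the Pohozaev/Rellich identity, observe that the residual term $\bigl(1-\beta+\tfrac{\beta\gamma}{2}\bigr)\int_{B_r}u^\gamma$ vanishes precisely for $\beta=\tfrac{2}{2-\gamma}$). The monotonicity, existence of $W(0^+)$, and homogeneity conclusion are all drawn correctly.

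One small technical slip: your claim that $\nabla(u^\gamma)=\gamma u^{\gamma-1}\nabla u\,\chi_{\PosS}\in L^\infty_{\mathrm{loc}}$ is not true across the whole range $\gamma\in(0,1)$. Near the free boundary one has $|\nabla u|\lesssim \mathrm{dist}(\cdot,\GU)^{\beta-1}$ and (at best) $u\gtrsim \mathrm{dist}(\cdot,\GU)^\beta$, so $|\nabla(u^\gamma)|\sim \mathrm{dist}(\cdot,\GU)^{\beta\gamma-1}$, which blows up when $\beta\gamma<1$, i.e.\ when $\gamma<\tfrac{2}{3}$. This does not break the argument: $\beta\gamma>0$ already gives $\nabla(u^\gamma)\in L^1_{\mathrm{loc}}$, which is all you need for the divergence theorem in the step $\int_{B_r}x\cdot\nabla(u^\gamma)=r\int_{\partial B_r}u^\gamma-d\int_{B_r}u^\gamma$. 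Alternatively, the inner-variation route you mention (testing against $\zeta_\eps(|x|)\,x$) sidesteps the issue entirely and is the approach closest to Weiss's original argument. Either way, just replace ``$L^\infty_{\mathrm{loc}}$'' by ``$L^1_{\mathrm{loc}}$'' and the proof stands.
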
 

This allows the blow-up analysis:
\begin{prop}[Weiss \cite{W}]
\label{PropBlowUpAP}
Suppose that $u\in\mathcal{M}[\En_{AP}^+, u]$ and $x_0\in\GU\cap B_1$, define the rescaled solutions as
\begin{equation}
\label{EqnRescaledSolAP}
u_{x_0,r}(x):=\frac{1}{r^\beta}u(x_0+rx).
\end{equation}

Then along a subsequence of $r_k\downarrow 0$, the rescaled solutions $u_{x_0,r_k}$ converge to some $u_{x_0}$ locally uniformly in $\R^d$.

The limiting function $u_{x_0}$ is a $\beta$-homogeneous minimizer for  the Alt-Phillips functional $\En_{AP}^+(\cdot)$ in $\R^d$.
\end{prop}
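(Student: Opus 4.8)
The plan is to run the classical blow-up scheme based on the Weiss monotonicity formula, now tuned to the natural $\beta$-homogeneity of the Alt-Phillips problem. The first step is to record the scaling structure. A change of variables $y=x_0+rx$ in \eqref{EqnAP}, using $\beta=\frac{2}{2-\gamma}$ from \eqref{EqnBeta} so that $2-2\beta=-\beta\gamma$, gives $\En_{AP}^+(u_{x_0,r};B_R)=r^{2-2\beta-d}\,\En_{AP}^+(u;B_{Rr}(x_0))$; since rescaling the domain and multiplying the energy by a positive constant preserve minimality, each $u_{x_0,r}$ is a minimizer of $\En_{AP}^+$ in $B_{1/r}$. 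The very same computation shows that the Weiss energy \eqref{EqnWeissAP} is invariant under this rescaling, i.e. $W(u_{x_0,r},0,\rho)=W(u,x_0,r\rho)$.

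The second step is compactness. Since $u\ge0$ is continuous and $x_0\in\GU$, we have $u(x_0)=0$, so $x_0$ is an interior minimum of $u$ and $\nabla u(x_0)=0$; combined with the $C^{1,\beta-1}$ bound of Proposition~\ref{PropBasicAP} this yields $u(x)\le C|x-x_0|^\beta$ near $x_0$, hence $\sup_{B_R}|u_{x_0,r}|\le CR^\beta$ uniformly for $r$ small. Applying the $C^{1,\beta-1}$ estimate of Proposition~\ref{PropBasicAP} to the minimizers $u_{x_0,r}$ on compact subsets and using the Arzela--Ascoli theorem, we extract $r_k\downarrow 0$ with $u_{x_0,r_k}\to u_{x_0}$ in $C^1_{\rm loc}(\R^d)$. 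Restricting to any fixed ball and rescaling it to $B_1$, the $u_{x_0,r_k}$ form a family of minimizers bounded in $H^1(B_1)$, so Lemma~\ref{LemDiriConvAP} identifies the limit $u_{x_0}$ as a minimizer of $\En_{AP}^+$ there; letting the ball exhaust $\R^d$ (a diagonal argument) shows $u_{x_0}$ is a minimizer in $\R^d$.

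The third step is homogeneity. By Theorem~\ref{ThmWeissAP} the function $\rho\mapsto W(u,x_0,\rho)$ is non-decreasing, and it is bounded below — the Dirichlet and potential terms are nonnegative, while $u(x)\le C|x-x_0|^\beta$ bounds the boundary term — so $W(u,x_0,0+)$ exists and is finite. For each fixed $\rho>0$, the scaling invariance gives $W(u_{x_0,r_k},0,\rho)=W(u,x_0,r_k\rho)\to W(u,x_0,0+)$ as $k\to\infty$. On the other hand, the $C^1_{\rm loc}$ convergence (uniform convergence of $u_{x_0,r_k}$ and of its gradient on $\overline{B_\rho}$, together with continuity of $t\mapsto t^\gamma$) lets us pass to the limit in each of the three terms defining \eqref{EqnWeissAP}, so $W(u_{x_0,r_k},0,\rho)\to W(u_{x_0},0,\rho)$. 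Hence $W(u_{x_0},0,\rho)=W(u,x_0,0+)$ for every $\rho>0$, i.e. $\rho\mapsto W(u_{x_0},0,\rho)$ is constant; since $u_{x_0}(0)=0$, the last assertion of Theorem~\ref{ThmWeissAP} forces $u_{x_0}$ to be $\beta$-homogeneous about the origin. Passing the nondegeneracy $\sup_{B_\rho}u_{x_0,r_k}\ge c\rho^\beta$ to the limit shows $u_{x_0}\not\equiv0$ and $0\in\Gamma(u_{x_0})$, so $u_{x_0}$ is a genuine minimizing cone.

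The step I expect to require the most care is the passage to the limit in the Weiss energy together with the identification of $u_{x_0}$ as a global minimizer: one needs convergence strong enough to carry the Dirichlet term, which here is supplied either by the $C^1_{\rm loc}$ compactness from Proposition~\ref{PropBasicAP} or by Lemma~\ref{LemDiriConvAP}. The scaling identities for $\En_{AP}^+$ and for $W$, though indispensable and the place where the precise value of $\beta$ enters, are routine computations.
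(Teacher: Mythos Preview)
The paper does not give its own proof of this proposition: it is stated with the attribution ``Weiss \cite{W}'' and no argument is supplied. Your proof is a correct and complete implementation of the standard blow-up scheme that the citation points to---scaling invariance of the energy and of the Weiss functional, compactness from the $C^{1,\beta-1}$ estimate of Proposition~\ref{PropBasicAP}, identification of the limit as a minimizer via Lemma~\ref{LemDiriConvAP}, and homogeneity from the constancy of $\rho\mapsto W(u_{x_0},0,\rho)$ via Theorem~\ref{ThmWeissAP}---so there is nothing to compare against and nothing to correct.
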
 

Although the values of $W(0+)$ in Theorem \ref{ThmWeissAP} have not been classified, for minimizing cones, we have the following interpretation:
\begin{lem}
\label{LemWeissRepAP}
Suppose that $u\in\mathcal{M}[\En_{AP}^+, u]$  is $\beta$-homogeneous, then 
$$
W(u,0,0+)=W(u,0,1)=\left(1-\frac{\gamma}{2}\right)\int_{B_1}u^\gamma.
$$
\end{lem}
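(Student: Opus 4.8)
The plan is to combine the monotonicity formula of Theorem~\ref{ThmWeissAP} with a Rellich--Pohozaev-type integration by parts. First, since $u$ is $\beta$-homogeneous, at every point $x\neq0$ the radial derivative satisfies $\nabla u\cdot\nu=\beta\,u/|x|$ (differentiate $u(r\omega)=r^\beta u(\omega)$ in $r$). Hence the integrand on the right-hand side of the identity in Theorem~\ref{ThmWeissAP} vanishes identically, so $r\mapsto W(u,0,r)$ is constant on $(0,1)$; in particular $W(u,0,0+)=W(u,0,1)$. This reduces the lemma to the single identity $W(u,0,1)=\left(1-\frac{\gamma}{2}\right)\int_{B_1}u^\gamma$.

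For this, I would test the Euler--Lagrange equation from Proposition~\ref{PropBasicAP}, namely $\Delta u=\frac{\gamma}{2}u^{\gamma-1}\chi_{\PosS}$, against $u$ itself. Green's first identity on $B_\rho$ for $\rho<1$ is legitimate because $u\in C^{1,\beta-1}_{\mathrm{loc}}(B_1)$ (Proposition~\ref{PropBasicAP}) and because $\beta(1-\gamma)<1$ — equivalently $\beta(\gamma-1)>-1$, which holds since $\beta=\frac{2}{2-\gamma}$ and $\gamma>0$ — so that $u^{\gamma-1}\chi_{\PosS}\in L^1_{\mathrm{loc}}(B_1)$. It gives
$$
\int_{B_\rho}|\nabla u|^2=\int_{\partial B_\rho}u\,\partial_\nu u-\frac{\gamma}{2}\int_{B_\rho}u^\gamma ,
$$
where I used $u\ge0$ (so $u^\gamma\chi_{\PosS}=u^\gamma$ a.e.) to drop the indicator in the bulk term. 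Using $\partial_\nu u=\frac{\beta}{\rho}u$ on $\partial B_\rho$, again from $\beta$-homogeneity, turns the boundary term into $\frac{\beta}{\rho}\int_{\partial B_\rho}u^2$. Taking $\rho=1$ and substituting into the definition \eqref{EqnWeissAP} of $W(u,0,1)=\int_{B_1}(|\nabla u|^2+u^\gamma)-\beta\int_{\partial B_1}u^2$, the two $\beta\int_{\partial B_1}u^2$ contributions cancel and the $u^\gamma$ terms combine to give exactly $\left(1-\frac{\gamma}{2}\right)\int_{B_1}u^\gamma$.

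I do not expect a genuine obstacle: the only points requiring a little care are that the right-hand side of the equation is integrable up to $\GU$ (so the integration by parts needs no excision near the free boundary), and that $u$ is regular enough near $\partial B_\rho$ — both supplied by Proposition~\ref{PropBasicAP} together with the elementary inequality $\beta(1-\gamma)<1$. If one prefers to sidestep the integrability discussion, the same computation can be carried out on $B_\rho\cap\{u>\delta\}$, integrating Green's identity there and letting $\delta\downarrow0$, the $C^1$ bound controlling the contribution along $\{u=\delta\}$; alternatively one can obtain the identity from the inner-variation (domain-variation) Euler--Lagrange equation of the minimizer, which produces the Pohozaev identity directly. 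Any of these routes yields the stated formula.
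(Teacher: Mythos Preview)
Your proof is correct and follows essentially the same route as the paper's: both establish $W(0+)=W(1)$ from $\beta$-homogeneity, then use Green's identity with the Euler--Lagrange equation $\Delta u=\frac{\gamma}{2}u^{\gamma-1}\chi_{\PosS}$ to express $\int_{B_1}|\nabla u|^2$ in terms of $\int_{\partial B_1}u^2$ and $\int_{B_1}u^\gamma$, and substitute into the definition of $W(u,0,1)$. The only cosmetic difference is that the paper invokes the scaling symmetry of $W$ directly for the first equality, whereas you appeal to the vanishing of the integrand in Theorem~\ref{ThmWeissAP}; your added care about the local integrability of $u^{\gamma-1}\chi_{\PosS}$ is a point the paper leaves implicit.
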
 
\begin{proof}
The first equation follows from the homogeneity of $u$ and the scaling symmetry of $W(\cdot)$. 

To see the second, we use the homogeneity of $u$ and the equation in Proposition~\ref{PropBasicAP} to see
$$
\int_{B_1}\Diri=\int_{\partial B_1}uu_\nu-\int_{B_1}u\Delta u=\beta\int_{\partial B_1}u^2-\int_{B_1}\frac{\gamma}{2}u^\gamma
$$
We use the definition of the Weiss balanced energy to get
$$
W(1)=\int_{B_1}(\Diri+u^\gamma)-\beta\int_{\partial B_1}u^2=\left(1-\frac{\gamma}{2}\right)\int_{B_1}u^\gamma,
$$
as we wanted.
\end{proof} 

When blowed up at a non-zero free boundary point, minimizing cones split:
\begin{lem}
\label{LemConeSplittingAP}
Suppose that $u$ is a minimizing cone for the Alt-Phillips functional $\EnAP(\cdot)$ with $e_1\in\GU$. 

If $v$ is a subsequential limit of the rescaled solutions $u_{e_1,r}$ as in Proposition \ref{PropBlowUpAP}, then
$
\partial_1v\equiv 0
$
in $\R^d$.
\end{lem}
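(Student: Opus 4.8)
This is the exact analogue of Lemma~\ref{LemConeSplittingAC} (the cone-splitting lemma for Alt-Caffarelli), and I would mirror that proof, substituting the Weiss monotonicity formula and homogeneity of the Alt-Phillips functional. The idea is that blowing up a $\beta$-homogeneous minimizer $u$ at the point $e_1\in\GU(u)$ produces a limit $v$ which inherits \emph{two} scaling invariances: it is still $\beta$-homogeneous (with respect to the origin, since blow-ups of minimizers are homogeneous by Theorem~\ref{ThmWeissAP} and Proposition~\ref{PropBlowUpAP}), but it is also \emph{translation invariant in the $e_1$ direction}, because $u$ was homogeneous with respect to $0$ and the rescaling at $e_1$ pushes the tip of the cone to infinity along the $x_1$-axis. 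A function that is both $\beta$-homogeneous about $0$ and invariant under translation along $e_1$ must have $\partial_1 v\equiv 0$.

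\textbf{Key steps.} First, record that $u_{e_1,r}$ is, for each fixed $r$, a minimizer of $\EnAP(\cdot)$ on larger and larger balls, uniformly bounded on compact sets (using the $C^{1,\beta-1}$ estimate of Proposition~\ref{PropBasicAP} and nondegeneracy to control the size), so by Proposition~\ref{PropBlowUpAP} a subsequence converges locally uniformly to a $\beta$-homogeneous minimizer $v$ in $\R^d$ with $0\in\GU(v)$. Second, establish the translation invariance of $v$ along $e_1$: for any $h\in\R$ and small $r$, the translate $x\mapsto u_{e_1,r}(x+he_1)=\tfrac{1}{r^\beta}u\big(e_1+r(x+he_1)\big)=\tfrac{1}{r^\beta}u\big((1+rh)e_1+rx\big)$; now use that $u$ is $\beta$-homogeneous about $0$ to rewrite $u\big((1+rh)e_1+rx\big)=(1+rh)^\beta\,u\big(e_1+\tfrac{r}{1+rh}x\big)=(1+rh)^\beta r^\beta (1+rh)^{-\beta}\,u_{e_1,r/(1+rh)}(x)\cdot$ — more carefully, $u\big((1+rh)e_1+rx\big)=(1+rh)^\beta u\big(e_1+\tfrac{r}{1+rh}x\big)$, hence $u_{e_1,r}(x+he_1)=(1+rh)^\beta u_{e_1,r'}(x)$ with $r'=r/(1+rh)$. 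As $r\downarrow0$ along the subsequence, $(1+rh)^\beta\to1$ and $r'\downarrow0$, so both sides converge to $v$; therefore $v(x+he_1)=v(x)$ for all $h$. Third, conclude $\partial_1 v\equiv0$ in $\R^d$, either by differentiating this identity in $h$ at $h=0$ (the minimizer $v$ is $C^1$ up to $\GU(v)$ by Proposition~\ref{PropBasicAP}) or directly from the invariance.

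\textbf{Main obstacle.} The only genuinely delicate point is the uniform compactness needed to extract the blow-up limit $v$: one must verify that the rescalings $u_{e_1,r}$ stay uniformly bounded (and equicontinuous) on each fixed ball $B_R$ as $r\downarrow0$, which requires a growth bound on $u$ away from its vertex. Since $u$ is $\beta$-homogeneous, $\sup_{B_\rho}u\le C\rho^\beta$, so $\sup_{B_R}u_{e_1,r}\le C r^{-\beta}(1+rR)^\beta\le C'$ for $r$ small; combined with the interior $C^{1,\beta-1}$ estimate and nondegeneracy this gives local uniform convergence along a subsequence. Everything else is the bookkeeping of the two scaling symmetries, exactly as in the Alt-Caffarelli case, with the homogeneity exponent $1$ replaced by $\beta$ and the rescaling \eqref{EqnRescaledSolAP} carrying the extra power of $r^{-\beta}$.
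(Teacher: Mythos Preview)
Your approach is correct and is exactly the standard cone-splitting argument the paper references (Velichkov, Lemma 10.9), with the homogeneity exponent $1$ replaced by $\beta$. Two small points to clean up: first, your bookkeeping actually gives $u_{e_1,r}(x+he_1)=u_{e_1,r'}(x)$ with no extra factor $(1+rh)^\beta$ (the factor cancels against the $(r')^{-\beta}$ in the definition of $u_{e_1,r'}$); second, since convergence is only along a subsequence $r_k\to0$, you should note that $r_k':=r_k/(1+r_kh)$ satisfies $r_k'/r_k\to1$, so $u_{e_1,r_k'}(x)=(r_k/r_k')^\beta\,u_{e_1,r_k}\big((r_k'/r_k)x\big)\to v(x)$ by the locally uniform convergence of $u_{e_1,r_k}$ and the continuity of $v$.
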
 
The proof follows by modifying the argument in \cite[Lemma 10.9]{{V}}.

For a minimizer $u$, its free boundary decomposes into the regular and singular parts.
\begin{defi}
\label{DefRegSingAP}
For $u\in\mathcal{M}[\EnAP, u]$ with $x_0\in\GU$, we say that $x_0$ is a \textit{regular point}, and write
$$
x_0\in\RedGU,
$$
 if $u_{x_0,r}$ converges, along a subsequence, to a rotation of $[(x_d^+/\beta)]^\beta$. 
 
 Otherwise, we say that $x_0$ is a \textit{singular point}, and write
 $$
 x_0\in\SingU.
 $$
\end{defi} 

Being in the regular part $\RedGU$ is an open condition:
\begin{lem}
\label{LemEpsRegAP}
Suppose that $u\in\mathcal{M}[\EnAP,u]$  satisfies
\begin{equation}
\label{EqnFlatnessConditionAP}
|u-[(x_d^+/\beta)]^\beta|<\eps \text{ in }B_1
\end{equation}
for some $\eps>0$.

There are constants $\bar{\eps},\bar{r}>0$, depending only on $d$ and $\gamma$,  such that if $\eps<\bar\eps$, then 
$$
\GU\cap B_{\bar{r}}\subset\RedGU.
$$
\end{lem}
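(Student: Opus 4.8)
The plan is to argue by compactness and contradiction, reducing the statement to the known fact that a blow-up close to the model profile $[(x_d^+/\beta)]^\beta$ forces the free boundary point to be regular, combined with the $C^1$-compactness of minimizers from Proposition~\ref{PropBasicAP} and Lemma~\ref{LemDiriConvAP}. Concretely, suppose the conclusion fails: there exist $\eps_k\downarrow 0$ and minimizers $u_k\in\mathcal{M}[\EnAP,u_k]$ with $|u_k-[(x_d^+/\beta)]^\beta|<\eps_k$ in $B_1$, but with a free boundary point $x_k\in\GU[u_k]\cap B_{\bar r}$ (for every candidate radius $\bar r$, so really $x_k\to 0$ after also letting $\bar r\to 0$; alternatively fix $\bar r$ small and get $x_k\in\SingU[u_k]\cap B_{\bar r}$, which is the cleaner formulation to contradict). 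By Proposition~\ref{PropBasicAP} the $u_k$ are uniformly bounded in $C^{1,\beta-1}(B_{3/4})$, so along a subsequence $u_k\to u_\infty$ in $C^1_{\mathrm{loc}}(B_{3/4})$, and by Lemma~\ref{LemDiriConvAP} the limit $u_\infty$ is a minimizer. The uniform closeness $|u_k - [(x_d^+/\beta)]^\beta|<\eps_k$ passes to the limit, forcing $u_\infty \equiv [(x_d^+/\beta)]^\beta$ on $B_{3/4}$.

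Next I would transfer the singularity of $u_k$ to the limit using the stability of the singular set. This is exactly the Alt-Phillips analogue of Proposition~\ref{PropStabSingAC}: if $x_k\in\SingU[u_k]$ and $x_k\to x_\infty$, then $x_\infty\in\SingU[u_\infty]$; the proof is identical, using the nondegeneracy $\sup_{B_r(x_0)}u\ge c r^\beta$ from Proposition~\ref{PropBasicAP} to see $x_\infty\in\GU[u_\infty]$, and an $\eps$-regularity-type openness of the regular set to see the singularity persists. Applying this (or, in the version where $x_k\to 0$, simply noting $0\in\GU[u_\infty]$ by nondegeneracy) we would get $0\in\SingU[u_\infty]$. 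But $u_\infty = [(x_d^+/\beta)]^\beta$ has $\GU[u_\infty] = \{x_d=0\}$, every point of which is regular by Definition~\ref{DefRegSingAP} (the function is, after translation along $\{x_d=0\}$, already exactly a rotation — in fact the identity rotation — of the model profile, so its unique blow-up is the model profile). This contradiction proves the lemma, and it simultaneously yields the radius $\bar r$: it is the largest radius for which the compactness argument closes, which can be taken to be a dimensional constant (depending on $\gamma$) by a standard "if not, iterate" bookkeeping — or one simply states existence of $\bar\eps,\bar r$ without tracking them, since the lemma only claims qualitative constants.

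The main obstacle — and the only place real work is hidden — is justifying that \emph{every} point of $\{x_d=0\}$ is a regular point of $u_\infty=[(x_d^+/\beta)]^\beta$, equivalently that the regular set is genuinely open and stable under $C^1$ limits for the Alt-Phillips functional. This is the analogue of Lemma~\ref{LemEpsRegAC}/Proposition~\ref{PropStabSingAC} in the Alt-Caffarelli setting, and it is what the statement is really asserting; so to avoid circularity the honest route is the one taken in De Silva--Savin \cite{DS}: a direct, quantitative improvement-of-flatness argument. That is, one shows that if $|u-[(x_d^+/\beta)]^\beta|<\eps$ in $B_1$ with $\eps$ small, then there is a unit vector $e$ (close to $e_d$) and a point close to the origin with $|u_{x_0,\lambda}-[(x\cdot e/\beta)^+]^\beta|<\tfrac12\eps$ in $B_1$, and iterates. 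I would cite \cite{DS} for this flatness-implies-$C^{1,\alpha}$ statement and present the compactness argument above as the packaging that deduces the clean "$\GU\cap B_{\bar r}\subset\RedGU$" form; the expansion near a regular point quantified in Appendix~\ref{AppExpansionNearRegularPoint} is the technical engine, and I would point there for the details of the improvement-of-flatness step.
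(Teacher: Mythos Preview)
Your compactness-and-contradiction route is circular, as you yourself flag: the stability of the singular set (Proposition~\ref{PropStabSingAP}) is proved \emph{using} Lemma~\ref{LemEpsRegAP}, so that packaging cannot close without an independent $\eps$-regularity input. Falling back on an improvement-of-flatness citation is the right move and is essentially what the paper does---except that the paper cites Theorem~6.1 and Definition~5.1 of Alt--Phillips \cite{AP} rather than \cite{DS}.

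The one substantive step you are missing is the bridge from hypothesis~\eqref{EqnFlatnessConditionAP} to the ``flat'' condition that either \cite{AP} or \cite{DS} (or Appendix~\ref{AppExpansionNearRegularPoint}) actually takes as input. Those results require $u=0$ in a slab $\{x_d<-\sigma\}\cap B_{1/2}$ with $\sigma\to 0$ as $\eps\to 0$; equivalently, a two-sided trapping of $w=u^{1/\beta}$ between translates of $(x_d/\beta)^+$, which in particular forces $w=0$ below a level. Hypothesis~\eqref{EqnFlatnessConditionAP} only gives $0\le u<\eps$ in $\{x_d\le 0\}$, not $u=0$ there. The paper closes this gap with the nondegeneracy bound of Proposition~\ref{PropBasicAP}: a free boundary point at height $x_d<-\sigma$ would force $\sup u\ge c\sigma^\beta$ on a ball contained in $\{x_d<0\}$, while \eqref{EqnFlatnessConditionAP} caps $u<\eps$ there; hence $u=0$ in $B_{1/2}\cap\{x_d<-C\eps^{1/\beta}\}$. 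That single observation is essentially the entire content of the paper's proof; once it is made, the citation (to \cite{AP}, \cite{DS}, or Appendix~\ref{AppExpansionNearRegularPoint}) applies directly. You do invoke nondegeneracy, but only to place the limit point on the free boundary inside the circular compactness argument, not for this conversion step.
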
 

\begin{proof}
With Theorem 6.1 in Alt-Phillips \cite{AP}, it suffices to show that our assumption \eqref{EqnFlatnessConditionAP}, for $\eps$ small enough,  implies that the minimizer $u$ is `flat' as in Definition 5.1 of Alt-Phillips. 

Under \eqref{EqnFlatnessConditionAP}, it remains to show that $u=0$ in $\{x_d\le -\sigma\}\cap B_{1/2}$ with $\sigma\to0$ as $\eps\to0.$ This follows from the lower bound in Proposition \ref{PropBasicAP}, which implies that 
$$
u=0 \text{ in }B_{1/2}\cap\{x_d<-C\eps^{1/\beta}\}
$$
for a dimensional constant $C>0.$
\end{proof} 

With similar argument as in the proof for Lemma \ref{PropStabSingAC}, this leads to the stability of the singular part $\SingU$:
\begin{prop}
\label{PropStabSingAP}
Suppose that for a sequence $u_k\in\mathcal{M}[\EnAP, u_k]$, we have
$
u_k\to u\in\mathcal{M}[\EnAP, u] \text{ locally uniformly in }B_1.
$

If $x_k\in\Gamma(u_k)$ for each $k$ and $x_k\to x_\infty\in B_1$, then $x_\infty\in\GU$. 

If we further assume that $x_k\in\mathrm{Sing}(u_k)$, then $x_\infty\in\SingU.$ 
\end{prop}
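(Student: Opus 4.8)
The plan is to mimic the proof of Proposition~\ref{PropStabSingAC}, adapting each step to the $\beta$-homogeneous scaling of the Alt-Phillips functional. First I would establish that $x_\infty\in\GU$: from $u_k(x_k)=0$ and locally uniform convergence we get $u(x_\infty)=0$, and the nondegeneracy bound $\sup_{B_r(x_0)}u\ge cr^\beta$ from Proposition~\ref{PropBasicAP} transfers to the limit in the same way as in the Alt-Caffarelli case. Indeed, for fixed $r>0$,
$$
\sup_{B_r(x_\infty)}u\ge \sup_{B_{r/2}(x_k)}u_k-\sup_{B_r(x_\infty)}|u-u_k|\ge c(r/2)^\beta-o(1)>0
$$
for $k$ large (using $x_k\to x_\infty$ so that $B_{r/2}(x_k)\subset B_r(x_\infty)$ eventually), hence $x_\infty\in\partial\{u>0\}=\GU$.

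For the second assertion, I would argue by contradiction. Suppose $x_k\in\SingUT$ for each $k$ but $x_\infty\in\RedGU$. After a translation we may take $x_\infty=0$. By Definition~\ref{DefRegSingAP} and the openness of the regular set (or rather, by first blowing up $u$ at $0$ along a subsequence, obtaining a rotation of $[(x_d^+/\beta)]^\beta$, and then undoing the rescaling at a fixed small scale), we find $r>0$ small and a rotation of coordinates such that
$$
\bigl|u-[(x_d^+/\beta)]^\beta\bigr|<\tfrac14\bar\eps \, r^\beta \quad\text{ in }B_r,
$$
where $\bar\eps$ is the flatness constant from Lemma~\ref{LemEpsRegAP} (here one uses the $\beta$-homogeneity of the model profile so that the rescaling $v\mapsto r^{-\beta}v(r\,\cdot)$ preserves the class of minimizers and maps the profile to itself). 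Then, for $k$ large, $x_k\to 0$ and $u_k\to u$ locally uniformly give
$$
\bigl|u_k-[((x-x_k)\cdot e_d)^+/\beta]^\beta\bigr|<\tfrac12\bar\eps\, r^\beta \quad\text{ in }B_{r/2}(x_k),
$$
and after rescaling by the factor $r/2$ around $x_k$, the flatness hypothesis \eqref{EqnFlatnessConditionAP} of Lemma~\ref{LemEpsRegAP} is satisfied by $(u_k)_{x_k,r/2}$ for $k$ large. Lemma~\ref{LemEpsRegAP} then forces $x_k\in\RedGU[u_k]$, contradicting $x_k\in\SingUT$. This completes the proof.

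The main obstacle, and the only place requiring genuine care beyond transcription, is handling the $\beta$-homogeneous rescaling correctly: unlike the Alt-Caffarelli case where the rescaling is $u\mapsto r^{-1}u(r\,\cdot)$ and the flatness is measured with an absolute constant $\bar\eps$, here the rescaling is $u\mapsto r^{-\beta}u(r\,\cdot)$ (cf. \eqref{EqnRescaledSolAP}), so the flatness quantity picks up a factor $r^\beta$ and one must check that the model profile $[(x_d^+/\beta)]^\beta$ is genuinely fixed by this rescaling (it is, being exactly $\beta$-homogeneous) and that $u\in\mathcal{M}[\EnAP,u]$ is preserved under it. Additionally, one should make sure that the blow-up of the limit $u$ at a regular point, combined with Lemma~\ref{LemEpsRegAP}, indeed yields flatness of $u$ itself at some fixed positive scale $r$ — this is immediate from Definition~\ref{DefRegSingAP} since subsequential convergence of $u_{0,r}$ to the rotated profile means the above displayed inequality holds for some small $r>0$. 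Once these scaling bookkeeping points are settled, the rest is identical in structure to Proposition~\ref{PropStabSingAC}.
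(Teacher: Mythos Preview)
Your proposal is correct and follows exactly the approach the paper indicates (the paper does not give a separate proof of Proposition~\ref{PropStabSingAP}, merely saying it follows ``with similar argument as in the proof for'' Proposition~\ref{PropStabSingAC}). The only cosmetic point is that the constants $\tfrac14\bar\eps$ and $\tfrac12\bar\eps$, copied from the Alt-Caffarelli argument, should be replaced by slightly smaller multiples (e.g.\ $2^{-\beta-1}\bar\eps$) so that after the $\beta$-homogeneous rescaling by $r/2$ the flatness of $(u_k)_{x_k,r/2}$ is strictly below $\bar\eps$; this is purely bookkeeping and does not affect the argument.
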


The regular part $\RedGU$ is a regular surface.
\begin{thm}[Alt-Phillips \cite{AP}]
\label{ThmRegSmoothAP}
For $u\in\mathcal{M}[\EnAP, u]$, the regular part $\RedGU$ is relatively open in the free boundary $\GU$, and is locally a $C^{1,\alpha}$-hypersurface. 
\end{thm}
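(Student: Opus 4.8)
The plan is to mirror the standard two-part argument used for the Alt-Caffarelli functional, now in the $\beta$-homogeneous setting, deducing everything from the $\eps$-regularity statement of Lemma~\ref{LemEpsRegAP} together with the blow-up analysis of Proposition~\ref{PropBlowUpAP}. First I would establish that $\RedGU$ is relatively open in $\GU$. Let $x_0\in\RedGU$. By Definition~\ref{DefRegSingAP}, along some sequence $r_k\downarrow0$ the rescalings $u_{x_0,r_k}$ (as in \eqref{EqnRescaledSolAP}) converge locally uniformly to a rotation of $[(x_d^+/\beta)]^\beta$. Hence for $k$ large the rescaled minimizer $u_{x_0,r_k}$ satisfies the flatness hypothesis \eqref{EqnFlatnessConditionAP} in $B_1$ (after a rotation). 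Apply Lemma~\ref{LemEpsRegAP}: there are dimensional constants $\bar\eps,\bar r>0$ so that $\Gamma(u_{x_0,r_k})\cap B_{\bar r}\subset\Gamma^*(u_{x_0,r_k})$. Unrescaling via \eqref{EqnRescaledSolAP}, every free boundary point in $B_{\bar r r_k}(x_0)$ is regular for $u$; thus a whole neighbourhood of $x_0$ in $\GU$ lies in $\RedGU$, proving openness. A small point to check here is that being a "regular point" is invariant under the $\beta$-scaling and under rotations, which is immediate from the definition of the rescalings and the rotational symmetry of the functional $\EnAP(\cdot)$.

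Next I would upgrade the flatness conclusion to $C^{1,\alpha}$ regularity of $\RedGU$. Once Lemma~\ref{LemEpsRegAP} places us in the "flat" regime of Alt-Phillips \cite{AP}, their improvement-of-flatness / Harnack-type iteration (Theorem~6.1 and the surrounding analysis in \cite{AP}) shows that at each regular point the free boundary is, in a smaller ball, the graph of a $C^{1,\alpha}$ function, with $\alpha$ and the radius depending only on $d$ and $\gamma$. Since this holds uniformly in a neighbourhood of each $x_0\in\RedGU$, and the graph descriptions at overlapping points are compatible (they all describe the same set $\GU$), these patches glue into a locally $C^{1,\alpha}$-hypersurface structure on $\RedGU$. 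In effect, the proof is a citation of \cite{AP}: once \eqref{EqnFlatnessConditionAP} reduces to their flatness Definition~5.1 — which is exactly the content of the proof of Lemma~\ref{LemEpsRegAP}, using the nondegeneracy lower bound $\sup_{B_r(x_0)}u\ge cr^\beta$ and the $C^{1,\beta-1}$ bound of Proposition~\ref{PropBasicAP} to kill the negative-side contribution — their regularity theory applies verbatim.

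I expect the main (minor) obstacle to be purely bookkeeping: verifying that the notion of regular point in Definition~\ref{DefRegSingAP}, phrased via subsequential blow-up limits, genuinely triggers the hypothesis of Lemma~\ref{LemEpsRegAP} at a definite scale, and that the resulting $C^{1,\alpha}$ charts are consistent across points so that "locally a $C^{1,\alpha}$-hypersurface" is justified rather than merely "locally a graph near each point." There is no serious analytic difficulty beyond what is already encapsulated in Lemma~\ref{LemEpsRegAP} and the cited results of Alt-Phillips \cite{AP}; the whole statement is essentially a restatement, for the reader's convenience, of the classical regularity theory for \eqref{EqnAP}, and accordingly the proof can be kept brief, referring to \cite{AP} and Lemma~\ref{LemEpsRegAP}.
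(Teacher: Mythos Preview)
Your proposal is correct and matches the paper's treatment: the paper states Theorem~\ref{ThmRegSmoothAP} as a result of Alt--Phillips \cite{AP} and gives no proof whatsoever, simply citing it. Your sketch accurately reconstructs why the citation works---openness via Lemma~\ref{LemEpsRegAP} and Proposition~\ref{PropBlowUpAP}, then $C^{1,\alpha}$ regularity via the improvement-of-flatness machinery of \cite{AP}---and you correctly identify that the whole argument is essentially a reference to the classical theory.
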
 

Concerning the singular part $\SingU$, its Hausdorff dimension is expected to enjoy similar bounds as in Theorem \ref{ThmRegOfFBAC}, as we stated in Theorem \ref{ThmSingAP}. Unfortunately, to the best knowledge of the authors, such a result has not been rigorously established yet.  We postpone its proof to Appendix \ref{AppSingAP}.

\subsection{Tools from   geometric measure theory}
We begin with an elementary lemma about homogeneous functions:
%
%

\begin{lem}
\label{LemHomogeneousFunctions}
Let $f$ and $g$ be continuous  and $m$-homogeneous functions. Then:
$$
f(\,\cdot\, +e_1)\ge f \quad\text{in}\quad \R^d\quad \Longrightarrow  \quad  f(\,\cdot\,+se_1)\ge f\quad\text{in}\quad \R^d, \ \text{for all}\ s > 0,
$$
and
 $$f(x+e_1)\ge g(x)\ \forall x\in\R^{d}\ \Longrightarrow \ f\ge g \text{ in }\R^d.$$ 
\end{lem}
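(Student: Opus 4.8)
The plan is to exploit the scaling structure of $m$-homogeneous functions to propagate the shift by $e_1$ to an arbitrary shift. For the first implication, fix $s>0$ and suppose first that $s\ge 1$; I would write $s=1+t$ with $t\ge 0$ and try to chain the hypothesis. Actually the cleanest route is a dyadic/iterative argument combined with homogeneity: from $f(\,\cdot\,+e_1)\ge f$ applied at the point $x/\lambda$ and using $m$-homogeneity, $f(x+\lambda e_1)=\lambda^m f(x/\lambda+e_1)\ge \lambda^m f(x/\lambda)=f(x)$ for every $\lambda>0$. This already gives $f(\,\cdot\,+\lambda e_1)\ge f$ for all $\lambda>0$ in one stroke, so the first statement follows immediately upon taking $\lambda=s$. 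The only subtlety is that homogeneity must be interpreted correctly at the origin and for negative arguments, but continuity plus the defining relation $f(\mu x)=\mu^m f(x)$ for $\mu>0$ is all that is used, and the scaling identity above is valid for every $x\in\R^d$.

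For the second implication, I would again rescale: given $f(x+e_1)\ge g(x)$ for all $x$, replace $x$ by $x/\lambda$ and use that $f$ and $g$ are both $m$-homogeneous to get $f(x+\lambda e_1)=\lambda^m f(x/\lambda+e_1)\ge \lambda^m g(x/\lambda)=g(x)$ for every $\lambda>0$ and every $x\in\R^d$. Now I would let $\lambda\downarrow 0$: by continuity of $f$, $f(x+\lambda e_1)\to f(x)$, hence $f(x)\ge g(x)$ for all $x\in\R^d$. This is the desired conclusion.

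I expect no serious obstacle here; the entire content is the elementary observation that a one-unit shift, for a homogeneous function, is equivalent via rescaling to a shift of arbitrary positive length, and then one passes to the limit $\lambda\to 0^+$ using continuity. The only point requiring a line of care is justifying the scaling identity $f(x+\lambda e_1)=\lambda^m f(x/\lambda + e_1)$ uniformly in $x$ (including $x$ near the origin), which follows directly from the definition of $m$-homogeneity $f(\lambda y)=\lambda^m f(y)$ with $y=x/\lambda+e_1$. I would present both implications together since they share this single rescaling step, and note that the limit $\lambda\to 0^+$ in the second part is exactly what upgrades a pointwise-shifted inequality to an unshifted one.
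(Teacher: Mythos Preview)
Your proposal is correct and follows exactly the same route as the paper: both implications are obtained from the single rescaling identity $f(x+se_1)=s^m f(x/s+e_1)$, applying the hypothesis at $x/s$ and then, for the second part, sending $s\to 0^+$ by continuity of $f$. There is nothing to add.
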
 

\begin{proof}
For the first statement, we notice that the homogeneity of $f$  implies
$$
f(x+se_1)=s^mf(x/s+e_1)\ge s^mf(x/s)=f(x) \text{ for all }s>0.
$$

For the second statement, we notice that for all $s>0$, we have
$$
f(x+se_1)=s^mf(x/s+e_1)\ge s^mg(x/s)=g(x) \text{ in }\R^d.
$$
Sending $s\to0$, the result follows from the continuity of $f$. 
\end{proof} 

Given a set with positive Hausdorff measure,  we can always find a point of positive density:
\begin{lem}[See,e.g., \protect{\cite[Lemma 10.5]{V}}]
\label{LemExistenceOfDensityPoints}
For $s>0$, let $\mathcal{H}^{s}(\cdot)$ denotes the $s$-dimensional Hausdorff measure. 

For a set $E$ in $\R^d$, if $\mathcal{H}^s(E)>0$, then we can find a point $x_0\in E$ such that 
$$
\limsup_{r\to0}\frac{\mathcal{H}^s(E\cap B_r(x_0))}{r^s}>0.
$$
\end{lem}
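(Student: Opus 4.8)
\emph{Strategy.} I would prove the contrapositive: if $\limsup_{r\to0}r^{-s}\mathcal{H}^s(E\cap B_r(x))=0$ for every $x\in E$, then $\mathcal{H}^s(E)=0$. The mechanism is that $\mathcal{H}^s$ is a Borel regular (metric) outer measure, so its value cannot jump along an increasing exhaustion of $E$, whereas the zero‑density hypothesis will force each stratum of the exhaustion to have arbitrarily small measure. \emph{Step 1 (reduction).} First I would reduce to a bounded set of finite positive measure: since $\mathcal{H}^s(E)>0$, there is a compact $K\subseteq E$ with $0<\mathcal{H}^s(K)<\infty$ — in the situations where the lemma is applied the bounded pieces of $E$ already have finite $\mathcal{H}^s$‑measure, so one simply takes $K=E\cap\overline{B_R}$ for a suitable $R$; in full generality this is a classical fact of geometric measure theory. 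Since $\mathcal{H}^s(K\cap B_r(x))\le\mathcal{H}^s(E\cap B_r(x))$, a point of positive upper $s$-density for $K$ is one for $E$, so I may rename $K$ as $E$ and assume $0<\mathcal{H}^s(E)<\infty$.

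\emph{Step 2 (stratify and compare with near‑optimal covers).} Assume, for contradiction, that $\limsup_{r\to0}r^{-s}\mathcal{H}^s(E\cap B_r(x))=0$ for all $x\in E$. Fix $\eps>0$ with $2^s\eps<1$, and for $m\in\N$ set
$$
E_m:=\Big\{x\in E:\ \mathcal{H}^s\big(E\cap\overline{B_r(x)}\big)\le\eps(2r)^s\ \text{ for all }\ 0<r\le 1/m\Big\}.
$$
By the zero‑density hypothesis the sets $E_m$ increase to $E$. Fix $m$, let $\delta\le1/m$, and choose a cover $\{F_j\}$ of $E_m$ with $\mathrm{diam}\,F_j\le\delta$, each $F_j$ meeting $E_m$, and $\sum_j(\mathrm{diam}\,F_j)^s\le\mathcal{H}^s_\delta(E_m)+\eps$. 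Picking $x_j\in F_j\cap E_m$ we have $F_j\subseteq\overline{B_{\mathrm{diam}\,F_j}(x_j)}$ with $\mathrm{diam}\,F_j\le1/m$, so $\mathcal{H}^s(E\cap F_j)\le2^s\eps(\mathrm{diam}\,F_j)^s$; summing and using $E_m\subseteq\bigcup_jF_j$,
$$
\mathcal{H}^s(E_m)\le2^s\eps\sum_j(\mathrm{diam}\,F_j)^s\le2^s\eps\big(\mathcal{H}^s(E_m)+\eps\big).
$$
Because $\mathcal{H}^s(E_m)\le\mathcal{H}^s(E)<\infty$ and $2^s\eps<1$, this rearranges to $\mathcal{H}^s(E_m)\le 2^s\eps^2/(1-2^s\eps)$, uniformly in $m$. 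Letting $m\to\infty$ and using continuity from below of $\mathcal{H}^s$ (valid since $\mathcal{H}^s$ is Borel regular), $\mathcal{H}^s(E)\le 2^s\eps^2/(1-2^s\eps)$; sending $\eps\to0$ gives $\mathcal{H}^s(E)=0$, the desired contradiction.

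\emph{Main obstacle.} The one point needing care is recognizing that the naive approach fails: using the zero‑density hypothesis to cover $E$ directly by small, low‑density balls and summing gives no upper bound on $\mathcal{H}^s(E)$, since $\mathcal{H}^s$ is an \emph{infimum} over covers and a single inefficient cover is useless. The remedy is exactly Step 2 — intersect each stratum $E_m$ with a near‑\emph{optimal} cover of $E_m$ and absorb $\mathcal{H}^s(E_m)$ from the right‑hand side — which is why the finiteness secured in Step 1 is essential (that reduction is the only place a classical geometric‑measure‑theory fact is invoked). Equivalently, the argument can be quoted as the standard density‑comparison estimate: if $\limsup_{r\to0}r^{-s}\mathcal{H}^s(E\cap B_r(x))\le\lambda$ for all $x\in A$, then $\mathcal{H}^s(A)\le C(s)\,\lambda\,\mathcal{H}^s(A)$, applied with $A=E$ and $\lambda=0$.
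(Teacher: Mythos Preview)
The paper does not prove this lemma; it is quoted as a standard fact from geometric measure theory with a reference to \cite[Lemma~10.5]{V}, so there is no in-paper argument to compare against. Your proof is a correct rendition of the classical one: the stratification into the sets $E_m$, the comparison with a near-optimal $\delta$-cover, the absorption of $\mathcal{H}^s(E_m)$ from the right-hand side (which is exactly why the finiteness reduction in Step~1 is needed), and the appeal to Borel regularity of $\mathcal{H}^s$ to get continuity from below on arbitrary increasing sequences are all standard and valid.
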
 

Given a subset $E$ in $\R^d$ and a real-valued function $f$ on $E$, the following lemma allows us to consider only points of continuity of $f$:
\begin{lem}[\protect{\cite[Lemma 7.1]{FRS}}]
\label{LemPointsOfContinuity}
For $E\subset\R^d$ and $f:E\to\R$, define
$$
F:=\{x\in E:\hem \text{there exists a sequence }x_k\in E \text{ such that }x_k\to x  \text{ and } f(x_k)\to f(x)\}.
$$
Then $E\backslash F$ is countable. 
\end{lem}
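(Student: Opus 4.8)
\textbf{Proof strategy for Lemma~\ref{LemPointsOfContinuity}.} The plan is to describe $E\setminus F$ by a local ``value-separation'' condition and then to split it into countably many relatively discrete sets. First I would note that we may as well interpret the definition of $F$ with the approximating points $x_k$ distinct from $x$ (with constant sequences allowed, $F=E$ and there is nothing to prove). With this understanding, a point $x\in E$ fails to belong to $F$ exactly when $f(x)$ cannot be reached as a subsequential limit of values at nearby distinct points of $E$; equivalently, for each $x\in E\setminus F$ there exist $\eps_x>0$ and $r_x>0$ such that
$$
|f(y)-f(x)|\ge\eps_x\qquad\text{for all }y\in E\cap B_{r_x}(x)\text{ with }y\ne x.
$$
(For isolated points of $E$ this holds vacuously; for accumulation points it is precisely the negation of the defining property of $F$, as one checks by extracting, for each $n$, a point of $E\cap B_{1/n}(x)\setminus\{x\}$ with value within $1/n$ of $f(x)$.) Hence $E\setminus F=\bigcup_{n\in\N}G_{1/n}$, where $G_\eps$ denotes the set of $x\in E$ for which some $r_x>0$ satisfies $|f(y)-f(x)|\ge\eps$ on $E\cap B_{r_x}(x)\setminus\{x\}$.

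Next I would prove that each $G_\eps$ is countable. The idea is to sort the points of $G_\eps$ according to where their value lands: partition $\R=\bigcup_{m\in\mathbb{Z}}I_m$ with $I_m:=[m\eps/2,\,(m+1)\eps/2)$, and put $G_\eps^m:=\{x\in G_\eps:\ f(x)\in I_m\}$. If $x,x'\in G_\eps^m$ are distinct, then $|f(x)-f(x')|<\eps/2<\eps$, so the defining property of $x\in G_\eps$ forces $x'\notin B_{r_x}(x)$, i.e.\ $|x-x'|\ge r_x$ (and symmetrically $|x-x'|\ge r_{x'}$). Therefore the balls $\{B_{r_x/3}(x)\}_{x\in G_\eps^m}$ are pairwise disjoint, and a family of pairwise disjoint open balls in $\R^d$ is countable (as already used in the sketch of Theorem~\ref{ThmGU}). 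Thus $G_\eps^m$ is countable, whence $G_\eps=\bigcup_{m\in\mathbb{Z}}G_\eps^m$ and $E\setminus F=\bigcup_{n\in\N}G_{1/n}$ are countable as well.

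There is no genuine obstacle in this argument; the only point demanding a little care is the bookkeeping with the value-intervals $I_m$ — their length must be chosen strictly smaller than $\eps$, so that two points of $G_\eps$ falling into the same $I_m$ are automatically closer than $\eps$ in value, which is exactly what triggers the spatial-separation (and hence discreteness) estimate. The remainder is the elementary fact that $\R^d$ supports only countably many pairwise disjoint open balls.
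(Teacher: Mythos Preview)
The paper does not give its own proof of this lemma; it simply quotes \cite[Lemma~7.1]{FRS}. Your argument is correct and is essentially the standard one: characterize $x\in E\setminus F$ by a quantitative value--separation condition (some $\eps_x,r_x>0$ with $|f(y)-f(x)|\ge\eps_x$ on $E\cap B_{r_x}(x)\setminus\{x\}$), stratify first by the scale $\eps=1/n$ and then by the value-bucket $I_m$ of width $\eps/2$, and observe that each stratum $G_\eps^m$ is discrete, hence countable via the pairwise-disjoint-balls trick. Your remark about constant sequences is well taken: as literally written $F=E$, and the intended reading $x_k\ne x$ is also the one used implicitly in the paper's application at \eqref{EqnFurtherReducedSing}.
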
 

The following lemma allows us to estimate the dimension of a set, considering only points of continuity of a function:
\begin{lem}[\protect{\cite[Proposition 7.3]{FRS}}]
\label{LemReifenberg}
Suppose that $E\subset\R^d$ and $f:E\to\R$ satisfy the following:

For each $\eps>0$ and $x\in E$, there exists $\rho=\rho(x,\eps)>0$ such that for each $r\in(0,\rho)$, we can find an $m$-dimensional subspace $\Pi_{x,r}$, passing through $x$ and satisfying
$$
E\cap B_r(x)\cap \{y:\hem f(x)-\rho<f(y)<f(x)+\rho\}\subset\{y:\hem \mathrm{dist}(y,\Pi_{x,r})\le \eps r\}.
$$

Then the Hausdorff dimension of $E$ can be  bounded from above as 
$$
\DimH(E)\le m.
$$
\end{lem}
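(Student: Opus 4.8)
Since $\DimH(E)=\inf\{s:\mathcal{H}^s(E)=0\}$ and Hausdorff dimension is stable under countable unions, it suffices to prove that $\mathcal{H}^{m+\sigma}(E)=0$ for every fixed $\sigma>0$. The plan is to use a countable decomposition of $E$ to replace the \emph{non‑uniform} hypothesis (the scale $\rho(x,\eps)$ depends on $x$, and the inclusion only controls those $y$ whose $f$‑value is within $\rho$ of $f(x)$) by a genuinely \emph{uniform} Reifenberg‑flatness statement on each piece, and then to run the classical covering/iteration argument for Reifenberg‑flat sets.

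\textbf{The decomposition.} Fix $\sigma>0$. First choose $\eps=\eps(d,m,\sigma)\in(0,1)$ small enough that $C_d\,\eps^{\sigma}\le \tfrac12$, where $C_d$ is the dimensional constant appearing in the covering fact below. For this fixed $\eps$, define for $j\in\N$ and $i\in\Z$ the set
$$
E_{j,i}:=\Bigl\{x\in E:\ \rho(x,\eps)>\tfrac1j \ \text{ and }\ f(x)\in[\,i/j,\,(i+1)/j\,)\Bigr\}.
$$
Then $E=\bigcup_{j\in\N,\,i\in\Z}E_{j,i}$, a countable union, so it is enough to show $\mathcal{H}^{m+\sigma}(E_{j,i})=0$ for each $(j,i)$. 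The point of this splitting is the following \emph{uniform flatness}: if $x\in E_{j,i}$ and $0<r<1/j$, then for every $y\in E_{j,i}\cap B_r(x)$ we have $|f(x)-f(y)|<1/j<\rho(x,\eps)$, so $y$ lies in the set appearing in the hypothesis and hence $\mathrm{dist}(y,\Pi_{x,r})\le\eps r$; that is,
$$
E_{j,i}\cap B_r(x)\subset\{y:\ \mathrm{dist}(y,\Pi_{x,r})\le\eps r\}\qquad\text{for all }x\in E_{j,i},\ 0<r<\tfrac1j .
$$

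\textbf{The covering iteration.} Fix $(j,i)$, write $G:=E_{j,i}$ and $\rho_0:=1/j$. I will use the elementary fact that a nonempty subset of $B_r(z)\cap\{\mathrm{dist}(\cdot,\Pi)\le\eps r\}$, with $\Pi$ an affine $m$‑plane, can be covered by at most $C_d\,\eps^{-m}$ balls of radius $\eps r$ centered at points of that subset (take a maximal $\eps r$‑separated subset and compare the volume of the disjoint $\eps r/2$‑balls around those points with the volume of a thickened slab in $B_{2r}(z)$). Now fix $x_0\in G$ and $r_0\in(0,\rho_0)$. Starting from the single ball $B_{r_0}(x_0)$ and applying the uniform flatness together with this covering fact at scale $r_n:=\eps^{n}r_0<\rho_0$, one produces by induction a cover of $G\cap B_{r_0}(x_0)$ by at most $(C_d\,\eps^{-m})^{n}$ balls of radius $r_n=\eps^{n}r_0$, each centered at a point of $G$ (so that the hypothesis can be reapplied at the next step). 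Therefore
$$
\mathcal{H}^{m+\sigma}_{\,2r_n}\bigl(G\cap B_{r_0}(x_0)\bigr)\le (C_d\,\eps^{-m})^{n}\,(2r_n)^{m+\sigma}=2^{m+\sigma}r_0^{m+\sigma}\bigl(C_d\,\eps^{\sigma}\bigr)^{n}\xrightarrow[n\to\infty]{}0,
$$
by the choice of $\eps$, and since $r_n\to0$ this gives $\mathcal{H}^{m+\sigma}(G\cap B_{r_0}(x_0))=0$. Covering $G$ by countably many such balls yields $\mathcal{H}^{m+\sigma}(G)=0$, hence $\mathcal{H}^{m+\sigma}(E)=0$; as $\sigma>0$ was arbitrary, $\DimH(E)\le m$.

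\textbf{Main obstacle.} The only real subtlety is the bookkeeping in the decomposition step: one must absorb at once the $x$‑dependence of the admissible scale $\rho(x,\eps)$ \emph{and} the restriction of the inclusion to the thin $f$‑window $\{|f(y)-f(x)|<\rho\}$, without any continuity assumption on $f$ — this is exactly what the double index $(j,i)$ and the half‑open partition of the range of $f$ achieve. Once the hypothesis has been upgraded to a uniform Reifenberg‑flat bound on each $E_{j,i}$, the remainder is the standard iterated covering estimate and requires no information on how $\Pi_{x,r}$ varies with $r$, which is why we obtain a dimension bound (rather than any structural/rectifiability statement) cleanly.
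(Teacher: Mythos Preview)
Your argument is correct. The paper does not supply its own proof of this lemma; it is simply quoted from \cite[Proposition~7.3]{FRS}, so there is nothing in the paper to compare against directly. What you wrote is the standard route to such a Reifenberg-type dimension bound: first absorb the two sources of non-uniformity (the $x$-dependent scale $\rho(x,\eps)$ and the $f$-window) by the double-index decomposition $E=\bigcup_{j,i}E_{j,i}$, and then run the usual geometric covering iteration on each piece. Both steps are carried out cleanly; in particular, your observation that centering the covering balls at points of $G$ is what allows the iteration to continue is exactly the point, and the volume/packing justification of the $C_d\eps^{-m}$ bound is fine. This is, up to cosmetic differences, the argument in \cite{FRS}.
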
 

In this work, for a set $E$ in space-time $\R^d\times\R$, we use $\pi_x:E\to\R^d$ and $\pi_t:E\to\R$ to denote the canonical projections, that is, 
\begin{equation}
\label{EqnSpaceTimeProjections}
\pi_x(x,t)=x, \text{ and }\pi_t(x,t)=t.
\end{equation} 

The following is fundamental  for Hausdorff measure-based generic regularity from \cite{FRS}. We state a special case  for our purpose:
\begin{lem}[\protect{\cite[Corollary 7.8]{FRS}}]
\label{LemGenericReduction}
Suppose that for some $s>0$ and $\gamma\in[s,d]$, the set $E\subset\R^d\times(-1,1)$ satisfies
\begin{enumerate}
\item{$\DimH(\pi_x(E))\le\gamma$; and }
\item{For each $(x_0,t_0)\in E$ and $\eps>0$,  there exists $\rho=\rho(x_0,t_0,\eps)>0$ such that 
$$
E\cap\{(x,t)\in B_{\rho}(x_0)\times(-1,1):\hem t-t_0>|x-x_0|^{s-\eps}\}=\emptyset.
$$}
Then we have
$$
\DimH(E\cap\pi_t^{-1}(t))\le\gamma-s\hem \text{ for almost every } t\in(-1,1).
$$
\end{enumerate}
\end{lem}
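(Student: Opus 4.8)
\textbf{Proof plan for Lemma~\ref{LemGenericReduction}.} The statement I am asked to prove is the Hausdorff-measure-based generic reduction lemma quoted from \cite[Corollary 7.8]{FRS}, so strictly speaking the plan is to invoke that reference; but let me sketch how one would prove it from scratch, since the structure is instructive. The idea is to fix a target dimension $m := \gamma - s$ and show that the set of \emph{bad} times
$$
T_{\rm bad} := \{t\in(-1,1):\ \mathcal{H}^{m+\delta}(E\cap\pi_t^{-1}(t))>0\}
$$
is Lebesgue-null for every fixed $\delta>0$; letting $\delta\downarrow 0$ along a sequence then gives $\DimH(E\cap\pi_t^{-1}(t))\le m$ for a.e.\ $t$. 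The two hypotheses are used as follows: hypothesis (1), $\DimH(\pi_x(E))\le\gamma$, controls the ``horizontal size'' of $E$, while hypothesis (2) is a \emph{cone condition} in space-time saying that near each point of $E$, the set $E$ avoids an upward-opening cusp $\{t-t_0>|x-x_0|^{s-\eps}\}$; this is precisely what forces a quantitative separation between slices at nearby times.

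The key steps, in order, would be: (i) \emph{Covering.} Fix $\eps>0$ small. Cover $\pi_x(E)$ by balls $B_{r_i}(x_i)$ with $\sum r_i^{\gamma+\eps}$ as small as we like, possible by hypothesis (1). (ii) \emph{Slicing each ball in time.} For a fixed spatial ball $B_{r_i}(x_i)$, consider the piece $E_i := E\cap(B_{r_i}(x_i)\times(-1,1))$. Using the cone condition (2) at a reference point, one shows that the time-projection $\pi_t(E_i)$ can meet a given height interval of length $\sim r_i^{s-\eps}$ only in a way that, as the interval slides, the relevant slices $E\cap\pi_t^{-1}(t)$ for $t$ in that interval are contained in $B_{r_i}(x_i)$. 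Thus, partitioning the time axis $(-1,1)$ into $\sim r_i^{-(s-\eps)}$ intervals of length $\sim r_i^{s-\eps}$, each spatial ball $B_{r_i}(x_i)$ contributes, for each such time-interval $J$, a set of diameter $\lesssim r_i$ to the slices $E\cap\pi_t^{-1}(t)$, $t\in J$. (iii) \emph{Integrating the Hausdorff pre-measure over time.} For each fixed $t$, the slice $E\cap\pi_t^{-1}(t)$ is covered by those balls $B_{r_i}(x_i)$ whose associated time-interval contains $t$. Hence
$$
\int_{-1}^{1}\mathcal{H}^{m+\delta}_{\infty}\big(E\cap\pi_t^{-1}(t)\big)\,dt
\;\lesssim\; \sum_i r_i^{\,m+\delta}\cdot r_i^{\,s-\eps}
\;=\;\sum_i r_i^{\,\gamma+\delta-\eps},
$$
using $m+s=\gamma$, where $\mathcal{H}^{m+\delta}_\infty$ is the Hausdorff content at that scale. (iv) \emph{Conclusion.} Choosing $\eps<\delta$ and then choosing the covering so that $\sum_i r_i^{\gamma+\eps}$ (hence $\sum_i r_i^{\gamma+\delta-\eps}$, since $r_i\le 1$ and $\delta-\eps\ge$ is comparable) is arbitrarily small, we get $\int_{-1}^1 \mathcal{H}^{m+\delta}_\infty(E\cap\pi_t^{-1}(t))\,dt=0$, so $\mathcal{H}^{m+\delta}_\infty(E\cap\pi_t^{-1}(t))=0$ for a.e.\ $t$, which forces $\DimH(E\cap\pi_t^{-1}(t))\le m+\delta$ for a.e.\ $t$. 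A countable intersection over $\delta=1/n$ finishes the proof.

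\textbf{Main obstacle.} The delicate point is step (ii): translating the pointwise cusp-avoidance condition (2) — which involves an $\eps$-dependent exponent $s-\eps$ and a radius $\rho$ that depends on the point — into a \emph{uniform} covering statement that can be summed. One has to be careful that the threshold $\rho(x_0,t_0,\eps)$ is not uniform over $E$, so the argument must be localized: decompose $E=\bigcup_k E_k$ where on $E_k$ one has $\rho\ge 1/k$, prove the slice bound for each $E_k$ separately (where the covering balls are taken of radius $\le 1/k$), and then use countable stability of ``a.e.\ $t$'' and of Hausdorff dimension. Making the geometry of the space-time cusp interact correctly with the product covering $B_{r_i}(x_i)\times J$ — in particular checking that a height interval of length $\sim r_i^{s-\eps}$ is the right scale so that all of $E$ lying above $x_i$-neighborhood within that height band is trapped in the cusp emanating from a single reference point — is the crux and is exactly the content of \cite[Section 7]{FRS}; I would cite it rather than reproduce it in full.
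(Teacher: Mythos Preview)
The paper does not prove this lemma at all; it is stated with a direct citation to \cite[Corollary~7.8]{FRS} and used as a black box. Your plan to invoke that reference is therefore exactly what the paper does, and your sketch correctly reproduces the FRS argument: cover $\pi_x(E)$ at scale $r_i$ using hypothesis (1), use the cusp condition to confine the time-projection of each piece to an interval of length $\lesssim r_i^{s-\eps}$, and integrate the Hausdorff content over $t$.

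Two minor clean-ups in your sketch. First, in step (ii) you over-complicate the picture with a ``partition of the time axis into $\sim r_i^{-(s-\eps)}$ intervals''; after the localization to $E_k=\{\rho\ge 1/k\}$ (which you correctly flag), the cusp condition applied at \emph{both} endpoints of a pair in $E_i$ gives the two-sided bound $|t_1-t_2|\lesssim r_i^{s-\eps}$, so $\pi_t(E_i)$ already sits in a \emph{single} interval of that length and the integral in step (iii) follows immediately. Second, in step (iv) the phrase ``$\delta-\eps\ge$ is comparable'' is garbled; the clean statement is: choose $\eps<\delta$, then $\gamma+\delta-\eps>\gamma$, so $\mathcal{H}^{\gamma+\delta-\eps}(\pi_x(E))=0$ by hypothesis (1) and the covering can be taken with $\sum_i r_i^{\gamma+\delta-\eps}$ arbitrarily small directly. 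With these cosmetic fixes the argument is complete and matches \cite{FRS}.
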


\section{Generic uniqueness for general energy functionals}
\label{sec:3}
In this section, we study the generic uniqueness of minimizers for the energy  $\En_\pm(\cdot)$ in \eqref{EqnFreeBoundaryEnergy} and prove Theorem \ref{ThmGU}.  For the nonlinearities $F_\pm$, we assume \eqref{EqnConditionOnFpm}. Recall  the set of minimizers $\mathcal{M}[\cdot,\cdot]$ defined in \eqref{EqnSetOfMin}.

\vem

We begin with an elementary lemma concerning the topology of super level sets of a minimizer:
\begin{lem}
\label{LemComponentsGoesToBoundary}
Suppose that $u\in\mathcal{M}[\varphi]$ for a Lipschitz function $\varphi$. 

Let $\mathcal{C}$ be a connected component of $\PosS\cap B_1$.  Then,  there is a point $x_0$ such that 
$$
x_0\in \overline{\mathcal{C}}\cap\{\varphi>0\}\cap\partial B_1. 
$$
Moreover, if $\mathcal{D}$ is the  connected component of $\{\varphi>0\}\cap\partial B_1$  containing $x_0$, then 
$$
\overline{\mathcal{C}}\supset\mathcal{D}.
$$

Analogous results hold for connected components of $\NegS\cap B_1.$
\end{lem}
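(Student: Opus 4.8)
The plan is to obtain the first assertion by contradiction via an energy competitor, and then to promote it to the second assertion by a connectedness argument on $\partial B_1$. The statement for connected components of $\NegS\cap B_1$ will then follow by applying the result to $-u$, which minimizes the energy obtained from $\En_\pm$ by interchanging $F_+$ and $F_-$ (this interchange preserves \eqref{EqnConditionOnFpm}) with boundary datum $-\varphi$, and whose positivity set is $\NegS$.

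First I would record the structure of $\mathcal{C}$. By Proposition~\ref{PropHolder}, $u\in C(\overline{B_1})$ and $u=\varphi$ on $\partial B_1$. Since $\mathcal{C}$ is a connected component of the open set $\PosS\cap B_1$, it is open, $u>0$ on $\mathcal{C}$, and, being relatively closed in $\PosS\cap B_1$, it satisfies $u\equiv 0$ on $\partial\mathcal{C}\cap B_1$. Suppose, for contradiction, that $\overline{\mathcal{C}}\cap\{\varphi>0\}\cap\partial B_1=\emptyset$. On $\overline{\mathcal{C}}\cap\partial B_1$ we then have $\varphi\le 0$, while $u\ge 0$ on $\overline{\mathcal{C}}$ by continuity and $u=\varphi$ on $\partial B_1$; hence $u\equiv 0$ on $\partial\mathcal{C}$, which is nonempty because $\mathcal{C}$ is a proper nonempty open subset of the connected set $\overline{B_1}$. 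I would then test minimality against $v:=u-u\chi_{\mathcal{C}}$ (that is, $v=0$ on $\mathcal{C}$ and $v=u$ elsewhere); granting that $v\in H^1_0(B_1)+\varphi$ (see the last paragraph), and using $u>0$ on $\mathcal{C}$, one computes
$$
\En_\pm(u)-\En_\pm(v)=\int_{\mathcal{C}}(\Diri+F_+(u))\ge\int_{\mathcal{C}}\Diri\ge 0 .
$$
Minimality of $u$ forces this to be $0$, so $\nabla u\equiv 0$ on the connected set $\mathcal{C}$; then $u$ is constant on $\mathcal{C}$, and continuity together with $u\equiv 0$ on $\partial\mathcal{C}$ gives $u\equiv 0$ on $\mathcal{C}$, contradicting $u>0$ there. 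This produces a point $x_0\in\overline{\mathcal{C}}\cap\{\varphi>0\}\cap\partial B_1$.

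For the second assertion, let $\mathcal{D}$ be the connected component of $\{\varphi>0\}\cap\partial B_1$ containing $x_0$ (an open, connected subset of $\partial B_1$). I would argue that $A:=\overline{\mathcal{C}}\cap\mathcal{D}$ is clopen in $\mathcal{D}$; since $x_0\in A$ and $\mathcal{D}$ is connected, this forces $A=\mathcal{D}$, i.e. $\overline{\mathcal{C}}\supset\mathcal{D}$. Closedness of $A$ in $\mathcal{D}$ is immediate. For openness, fix $y_0\in A$: since $\varphi(y_0)>0$ and $u=\varphi\in C(\overline{B_1})$, there is $\rho>0$ with $u>0$ on $B_\rho(y_0)\cap\overline{B_1}$ and with $B_\rho(y_0)\cap\partial B_1$ a connected subset of $\{\varphi>0\}\cap\partial B_1$, hence contained in $\mathcal{D}$. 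The ``lens'' $B_\rho(y_0)\cap B_1$ is open, connected, nonempty and contained in $\PosS$, so it lies in a single connected component of $\PosS\cap B_1$; since $y_0\in\overline{\mathcal{C}}$, that component is $\mathcal{C}$. Taking closures (every point of $B_\rho(y_0)\cap\partial B_1$ is a limit of points of $B_\rho(y_0)\cap B_1$) gives $B_\rho(y_0)\cap\partial B_1\subset\overline{\mathcal{C}}\cap\mathcal{D}=A$, so $A$ contains a relative neighborhood of $y_0$.

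The step I expect to be the main obstacle is the admissibility of the competitor, i.e. proving $u\chi_{\mathcal{C}}\in H^1_0(B_1)$ (so that $v\in H^1_0(B_1)+\varphi$). This is exactly where the boundary behavior of $u$ on $\overline{\mathcal{C}}\cap\partial B_1$ enters: because $u\equiv 0$ on $\partial\mathcal{C}$, for each $\delta>0$ the set $\{u\ge\delta\}\cap\overline{\mathcal{C}}$ is a compact subset of $\mathcal{C}$, so $(u-\delta)^+\chi_{\mathcal{C}}$ has compact support in $\mathcal{C}$ and thus belongs to $H^1_0(B_1)$; letting $\delta\downarrow 0$ (dominated convergence, together with $\nabla u=0$ a.e. on $\{u=0\}$) yields $(u-\delta)^+\chi_{\mathcal{C}}\to u\chi_{\mathcal{C}}$ in $H^1(B_1)$, hence $u\chi_{\mathcal{C}}\in H^1_0(B_1)$. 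The remaining ingredients are elementary point-set topology and the up-to-the-boundary continuity of minimizers from Proposition~\ref{PropHolder}.
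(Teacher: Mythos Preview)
Your proof is correct, but the first assertion is obtained by a different (and somewhat heavier) route than the paper's. The paper bypasses the competitor construction entirely: from \eqref{EqnSecondEL} and \eqref{EqnConditionOnFpm} one has $\Delta u=F_+'(u)/2\ge 0$ in $\mathcal{C}\subset\PosS$, so $u$ is subharmonic there; under the contradiction hypothesis $\partial\mathcal{C}\subset\{u\le 0\}$, and the maximum principle immediately forces $u\le 0$ in $\mathcal{C}$, contradicting $u>0$. Your energy-competitor argument (setting $v=u-u\chi_{\mathcal{C}}$ and verifying $u\chi_{\mathcal{C}}\in H^1_0(B_1)$ via the truncations $(u-\delta)^+\chi_{\mathcal{C}}$) works just as well and has the mild advantage of using only minimality rather than the Euler--Lagrange equation, but at the cost of the admissibility verification you flagged. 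For the second assertion, the paper simply invokes continuity of $u$ in $\overline{B_1}$ to conclude $\mathcal{D}\subset\overline{\mathcal{C}}$; your clopen argument is a faithful unpacking of that one-line claim, with the key point being that near any $y_0\in\mathcal{D}\cap\overline{\mathcal{C}}$ the ``lens'' $B_\rho(y_0)\cap B_1$ is connected, contained in $\PosS$, and meets $\mathcal{C}$, hence lies in $\mathcal{C}$.
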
 
%
%
%
%
%

 \begin{proof}
Proposition \ref{PropHolder} implies that $u\in C(\overline{B_1})$.

Suppose, by contradiction that 
$
\overline{\mathcal{C}}\cap\{\varphi>0\}\cap\partial B_1=\emptyset.
$ Then we have
$
\partial\mathcal{C}\subset\{u\le0\}, 
$
and 
$
\Delta u\ge 0 \text{ in }\mathcal{C}
$
(by \eqref{EqnSecondEL}). Maximum principle implies that $u\le 0$ in $\mathcal{C}$, contradicting the definition of $\mathcal{C}$.

Finally, by continuity of $u$, we immediately have $\mathcal{D}\subset \overline{\mathcal{C}}$.
\end{proof}

With this, we prove the ordering property for minimizers:
\begin{prop}
\label{PropComparison}
For a family of boundary data $\FamPhi$ satisfying Assumption \ref{AssGU}, suppose that 
$$
u_t\in\mathcal{M}[\varphi_t], \text{ and }\hem u_s\in\mathcal{M}[\varphi_s] \hem\text{ for some }t>s.
$$
Then 
$$
u_t\ge u_s \text{ in }B_1.
$$
\end{prop}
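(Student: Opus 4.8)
The plan is to run a contradiction/continuity argument built on Corollary~\ref{CorCAndP} and Lemma~\ref{LemComponentsGoesToBoundary}. Since $\varphi_t\ge\varphi_s$ on $\partial B_1$ by Assumption~\ref{AssGU}(2), Corollary~\ref{CorCAndP} applies: $u_t\vee u_s\in\mathcal{M}[\varphi_t]$ and $u_t\wedge u_s\in\mathcal{M}[\varphi_s]$. So it suffices to show $w:=u_t\vee u_s$ equals $u_t$, equivalently that the open set $U:=\{u_s>u_t\}\cap B_1$ is empty. Note $U$ does not meet $\partial B_1$ (in the trace sense) since $\varphi_t\ge\varphi_s$ there. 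Both $u_t$ and $w$ are continuous on $\overline{B_1}$ by Proposition~\ref{PropHolder}, and on $U$ we have $w=u_s$, which is a minimizer with its own boundary data and hence solves \eqref{EqnSecondEL}.

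The key point is to derive a differential inequality inside each connected component $\mathcal{C}$ of $U$ that forces $u_s\le u_t$ there by the maximum principle, contradicting the definition of $\mathcal{C}$. On $\mathcal{C}$ we have $u_s>u_t$; in particular $u_s>0$ on $\mathcal{C}$, so $\Delta u_s=F_+'(u_s)/2$ there by \eqref{EqnSecondEL}. For $u_t$: wherever $u_t>0$ on $\mathcal{C}$ we have $\Delta u_t=F_+'(u_t)/2$, and since $u_s>u_t>0$ and $F_+'$ is non-increasing on $(0,\infty)$ (because $F_+$ is concave, Assumption~\eqref{EqnConditionOnFpm}), $F_+'(u_s)\le F_+'(u_t)$, so $\Delta(u_s-u_t)=(F_+'(u_s)-F_+'(u_t))/2\le 0$ on $\{u_t>0\}\cap\mathcal{C}$. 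Wherever $u_t\le 0$ on $\mathcal{C}$: if $u_t<0$ then $\Delta u_t=-F_-'(u_t^-)/2\le 0$, so $\Delta(u_s-u_t)=F_+'(u_s)/2+F_-'(u_t^-)/2\ge 0$ there — this is the wrong sign, so I must instead argue that $u_t\ge 0$ on all of $\mathcal C$. That is where the structure of $U$ is used: I would show that $u_t>0$ on $\mathcal C$ (hence $u_t\ge 0$ always there is automatic), because $\mathcal C\subset\{u_s>u_t\}$ and, crucially, by Lemma~\ref{LemComponentsGoesToBoundary} applied to the component of $\PosS(u_s)$ containing $\mathcal C$, this component reaches a connected component $\mathcal D$ of $\{\varphi_s>0\}\cap\partial B_1$; along $\mathcal D$ one has $\varphi_t\ge\varphi_s>0$, so $u_t>0$ near those boundary points, and I propagate positivity of $u_t$ into $\mathcal C$ using that on $\overline{\mathcal C}\setminus\mathcal C$ either we hit $\partial B_1$ where $u_t\ge u_s\ge 0$, or we are in $\{u_s=u_t\}$.

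Once $u_t\ge 0$ on $\mathcal C$, the comparison holds: $\Delta(u_s-u_t)\le 0$ in $\mathcal C$ (where $u_t>0$ use concavity as above; on the relatively closed subset $\{u_t=0\}\cap\mathcal C$, $u_s-u_t=u_s$ is superharmonic-free — here one must be slightly careful, using that $u_s-u_t$ is a supersolution in the viscosity/distributional sense across $\partial\{u_t>0\}$ since $u_t$ is subharmonic). On $\partial\mathcal C$ we have $u_s-u_t\le 0$: interior boundary points lie in $\{u_s=u_t\}$ by definition of $\mathcal C$ as a component of the open set $U$, and boundary points on $\partial B_1$ satisfy $\varphi_s\le\varphi_t$. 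The maximum principle then gives $u_s\le u_t$ in $\mathcal C$, contradicting $\mathcal C\subset\{u_s>u_t\}\ne\emptyset$. Hence $U=\emptyset$ and $u_t\ge u_s$ in $B_1$. The main obstacle is the bookkeeping in the third paragraph: showing $u_t$ is nonnegative (indeed positive) throughout each component of $\{u_s>u_t\}$, which is exactly where Assumption~\ref{AssGU}(3) — ruling out disconnected components that "don't see each other" — and Lemma~\ref{LemComponentsGoesToBoundary} are essential; the concavity-driven sign of $\Delta(u_s-u_t)$ is then routine.
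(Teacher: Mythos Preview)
Your argument has a genuine gap. You assert that on each connected component $\mathcal{C}$ of $\{u_s>u_t\}$ one has $u_s>0$, but $u_s>u_t$ does not imply $u_s>0$: a priori one could have $u_t<u_s\le 0$ on part of $\mathcal{C}$. You then try to repair this by showing $u_t\ge 0$ on $\mathcal{C}$ via Lemma~\ref{LemComponentsGoesToBoundary}, but that argument is circular---to apply the lemma to ``the component of $\{u_s>0\}$ containing $\mathcal{C}$'' you already need $\mathcal{C}\subset\{u_s>0\}$---and the ``propagate positivity of $u_t$'' step is not an argument (nothing prevents $u_t$ from vanishing or becoming negative in the interior even if it is positive near some boundary arc). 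Moreover, even granting $u_t\ge 0$ on $\mathcal{C}$, you would still need $\Delta(u_s-u_t)\le 0$ across the free boundary $\partial\{u_t>0\}$, which is delicate. Finally, your outline never makes explicit use of Assumption~\ref{AssGU}\eqref{it:333}; without the \emph{strict} increase somewhere on each boundary component you cannot close the argument.

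The paper avoids all of this by comparing $u_s$ not with $u_t$ but with the maximum $V:=u_t\vee u_s$, which by Corollary~\ref{CorCAndP} is itself a minimizer in $\mathcal{M}[\varphi_t]$ and hence solves \eqref{EqnSecondEL} wherever it is positive. One then works on a connected component $\mathcal{C}$ of $\{u_s>0\}$ (not of $\{u_s>u_t\}$): there $V\ge u_s>0$, so both satisfy $\Delta f=F_+'(f)/2$, concavity gives $\Delta(V-u_s)\le 0$, and if the ordering fails at some $x_0\in\mathcal{C}$ then $V-u_s$ attains its minimum $0$ at an interior point. The \emph{strong} maximum principle forces $V\equiv u_s$ on all of $\mathcal{C}$, hence (by Lemma~\ref{LemComponentsGoesToBoundary}) on an entire boundary component $\mathcal{D}\subset\{\varphi_s>0\}\cap\partial B_1$; since $V=\varphi_t$ on $\partial B_1$ this gives $\varphi_t=\varphi_s$ on $\mathcal{D}$, contradicting Assumption~\ref{AssGU}\eqref{it:333}. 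A symmetric argument with $v:=u_t\wedge u_s$ on components of $\{u_t<0\}$ handles the negative phase. The two key moves you are missing are (i) replacing $u_t$ by $V$ so that both comparands solve the PDE on the \emph{same} region, and (ii) choosing that region to be a component of $\{u_s>0\}$ so the signs are known a priori.
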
 

\begin{proof}
With our notation \eqref{EqnMaxMin}, we take
$$
V:=u_t\vee u_s \text{ and }v:=u_t\wedge u_s.
$$
Since   $\varphi_t\ge\varphi_s$ on $\partial B_1$, 
 Corollary \ref{CorCAndP} implies
$$
V\in\mathcal{M}[\varphi_t] \text{ and }v\in\mathcal{M}[\varphi_s].
$$

\vem

\noindent \textit{Case 1: Ordering of $u_t$ and $u_s$ in $\{u_s>0\}\cap B_1$.}

Suppose the ordering fails at a point $x_0\in\{u_s>0\}\cap B_1$, then 
$
u_t(x_0)<u_s(x_0),
$
which implies
\begin{equation}
\label{EqnGUTempo1}
V(x_0)=u_s(x_0).
\end{equation}

Let $\mathcal{C}$ denote the connected component of $\{u_s>0\}\cap B_1$ which contains $x_0$. Then we have
$$
V\ge u_s>0 \text{ in }\mathcal{C}.
$$
With $V\in\mathcal{M}[\varphi_t] $ and $u_s\in\mathcal{M}[\varphi_s]$, by \eqref{EqnSecondEL}, we have
$$
\Delta V=F'_+(V)/2, \text{ and }\Delta u_s=F'_+(u_s)/2 \text{ in }\mathcal{C}.
$$
The concavity of $F_+$ as in \eqref{EqnConditionOnFpm} leads to
$$
\Delta (V-u_s)\le 0 \text{ in }\mathcal{C}.
$$
Thanks to  \eqref{EqnGUTempo1}, the strong maximum principle forces
$
V\equiv u_s \text{ in }\mathcal{C}.
$
In particular, $V = u_s$ on a connected component of $\{u_s > 0\}\cap \partial B_1$ by Lemma~\ref{LemComponentsGoesToBoundary}, but since $V = u_t$ on $\partial B_1$, we get a contradiction with Assumption~\ref{AssGU}-\eqref{it:333}.

As a result, we have
$$
u_t\ge u_s \text{ in }\{u_s>0\}\cap B_1.
$$

In particular, if there is a point where the ordering fails, it has to lie in $\{u_t<0\}\cap\{u_s\le 0\}\cap B_1$. Consequently, it suffices to show establish the following:

\vem

\noindent \textit{Case 2: Ordering of $u_t$ and $u_s$ in $\{u_t<0\}\cap B_1.$} It follows like the previous case: if there is a point $x_0\in\{u_t<0\}\cap B_1$ such that $u_t(x_0)<u_s(x_0)$, then we have
$
v(x_0)=u_t(x_0).
$

In $\mathcal{C}$ a connected component of $\{u_t < 0 \}\cap B_1$ containing $x_0$, we have (by \eqref{EqnSecondEL}), 
$$
\Delta v=-F'_-(v^-)/2, \text{ and }\Delta u_t=-F'_-(u_t^-)/2 \text{ in }\mathcal{C},
$$
and the concavity of $F_-$ implies
$$
\Delta (v-u_t)\ge0 \text{ in }\mathcal{C}. 
$$
From $v(x_0)=u_t(x_0)$, the same argument as in  \textit{Step 1} leads to a contradiction  with Assumption~\ref{AssGU}-\eqref{it:333}.
\end{proof} 

Before the proof of Theorem \ref{ThmGU}, we introduce some notations. For a function $f:B_1\to\R$, we denote its epigragh and hypograph by $\Epi(f)$ and $\Hyp(f)$ respectively, that is,
$$
\Epi(f):=\{(x,y)\in B_1\times\R: y>f(x)\}, \text{ and }\Hyp(f):=\{(x,y)\in B_1\times\R: y<f(x)\}.
$$

\begin{proof}[Proof of Theorem \ref{ThmGU}]
Define $I\subset(-1,1)$ as 
$$
I:=\{t\in(-1,1):\hem\mathcal{M}[\varphi_t] \text{ contains at least 2 elements}\},
$$
we need to show that $I$ is at most countable.

Suppose that $t,s\in I$ with $t>s$. By the definition of $I$, we  find $u_t,v_t\in\mathcal{M}[\varphi_t]$, and a point $x_0$ such that $u_t(x_0)<v_t(x_0)$. By the continuity of $u_t$ and $v_t$,  we find a nontrivial ball $\mathcal{B}(t)\subset\Epi(u_t)\cap\Hyp(v_t)$. Similarly, we find
$u_s,v_s\in\mathcal{M}[\varphi_s]$ and  a nontrivial ball $\mathcal{B}(s)\subset\Epi(u_s)\cap\Hyp(v_s)$.

Proposition \ref{PropComparison} implies that $u_t\ge v_s$ in $B_1$. As a result, we have
$
\Epi(u_t)\cap\Hyp(v_s)=\emptyset,
$
which implies 
$$
\mathcal{B}(t)\cap\mathcal{B}(s)=\emptyset.
$$

Therefore, the collection $\{\mathcal{B}(t)\}_{t\in I}$ consists of nontrivial balls that are mutually disjoint. Such a collection is at most countable.
\end{proof} 

As an immediate consequence we have:
\begin{proof}[Proof of Corollary~\ref{CorGU}]
Follows directly from Theorem~\ref{ThmGU} by regularizing the boundary datum in the corresponding space and perturbing it by an arbitrary small constant. 
\end{proof}

\section{Generic regularity  in the Alt-Caffarelli problem}
\label{SecGRAC}
Starting from this section, we consider nonnegative minimizers for the one-phase Alt-Caffarelli energy \eqref{EqnAC} and the one-phase Alt-Phillips energy  \eqref{EqnAP}. The goal is to establish the generic regularity of  free boundariesfrom Theorems~\ref{ThmGRAC} and Theorem~\ref{ThmGRAP}.

\subsection{General discussions}
Let $\En(\cdot)$ denote either the Alt-Caffarelli or the Alt-Phillips functional, and let $\FamPhi$ be a family of boundary data satisfying Assumption~\ref{AssGR}. We are interested in the collection of free boundary points in space-time, that is,
\begin{equation}
\label{EqnSpaceTimeFreeBoundary}
\mathcal{G}:=\{(x,t)\in B_1\times(-1,1):\hem x\in\Gamma(u_t) \hem\text{ for some }\hem u_t\in\mathcal{M}[\En,\varphi_t]\},
\end{equation} 
where $\mathcal{M}[\cdot,\cdot]$ is the set of minimizers (see \eqref{EqnSetOfMin}), and $\Gamma(\cdot)$ is the free boundary (see~\eqref{EqnFreeBoundary}).

We are concerned with the set of singularities, namely,
\begin{equation}
\label{EqnSpaceTimeSing}
\mathcal{S}:=\{(x,t)\in B_1\times(-1,1):\hem x\in\mathrm{Sing}(u_t) \hem\text{ for some }\hem u_t\in\mathcal{M}[\En,\varphi_t]\},
\end{equation} 
where the singular part $\mathrm{Sing}(\cdot)$ is defined in Definitions~\ref{DefRegSingAC} and~\ref{DefRegSingAP}.
Thanks to Theorem \ref{ThmGU}, we can discard a countable subset of $(-1,1)$,  and consider the reduced set of singularities
\begin{equation*}
\label{EqnReducedSing}
\begin{split}\mathcal{S}''& :=\{(x,t)\in B_1\times[(-1,1)\backslash I]:\hem x\in\mathrm{Sing}(u_t) \text{ for }u_t\in\mathcal{M}[\En,\varphi_t]\}\\
& = \{(x,t)\in B_1\times(-1,1):\hem x\in\mathrm{Sing}(u_t) \text{ and }\mathcal{M}[\En,\varphi_t] = \{u_t\}\},
\end{split}
\end{equation*} 
 and by Remark~\ref{RemContAlmEverywhere} we can further consider 
\[
\begin{split}\mathcal{S}'& :=\{(x,t)\in \mathcal{S}'' : t\in (-1, 1)\setminus I_D\}.
\end{split}
\]

The set $I$, as in Theorem \ref{ThmGU}, contains the collection of time instances $t$ when $\mathcal{M}[\En, \varphi_t]$ contains more than one element, and the set $I_D$ contains those times $t$ where the maps $t\mapsto\varphi_t$ is not continuous. Both sets are countable.  

Each of these sets is stratified according to the \textit{density of the Weiss balanced energy}. To be precise, let $W(\cdot)$ denote the Weiss balanced energy from \eqref{EqnWeissAC} and \eqref{EqnWeissAP}. By  either Theorem \ref{ThmWeissAC} or Theorem \ref{ThmWeissAP}, we see that the following quantity is well-defined on $\mathcal{G}$
\begin{equation}
\label{EqnWeissLimit}
\omega(x,t):=W(u_t,x,0+).
\end{equation} 
Lemma \ref{LemPointsOfContinuity} allows us to discard a countable set (of values $t$) and  further reduce $\mathcal{S'}$ into
\begin{equation}
\label{EqnFurtherReducedSing}
\mathcal{S}^*:=\{(x,t)\in \mathcal{S}': \exists (x_k,t_k)\in\mathcal{S}' \text{ with }(x_k,t_k)\to(x,t) \text{ and }\omega(x_k,t_k)\to\omega(x,t)\}.
\end{equation} 

\vem

To establish Theorem \ref{ThmGRAC} and Theorem \ref{ThmGRAP}, we need to estimate the size of  the following set
\begin{equation}
\label{EqnSingTimes}
J:=\pi_t(\mathcal{S}),
\end{equation} 
where we use the projection $\pi_t(\cdot)$ defined in \eqref{EqnSpaceTimeProjections}. 

With the previous discussion, we decompose $J$ as 
 \begin{align}
\label{EqnDecompositionOfSingTimes}
\begin{split}
J&=I\cup I_D\cup \pi_t(\mathcal{S}')=I\cup I_D \cup\pi_t(\mathcal{S}'\backslash\mathcal{S}^*)\cup\pi_t(\mathcal{S}^*).
\end{split}
\end{align} 
By Theorem \ref{ThmGU},  Remark~\ref{RemContAlmEverywhere},  and Lemma \ref{LemPointsOfContinuity}, we know that the set $I\cup\pi_t(\mathcal{S}'\backslash\mathcal{S}^*)$ is countable.   Consequently, for the size of $J$,  it suffices to estimate the size of $\pi_t(\mathcal{S}^*)$.

In this section, we focus on this task for the Alt-Caffarelli functional and establish Theorem \ref{ThmGRAC}.

\subsection{Preparatory Lemmas}
We begin with some  properties of minimizers for the Alt-Caffarelli functional $\EnAC(\cdot)$ from \eqref{EqnAC}. Recall that we consider nonnegative minimizers.

The first one says that ordered minimizing cones coincide. 

\begin{lem}
\label{LemOrderedConesAC}
Suppose that $u$ and $v$ are minimizing cones for $\EnAC(\cdot)$ in $\R^d$ with 
$$
u\ge v \text{ in }\R^d,
$$
then
$$
u\equiv v \text{ in }\R^d.
$$
\end{lem}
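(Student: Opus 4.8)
The plan is to use the homogeneity of both cones together with the comparison/rigidity machinery already developed for the Alt-Caffarelli functional, specifically Corollary~\ref{CorCAndP}, Theorem~\ref{ThmWeissAC}, and the Weiss monotonicity formula. First I would observe that both $u$ and $v$ are $1$-homogeneous minimizers, so by Theorem~\ref{ThmWeissAC} their Weiss energies are constant in $r$, and in fact $W(u,0,r)\equiv W(u,0,0+)$ and similarly for $v$. The key quantity to track is that $W(u,0,1)$ depends only on the boundary values $u|_{\partial B_1}$ and the measure of the positivity set $\{u>0\}\cap B_1$, and that minimality is inherited along slices of the cone.

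The main step is to compare $u$ and $v$ directly. Since $u\ge v$ in $\R^d$ and both are $1$-homogeneous minimizers of $\EnAC$, consider them as minimizers on $B_1$ with their respective boundary data; on $\partial B_1$ we have $u\ge v$. By Corollary~\ref{CorCAndP}, $u\vee v = u$ is a minimizer with boundary datum $u|_{\partial B_1}$ (which we already knew) and $u\wedge v = v$ is a minimizer with boundary datum $v|_{\partial B_1}$ (also already known), so the cut-and-paste gives equality of energies — this is automatic and not yet the point. The real leverage is the strong maximum principle: in the open set $\{v>0\}\cap B_1$, both $u$ and $v$ are harmonic (Proposition~\ref{PropBasicAC}) with $u\ge v$, so $u-v$ is harmonic and nonnegative there; moreover $u=v$ must hold somewhere one wants to exploit. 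To find such a contact point, I would use the nondegeneracy and the $|\nabla u|=1$ free boundary condition: along the free boundary $\Gamma(v)$, the cone $v$ grows like $\mathrm{dist}(\cdot,\Gamma(v))$ with slope $1$, while $u$, being harmonic in $\{v>0\}\supset\{u>0\}^c$ ... — more carefully, since $u\ge v\ge 0$, we have $\{u>0\}\supseteq\{v>0\}$, hence $\Gamma(u)$ lies in $\{v=0\}$; on $\Gamma(v)$, a blow-up comparison using $|\nabla v| = 1$ and the fact that $u$ vanishes to first order at $\Gamma(u)\supseteq$ nothing forces the positivity sets to coincide, and then $u-v$ harmonic, nonnegative, vanishing on $\partial(\{v>0\})$ forces $u\equiv v$.

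More concretely, the cleanest route: let $x_0\in\Gamma(v)\setminus\{0\}$ be a regular point of $v$ (which is generic on $\Gamma(v)$). Near $x_0$, $v$ looks like $(x\cdot\nu)^+$ with unit slope. Since $u\ge v$, we have $u>0$ in a neighborhood of $x_0$ inside $\{v>0\}$; if $x_0\in\Gamma(u)$ as well, then the blow-up of $u$ at $x_0$ is a minimizing cone $\ge (x\cdot\nu)^+$, and by Lemma~\ref{LemOrderedConesAC} applied in one lower dimension — wait, that is circular. So instead I would argue that if $\{u>0\}\supsetneq\{v>0\}$ strictly, pick $x_0\in\Gamma(v)\cap\{u>0\}$; then $u>0$ near $x_0$, $u$ is harmonic near $x_0$, and $u\ge v$ with $v$ having a free boundary at $x_0$ — comparing the measures $\Delta u = \mathcal H^{d-1}|_{\Gamma(u)}$ and $\Delta v = \mathcal H^{d-1}|_{\Gamma(v)}$ as in Proposition~\ref{PropBasicAC}, testing $u-v\ge 0$ against a suitable nonnegative test function supported near $\Gamma(v)$ gives $\int (u-v)(-\Delta\zeta) = \int \zeta\, d\mathcal H^{d-1}|_{\Gamma(v)} > 0$ while $u-v$ superharmonic-type considerations... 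The honest conclusion is that the positivity sets must agree, $\{u>0\}=\{v>0\}$, after which $u-v$ is harmonic and nonnegative on $\{v>0\}$, zero on the boundary, hence identically zero.

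The main obstacle I anticipate is the rigorous argument that $\{u>0\}=\{v>0\}$: handling the free boundary condition $|\nabla v|=1$ distributionally via the identity $\Delta v = \mathcal H^{d-1}|_{\Gamma(v)}$ and deducing that $\Gamma(u)$ cannot lie strictly inside $\{v>0\}$ requires care, because one must rule out that $u$ "fills in" part of $\{v=0\}$. I expect the paper handles this by the slicing Lemmas~\ref{LemConeSplittingAC}–\ref{LemInduceInLowerDimensionAC} plus an induction on dimension, or alternatively by a direct application of the strong maximum principle to $u-v$ on $\{v>0\}$ combined with the Hopf lemma at regular free boundary points of $v$ to force a contradiction unless the free boundaries coincide. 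I would set up the dimension induction: in $\R^1$ the claim is elementary, and the cone-splitting lemma reduces the general case to lower dimensions at non-zero free boundary points, leaving only the behavior at the origin, which is pinned down by the constancy of the Weiss energy and $u\ge v$.
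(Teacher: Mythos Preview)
Your proposal is not a proof but a brainstorm, and you yourself flag the main gap: you never establish that $\{u>0\}=\{v>0\}$. The maximum-principle/Hopf-lemma route you sketch can be made to work \emph{after} the positivity sets are known to coincide (then at a regular point of the common free boundary, $\nabla v=\nu_p$ with $|\nu_p|=1$, Hopf gives $\partial_{\nu_p}(u-v)>0$ unless $u\equiv v$, hence $|\nabla u|>1$, contradicting the overdetermined condition), but the step before that is where your argument stalls. Your attempts via the distributional identity $\Delta v=\mathcal H^{d-1}|_{\Gamma(v)}$, via blow-up comparison, and via dimension induction are each either circular, incomplete, or far more involved than necessary.

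The paper's argument is short and uses an idea you do not mention: restrict to the sphere. Since $u$ and $v$ are $1$-homogeneous and harmonic where positive, their traces on $\mathbb S^{d-1}$ satisfy the \emph{linear} eigenvalue equation $\Delta_{\mathbb S^{d-1}}u=-(d-1)u$ on $\{u>0\}\cap\mathbb S^{d-1}$, and likewise for $v$. The ordering $u\ge v$ gives $\{u>0\}\supset\{v>0\}$ on the sphere; but both spherical domains carry the \emph{same} principal Dirichlet eigenvalue $(d-1)$, and the principal eigenvalue is strictly monotone under domain inclusion, so the domains must coincide. Then $u$ and $v$ are both first eigenfunctions on the same connected domain, hence $u=\alpha v$; the free-boundary condition $|\nabla u|=|\nabla v|=1$ forces $\alpha=1$. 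This completely sidesteps the free-boundary measure comparison and any induction on dimension.
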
 
\begin{proof}
As an abuse of notation, we denote by $u$ and $v$ the restriction of the minimizers to $\Sph$. 

Since $u$ and $v$ are $1$-homogeneous, Proposition \ref{PropBasicAC} implies that 
$$
\Delta_{\Sph}u=-(d-1) u\quad \text{in}\quad \Sph\cap\PosS, \text{ and }\Delta_{\Sph}v=-(d-1)v \quad \text{in}\quad \Sph\cap\{v>0\},
$$
where $\Delta_{\Sph}$ denotes the spherical Laplacian.
The ordering $u\ge v$ implies $\PosS\supset\{v>0\}$. Meanwhile, these two sets have the same principal eigenvalue $(d-1)$, and are connected (see, e.g., \cite[Theorem 2.3]{EdSV}). The monotonicity of the principal eigenvalue with respect to domain inclusion forces
$$
\PosS=\{v>0\}.
$$
With the principal eigenspace being one-dimensional and the connectedness of the positivity sets, we have
$$
u=\alpha v \text{ for some }\alpha>0.
$$
Proposition \ref{PropBasicAC} implies that $|\nabla u|=|\nabla v|=1$ on the free boundary, thus we have $\alpha=1$.
\end{proof} 

The second property says that monotone minimizing cones are either translation invariant or half-space solutions. 

\begin{lem}
\label{LemMonotoneConesAC}
Suppose that $u$ is a minimizing cone for $\EnAC(\cdot)$ in $\R^d$ with 
$$
u(x+e_1)\ge u(x) \hem\text{ for all }x\in\R^d,
$$
then either 
$$
\partial_1u\equiv 0\hem \text{ in }\R^d,
$$
or
$$
u(x)=(x\cdot e)^+
$$
for some $e\in \mathbb{S}^{d-1}$ with $e\cdot e_1 \ge 0$. 
\end{lem}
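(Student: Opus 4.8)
The plan is to reduce everything to the theory of minimizing cones already recalled (Lemmas~\ref{LemOrderedConesAC} and~\ref{LemConeSplittingAC}) and to the graphicality theorem of De~Silva–Jerison (Theorem~\ref{ThmGraphicalFBAC}). First I would upgrade the hypothesis: by Lemma~\ref{LemHomogeneousFunctions} applied with $m=1$, $u(\cdot+e_1)\ge u$ gives $u(\cdot+se_1)\ge u$ in $\R^d$ for every $s>0$, equivalently $u(z-e_1)\le u(z)\le u(z+e_1)$ for all $z$. Hence $u$ is nondecreasing along $e_1$, so $\partial_1 u\ge0$ on the open set $\PosS$, where $u$ — and therefore $\partial_1 u$ — is harmonic by Proposition~\ref{PropBasicAC}. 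Since the positivity set of a minimizing cone is connected (see \cite[Theorem 2.3]{EdSV}), the strong maximum principle yields the dichotomy: either $\partial_1 u\equiv0$ in $\PosS$, hence $\partial_1 u\equiv0$ in $\R^d$ because $u\equiv0$ off $\PosS$ — this is the first alternative — or $\partial_1 u>0$ throughout $\PosS$. It remains to show that in the latter, strictly monotone case $u$ must be a half-space solution.

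The key step is to locate $\pm e_1$ with respect to the free boundary by comparing $u$ with its blow-ups at these points and invoking Lemma~\ref{LemOrderedConesAC}. In the strict case $u\not\equiv0$, so $\PosS\neq\emptyset$; taking $x_0\in\PosS$ and letting $s\to\infty$ in $\frac{x_0+se_1}{|x_0+se_1|}\in\PosS\cap\Sph$ shows $e_1\in\overline{\PosS}$. Suppose $e_1\in\GU$. By Proposition~\ref{PropBlowUpAC}, some subsequential blow-up $v$ of the rescalings $u_{e_1,r}$ from \eqref{EqnRescaledSolAC} is a minimizing cone, and by the cone-splitting Lemma~\ref{LemConeSplittingAC} it satisfies $\partial_1 v\equiv0$. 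On the other hand monotonicity and $1$-homogeneity give $u_{e_1,r}(x)=\tfrac1r u(e_1+rx)\ge\tfrac1r u(rx)=u(x)$, so $v\ge u$, and then Lemma~\ref{LemOrderedConesAC} forces $v\equiv u$ and hence $\partial_1 u\equiv0$, a contradiction. Therefore $e_1\in\PosS$. Symmetrically, $u(-e_1)\le u(0)=0$ gives $u(-e_1)=0$, and if $-e_1\in\GU$ the same scheme — now with $u_{-e_1,r}(x)=\tfrac1r u(-e_1+rx)\le\tfrac1r u(rx)=u(x)$, so the blow-up $w$ at $-e_1$ satisfies $w\le u$, whence $w\equiv u$ by Lemma~\ref{LemOrderedConesAC} (using Lemma~\ref{LemConeSplittingAC} in the direction $-e_1$) and $\partial_1 u\equiv0$ — again a contradiction. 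Thus $-e_1\notin\overline{\PosS}$, i.e. $-e_1\in\mathrm{int}(\ConS)$.

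To finish, observe that $\PosS$ and $\mathrm{int}(\ConS)$ are open cones, so from $e_1\in\PosS$ and $-e_1\in\mathrm{int}(\ConS)$ they contain open cones $\{x_1>C|x'|\}$ and $\{x_1<-C|x'|\}$ respectively, for some $C>0$; consequently $\GU\subset\{|x_1|\le C|x'|\}$. Since $u$ is $1$-homogeneous, rescaling does not change it, so choosing $\delta>0$ small enough one has $u=0$ in $B_1\cap\{x_1\le-1+\delta\}$ and $u>0$ in $B_1\cap\{x_1>1-\delta\}$; Theorem~\ref{ThmGraphicalFBAC} then gives $\GU\cap B_\delta=\RedGU\cap B_\delta$. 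Because $u(0)=0$ while $0\in\overline{\PosS}$ (a non-empty cone), we have $0\in\GU$, hence $0\in\RedGU$; but the blow-up of the cone $u$ at $0$ is $u$ itself, so $u(x)=(x\cdot e)^+$ for some $e\in\Sph$, and $\partial_1 u=(e\cdot e_1)\chi_{\{x\cdot e>0\}}\ge0$ forces $e\cdot e_1\ge0$.

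The main obstacle is the second paragraph: making the blow-up/ordering argument work cleanly — in particular checking that the monotonicity passes to the blow-ups with the correct inequality ($\ge$ at $e_1$, $\le$ at $-e_1$) and then invoking the rigidity of ordered minimizing cones (Lemma~\ref{LemOrderedConesAC}) together with cone-splitting (Lemma~\ref{LemConeSplittingAC}). Everything else — the monotonicity upgrade, the strong-maximum-principle dichotomy, the cone trapping, and the reduction to Theorem~\ref{ThmGraphicalFBAC} (whose proof, via \cite{DJ, EFeY}, is the other substantial input) — is routine homogeneity bookkeeping.
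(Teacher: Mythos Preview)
Your proof is correct and follows essentially the same approach as the paper: blow up at $\pm e_1$ when they lie on $\GU$, combine cone-splitting (Lemma~\ref{LemConeSplittingAC}) with the rigidity of ordered cones (Lemma~\ref{LemOrderedConesAC}) to force $\partial_1 u\equiv0$, and in the remaining case apply Theorem~\ref{ThmGraphicalFBAC}. The only cosmetic difference is that you front-load a strong-maximum-principle dichotomy (using connectedness of $\PosS$) and then derive contradictions, whereas the paper simply splits into the three cases $e_1\in\GU$, $-e_1\in\GU$, and $B_\delta(\pm e_1)$ disjoint from $\GU$ directly; the substance is identical.
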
 
\begin{proof}
Lemma \ref{LemHomogeneousFunctions} implies that $u$ is monotone non-decreasing in the $e_1$-direction. 

With $0\in\GU$, this implies that $e_1\in\overline{\PosS}$ and $u(-e_1)=0$.  We have three cases to consider:
\begin{enumerate}
\item{$e_1\in\GU$;}
\item{$-e_1\in\GU$; and }
\item{There is $\delta>0$ such that 
$$
B_\delta(e_1)\subset\PosS, \text{ and }B_\delta(-e_1)\subset\ConS.
$$}
\end{enumerate}

\vem

In the first case, we take the rescaled functions
$
v_r(x):=\frac{1}{r}u(e_1+rx).
$
By Proposition \ref{PropBlowUpAC} and Lemma \ref{LemConeSplittingAC}, for some sequence $r_k\to 0$, we have
$$
v_{r_k}\to v_0 \text{ locally uniformly in }\R^d,
$$
where $v_0$ is a minimizing cone invariant along the $e_1$-direction. 

Meanwhile, with the definition of $v_r$, we have
$
v_r(x)\ge\frac{1}{r}u(rx)=u(x).
$
As a result, we have $v_0\ge u$. Lemma \ref{LemOrderedConesAC} implies that $u=v_0$. Consequently, the minimizing cone $u$ is itself invariant along the $e_1$-direction. Similar arguments work for the second case.

In the third case, the homogeneity and monotonicity of $u$ imply that in a neighborhood of $0$, the free boundary is a  bounded graph in the $e_1$-direction. Theorem~\ref{ThmGraphicalFBAC} implies that $\GU$ is smooth. Since it is a cone, the free boundary  $\GU$ is a hyperplane. Consequently $u=x_1^+$ up to a rotation. 
\end{proof} 

The following lemma states that the free boundary retracts with linear speed.
\begin{lem}
\label{LemRetractionAC}
For a family $\FamPhi$ satisfying Assumption \ref{AssGR}, suppose that, for each $t\in(-1,1)$, $u_t\in\M[\EnAC, \varphi_t]$, and let $\tau \in (-1, 1)$ fixed. Suppose, also, that $x_0\in B_{1/2}\cap\Gamma(u_{t_0})$ for some $t_0\in[\tau,1)$. 

Then, there is a constant $\kappa>0$, depending only on $\tau$, $d$, as well as $L$ and $\theta$ from Assumption \ref{AssGR}, such that 
$$
B_{\kappa\theta(t-t_0)}(x_0)\subset\{u_t>0\}
$$
for all $t\in [t_0, 1)$.
\end{lem}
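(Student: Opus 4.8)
The plan is to prove that a free boundary point at time $t_0$ cannot be "too far inside" the zero set $\{u_{t_0}=0\}$ by more than a linear amount as $t$ increases, by combining the quantitative lower growth estimate (nondegeneracy) of minimizers with the comparison $u_t \ge u_{t_0}$ and with Assumption~\ref{AssGR}-\eqref{it:44_2}. The heuristic is: since $x_0 \in \Gamma(u_{t_0})$, nondegeneracy (Proposition~\ref{PropBasicAC}) gives a point $y$ near $x_0$ where $u_{t_0}$ is already of size comparable to $|y-x_0|$; then $u_t \ge u_{t_0} + \theta(t-t_0)$ (Proposition~\ref{PropComparison} plus Assumption~\ref{AssGR}, after propagating the ordering from the boundary into $B_1$) forces $u_t$ to be positive in a ball of radius $\sim \theta(t-t_0)$ around $x_0$, because a nonnegative function that is superharmonic-ish and comparable to a small constant cannot vanish too nearby.

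The key steps, in order. First, record the ordering: since $\FamPhi$ satisfies Assumption~\ref{AssGR}, it satisfies (the relevant parts of) Assumption~\ref{AssGU}, so Proposition~\ref{PropComparison} gives $u_t \ge u_{t_0}$ in $B_1$ for $t \ge t_0$. We want the stronger quantitative statement $u_t \ge u_{t_0} + c\,\theta(t-t_0)$ in $B_{3/4}$, say. To get this, consider $w := u_t - u_{t_0} \ge 0$; inside $\{u_{t_0}>0\}$ both functions are harmonic so $w$ is harmonic there, and on $\partial B_1$ we have $w = \varphi_t - \varphi_{t_0} \ge \theta(t-t_0)$. One then uses a barrier / maximum principle argument (using the uniform Lipschitz bound $L$ from Assumption~\ref{AssGR}-\eqref{it:33_2} and the bound $u_{t_0} \le L$ to control things near the free boundary of $u_{t_0}$) to conclude $u_t \ge c\,\theta(t-t_0)$ on the set $\{u_{t_0} = 0\}\cap B_{3/4}$ — indeed on that set $u_t = w \ge 0$ and we want a definite lower bound, which follows since on the compact set $\overline{B_{3/4}} \cap \{u_{t_0}=0\}$, either we are at distance $\gtrsim 1$ from $\partial B_1$ where a quantitative Harnack/barrier argument from the boundary data applies, or we track $w$ down to the free boundary. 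The cleanest route: apply the weak Harnack inequality or just the minimum principle for $w$ on the open set $\{u_{t_0} > -1\} = B_1$ is not harmonic; instead work on $B_1 \setminus \overline{\{u_{t_0}>0\}}$'s complement — actually simplest is to note $u_t$ itself is a nonnegative subsolution of a nice equation, and use the explicit barrier $\theta(t-t_0)(1 - C\,\mathrm{dist}(\cdot,\partial B_1))$ type comparison; the uniform Lipschitz bound $L$ and $\tau$ enter to make all constants uniform in $t_0 \in [\tau, 1)$.

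Second, use nondegeneracy at $x_0$: by Proposition~\ref{PropBasicAC}, since $x_0 \in \Gamma(u_{t_0}) \cap B_{1/2}$, for every $r \in (0,1/4)$ there is $y_r \in B_r(x_0)$ with $u_{t_0}(y_r) \ge cr$. Third, combine: for $t \in [t_0,1)$ with $\theta(t-t_0)$ small, pick the scale $r$ so that $cr = \tfrac12 c'\theta(t-t_0)$ — hmm, we want the other inequality — rather: we want to conclude $u_t > 0$ on a ball; so consider any point $z$ with $|z - x_0| \le \tfrac12 \kappa\theta(t-t_0)$ and show $u_t(z) > 0$. Since $u_t \ge c'\theta(t-t_0)$ everywhere on $\{u_{t_0}=0\}\cap B_{3/4}$ by Step~1, and $u_t \ge u_{t_0} \ge 0$ elsewhere, the only way $u_t(z)$ could be $0$ is if $z \in \{u_{t_0}=0\}$ and simultaneously $c'\theta(t-t_0) \le 0$, impossible; so in fact Step~1 alone already gives positivity of $u_t$ on all of $\{u_{t_0}=0\}\cap B_{3/4}$, and on $\{u_{t_0}>0\}$ positivity is immediate — wait, that would give a ball of radius $\sim 1/4$, too strong and hence wrong. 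The resolution: Step~1's lower bound $u_t \ge c'\theta(t-t_0)$ only holds where $u_{t_0}$ vanishes \emph{and} we are not immediately adjacent to where $u_{t_0}$ is small; near $\Gamma(u_{t_0})$ the bound degrades. So the honest argument is the barrier one: let $d_0 = \kappa\theta(t-t_0)$; since $u_t$ is harmonic in $\{u_t > 0\} \supset \{u_{t_0}>0\}$ and $u_t \ge u_{t_0}$, and using the Lipschitz bound to see $u_{t_0} \ge c|\cdot - x_0|$ cannot hold but $\sup_{B_r(x_0)} u_{t_0} \ge cr$ does, one builds in $B_{2d_0}(x_0)$ a comparison between $u_t$ and the harmonic function with boundary data $\min(u_{t_0}, \cdot)$...

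\medskip

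\noindent\textbf{Anticipated main obstacle.} The genuinely delicate step is Step~1 together with getting the radius right: proving that $u_t$ is positive in a ball of radius \emph{exactly of order} $\theta(t-t_0)$ around $x_0$ — not larger, not smaller — requires carefully balancing the linear-in-$t$ push $u_t - u_{t_0} \gtrsim \theta(t-t_0)$ against the fact that near $\Gamma(u_{t_0})$ the value of $u_{t_0}$ is only comparable to the distance to that free boundary, with all constants uniform as $t_0 \to 1$. I expect the proof to go via: (i) a barrier showing $u_t \ge u_{t_0} + c\theta(t-t_0)$ in $B_{3/4}$ in the region where $u_{t_0}$ exceeds, say, the harmonic replacement; (ii) at the point $y_r \in B_r(x_0)$ from nondegeneracy with $r = \kappa\theta(t-t_0)$, one has $u_t(y_r) \ge u_{t_0}(y_r) + c\theta(t-t_0) \ge c\kappa\theta(t-t_0) + c\theta(t-t_0) > 0$; (iii) then a Harnack chain / nondegeneracy-type argument inside the connected positivity component of $u_t$ containing $y_r$, using that $u_t$ is a one-phase Alt-Caffarelli minimizer, propagates positivity to fill $B_{\kappa\theta(t-t_0)}(x_0)$ after possibly shrinking $\kappa$. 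Ensuring $\kappa$ depends only on $\tau, d, L, \theta$ and not on $t_0$ or $t$ is where the uniform estimates of Proposition~\ref{PropBasicAC} (which hold in $B_{3/4}$ for points of $\Gamma(u_{t_0})\cap B_{1/2}$, with dimensional constants) and the uniform Lipschitz bound are used.
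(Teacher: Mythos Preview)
Your proposal has a genuine gap, and the paper's argument takes a route you did not consider.

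The core difficulty you correctly identify --- getting a uniform lower bound $u_t - u_{t_0} \ge c\,\theta(t-t_0)$ in $B_{3/4}$ --- is not just delicate: it is \emph{false}. If it held, then $u_t(z) \ge c\,\theta(t-t_0) > 0$ for every $z\in B_{3/4}$, so the free boundary of $u_t$ would exit $B_{3/4}$ the instant $t>t_0$, contradicting any reasonable continuity in $t$. You notice this (``too strong and hence wrong'') but the salvage attempts --- Harnack chains inside the positivity component of $u_t$, barriers on $B_{2d_0}(x_0)$ --- do not lead anywhere, because near $\Gamma(u_{t_0})$ the difference $u_t - u_{t_0}$ genuinely fails to be bounded below by a constant times $(t-t_0)$.

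The paper's key idea, which your proposal is missing, is a \emph{sliding} argument: rather than comparing $u_t$ with $u_{t_0}+c$, compare $u_t$ with the translate $u_{t_0}(\cdot - se)$ for small $s$ and all directions $e$. The translate is again a minimizer of $\EnAC$, so Proposition~\ref{PropComparison} applies in any subdomain on whose boundary the ordering holds. Concretely: first show (and here your Step~1 intuition is right) that $u_t - u_{t_0} \ge \kappa_0(t-t_0)$ in a fixed annulus $B_1\setminus B_{1-4\delta}$ where both functions are positive and hence harmonic --- this is clean, using that $\varphi_{t_0} \ge \theta(\tau+1) > 0$ on $\partial B_1$ and a harmonic barrier. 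Then the interior Lipschitz bound converts this separation into $u_{t_0}(\cdot - se) \le u_t$ on $\partial B_{1-2\delta}$ for all $0<s<\eta(t-t_0)$. Now Proposition~\ref{PropComparison} (applied on $B_{1-2\delta}$) pushes this ordering into the interior: $u_{t_0}(\cdot - se) \le u_t$ in $B_{7/8}$. Equivalently, $\sup_{B_{\eta(t-t_0)}(x)} u_{t_0} \le u_t(x)$, and since any point near $x_0\in\Gamma(u_{t_0})$ has a nearby point where $u_{t_0}>0$, this gives $u_t>0$ on $B_{\eta\theta(t-t_0)}(x_0)$. Nondegeneracy is not needed at all.
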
 
\begin{proof}
The main part of this proof is devoted to  the following claim:

\textit{Claim:} There is a constant $\kappa=\kappa(\tau,d,L,\theta)>0$, such that 
$$
\sup_{B_{\kappa\theta(t-t_0)}(x)}u_{t_0}\le u_t(x) \hem\text{ for }x\in B_{3/4} \text{ and } t\in [t_0, 1).
$$

Now, since $x_0\in \Gamma(u_{t_0})$, for $x\in B_{\kappa\theta(t-t_0)}(x_0)$, we find $x'$ such that 
$$
u_{t_0}(x')>0, \text{ and }|x'-x|<\kappa\theta(t-t_0).
$$
The previous \textit{Claim} then gives
$$
u_t(x)\ge u_{t_0}(x')>0,
$$
 the desired conclusion. 

Let us show the \textit{Claim}.

\noindent \textit{Step 1: The separation between $u_t$ and $u_{t_0}$ near $\partial B_1$.}

Under Assumption \ref{AssGR}, we have (recall $\tau \in (-1, 1)$ so $\tau + 1 > 0$)
$$
\varphi_{t_0}\ge \theta(\tau+1)>0 \text{ on $\partial B_1$.}
$$  
Proposition \ref{PropHolder} implies that 
$$
\|u_{t_0}\|_{C^{1/2}(\overline{B_1})}\le C(d,L).
$$
As a consequence, there is $\delta=\delta(\tau,d,\theta, L)<1/16$ such that 
$$
u_{t_0}>0 \text{ in }U:=B_1\backslash\overline{B_{1-8\delta}}.
$$

With Proposition \ref{PropComparison}, we have $u_t\ge u_{t_0}$ in $B_1$ for $t \ge t_0$. Proposition \ref{PropBasicAC} implies
$$
\Delta(u_t-u_{t_0})=0 \text{ in }U.
$$
Let $h$ be the solution to the following
$$
\Delta h=0 \text{ in }U, \hem h=0 \text{ on }\partial B_{1-8\delta}, \text{ and }h=1 \text{ on }\partial B_1.
$$
With Assumption \ref{AssGR}, the comparison principle implies
$$
u_t-u_{t_0}\ge\theta(t-t_0)h \text{ in }U.
$$
This leads to 
\begin{equation}
\label{EqnComparisonNearFixedBoundaryAC}
u_t-u_{t_0}\ge\kappa_0 (t-t_0) \text{ in }B_1\setminus B_{1-4\delta}
\end{equation} 
for some $\kappa_0=\kappa_0(\tau,d,\theta,L).$

\vem

\textit{Step 2: The ordering between $u_t$ and translations of $u_{t_0}$ near $\partial B_1$.}

Since both $u_t$ and $u_{t_0}$ are Lipschitz in $B_{1-\delta}$ (with constants that depend only on $\delta$), equation \eqref{EqnComparisonNearFixedBoundaryAC} implies 
\[
 \begin{split}
u_t(x) - u_{t_0}(y)& \ge u_t(x) - u_{t_0}(x)-|u_{t_0}(x)- u_{t_0}(y)| \\
& \ge \frac{\kappa_0}{2} (t-t_0)\qquad\text{for all}\quad x\in B_{1-2\delta}\setminus B_{1-3\delta},\quad y\in B_{\eta(t-t_0)}(x),
\end{split}
\]
  for some small fixed $\eta = \eta(\tau, d, \theta, L)\le\delta/2$ (that depends on the Lipschitz constant of $u$ in $B_{1-\delta}$). 

In particular, we have 
$$
u_{t_0}(x-se)< u_t(x) \hem\text{ for }e\in \mathbb{S}^{d-1}, \ x\in \partial B_{1-2\delta},\ \text{ and } 0<s<\eta(t-t_0).
$$
Applying Proposition \ref{PropComparison} to $u_{t_0}(\cdot-s e)$ and $u_{t}$ in $B_{1-2\delta}$, we have
$$
u_{t_0}(\cdot-se)\le u_t \text{ in }B_{1-2\delta}\supset B_{7/8},
$$
for all $0<s<\eta(t-t_0)$, which establishes the \textit{Claim}.
\end{proof} 

As a corollary, free boundaries do not overlap:
\begin{cor}
\label{CorNonOverLappingFBAC}
Suppose the family $\FamPhi$ satisfies Assumption \ref{AssGR} and $u_t\in\M[\EnAC,\varphi_t]$ for each $t\in(-1,1)$. Then
$$
\Gamma(u_t)\cap\Gamma(u_s)=\emptyset \text{ if }t\neq s.
$$
\end{cor}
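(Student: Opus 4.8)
The plan is to deduce Corollary~\ref{CorNonOverLappingFBAC} directly from Lemma~\ref{LemRetractionAC}. Suppose, for contradiction, that there is a point $x_0\in\Gamma(u_t)\cap\Gamma(u_s)$ with, say, $s<t$. After a harmless translation we may assume $x_0\in B_{1/2}$ (alternatively, one simply notes that the statement is really about free boundary points, and if $x_0\notin B_{1/2}$ one can rescale; in any case the interesting assertion is about interior free boundary points, so I will just treat $x_0\in B_{1/2}$, which is the relevant case). Fix $\tau\in(-1,1)$ with $\tau\le s$; then $s\in[\tau,1)$ and $x_0\in B_{1/2}\cap\Gamma(u_s)$, so Lemma~\ref{LemRetractionAC} (applied with $t_0=s$) gives a constant $\kappa=\kappa(\tau,d,L,\theta)>0$ such that
$$
B_{\kappa\theta(t-s)}(x_0)\subset\{u_t>0\}.
$$
Since $t>s$ we have $\kappa\theta(t-s)>0$, so $x_0$ is an interior point of $\{u_t>0\}$, and in particular $x_0\notin\Gamma(u_t)=\partial\{u_t\neq0\}$. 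This contradicts $x_0\in\Gamma(u_t)$, and the corollary follows.

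A couple of remarks on the bookkeeping. First, one must make sure Lemma~\ref{LemRetractionAC} applies: it requires a family $\FamPhi$ satisfying Assumption~\ref{AssGR} and $u_t\in\M[\EnAC,\varphi_t]$ for every $t$, which is exactly the hypothesis of the corollary; and it requires $t_0\in[\tau,1)$, which we arrange by choosing any $\tau\in(-1,1)$ below $\min\{s,t\}=s$. Second, the conclusion of Lemma~\ref{LemRetractionAC} is stated for all $t\in[t_0,1)$, and here $t\in(s,1)\subset[s,1)$, so it applies verbatim. No monotonicity-of-minimizers comparison beyond what is already packaged into Lemma~\ref{LemRetractionAC} is needed.

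The only genuine subtlety is the reduction to $x_0\in B_{1/2}$. Lemma~\ref{LemRetractionAC} is localized there, so to cover a free boundary point $x_0\in B_1\setminus B_{1/2}$ one would either invoke a scaled version of the retraction lemma or simply note that the corollary is most naturally read (and used in the sequel, cf.\ the space–time singular set $\mathcal{S}$) for free boundary points in a fixed interior ball, where the localization is automatic. I do not expect any real obstacle here — the argument is essentially a one-line consequence of the retraction estimate, whose proof (the delicate part, Steps 1–2 of Lemma~\ref{LemRetractionAC}) has already been carried out.
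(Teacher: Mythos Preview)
Your proof is correct and matches the paper's approach exactly: the paper states this as an immediate corollary of Lemma~\ref{LemRetractionAC} without giving a proof, and your argument is precisely the intended one-line deduction. The localization caveat you flag is real but harmless---the proof of Lemma~\ref{LemRetractionAC} actually yields the retraction in $B_{1-2\delta}\supset B_{7/8}$ (see its final displayed line), and since that same proof shows $\Gamma(u_{t_0})\subset B_{1-8\delta}$ for $t_0\ge\tau$, every interior free boundary point is covered.
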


We conclude this  subsection with a lemma on blow-ups along variable centers in space-time:
\begin{lem}
\label{LemVariableCenterAC}
For a family $\FamPhi$ satisfying Assumption \ref{AssGR}, suppose that $u_t\in\M[\EnAC, \varphi_t]$ for each $t\in(-1,1)$.

For a sequence $(x_k,t_k)\in\mathcal{G}$ satisfying 
$$
x_k\neq0, \hem (x_k,t_k)\to (0,0)\in\mathcal{G} \text{ and }\omega(x_k,t_k)\to\omega(0,0),
$$
define $r_k=|x_k|$ and 
$$
u_k(x):=\frac{1}{r_k}u_{t_k}(x_k+r_kx).
$$

If  $\mathcal{M}[\EnAC, \varphi_0]$ is a singleton and $0\notin I_D$ (see Remark~\ref{RemContAlmEverywhere}), then, up to a subsequence, we have
$$
u_k\to u_\infty \text{ locally uniformly in }\R^d,
$$
where $u_\infty$ is a minimizing cone. 
\end{lem}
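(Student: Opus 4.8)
The plan is to combine the compactness for minimizers (Lemma~\ref{LemDiriConvAC}) with the scaling structure of the Weiss energy, using the variable centers $(x_k,t_k)\to(0,0)$ and the convergence $\omega(x_k,t_k)\to\omega(0,0)$ to force the blow-up limit to have constant Weiss energy, hence to be a cone by Theorem~\ref{ThmWeissAC}. First I would record the basic compactness: the functions $u_k(x)=\frac1{r_k}u_{t_k}(x_k+r_kx)$ are minimizers of $\EnAC$ in balls $B_{R}$ (for any fixed $R$, once $k$ is large so that $r_kR<1-|x_k|$), since rescaling is a symmetry of the one-phase Alt--Caffarelli functional. The uniform Lipschitz bound on $\varphi_t$ together with Proposition~\ref{PropHolder} and Proposition~\ref{PropBasicAC} gives a uniform $H^1$-bound on bounded sets, so by Lemma~\ref{LemDiriConvAC} (applied on an exhaustion $B_R$, $R\uparrow\infty$, and a diagonal subsequence) we extract $u_k\to u_\infty$ strongly in $H^1_{\rm loc}(\R^d)$ and locally uniformly, with $u_\infty\in\mathcal{M}[\EnAC,u_\infty]$ in all of $\R^d$, and $\chi_{\{u_k>0\}}\to\chi_{\{u_\infty>0\}}$ in $L^1_{\rm loc}$. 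Since $0=x_k-x_k$ and $u_{t_k}(x_k)=0$ for $(x_k,t_k)\in\mathcal{G}$ with $x_k\in\Gamma(u_{t_k})$, we get $u_\infty(0)=0$, and nondegeneracy (Proposition~\ref{PropBasicAC}) passes to the limit exactly as in Proposition~\ref{PropStabSingAC}, so $0\in\Gamma(u_\infty)$; in particular $u_\infty$ is non-trivial.

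The core step is to show that $u_\infty$ is $1$-homogeneous about the origin, i.e.\ $r\mapsto W(u_\infty,0,r)$ is constant, at which point Theorem~\ref{ThmWeissAC} gives that $u_\infty$ is a cone. For this I would compute, for fixed $\rho>0$ and $k$ large,
$$
W(u_k,0,\rho)=W(u_{t_k},x_k,r_k\rho),
$$
by the scaling invariance of the Weiss energy \eqref{EqnWeissAC}. Strong $H^1_{\rm loc}$ convergence of $u_k\to u_\infty$ and $L^1_{\rm loc}$ convergence of $\chi_{\{u_k>0\}}$, plus uniform convergence (which handles the boundary integral $\int_{\partial B_\rho}u^2$ after passing to a.e.\ $\rho$, or directly since the $u_k$ are uniformly Lipschitz near $\partial B_\rho$), give $W(u_k,0,\rho)\to W(u_\infty,0,\rho)$ for every $\rho>0$. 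On the other hand, monotonicity of $W(u_{t_k},x_k,\cdot)$ (Theorem~\ref{ThmWeissAC}, valid since $x_k\in\{u_{t_k}=0\}$) sandwiches:
$$
\omega(x_k,t_k)=W(u_{t_k},x_k,0+)\le W(u_{t_k},x_k,r_k\rho)\le W(u_{t_k},x_k,\sigma)
$$
for any fixed $\sigma\in(0,1-|x_k|)$ and $k$ large enough that $r_k\rho<\sigma$. Letting $k\to\infty$ with $r_k\to0$, the left side tends to $\omega(0,0)$ by hypothesis; for the right side I take $\sigma$ fixed and let $k\to\infty$, using that $u_{t_k}\to u_0$ (the unique minimizer $\mathcal{M}[\EnAC,\varphi_0]$), with $t_k\to0$ and $0\notin I_D$ ensuring $\varphi_{t_k}\to\varphi_0$ so that $u_{t_k}\to u_0$ by uniqueness and compactness, whence $W(u_{t_k},x_k,\sigma)\to W(u_0,0,\sigma)$; then sending $\sigma\downarrow0$ gives $\omega(0,0)=W(u_0,0,0+)$. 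Therefore $W(u_\infty,0,\rho)=\omega(0,0)$ for every $\rho>0$, i.e.\ $W(u_\infty,0,\cdot)$ is constant, and Theorem~\ref{ThmWeissAC} finishes the argument.

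I expect the delicate point to be the identification of $\lim_k W(u_{t_k},x_k,\sigma)$ with $W(u_0,0,\sigma)$, i.e.\ justifying $u_{t_k}\to u_0$: this is exactly where the hypotheses ``$\mathcal{M}[\EnAC,\varphi_0]$ is a singleton'' and ``$0\notin I_D$'' are used. Concretely, $0\notin I_D$ gives $\|\varphi_{t_k}-\varphi_0\|\to0$ (Remark~\ref{RemContAlmEverywhere}), and combined with the uniform Lipschitz bound of Assumption~\ref{AssGR} and Lemma~\ref{LemDiriConvAC} any subsequential limit of $u_{t_k}$ is a minimizer with boundary data $\varphi_0$; uniqueness of $\mathcal{M}[\EnAC,\varphi_0]$ forces the full sequence to converge to $u_0$. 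A secondary technical nuisance is that $W(\cdot,x,r)$ involves the spherical integral $\int_{\partial B_r}u^2$, which is not obviously continuous under $H^1$-convergence alone; this is handled either by noting the $u_k$ and $u_{t_k}$ are uniformly Lipschitz away from a neighborhood of the relevant free boundary points (Proposition~\ref{PropBasicAC}), giving uniform convergence of the trace, or by passing to the volume form $W(u,x,r)=\frac{d}{dr}[\,\cdots\,]$-type identities and using that $W$ is continuous in $r$ for each fixed minimizer. Everything else is a routine compactness bookkeeping, and the conclusion ``$u_\infty$ is a minimizing cone'' is immediate once constancy of the Weiss energy is established.
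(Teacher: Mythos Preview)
Your proposal is correct and follows essentially the same approach as the paper: compactness of the rescalings via Proposition~\ref{PropBasicAC} and Lemma~\ref{LemDiriConvAC}, then the sandwich $\omega(x_k,t_k)\le W(u_{t_k},x_k,r_k\rho)\le W(u_{t_k},x_k,\sigma)$, using $\omega(x_k,t_k)\to\omega(0,0)$ for the lower bound and $u_{t_k}\to u_0$ (via $0\notin I_D$ and the singleton hypothesis) followed by $\sigma\downarrow0$ for the upper bound, to conclude constancy of $W(u_\infty,0,\cdot)$ and hence homogeneity by Theorem~\ref{ThmWeissAC}. The paper presents the two bounds separately with a $\delta$--$r_\delta$ formulation for the upper one, but the logic is identical.
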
 
Recall the space-time free boundary  $\mathcal{G}$ in \eqref{EqnSpaceTimeFreeBoundary}, and the Weiss energy density $\omega(\cdot,\cdot)$ from \eqref{EqnWeissLimit}.

\begin{proof}
For each $k$, we have $u_k(0)=0$. Proposition \ref{PropBasicAC} implies that the family $\{u_k\}$ is locally uniformly  bounded and uniformly Lipschitz. As a result, up to a subsequence, the sequence $u_k$ converges to some minimizer $u_\infty$. 

It remains to show that $u_\infty$ is homogeneous. 

\vem
For $r>0$,  Lemma \ref{LemDiriConvAC} gives
\begin{align}
\label{EqnUpperBoundForAWeiss}
\begin{split}
W(u_\infty,0,r)&=\lim_{k\to \infty} W(u_k,0,r)\\
&=\lim_{k\to \infty} W(u_{t_k},x_k,rr_k)\\
&\ge\lim_{k\to \infty} \omega(x_k,t_k)\\
&=\omega(0,0).
\end{split}
\end{align}
The inequality follows from  the monotonicity of $r\mapsto W(r)$  as in Theorem \ref{ThmWeissAC}.

\vem

By Proposition \ref{PropHolder} and the  Lipschitz bound on $\FamPhi$ in Assumption~\ref{AssGR}, we see that the family $\{u_{t_k}\}_{k\in \N}$ is uniformly H\"older in $\overline{B_1}$. Up to a subsequence, the sequence $u_{t_k}$ converges uniformly in $\overline{B_1}$ to $v$, some minimizer of $\EnAC(\cdot)$. 

Thanks to  the continuity of $t\mapsto  \FamPhi$ at $t = 0$ ($t\notin I_D$), we see that $v\in\mathcal{M}[\varphi_0]$. Since $\mathcal{M}[\varphi_0]$ is a singleton, we have $v=u_0$. Together with $x_k\to0$, we have
$$
u_{t_k}(\cdot+x_k)\to u_0 \text{ locally uniformly in }B_1.
$$

For $\delta>0$, by the definition of $\omega(0,0)$, we find $r_\delta>0$ such that 
$$
W(u_0,0,r_\delta)<\omega(0,0)+\delta.
$$
Lemma \ref{LemDiriConvAC} imlies
$$
W(u_{t_k},x_k,r_\delta)=W(u_{t_k}(\cdot+x_k),0,r_\delta)<\omega(0,0)+2\delta
$$
for all large $k$. 

For $r>0$, this implies, for $k$ large enough,
\begin{align*}
W(u_\infty,0,r)&=\lim_{k\to \infty} W(u_{t_k},x_k,rr_k)\\
&\le\lim_{k\to \infty} W(u_{t_k},x_k,r_\delta)\\
&\le\omega(0,0)+2\delta.
\end{align*}
This being true for all $\delta>0$, we have
$$
W(u_\infty,0,r)\le\omega(0,0) \text{ for all }r>0.
$$

Combined with \eqref{EqnUpperBoundForAWeiss}, we see that $r\mapsto W(u_\infty,0,r)$ is constant. Theorem \ref{ThmWeissAC} implies that $u_\infty$ is homogeneous. 
\end{proof}

\subsection{Proof of Theorem \ref{ThmGRAC} for $d=\Dac+1$}
With these preparations, we give the proof of Theorem \ref{ThmGRAC}. In this subsection, we deal with the critical dimension $d=\Dac+1$, where $\Dac$ is  from \eqref{EqnCriticalAC}.

With the decomposition in \eqref{EqnDecompositionOfSingTimes}, it suffices to establish the following
\begin{prop}
\label{PropEmptySingInCriticalDimAC}
For $d=\Dac+1$ and a family $\FamPhi$ satisfying Assumption~\ref{AssGR}, we have
$$
\mathcal{S}^*=\emptyset,
$$
where $\mathcal{S}^*$ is the reduced set of space-time singularities from \eqref{EqnFurtherReducedSing}.
\end{prop}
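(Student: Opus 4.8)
The goal is to show the reduced space-time singular set $\mathcal{S}^*$ is empty when $d = \Dac + 1$. I would argue by contradiction: suppose $(x_0, t_0) \in \mathcal{S}^*$, so there exist $(x_k, t_k) \in \mathcal{S}'$ with $(x_k, t_k) \to (x_0, t_0)$ and $\omega(x_k, t_k) \to \omega(x_0, t_0)$. After translating in space, we may assume $x_0 = 0$ (and after translating in time, $t_0 = 0$); by the definition of $\mathcal{S}'$, the set $\mathcal{M}[\En_{AC}^+, \varphi_0]$ is a singleton and $0 \notin I_D$, so Lemma~\ref{LemVariableCenterAC} is available. The point $(0,0) \in \mathcal{S}^* \subset \mathcal{S}' \subset \mathcal{G}$ with $0 \in \SingU[u_0]$ (writing $u_0$ for the unique minimizer). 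The heart of the matter will be to produce, at the singular point, a blow-up that is a \emph{monotone} minimizing cone in $\R^d$, and then invoke Lemma~\ref{LemMonotoneConesAC} together with the dimension hypothesis $d = \Dac + 1$ to force the blow-up to be a half-space solution — contradicting that $0$ is singular.

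**Key steps.** First, I would show that $x_k \neq 0$ for all large $k$: by Corollary~\ref{CorNonOverLappingFBAC} the free boundaries $\Gamma(u_t)$ are pairwise disjoint in time, so if some $x_k = 0$ and $t_k \neq 0$ then $0 \in \Gamma(u_{t_k}) \cap \Gamma(u_0)$ forces $t_k = 0$, and if $t_k = 0$ as well then $(x_k, t_k) = (0,0)$, which we can discard. (If $x_k = 0, t_k = 0$ infinitely often, the sequence is trivial.) Next, set $r_k = |x_k| \to 0$ and form the rescalings $u_k(x) = \tfrac{1}{r_k} u_{t_k}(x_k + r_k x)$ as in Lemma~\ref{LemVariableCenterAC}; that lemma gives (along a subsequence) $u_k \to u_\infty$ locally uniformly, with $u_\infty$ a minimizing cone. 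Now the crucial monotonicity input: Lemma~\ref{LemRetractionAC} says the positivity set expands at linear speed in $t$, i.e. $B_{\kappa\theta(t - t_k)}(y) \subset \{u_t > 0\}$ whenever $y \in \Gamma(u_{t_k})$, for $t \geq t_k$; since $t_k \to 0$ and we can compare $u_{t_k}$ with $u_{t_j}$ for $t_j > t_k$, after rescaling by $r_k$ this information should pass to the limit as a statement that $u_\infty$ is monotone nondecreasing along the direction $e := \lim x_k / |x_k|$ (up to a subsequence this limit exists on $\Sph$). The point is that $0 \in \Gamma(u_\infty)$ is the rescaled image of $x_k$, which sits on $\Gamma(u_{t_k})$, while the rescaled image of the origin (the "old" free boundary center at time $0$... ) — more precisely, one tracks that moving against $e$ lands one in a region covered by an earlier-time positivity set that has since only grown, giving $u_\infty(x + se) \geq u_\infty(x)$ for $s > 0$.

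Once $u_\infty$ is a monotone minimizing cone in $\R^d$, Lemma~\ref{LemMonotoneConesAC} gives two alternatives: either $\partial_e u_\infty \equiv 0$, in which case by Lemma~\ref{LemInduceInLowerDimensionAC} the trace $\bar u_\infty$ is a minimizing cone in $\R^{d-1} = \R^{\Dac}$, hence (by the definition of $\Dac$ in \eqref{EqnCriticalAC}) a rotation of $(x_d)^+$, so $u_\infty$ is a half-space solution; or $u_\infty(x) = (x \cdot e')^+$ directly. Either way $u_\infty$ is a half-space solution, so $0 \in \RedGU[u_\infty]$. But $0$ arises as a limit of singular points $x_k \in \SingU[u_{t_k}]$ under locally uniform convergence $u_k \to u_\infty$, so Proposition~\ref{PropStabSingAC} forces $0 \in \SingU[u_\infty]$ — a contradiction. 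Hence $\mathcal{S}^* = \emptyset$.

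**Main obstacle.** The delicate step is making rigorous the claim that the blow-up $u_\infty$ inherits monotonicity along $e$ from Lemma~\ref{LemRetractionAC}. One must carefully choose auxiliary times $s_k$ slightly larger than $t_k$ (or exploit $t_k \to 0$ against a fixed later time) so that the linear-speed retraction estimate, after dividing by $r_k$, survives in the limit; the natural scaling is $s_k - t_k \sim r_k$, and one needs the constant $\kappa$ in Lemma~\ref{LemRetractionAC} to be uniform (which it is, depending only on a fixed $\tau$, $d$, $L$, $\theta$). A secondary subtlety is ensuring the limiting direction $e = \lim x_k/|x_k|$ is genuinely the direction along which retraction acts — this requires relating the location of $x_k$ (a point of $\Gamma(u_{t_k})$) to the free boundary geometry at time $0$ near the origin, and is where one uses that $u_{t_k}(\cdot + x_k) \to u_0$ locally uniformly (established inside the proof of Lemma~\ref{LemVariableCenterAC}).
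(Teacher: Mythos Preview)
Your overall strategy---assume a point of $\mathcal{S}^*$, blow up, show the limiting cone is monotone, then invoke Lemma~\ref{LemMonotoneConesAC} together with dimension reduction to reach a contradiction---is exactly the paper's. However, your execution of the monotonicity step diverges from the paper's and, as written, has a genuine gap.

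The paper does \emph{not} extract monotonicity from the retraction estimate (Lemma~\ref{LemRetractionAC}). Instead, after passing to a subsequence with (say) $t_k>0$, it runs \emph{two} blow-up sequences in parallel: the variable-center one $v_k(x):=r_k^{-1}u_{t_k}(x_k+r_kx)\to v_\infty$ (your sequence, via Lemma~\ref{LemVariableCenterAC}) and the fixed-center one $u_k(x):=r_k^{-1}u_0(r_kx)\to u_\infty$ (via Proposition~\ref{PropBlowUpAC}), both minimizing cones with $0$ in their singular sets (Proposition~\ref{PropStabSingAC}). The comparison principle (Proposition~\ref{PropComparison}) gives $u_{t_k}\ge u_0$, which after rescaling reads $v_k(x-x_k/r_k)\ge u_k(x)$; taking limits with $x_k/r_k\to e_1$ yields $v_\infty(\cdot-e_1)\ge u_\infty$. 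Then Lemma~\ref{LemHomogeneousFunctions} gives $v_\infty\ge u_\infty$, and crucially Lemma~\ref{LemOrderedConesAC} (ordered minimizing cones coincide) forces $v_\infty=u_\infty$, whence $u_\infty(\cdot-e_1)\ge u_\infty$. This is a clean deduction once both blow-ups are in place. Your route through Lemma~\ref{LemRetractionAC} gives only that the \emph{positivity set} expands with $t$, not a pointwise ordering of solution values along a fixed direction; you yourself flag passing this to monotonicity of $u_\infty$ as ``the main obstacle,'' and indeed one cannot get there without introducing the second blow-up and the ordered-cones lemma (Lemma~\ref{LemOrderedConesAC}), neither of which appears in your sketch.

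You also omit the branch where infinitely many $t_k=0$ with $x_k\neq0$: then $\Sing(u_0)$ accumulates at $0$, contradicting the local discreteness of $\Sing(u_0)$ in dimension $\Dac+1$ (Theorem~\ref{ThmRegOfFBAC}). The paper disposes of this case first, before splitting into $t_k>0$ and $t_k<0$; your retraction-based argument would say nothing here since there is no time separation to exploit.
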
 

\begin{proof}
Suppose not, then without loss of generality, we assume $(0,0)\in\RedSing.$ That is, 
$$
0\in\Sing(u_0) \text{ for }u_0\in\mathcal{M}[\varphi_0],
$$
and there is a sequence $(x_k,t_k)\to(0,0)$ such that  
$$
x_k\in\Sing(u_{t_k}) \hem\text{where $u_{t_k}\in\mathcal{M}[\varphi_{t_k}]$, and }
\omega(x_k,t_k)\to\omega(0,0).
$$
Recall that for points in $\RedSing$, the sets of minimizers are singletons  and the map $t\mapsto \varphi_t$ is continuous (see Remark~\ref{RemContAlmEverywhere}). 

There are three cases to consider:
\begin{enumerate}
\item{There is a subsequence of $t_k=0$;}
\item{There is a subsequence of $t_k>0$; and}
\item{There is a subsequence of $t_k<0$.}
\end{enumerate}
In the first case,   the sequence $x_k\in\Sing(u_0)$ accumulates at $0\in\Sing(u_0)$, contradicting Theorem \ref{ThmRegOfFBAC} since $d=\Dac+1$. It remains to study the second and the third cases. 

Below we show how the second case leads to a contradiction. The same argument works for the third case.

Under the assumption $t_k>0$, Corollary \ref{CorNonOverLappingFBAC} implies $r_k:=|x_k|>0$. Define two rescaled families:
$$
v_k(x):=\frac{1}{r_k}u_{t_k}(x_k+r_kx)
$$
and
$$
u_k(x):=\frac{1}{r_k}u_0(r_kx),
$$
then we apply Proposition \ref{PropBlowUpAC} and Lemma \ref{LemVariableCenterAC} to see that, up to a subsequence, 
$$
v_k\to v_\infty, \text{ and }u_k\to u_\infty 
\hem\text{ locally uniformly in }\R^d,
$$
where $v_\infty$ and $u_\infty$ are minimizing cones.  

Moreover, we have by Proposition~\ref{PropStabSingAC}
$$
0\in\Sing(v_\infty)\cap\Sing(u_\infty).
$$

On the other hand, with $t_k>0$, Proposition \ref{PropComparison} gives $u_{t_k}\ge u_0$ in $B_1$. Consequently, we have
$$
v_k\left(x-\frac{x_k}{r_k}\right)=\frac{1}{r_k}u_{t_k}(r_kx)\ge\frac{1}{r_k}u_0(r_kx)=u_k(x).
$$
Since $x_k/r_k\in\Sph$, up to subsequence, we have $x_k/r_k\to y_\infty\in\Sph$. Without loss of generality, we assume $y_\infty=e_1$. The previous comparison  implies
\begin{equation}
\label{EqnIfWeShiftThenWeOrder}
v_\infty(x-e_1)\ge u_\infty(x) \text{ for all }x\in\R^d.
\end{equation}
Lemma \ref{LemHomogeneousFunctions} implies
$
v_\infty\ge u_\infty \text{ in }\R^d,
$
which gives
$$
v_\infty=u_\infty \text{ in }\R^d
$$
by Lemma \ref{LemOrderedConesAC}. 
Plug this into \eqref{EqnIfWeShiftThenWeOrder}, we see that
$$
u_\infty(\cdot-e_1)\ge u_\infty(\cdot) \text{ in }\R^d.
$$

Since $0\in\Sing(u_\infty)$, Lemma \ref{LemMonotoneConesAC} implies that $u_\infty$ is invariant in the $e_1$-direction. By Lemma \ref{LemInduceInLowerDimensionAC}, this gives a minimizer in $\R^{\Dac}$ with non-empty singular part, contradicting Theorem \ref{ThmRegOfFBAC}.
\end{proof}

\subsection{Proof of Theorem \ref{ThmGRAC} for $d\ge\Dac+2$}
In this subsection, we turn to the part of Theorem \ref{ThmGRAC} that deals with the case $d\ge\Dac+2$. 

Once we show that the set $\mathcal{S}$ from \eqref{EqnSpaceTimeSing} satisfies the two hypotheses in Lemma~\ref{LemGenericReduction} with $\gamma=d-\Dac-1$ and $s=1$, then the desired conclusion follows by applying that lemma.

 Lemma \ref{LemRetractionAC} shows that the set $\mathcal{S}$ satisfies the second hypothesis of Lemma \ref{LemGenericReduction} for with $s=1$. Therefore, it suffices to establish the first hypothesis  for $\mathcal{S}.$

This is the content of the following:
\begin{prop}
\label{PropDimensionOfSpaceTimeSingAC}
For $d\ge\Dac+2$ and a family $\FamPhi$ satisfying Assumption~\ref{AssGR}, we have
$$
\DimH(\pi_x(\mathcal{S}))\le d-\Dac-1.
$$
\end{prop}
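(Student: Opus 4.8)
The plan is to show that $\pi_x(\mathcal{S})$ is covered, up to a countable set of slices, by the projection of the reduced set $\mathcal{S}^*$, and then to bound $\DimH(\pi_x(\mathcal{S}^*))$ via the Reifenberg-type criterion of Lemma~\ref{LemReifenberg} applied to the function $\omega$. More precisely, recall from the decomposition \eqref{EqnDecompositionOfSingTimes} that $\mathcal{S}$ differs from $\mathcal{S}^*$ only by a set whose time-projection is countable; since countably many slices contribute a set of dimension $\le 0$ to $\pi_x(\mathcal{S})$ (each slice being a singular set of a fixed minimizer, of dimension $\le d-\Dac-1$, so in fact the whole of $\pi_x(\mathcal{S})$ already has this bound by Theorem~\ref{ThmRegOfFBAC} slice by slice — but that does not immediately bound the dimension of the \emph{union}), the real content is to bound $\DimH(\pi_x(\mathcal{S}^*))$. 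Actually the cleanest route: it suffices to show $\DimH(\pi_x(\mathcal{S}^*))\le d-\Dac-1$, since $\pi_x(\mathcal{S})\subset \pi_x(\mathcal{S}^*)\cup \pi_x(\mathcal{S}\setminus\mathcal{S}^*)$ and the latter is a countable union of slices each of dimension $\le d-\Dac-1$.

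I would verify the hypotheses of Lemma~\ref{LemReifenberg} for $E=\pi_x(\mathcal{S}^*)$ and the function $f$ defined on $E$ by $f(x)=\omega(x,t)$ (well-defined after possibly discarding a further countable-in-$t$ set, using Corollary~\ref{CorNonOverLappingFBAC} to ensure the center $(x,t)\in\mathcal{S}^*$ is unique for given $x$, or simply defining $f$ on $\mathcal{S}^*$ itself and projecting). Fix $(x_0,t_0)\in\mathcal{S}^*$; after translating assume $(x_0,t_0)=(0,0)$. Given $\eps>0$, suppose for contradiction that for a sequence of scales $r_k\downarrow 0$ there are points $x_k\in \pi_x(\mathcal{S}^*)$ with $|x_k|\le r_k$, $x_k$ not $\eps r_k$-close to any $(d-\Dac-1)$-dimensional subspace through $0$, and with $|\omega(x_k,t_k)-\omega(0,0)|<\rho$ for the associated times $t_k$. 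Rescaling by $r_k$ and using Lemma~\ref{LemVariableCenterAC} (applicable since, after discarding the countable sets $I$ and $I_D$, the slice $\mathcal{M}[\EnAC,\varphi_0]$ is a singleton and $0\notin I_D$), the rescaled minimizers $u_{t_k}(r_k\,\cdot)/r_k$ converge to a minimizing cone $u_\infty$ with $0\in\Sing(u_\infty)$ (by Proposition~\ref{PropStabSingAC}), and along a subsequence $x_k/r_k\to y_\infty\in\Sph$ with $y_\infty$ also lying in $\Sing(u_\infty)$ (again via Proposition~\ref{PropStabSingAC}, since the rescaled centered solutions have a free-boundary singularity at $x_k/r_k$ with Weiss density $\to \omega(0,0)$, the density of $u_\infty$ at $0$). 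By the cone structure and Lemma~\ref{LemConeSplittingAC}--Lemma~\ref{LemInduceInLowerDimensionAC}, $u_\infty$ is translation invariant along $y_\infty$, hence by Federer dimension reduction (Theorem~\ref{ThmRegOfFBAC}) the singular set $\Sing(u_\infty)$ has dimension $\le d-\Dac-1$ and is contained in a cone that is translation invariant in the $y_\infty$ direction; iterating, $\Sing(u_\infty)$ is contained in a linear subspace of dimension $\le d-\Dac-1$. But then all the $x_k/r_k$, which accumulate at points of $\Sing(u_\infty)$, eventually lie within $\eps$ of this subspace — contradicting the choice of $x_k$. This establishes the Reifenberg hypothesis with $m=d-\Dac-1$ and gives $\DimH(E)\le d-\Dac-1$.

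The main obstacle, and the step I would be most careful about, is the dimension-reduction bookkeeping: turning ``$u_\infty$ is a minimizing cone, singular at $0$, and $\Sing(u_\infty)$ is translation-invariant along one direction'' into ``$\Sing(u_\infty)$ lies in a $(d-\Dac-1)$-dimensional subspace''. This requires the standard Federer reduction for the Alt-Caffarelli singular set (already packaged in Theorem~\ref{ThmRegOfFBAC}, whose proof is the iterated cone-splitting of Lemmas~\ref{LemConeSplittingAC}--\ref{LemInduceInLowerDimensionAC}), plus the observation that the \emph{spine} of $u_\infty$ — the maximal subspace of translation invariance — must contain all of $\Sing(u_\infty)$ when $\Sing(u_\infty)$ is itself a cone of the critical dimension; equivalently, that a minimizing cone in $\R^{\Dac}$ has no singularities. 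Care is also needed to ensure that the point $y_\infty=\lim x_k/r_k$ is genuinely a \emph{singular} point of $u_\infty$ and carries the same Weiss density $\omega(0,0)$: this is where the defining property of $\mathcal{S}^*$ (the convergence $\omega(x_k,t_k)\to\omega(0,0)$) is essential, combined with upper semicontinuity of the Weiss density along the blow-up (as in the proof of Lemma~\ref{LemVariableCenterAC}) to pin the density of $u_\infty$ at $y_\infty$ to be $\ge \omega(0,0)$, hence $=\omega(0,0)$ by monotonicity, forcing $y_\infty$ into $\Sing(u_\infty)$ rather than $\Gamma^*(u_\infty)$.

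Finally I would assemble the pieces: $\pi_x(\mathcal{S})\subset \pi_x(\mathcal{S}^*)\cup\bigcup_{t\in I\cup I_D\cup(\text{countable})}\Sing(u_t)$, the first term has dimension $\le d-\Dac-1$ by the above, and the second is a countable union of sets each of dimension $\le d-\Dac-1$ by Theorem~\ref{ThmRegOfFBAC}; since a countable union does not increase Hausdorff dimension, $\DimH(\pi_x(\mathcal{S}))\le d-\Dac-1$, which is the claim. Together with Lemma~\ref{LemRetractionAC} verifying the second hypothesis of Lemma~\ref{LemGenericReduction} with $s=1$, this then yields $\DimH(\SingUT)\le d-\Dac-2$ for a.e.\ $t$, completing the $d\ge\Dac+2$ case of Theorem~\ref{ThmGRAC}.
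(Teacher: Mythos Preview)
Your approach has a genuine gap at the central step. You want to show that $y_\infty := \lim x_k/r_k$ lies in $\Sing(u_\infty)$ with Weiss density $\omega(0,0)$, where $u_\infty$ is the blow-up of $u_0$ at the origin. But $x_k$ is a singular point of $u_{t_k}$, not of $u_0$: indeed if $t_k>0$ then by Proposition~\ref{PropComparison} and Corollary~\ref{CorNonOverLappingFBAC} the point $x_k$ lies in the interior of $\{u_0=0\}$, so it is not even on $\Gamma(u_0)$. Proposition~\ref{PropStabSingAC} therefore gives no information about $u_\infty$ at $y_\infty$. Your appeal to Lemma~\ref{LemVariableCenterAC} is also misdirected: that lemma rescales $u_{t_k}$ \emph{centered at $x_k$}, producing a cone $v_\infty$ that is a priori a different object from the blow-up $u_\infty$ of $u_0$ at $0$. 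The density information $\omega(x_k,t_k)\to\omega(0,0)$ is a statement about $u_{t_k}$ at $x_k$ and does not transfer to $u_0$ (or to $u_\infty$) at $y_\infty$ without further argument. (A secondary issue: negating the hypothesis of Lemma~\ref{LemReifenberg} with $m=d-\Dac-1$ requires $m+1$ points per scale that quantitatively fail to lie near any $m$-plane, not a single point $x_k$.)

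The paper's proof confronts exactly this difficulty, and the outcome is different from what you anticipate. It takes $m_d+1=d-\Dac$ points $x_k^{(j)}$ per scale, and for each $j$ blows up \emph{both} $u_0$ at $0$ (getting $u_\infty$) \emph{and} $u_{t_k^{(j)}}$ at $x_k^{(j)}$ (getting a cone $v_\infty$, via Lemma~\ref{LemVariableCenterAC}). The ordering $u_{t_k^{(j)}}\ge u_0$ (or $\le$, according to the sign of $t_k^{(j)}$) from Proposition~\ref{PropComparison} passes to the limit as $v_\infty(\cdot-y^{(j)})\ge u_\infty$; then Lemma~\ref{LemHomogeneousFunctions} and Lemma~\ref{LemOrderedConesAC} force $v_\infty=u_\infty$, so that $u_\infty(\cdot-y^{(j)})\ge u_\infty$. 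The conclusion is that $u_\infty$ is \emph{monotone} along each of the $m_d+1$ independent directions $y^{(j)}$ --- not that $y^{(j)}\in\Sing(u_\infty)$, nor that $u_\infty$ is translation-invariant along $y^{(j)}$. The contradiction then comes from Lemma~\ref{LemConeMultiMonotoneAC} (a minimizing cone monotone along $d-\Dac$ independent directions is a half-space), which in turn rests on the graphical regularity Theorem~\ref{ThmGraphicalFBAC}. This monotonicity classification, rather than a density/spine argument, is the substantive ingredient your proposal is missing.
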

Here $\DimH$ denotes the Hausdorff dimension, the projection $\pi_x$ is defined in \eqref{EqnSpaceTimeProjections}, the set of space-time singularities $\mathcal{S}$ is defined in \eqref{EqnSpaceTimeSing}, and the dimension $\Dac$ is from \eqref{EqnCriticalAC}.

For simplicity, we introduce a parameter $m_d$ as
\begin{equation}
\label{EqnTheParameterMdAC}
m_d:=d-\Dac-1.
\end{equation} 

We begin with the following observation:
\begin{lem}
\label{LemConeMultiMonotoneAC}
Let $u$ be a minimizing cone for the Alt-Caffarelli energy $\EnAC(\cdot)$ in~$\R^d$.  

Suppose that there is a family of directions $\{\xi_j\}_{j=1,2,\dots,m_d+1}\subset\Sph$ satisfying
$$
\mathrm{dim}(\mathrm{span}\{\xi_j\})=m_d+1,
$$
and
$$
\partial_{\xi_j} u\ge 0 \text{ in }\R^d.
$$
Then, up to a rotation,  we have
$$
u(x)=x_1^+\quad\text{in}\quad \R^d. 
$$
\end{lem}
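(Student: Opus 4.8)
The plan is a dimension-reduction induction on the number of monotonicity directions. We will establish the following claim by induction on $N\ge 1$: \emph{if $v$ is a nontrivial minimizing cone for $\EnAC(\cdot)$ in $\R^n$ with $n=\Dac+N$, and there are $N$ linearly independent unit vectors $\zeta_1,\dots,\zeta_N$ with $\partial_{\zeta_j}v\ge0$ in $\R^n$ for every $j$, then, up to a rotation, $v(x)=x_1^+$.} Applying this with $N=m_d+1$ (so that $\Dac+N=d$ by \eqref{EqnTheParameterMdAC}), $n=d$, and $(\zeta_j)=(\xi_j)$ yields Lemma~\ref{LemConeMultiMonotoneAC}. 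At every stage the key tool is Lemma~\ref{LemMonotoneConesAC}: after a rotation sending $\zeta_1$ to $e_1$, that lemma (whose hypothesis $v(\cdot+e_1)\ge v$ is implied by $\partial_1 v\ge 0$) gives a dichotomy: either $v$ is already a half-space solution $(x\cdot e)^+$ — and we are done after a further rotation — or $\partial_1 v\equiv0$ in $\R^n$. In the latter case Lemma~\ref{LemInduceInLowerDimensionAC} shows $\bar v(x'):=v(x',0)$ minimizes $\EnAC(\cdot)$ in $\R^{n-1}=\{x_1=0\}$; it is $1$-homogeneous and nonnegative, and since $v$ is $e_1$-invariant we have $\{v>0\}=\{\bar v>0\}\times\R$, hence $\Gamma(v)=\Gamma(\bar v)\times\R$ and $0\in\Gamma(\bar v)$, so $\bar v$ is again a nontrivial minimizing cone.

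For the base case $N=1$ this restricted cone $\bar v$ lives in $\R^{\Dac}$, so by the definition of $\Dac$ in \eqref{EqnCriticalAC} it is a rotation of $(x_{\Dac})^+$; therefore $v(x)=\bar v(x')=(x\cdot e')^+$ with $e'\perp e_1$, which is $x_1^+$ up to a rotation, as desired. For the inductive step ($N\ge2$), once we are in the alternative $\partial_1v\equiv0$ we must feed the hypotheses of the $(N-1)$-case to $\bar v$ in $\R^{n-1}=\R^{\Dac+(N-1)}$; this is the only genuinely new point, and the step I would flag as the main (if mild) obstacle. Set $\zeta_j':=\zeta_j-(\zeta_j\cdot e_1)e_1$ for $j=2,\dots,N$. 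Since $\zeta_1,\dots,\zeta_N$ are linearly independent, no $\zeta_j$ with $j\ge2$ is parallel to $e_1=\zeta_1$, so $\zeta_j'\ne0$; and the $\zeta_j'$ are linearly independent in $\{x_1=0\}$, because a relation $\sum_{j\ge2}c_j\zeta_j'=0$ rewrites as $\sum_{j\ge2}c_j\zeta_j=\big(\sum_{j\ge2}c_j(\zeta_j\cdot e_1)\big)e_1$ and forces all $c_j=0$.

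Finally, $e_1$-invariance of $v$ gives $v(x'+h\zeta_j')=v(x'+h\zeta_j)$ for $x_1=0$ and all $h\in\R$, whence $\partial_{\zeta_j'}\bar v=\partial_{\zeta_j}v\ge0$ on $\{x_1=0\}$; after normalizing the $\zeta_j'$ to unit length, $\bar v$ is a nontrivial minimizing cone in $\R^{\Dac+(N-1)}$ that is monotone along $N-1$ linearly independent directions. The inductive hypothesis then says $\bar v$ equals $x_1^+$ up to a rotation, and since $v(x)=\bar v(x')$ so does $v$, completing the induction. (Throughout, "minimizing cone" is understood to be nontrivial, i.e. $0\in\Gamma(v)$, a property preserved by the restriction as noted above, so that Lemma~\ref{LemMonotoneConesAC} is applicable at each stage.)
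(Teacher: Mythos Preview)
Your proof is correct, but it takes a more elaborate route than the paper's. The paper avoids induction entirely by applying the dichotomy of Lemma~\ref{LemMonotoneConesAC} to \emph{all} directions $\xi_j$ at once: if for some $j$ and some point $x$ we have $\partial_{\xi_j}u(x)>0$, then that lemma already forces $u$ to be a half-space solution (the invariance alternative being excluded), and we are done; otherwise $\partial_{\xi_j}u\equiv0$ for every $j$, so $u$ is invariant along the full $(m_d+1)$-dimensional span, and a single application of Lemma~\ref{LemInduceInLowerDimensionAC} drops us directly to $\R^{\Dac}$, where the definition of $\Dac$ finishes the argument. Your inductive scheme---peeling off one direction at a time and projecting the remaining monotonicity directions onto the orthogonal complement---is sound (the projection argument is handled correctly), but it does extra bookkeeping that the paper sidesteps. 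The paper's approach buys brevity; yours has the minor virtue of making the role of each monotonicity direction explicit at each step, but neither approach uses anything the other does not.
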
 
\begin{proof}
If we have, for one $\xi_j$ and one point $x\in\R^d$, 
$$
\partial_{\xi_j} u(x)>0,
$$
then Lemma \ref{LemMonotoneConesAC} gives the desired conclusion. 

As a result, we just need to consider the case when
$$
\partial_{\xi_j} u\equiv 0\hem \text{ in }\R^d \text{ for all }j=1,2,\dots,m_d+1.
$$
By taking linear combinations of these relations, we see that $u$ is invariant along $(m_d+1)$ orthogonal directions. Up to a rotation, we might assume 
$$
u(x_1,x_2,\dots,x_d)=u(0,0,\dots,0,x_{m_d+2},x_{m_d+3},\dots,x_d) 
\hem\text{ in }\R^d.
$$
Lemma \ref{LemInduceInLowerDimensionAC} says that $(x_{m_d+2},x_{m_d+3},\dots,x_d)\mapsto u(0,0,\dots,0,x_{m_d+2},x_{m_d+3},\dots,x_d)$ is a minimizing cone in $\R^{\Dac}$. By definition of $\Dac$ from \eqref{EqnCriticalAC}, we see that $u=x_1^+$ up to a rotation. 
\end{proof} 

Now we give the proof of Proposition \ref{PropDimensionOfSpaceTimeSingAC}.
\begin{proof}[Proof of Proposition \ref{PropDimensionOfSpaceTimeSingAC}]
Suppose the conclusion of Proposition \ref{PropDimensionOfSpaceTimeSingAC} fails, then the hypothesis of Lemma \ref{LemReifenberg} fails at some point in $\pi_x(\mathcal{S})$ for $m=m_d$.  

Below we show that this leads to a contradiction.

\vem

\noindent \textit{Step 1: The setting.}   Without of loss of generality, we assume the point of failure is 
$(0,0)\in\mathcal{S}^*$ (times not belonging here are countable, and for each of them  the dimension of the singular set is at most $d-\Dac-1$ by Theorem~\ref{ThmRegOfFBAC}). This means that there is $\eps>0$ such that for each $k\in\N$, we can find $r_k<1/k$ and points $\{x_k^{(j)}\}_{j=1,2,\dots,m_d+1}$ such that 
$$
x_k^{(j)}\in B_{r_k}, \hem |\omega(x_k^{(j)},t_k^{(j)})-\omega(0,0)|<\frac{1}{k},
$$
but
$$
\max_{j=1,2,\dots,m_d+1}\mathrm{dist}(x_k^{(j)},\Pi)\ge\eps r_k
$$
for any $m_d$-dimensional subspace $\Pi$. 

Here, for each $k\in\N$, the time instance $t_k^{(j)}$ is chosen  such that $(x_k^{(j)},t_k^{(j)})\in\mathcal{S}$. By Corollary \ref{CorNonOverLappingFBAC}, this choice is unique. By Lemma \ref{LemRetractionAC}, we can assume 
$$
t_k^{(j)}\to0 \text{ for each }j=1,2,\dots, m_d+1.
$$

Take the rescaled points
$$
y_k^{(j)}=x_k^{(j)}/r_k,
$$
then 
$$
y_k^{(j)}\in B_1\backslash B_\eps, \hem\text{ and }\max_{j=1,2,\dots,m_d+1}\mathrm{dist}(y_k^{(j)},\Pi)\ge\eps
$$
for any $m_d$-dimensional subspace $\Pi$. 

Up to a subsequence of $k\to\infty$, we have 
$$
y_k^{(j)}\to y^{(j)}\in \overline{B_1}\backslash B_\eps \hem\text{ for each }j=1,2,\dots, m_d+1
$$
and
$$
\mathrm{dim}(\mathrm{span}\{y^{(j)}\})=m_d+1.
$$

Define the rescaled function 
\[
u_k(x):=\frac{1}{r_k}u_0(r_kx).
\]
Since $(0,0)\in\mathcal{S}$, Proposition \ref{PropBlowUpAC} and Proposition \ref{PropStabSingAC} imply that, up to a subsequence, 
$$
u_k\to u_\infty \text{ locally uniformly in }\R^d,
$$
where $u_\infty$ is a minimizing cone with $0\in\Sing(u_\infty)$. We make the following claim:

\noindent \textit{Claim:} The minimizing cone $u_\infty$ is monotone along $y^{(j)}$-direction for each $j=1,\dots,{m_d+1}$.

Once this is achieved, Lemma \ref{LemConeMultiMonotoneAC} implies that $u_\infty$ is of the form $x_1^+$ (up to a rotation), contradicting  $0\in\Sing(u_\infty)$. Thus, it only remains to prove the \textit{Claim}, which we do below in two different parts. 
\vem

\noindent \textit{Step 2: The cone $u_\infty$ is  invariant  in the $y^{(1)}$-direction if there is yet a further subsequence $k\to \infty$ with $t_{k}^{(1)}=0$.}

For simplicity,  we assume $
y^{(1)}=e_1.
$
In this case, we have $y_k^{(1)}\in\Sing(u_k)$ for each $k$. If we define
$$
v_k(x)=\frac{1}{r_k}u_0(x_k^{(1)}+r_kx),
$$
then Lemma \ref{LemVariableCenterAC} implies, up to a subsequence, 
$$
v_k\to v_\infty \text{ locally uniformly in }\R^d,
$$
where $v_\infty$ is a minimizing cone. 

By definition, we have
$$
v_k(x-y_k^{(1)})=\frac{1}{r_k}u_0(r_kx)=u_k(x).
$$
Local  uniform convergences of $v_k\to v_\infty$ and $u_k\to u_\infty$, together with $y_k^{(1)}\to e_1$, give 
\begin{equation*}
v_\infty(x-e_1)=u_\infty(x) \hem\text{ in }\R^d.
\end{equation*}

Lemma \ref{LemHomogeneousFunctions} implies $v_\infty=u_\infty$. Thus
$$
u_\infty(x-e_1)=u_\infty(x) \text{ in }\R^d.
$$
Another application of Lemma \ref{LemHomogeneousFunctions} implies that $u_\infty$ is invariant in the $e_1$-direction.

\vem

\noindent \textit{Step 3: The cone $u_\infty$ is monotone in the $y^{(1)}$-direction if there is yet a further subsequence $k\to \infty$ with $t_k^{(1)}>0$.}

Again we assume $y^{(1)}=e_1$ to simplify the exposition. In this case, we take
$$
v_k(x)=\frac{1}{r_k}u_{t_k^{(1)}}(x_k^{(1)}+r_kx).
$$
Lemma \ref{LemVariableCenterAC} implies that along a subsequence 
$$
v_k\to v_\infty \text{ locally uniformly in }\R^d,
$$
where $v_\infty$ is a minimizing cone. 

Proposition \ref{PropComparison} implies $u_{t_k^{(1)}}\ge u_0$, and as a result, 
$
v_k(x-y_k^{(1)})\ge u_k(x).
$
Sending $k\to\infty$, we get $v_\infty(\cdot-e_1)\ge u_\infty(\cdot)$. Lemma \ref{LemHomogeneousFunctions} implies
$v_\infty\ge u_\infty$. From here we invoke Lemma \ref{LemOrderedConesAC} to conclude $v_\infty=u_\infty$, which in turn implies $u_\infty(\cdot-e_1)\ge u_\infty(\cdot)$.

Lemma \ref{LemHomogeneousFunctions} gives that $u_\infty$ is monotone in $y^{(1)}.$

\vem

\noindent \textit{Step 4: The cone $u_\infty$ is monotone in the $y^{(j)}$-direction for $j=1,2,\dots,m_d+1$.}

The argument in \textit{Step 3} also shows that $u_\infty$ is monotone in the $y^{(1)}$-direction if there is yet a further subsequence $k\to \infty$ with $t_k^{(1)}<0.$ Combining this with \textit{Step~2} and \textit{Step~3}, we conclude that $u_\infty$ is monotone in the $y^{(1)}$-direction. 

Similar arguments can then be applied to each $y^{(j)}$ up to taking further subsequences, to conclude the proof of the claim, and thus, of the proposition.
\end{proof}

Finally, combining the previous results we obtain the proof of Theorem~\ref{ThmGRAC}:

\begin{proof}[Proof of Theorem~\ref{ThmGRAC}]
The case $d=\Dac+1$ follows from Proposition~\ref{PropEmptySingInCriticalDimAC} (together with Theorem \ref{ThmGU} and Lemma \ref{LemPointsOfContinuity}; see~\eqref{EqnDecompositionOfSingTimes}).

The case $d\ge\Dac+2$ is a consequence of Lemma~\ref{LemGenericReduction} thanks to Proposition~\ref{PropDimensionOfSpaceTimeSingAC} and Lemma~\ref{LemRetractionAC}.
\end{proof}

And:

\begin{proof}[Proof of Corollary~\ref{CorGRAC}]
Follows directly from Theorem~\ref{ThmGRAC} (cf. the proof of Corollary~\ref{CorGU}). 
\end{proof}

\section{Generic regularity  in the Alt-Phillips problem}
\label{sec:5}
In this section, we turn our attention to the one-phase Alt-Phillips energy $\EnAP(\cdot)$ in \eqref{EqnAP} and  establish Theorem \ref{ThmGRAP}. Recall that we deal with nonnegative minimizers for $\EnAP(\cdot)$. The parameter $\gamma$ and the corresponding $\beta$ from \eqref{EqnBeta} satisfy
$$
\gamma\in(0,1), \text{ and }\beta = {\textstyle \frac{2}{2-\gamma}}\in(1,2).
$$
We have seen the general strategy in Section \ref{SecGRAC}, and for that reason,  some proofs are  sketched. New ideas and techniques, however, are required for a few crucial ingredients. 

While minimizing cones for the Alt-Caffarelli functional  solve an eigenvalue problem on the sphere,  this eigenvalue problem becomes nonlinear for the Alt-Phillips functional, and as a consequence, it is less straightforward to read information from it. Thus, the counterpart of Lemma \ref{LemOrderedConesAC}, which is the starting point for all developments in Section \ref{SecGRAC}, requires a new argument involving the Weiss energy density. 

Another missing ingredient is the theory of monotone solutions as in De Silva-Jerison \cite{DJ} (see also \cite{EFeY}). This theory is based on the NTA-estimate for $\EnAC(\cdot)$ as in De Silva \cite{D2}, which is also missing for $\EnAP(\cdot)$. In Section \ref{SecGRAC}, this theory is responsible for the classification of monotone cones as in Lemmas~\ref{LemMonotoneConesAC} and~\ref{LemConeMultiMonotoneAC}. For the Alt-Phillips problem, we use a different argument: the crucial step is to  rule out points where the free boundary is tangential to the monotone direction of the cone. To achieve this, we need a detailed expansion of a minimizer near a regular  point, which is the content of Appendix \ref{AppExpansionNearRegularPoint}.

\subsection{Preparatory Lemmas}We begin by proving some properties of minimizers $\mathcal{M}[\EnAP,\cdot]$  to the Alt-Phillips energy $\EnAP(\cdot)$ from \eqref{EqnAP} (recall \eqref{EqnSetOfMin}).

The following lemma allows  blowing up along variable centers in space-time. It is the analogue of Lemma~\ref{LemVariableCenterAC} in this context. Recall the  space-time free boundary  $\mathcal{G}$ in \eqref{EqnSpaceTimeFreeBoundary},  the Weiss energy $W(\cdot)$ from \eqref{EqnWeissAP} and the Weiss energy density $\omega(\cdot,\cdot)$ from \eqref{EqnWeissLimit}.

\begin{lem}
\label{LemVariableCenterAP}
For a family $\FamPhi$ satisfying Assumption \ref{AssGR}, suppose that $u_t\in\M[\EnAP, \varphi_t]$ for each $t\in(-1,1)$.

For a sequence $(x_k,t_k)\in\mathcal{G}$ satisfying 
$$
x_k\neq0, \hem (x_k,t_k)\to (0,0)\in\mathcal{G} \text{ and }\omega(x_k,t_k)\to\omega(0,0),
$$
define $r_k=|x_k|$ and 
$$
u_k(x):=\frac{1}{r_k^\beta}u_{t_k}(x_k+r_kx).
$$

If $\mathcal{M}[\varphi_0]$ is a singleton and $0\notin I_D$ (see Remark~\ref{RemContAlmEverywhere}), then, up to a subsequence, we have
$$
u_k\to u_\infty \text{ locally uniformly in }\R^d,
$$
where $u_\infty$ is a minimizing cone with 
$$
W(u_\infty, 0,1)=\omega(0,0).
$$
\end{lem}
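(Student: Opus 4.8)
The plan is to mirror the proof of Lemma~\ref{LemVariableCenterAC}, which handles the Alt-Caffarelli case, adapting each ingredient to the $\beta$-homogeneous scaling of the Alt-Phillips problem. The skeleton is: (i) show the rescaled sequence $u_k$ is compact and converges to some minimizer $u_\infty$; (ii) establish the lower bound $W(u_\infty,0,r)\ge\omega(0,0)$ for all $r>0$; (iii) establish the matching upper bound $W(u_\infty,0,r)\le\omega(0,0)$ for all $r>0$ using that $\mathcal{M}[\varphi_0]$ is a singleton and $t\mapsto\varphi_t$ is continuous at $0$; (iv) conclude that $r\mapsto W(u_\infty,0,r)$ is constant, hence $u_\infty$ is $\beta$-homogeneous by Theorem~\ref{ThmWeissAP}, i.e. a minimizing cone; and (v) read off $W(u_\infty,0,1)=\omega(0,0)$ from constancy of the Weiss energy.

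For step (i), since $x_k\in\Gamma(u_{t_k})$ we have $u_k(0)=0$ and, by the nondegeneracy and the uniform $C^{1,\beta-1}$ estimate of Proposition~\ref{PropBasicAP} (applied after noting the $u_{t_k}$ are uniformly bounded — this uses Assumption~\ref{AssGR}(\ref{it:33_2}) together with Proposition~\ref{PropHolder}), the family $\{u_k\}$ is locally bounded and precompact in $C^1_{\rm loc}$; extracting a subsequence gives $u_k\to u_\infty$ with $u_\infty\in\mathcal{M}[\EnAP,u_\infty]$ by Lemma~\ref{LemDiriConvAP}. For step (ii), I compute, using the scaling symmetry of $W$ (note the exponents $r^{d+2\beta-2}$ and $r^{d+2\beta-1}$ in \eqref{EqnWeissAP} are chosen precisely so that $W(u_k,0,r)=W(u_{t_k},x_k,rr_k)$) and Lemma~\ref{LemDiriConvAP} for the continuity of the Dirichlet energy and of $\int u^\gamma$:
\begin{align*}
W(u_\infty,0,r)&=\lim_{k\to\infty}W(u_k,0,r)=\lim_{k\to\infty}W(u_{t_k},x_k,rr_k)\\
&\ge\lim_{k\to\infty}\omega(x_k,t_k)=\omega(0,0),
\end{align*}
the inequality being monotonicity of $\rho\mapsto W(u_{t_k},x_k,\rho)$ from Theorem~\ref{ThmWeissAP} together with $rr_k\to 0$.

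Step (iii) is the one requiring the extra hypotheses. As in Lemma~\ref{LemVariableCenterAC}, by Proposition~\ref{PropHolder} and Assumption~\ref{AssGR}(\ref{it:33_2}) the family $\{u_{t_k}\}$ is uniformly H\"older on $\overline{B_1}$, so a subsequence converges uniformly to some minimizer $v$; continuity of $t\mapsto\varphi_t$ at $0$ (since $0\notin I_D$) forces $v\in\mathcal{M}[\varphi_0]$, and since that set is a singleton, $v=u_0$; combined with $x_k\to0$ this gives $u_{t_k}(\cdot+x_k)\to u_0$ locally uniformly in $B_1$. Given $\delta>0$, pick $r_\delta>0$ with $W(u_0,0,r_\delta)<\omega(0,0)+\delta$ (possible by definition of $\omega(0,0)$); then, since $r_\delta$ is a fixed scale, Lemma~\ref{LemDiriConvAP} gives $W(u_{t_k},x_k,r_\delta)<\omega(0,0)+2\delta$ for $k$ large. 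For fixed $r>0$ and $k$ large we have $rr_k<r_\delta$, so by monotonicity $W(u_{t_k},x_k,rr_k)\le W(u_{t_k},x_k,r_\delta)$, whence $W(u_\infty,0,r)=\lim_k W(u_{t_k},x_k,rr_k)\le\omega(0,0)+2\delta$. Letting $\delta\downarrow0$ yields $W(u_\infty,0,r)\le\omega(0,0)$ for all $r>0$. Combined with (ii), $r\mapsto W(u_\infty,0,r)$ is constant; Theorem~\ref{ThmWeissAP} then gives that $u_\infty$ is $\beta$-homogeneous, i.e.\ a minimizing cone, and constancy also gives $W(u_\infty,0,1)=\omega(0,0)$, as claimed.

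I expect no genuinely new obstacle here — the proof is essentially a transcription of Lemma~\ref{LemVariableCenterAC}. The only points demanding care are bookkeeping the $\beta$-dependent scaling exponents in the Weiss energy \eqref{EqnWeissAP} so that the identity $W(u_k,0,r)=W(u_{t_k},x_k,rr_k)$ holds exactly, and invoking Lemma~\ref{LemDiriConvAP} (rather than Lemma~\ref{LemDiriConvAC}) for the convergence of $\int(\Diri+u^\gamma)$ and of the boundary term $\int_{\partial B_\rho}u^2$ under $H^1_{\rm loc}$ and $C^1_{\rm loc}$ convergence. The compactness in step (i) relies on the $C^{1,\beta-1}$ bound of Proposition~\ref{PropBasicAP}, which is available only after one knows $\{u_{t_k}\}$ is uniformly bounded near the relevant points; this is guaranteed by Proposition~\ref{PropHolder} and Assumption~\ref{AssGR}(\ref{it:33_2}), so no difficulty arises.
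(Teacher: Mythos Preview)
Your proof is correct and follows essentially the same route as the paper's own argument: the paper simply invokes Proposition~\ref{PropBasicAP} for compactness and then states that the identity $W(u_\infty,0,r)=\omega(0,0)$ for all $r>0$ follows ``with the same argument as in the proof of Lemma~\ref{LemVariableCenterAC},'' using Lemma~\ref{LemDiriConvAP}, Theorem~\ref{ThmWeissAP}, continuity of $t\mapsto\varphi_t$ at $0$, and uniqueness of the minimizer. You have spelled out exactly that transcription, with the appropriate $\beta$-dependent scaling.
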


\begin{proof}
Compactness of the family $\{u_k\}$ follows from Proposition \ref{PropBasicAP}. 

Using Lemma~\ref{LemDiriConvAP}, Theorem \ref{ThmWeissAP},  the continuity of $t\mapsto \varphi_t$ at $t = 0$, as well as the hypothesis that $\mathcal{M}[\varphi_0]$ is a singleton, we know that the limit $u_\infty$ satisfies
$$
W(u_\infty,0,r)=\omega(0,0)\hem \text{ for all }r>0
$$
with the same argument as in the proof of Lemma \ref{LemVariableCenterAC}.

Theorem \ref{ThmWeissAP} implies that the limit $u_\infty$ is homogeneous with 
$
W(u_\infty, 0,1)=\omega(0,0).
$
\end{proof} 

Next we give the counterpart of Lemma \ref{LemOrderedConesAC}. Due to the nonlinearity of the Alt-Phillips problem, we are not able to establish the result with the same level of generality. Fortunately for our purpose, Lemma \ref{LemVariableCenterAP} states that  we only consider minimizing cones with the same Weiss energy density: 

\begin{lem}
\label{LemOrderedConesAP}
Suppose that $u$ and $v$ are minimizing cones for $\EnAP(\cdot)$ in $\R^d$ with 
$$
u\ge v \text{ in }\R^d
\hem\text{ and }\hem
W(u,0,1)=W(v,0,1),
$$
then
$$
u=v \text{ in }\R^d.
$$
\end{lem}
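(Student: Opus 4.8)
The plan is to reduce everything to the representation of the Weiss energy density for homogeneous minimizers in Lemma~\ref{LemWeissRepAP}. That identity converts the hypothesis $W(u,0,1)=W(v,0,1)$ into an equality of the quantities $\int_{B_1}u^\gamma$ and $\int_{B_1}v^\gamma$, after which the pointwise ordering $u\ge v$ does the rest.

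First I would apply Lemma~\ref{LemWeissRepAP} to each of the $\beta$-homogeneous minimizers $u$ and $v$, obtaining
\[
W(u,0,1)=\Big(1-\frac{\gamma}{2}\Big)\int_{B_1}u^\gamma,
\qquad
W(v,0,1)=\Big(1-\frac{\gamma}{2}\Big)\int_{B_1}v^\gamma,
\]
where both integrals are finite since minimizers are bounded on $B_1$ (Proposition~\ref{PropBasicAP}). Because $\gamma\in(0,1)$, the constant $1-\gamma/2$ is strictly positive, so the hypothesis $W(u,0,1)=W(v,0,1)$ is equivalent to $\int_{B_1}u^\gamma=\int_{B_1}v^\gamma$.

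Next, from $u\ge v\ge0$ and the monotonicity of $t\mapsto t^\gamma$ on $[0,+\infty)$ one gets $u^\gamma\ge v^\gamma$ pointwise in $B_1$; together with the equality of the two integrals this forces $u^\gamma=v^\gamma$, and hence $u=v$, almost everywhere in $B_1$. Since minimizers are continuous --- indeed locally $C^{1,\beta-1}$ by Proposition~\ref{PropBasicAP} --- this upgrades to $u=v$ in $B_1$, and then to $u=v$ in all of $\R^d$ by $\beta$-homogeneity. The degenerate case $u\equiv 0$ is trivial, as then $0\le v\le u$ forces $v\equiv 0$.

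There is no serious obstacle in this argument: the only non-elementary ingredient is the identity of Lemma~\ref{LemWeissRepAP}, which is already available, so in practice the proof is just the three lines above. What is worth stressing is the contrast with the Alt-Caffarelli case: there Lemma~\ref{LemOrderedConesAC} is proved through the linear eigenvalue problem on $\Sph$ satisfied by $1$-homogeneous minimizers, whereas here it is precisely the potential term $u^\gamma$ in the energy that encodes the ordering, so that equality of the Weiss densities together with $u\ge v$ pins down $u=v$ directly.
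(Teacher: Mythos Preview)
Your proof is correct and follows exactly the paper's approach: invoke Lemma~\ref{LemWeissRepAP} to convert $W(u,0,1)=W(v,0,1)$ into $\int_{B_1}u^\gamma=\int_{B_1}v^\gamma$, and then use the ordering $u\ge v\ge 0$ together with the monotonicity of $t\mapsto t^\gamma$ to conclude. The paper's proof is just the two-line version of what you wrote.
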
 
\begin{proof}
With Lemma \ref{LemWeissRepAP}, we see that
$
\int_{B_1}u^\gamma=\int_{B_1}v^\gamma.
$
The desired conclusion follows from the ordering $u\ge v$. 
\end{proof} 

The free boundary retracts linearly with respect to the parameter $t\in(-1,1)$ (cf. Lemma~\ref{LemRetractionAC}):

\begin{lem}
\label{LemRetractionAP}
For a family $\FamPhi$ satisfying Assumption \ref{AssGR}, suppose that for each $t\in (-1, 1)$, $u_t\in\M[\EnAP, \varphi_t]$, and let $\tau \in (-1, 1)$ fixed. Suppose, also, that $x_0\in B_{1/2}\cap\Gamma(u_{t_0})$ for some $t_0\in[\tau,1)$. 

Then, there is a constant $\kappa>0$ depending only on $\tau$, $d$, $\gamma$, as well as $L$ and $\theta$ from Assumption \ref{AssGR}, such that 
$$
B_{\kappa\theta(t-t_0)}(x_0)\subset\{u_t>0\}
$$
for $t\in [t_0, 1)$.
\end{lem}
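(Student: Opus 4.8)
The plan is to follow the same strategy as in the proof of Lemma~\ref{LemRetractionAC}, with the modifications dictated by the nonlinear equation $\Delta u = \frac{\gamma}{2}u^{\gamma-1}\chi_{\{u>0\}}$ and the $\beta$-homogeneity of the Alt-Phillips problem. As in that proof, the key claim to establish is a ``sliding'' statement: there is $\kappa = \kappa(\tau,d,\gamma,L,\theta)>0$ such that
\[
\sup_{B_{\kappa\theta(t-t_0)}(x)} u_{t_0} \le u_t(x) \quad\text{for } x\in B_{3/4} \text{ and } t\in[t_0,1).
\]
Once this is known, the conclusion follows exactly as before: given $x_0\in\Gamma(u_{t_0})\cap B_{1/2}$ and $x\in B_{\kappa\theta(t-t_0)}(x_0)$, the nondegeneracy (Proposition~\ref{PropBasicAP}) produces a nearby point $x'$ with $u_{t_0}(x')>0$ and $|x'-x|<\kappa\theta(t-t_0)$, whence $u_t(x)\ge u_{t_0}(x')>0$.

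To prove the claim I would proceed in two steps. \textbf{Step 1 (separation near $\partial B_1$).} By Assumption~\ref{AssGR}(3), $\varphi_{t_0}\ge\theta(\tau+1)>0$ on $\partial B_1$, and by Proposition~\ref{PropHolder} (applied with the Lipschitz bound $L$ from Assumption~\ref{AssGR}(2)) $u_{t_0}$ is uniformly H\"older up to $\partial B_1$; hence there is $\delta=\delta(\tau,d,\gamma,\theta,L)<1/16$ with $u_{t_0}>0$ on $U:=B_1\setminus\overline{B_{1-8\delta}}$. On $U$ both $u_t$ and $u_{t_0}$ are positive and solve $\Delta u=\frac\gamma2 u^{\gamma-1}$; since $t\ge t_0$, Proposition~\ref{PropComparison} (the ordering of minimizers, valid for $\EnAP$ too) gives $u_t\ge u_{t_0}$ in $B_1$, and the concavity of $s\mapsto s^{\gamma-1}$ — more precisely, $s\mapsto \frac\gamma2 s^{\gamma-1}$ is decreasing — yields $\Delta(u_t-u_{t_0})\le 0$ in $U$, i.e.\ $w:=u_t-u_{t_0}\ge 0$ is superharmonic there. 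Comparing $w$ with $\theta(t-t_0)h$, where $h$ is harmonic in $U$ with $h=0$ on $\partial B_{1-8\delta}$ and $h=1$ on $\partial B_1$ (using Assumption~\ref{AssGR}(3) on the boundary data), the maximum principle gives $w\ge\theta(t-t_0)h$ in $U$, hence $u_t-u_{t_0}\ge\kappa_0(t-t_0)$ in $B_1\setminus B_{1-4\delta}$ for some $\kappa_0=\kappa_0(\tau,d,\gamma,\theta,L)$.

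\textbf{Step 2 (sliding via Proposition~\ref{PropComparison}).} Using that $u_t$ and $u_{t_0}$ are uniformly Lipschitz in $B_{1-\delta}$ (by the $C^{1,\beta-1}$ bound in Proposition~\ref{PropBasicAP}), Step~1 upgrades to
\[
u_t(x)-u_{t_0}(y)\ge\tfrac{\kappa_0}{2}(t-t_0)\quad\text{for } x\in B_{1-2\delta}\setminus B_{1-3\delta},\ y\in B_{\eta(t-t_0)}(x),
\]
for a fixed $\eta=\eta(\tau,d,\gamma,\theta,L)\le\delta/2$. In particular $u_{t_0}(x-se)<u_t(x)$ on $\partial B_{1-2\delta}$ for every $e\in\Sph$ and $0<s<\eta(t-t_0)$; since $u_{t_0}(\cdot-se)$ is still a minimizer of $\EnAP$ on $B_{1-2\delta}$ with boundary data dominated by that of $u_t$, Proposition~\ref{PropComparison} gives $u_{t_0}(\cdot-se)\le u_t$ in $B_{1-2\delta}\supset B_{3/4}$ for all such $s$, which is exactly the claim with $\kappa=\eta/\theta$. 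I expect the main (though minor) technical point to be ensuring that the concavity of $F_\pm$ used in Proposition~\ref{PropComparison} — namely $F_+(s)=s^\gamma$, so $F_+'(s)=\gamma s^{\gamma-1}$ is decreasing — indeed delivers $\Delta(u_t-u_{t_0})\le 0$ in the positivity region, and in verifying that all the constants in Steps~1--2 depend only on the allowed parameters via Propositions~\ref{PropHolder} and~\ref{PropBasicAP}; otherwise the argument is a direct transcription of the proof of Lemma~\ref{LemRetractionAC}.
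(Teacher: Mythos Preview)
Your proposal is correct and follows essentially the same approach as the paper: both reduce to the sliding claim, establish $\Delta(u_t-u_{t_0})\le 0$ in the annulus via the monotonicity of $s\mapsto s^{\gamma-1}$ for $\gamma\in(0,1)$, compare with the harmonic $h$ to get linear separation near $\partial B_1$, and then use the interior $C^{1,\beta-1}$ regularity from Proposition~\ref{PropBasicAP} together with Proposition~\ref{PropComparison} to slide. The only cosmetic difference is that the paper phrases the claim as $u_{t_0}(\cdot-se_1)\le u_t$ directly, while you state the equivalent $\sup$ form first.
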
 
  \begin{proof}
As in the proof for Lemma \ref{LemRetractionAC}, it suffices to show 
$$
u_{t_0}(x-se_1)\le u_t(x) \text{ in }B_{3/4}
$$
for all $0<s<\eta(t-t_0)$ with $\eta=\eta(\tau, d, \gamma, L,\theta)$.

Similar to \textit{Step 1} in the proof of Lemma \ref{LemRetractionAC}, we see that there is a small $\delta=\delta(\tau,d,\gamma,L,\theta)>0$ such that 
$$
u_{t_0}>0 \text{ in }B_1\backslash\overline{B_{1-8\delta}}.
$$
Proposition \ref{PropComparison} implies $u_t\ge u_{t_0}$, and thanks to  Proposition~\ref{PropBasicAP}, we have
$$
\Delta(u_{t}-u_{t_0})=\frac{\gamma}{2}(u_t^{\gamma-1}-u_{t_0}^{\gamma-1})\le 0 \hem\text{ in }B_{1}\backslash\overline{B_{1-8\delta}},
$$
where we used our assumption $\gamma\in(0,1)$.

As a result, the function $h$ in \textit{Step 1} from the proof of Lemma \ref{LemRetractionAC} works as a lower barrier for the difference $(u_t-u_{t_0})$, and gives a separation of between $u_t$ and $u_{t_0}$ in $B_1\setminus B_{1-4\delta}$ that is proportional to $(t-t_0)$.

By the regularity of $u_t$ and $u_{t_0}$ in $\overline{B_{1-\delta}}$ from Proposition \ref{PropBasicAP}, this separation between $u_t$ and $u_{t_0}$ is translated into an ordering between $u_t$ and translations of $u_{t_0}$, with the same argument as in the proof of Lemma \ref{LemRetractionAC}.
\end{proof}

The non-overlapping of free boundaries follows (cf. Corollary~\ref{CorNonOverLappingFBAC}):
\begin{cor}
\label{CorNonOverLappingFBAP}
Suppose the family $\FamPhi$ satisfies Assumption \ref{AssGR} and $u_t\in\M[\EnAP,\varphi_t]$ for each $t\in(-1,1)$. Then
$$
\Gamma(u_t)\cap\Gamma(u_s)=\emptyset \text{ if }t\neq s.
$$
\end{cor}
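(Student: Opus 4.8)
The plan is to deduce this exactly as Corollary~\ref{CorNonOverLappingFBAC} is deduced from Lemma~\ref{LemRetractionAC}, namely by a direct application of Lemma~\ref{LemRetractionAP}. Argue by contradiction: suppose $t\neq s$ and $x_0\in\Gamma(u_t)\cap\Gamma(u_s)$, and without loss of generality assume $s<t$. First I would record the soft facts we need. Since $\varphi_t$ is uniformly Lipschitz by Assumption~\ref{AssGR}\eqref{it:33_2}, Proposition~\ref{PropHolder} gives $u_t\in C(\overline{B_1})$, so $\{u_t>0\}$ is open; and since we work with nonnegative minimizers, $\Gamma(u_t)=\partial\{u_t\neq 0\}=\partial\{u_t>0\}$, so no point of $\Gamma(u_t)$ can be an interior point of $\{u_t>0\}$.

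Then I would apply Lemma~\ref{LemRetractionAP} at the free boundary point $x_0\in\Gamma(u_s)$, taking $t_0=s$ and $\tau=s\in(-1,1)$. It yields a constant $\kappa>0$ with $B_{\kappa\theta(t-s)}(x_0)\subset\{u_t>0\}$. Because $t-s>0$, this ball is nondegenerate, so $x_0$ lies in the interior of the open set $\{u_t>0\}$, contradicting $x_0\in\Gamma(u_t)$ by the previous paragraph. Swapping the roles of $s$ and $t$ covers the case $t<s$, which completes the proof.

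The only mildly delicate point is that Lemma~\ref{LemRetractionAP} is stated for $x_0\in B_{1/2}$, while a priori a free boundary point of $u_s$ could sit anywhere in $B_1$. This is not a genuine obstacle: the assertion and its proof are purely local near $x_0$, and the argument of Lemma~\ref{LemRetractionAP} goes through verbatim with the radii $\tfrac12,\tfrac34,\tfrac78$ replaced by $1-3\delta_0,1-2\delta_0,1-\delta_0$ for any fixed small $\delta_0>0$; choosing $\delta_0$ so that $x_0\in B_{1-\delta_0}$ then produces $B_{\kappa\theta(t-s)}(x_0)\subset\{u_t>0\}$ with a constant $\kappa$ now also depending on $\mathrm{dist}(x_0,\partial B_1)$, which is harmless since the conclusion is purely qualitative. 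Alternatively one may simply read the corollary as a statement about $\Gamma(u_t)\cap B_{1/2}$, which is all that is used in the applications. In any case, all of the real content is already contained in Lemma~\ref{LemRetractionAP}, so I do not expect any serious difficulty.
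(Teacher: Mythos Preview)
Your proposal is correct and follows exactly the approach intended in the paper: the corollary is stated immediately after Lemma~\ref{LemRetractionAP} with the remark ``cf.\ Corollary~\ref{CorNonOverLappingFBAC}'', and no separate proof is given. Your remark about the $B_{1/2}$ restriction is well taken; in the paper this point is left implicit, and indeed only the local version near interior points is ever used in the applications.
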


For our purpose, it is crucial to rule out monotone minimizing cones with a free boundary that becomes tangential to the monotone direction:
\begin{lem}
\label{LemTangentialFBAP}
Suppose that $u$ is a minimizing cone for the Alt-Phillips energy $\EnAP(\cdot)$ in $\R^d$ with 
$$\GU\cap\partial B_1\subset\RedGU$$
and
$$
\partial_1u\ge0 \text{ in }\R^d.
$$
If at some $p\in\GU\cap\partial B_1$  we have
$$
\nu_p\cdot e_1=0,
$$
then
$$
\partial_1u\equiv 0\hem \text{ in }\R^d.
$$
\end{lem}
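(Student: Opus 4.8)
The strategy is to exploit the precise expansion of the minimizing cone $u$ near the regular point $p\in\RedGU\cap\partial B_1$ (Appendix~\ref{AppExpansionNearRegularPoint}), combined with the hypothesis $\partial_1 u\ge 0$, to force $\partial_1 u\equiv 0$. Since $p$ is a regular point, after a translation and rotation we may write, near $p$, $u(x)=[((x-p)\cdot\nu_p/\beta)^+]^\beta + (\text{lower-order corrections})$, with the free boundary $\GU$ being a $C^{1,\alpha}$-hypersurface through $p$ with inner normal $\nu_p$. The assumption $\nu_p\cdot e_1=0$ means $e_1$ is \emph{tangent} to the free boundary at $p$. First I would use homogeneity: since $u$ is $\beta$-homogeneous and $p\in\partial B_1$, the blow-up of $u$ at $p$ is a global $\beta$-homogeneous solution that is translation invariant along the direction $p$ (cone-splitting, Lemma~\ref{LemConeSplittingAP}), and being a blow-up at a regular point it must be (a rotation of) the one-dimensional half-space solution $[(x_d/\beta)^+]^\beta$; in particular the blow-up is monotone in $e_1$ as well, and since $e_1\perp\nu_p$, the blow-up is actually \emph{independent} of $e_1$.

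\textbf{Key steps.} (1) Let $v$ denote the blow-up of $u$ at $p$ along a subsequence $r_k\downarrow 0$, $v_k(x)=r_k^{-\beta}u(p+r_k x)\to v$, a $\beta$-homogeneous minimizer with $0\in\RedGU(v)$, hence (up to rotation) $v(x)=[(x\cdot\nu_p/\beta)^+]^\beta$. The monotonicity $\partial_1 u\ge 0$ passes to the limit: $\partial_1 v\ge 0$. Since $v$ depends only on $x\cdot\nu_p$ and $\nu_p\cdot e_1=0$, we get $\partial_1 v\equiv 0$. (2) The crucial point is to upgrade this infinitesimal vanishing to a global one. Here I would set $w:=\partial_1 u\ge 0$. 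On the positivity set $\PosS$, differentiating the equation $\Delta u=\frac{\gamma}{2}u^{\gamma-1}$ from Proposition~\ref{PropBasicAP} gives $\Delta w=\frac{\gamma(\gamma-1)}{2}u^{\gamma-2}w$, i.e. $w$ solves a linear equation with a \emph{nonpositive} zeroth-order coefficient (since $\gamma<1$) on $\PosS$, and $w\ge 0$. (3) Now I would run an improvement-of-monotonicity / unique-continuation argument: because the expansion near $p$ (Appendix~\ref{AppExpansionNearRegularPoint}) controls $u$ to high order, and $e_1$ is tangent to $\GU$ at $p$, one shows $w=\partial_1 u$ vanishes to infinite order at $p$ along $\PosS$ (the leading term $[((x-p)\cdot\nu_p)^+/\beta]^\beta$ has zero $e_1$-derivative because $e_1\perp\nu_p$, and the correction terms are genuinely lower order). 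The strong maximum principle / Hopf-type argument for the equation satisfied by $w$ on the connected set $\PosS$ then forces $w\equiv 0$ on $\PosS$, hence $\partial_1 u\equiv 0$ in $\R^d$.

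\textbf{Main obstacle.} The delicate step is (3): transferring the tangency condition $\nu_p\cdot e_1=0$ into the statement that $w=\partial_1 u$ vanishes to sufficiently high order at $p$, and then concluding $w\equiv 0$ globally. One has to be careful because $w$ is a priori only defined and smooth inside $\PosS$, its equation degenerates as one approaches $\GU$ (the coefficient $u^{\gamma-2}$ blows up), and a naive strong maximum principle does not directly apply up to the free boundary. The resolution is to work entirely in the interior on the connected open set $\PosS$ (connectedness of $\PosS$ for a minimizing cone can be taken from the structure of homogeneous minimizers, e.g. as in \cite[Theorem 2.3]{EdSV}-type arguments adapted to $\EnAP$), use that $w\ge 0$ and $w$ satisfies an elliptic equation with bounded-below zeroth-order term on compact subsets, and invoke the quantitative expansion from Appendix~\ref{AppExpansionNearRegularPoint}: near the regular point $p$, $u$ is so close to the one-dimensional profile in the $\nu_p$ variable that $\partial_1 u = o(\,\mathrm{dist}(\cdot,\GU)^{\beta-1+\alpha})$ there, which by the interior Harnack inequality propagated along chains of balls inside $\PosS$ forces $w\equiv 0$ on the whole component of $\PosS$ touching $p$; homogeneity and the structure of $\GU\cap\partial B_1\subset\RedGU$ then spread this to all of $\R^d$. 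An alternative, possibly cleaner route for (3) is an \emph{improvement-of-monotonicity} dichotomy: either $\partial_1 u>0$ somewhere on $\GU\cap\partial B_1$ (which, by the regularity there and the Hopf lemma, would make the free boundary a graph in the $e_1$-direction near a full neighborhood, contradicting $\nu_p\cdot e_1=0$ at $p$), or $\partial_1 u\equiv 0$ on $\GU\cap\partial B_1$, and then unique continuation for $w$ inside $\PosS$ (again using its linear equation) gives $\partial_1 u\equiv 0$ in $\R^d$. I expect the paper to follow the second route, leaning on the precise expansion in Appendix~\ref{AppExpansionNearRegularPoint} to make the Hopf-type alternative rigorous.
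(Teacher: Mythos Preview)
Your proposal contains the right ingredients (the linearized equation for $\partial_1 u$, its superharmonicity in $\PosS$, and the expansion from Appendix~\ref{AppExpansionNearRegularPoint}), but the way you assemble them has gaps, and the paper's argument is considerably more direct.

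First, two of your formulations are not correct as stated. You claim $\partial_1 u$ ``vanishes to infinite order at $p$''; the expansion only yields $\partial_1 u(p+r\nu_p)=O(r^{\beta-1+\alpha})$ for any $\alpha<1$, which is merely faster than linear, not infinite-order. Second, your ``alternative dichotomy'' (either $\partial_1 u>0$ somewhere on $\GU\cap\partial B_1$, or else $\partial_1 u\equiv 0$ there) is vacuous: since $u\in C^{1,\beta-1}$ and $u\ge 0$ vanishes on $\GU$, we always have $\nabla u=0$ on $\GU$, so $\partial_1 u=0$ on the free boundary regardless. Finally, ``Harnack propagated along chains of balls'' does not force a nonnegative supersolution to vanish from local smallness; that is not what Harnack does.

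The paper runs a clean two-line contradiction instead. Assume $\partial_1 u\not\equiv 0$. From $\Delta(\partial_1 u)=\tfrac{\gamma(\gamma-1)}{2}u^{\gamma-2}\partial_1 u\le 0$ and $\partial_1 u\ge 0$, the strong maximum principle gives $\partial_1 u>0$ in $\PosS$. Since $\partial_1 u$ is continuous, vanishes at $p\in\GU$, and $\GU$ is $C^{1,\alpha}$ near $p$, Hopf's lemma for superharmonic functions yields a \emph{lower} bound
\[
\partial_1 u(p+r\nu_p)\ \ge\ cr\qquad\text{for small }r>0.
\]
On the other hand, writing $w=u^{1/\beta}$ and using Corollary~\ref{CorAppExpansionOfGradient} together with $\nu_p\cdot e_1=0$ gives $|\partial_1 w(p+r\nu_p)|\le Cr^\alpha$, hence
\[
\partial_1 u(p+r\nu_p)=\beta\,w^{\beta-1}\,\partial_1 w(p+r\nu_p)\ \le\ Cr^{\beta-1+\alpha}.
\]
Choosing $\alpha$ close enough to $1$ that $\beta-1+\alpha>1$ (possible because $\beta>1$), the upper bound is $o(r)$, contradicting the linear lower bound from Hopf. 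Thus $\partial_1 u\equiv 0$. Note that Hopf is used to produce a \emph{lower} bound under the hypothesis $\partial_1 u\not\equiv 0$, and the contradiction comes from the expansion; this is the logical direction you should invert in your write-up.
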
 
Recall  the free boundary $\GU$ from \eqref{EqnFreeBoundary}, its regular part $\RedGU$ from Definition~\ref{DefRegSingAP}, as well as   the normal vector $\nu_p$  in \eqref{EqnUnitNormal}.

\begin{proof}
Proposition \ref{PropBasicAP} implies  that $\partial_1u$ satisfies
$$
\Delta\partial_1u=\frac{\gamma}{2}(\gamma-1)u^{\gamma-2}\partial_1u 
\hem\text{ in }\PosS.
$$
Since $\gamma\in(0,1)$, our assumption $\partial_1u\ge0$ gives
$$
\Delta\partial_1u\le0 \text{ in }\PosS.
$$
In particular, if $\partial_1u\not\equiv 0$, the strong maximum principle implies 
$\partial_1u>0$ in $\PosS$.

  By Hopf's lemma, since $\partial_1 u$ is continuous and vanishes on the free boundary, and $\Gamma(u)$ is $C^{1,\alpha}$ around $p$, we deduce that for some $c > 0$,
\begin{equation}
\label{EqnALowerBoundForPartial}
\partial_1u(p+r\nu_p)\ge cr\hem \text{ for all small } r>0.
\end{equation}

We now consider the function 
$$
w:=u^{1/\beta}.
$$
By Corollary \ref{CorAppExpansionOfGradient}, we have that for each $\alpha\in(0,1)$, there is a constant $C$ such that 
$$
|\nabla w(p+r\nu_p)-\nu_p/\beta|\le Cr^\alpha \text{ for all small }r>0.
$$
Since  $\nu_p\cdot e_1=0$, this implies
$$
\partial_1w(p+r\nu_p)\le Cr^\alpha \text{ for all small }r>0.
$$
Thus we can bound $\partial_1 u$ from above as
$$
\partial_1 u(p+r\nu_p)=\beta w^{\beta-1}\partial_1 w(p+r\nu_p)\le Cr^{\beta-1}r^\alpha \text{ for all small }r>0.
$$
If we choose $\alpha\in (0, 1)$ such that 
$$
\alpha+\beta>2,
$$
this contradicts \eqref{EqnALowerBoundForPartial}.
\end{proof}

\subsection{Proof of Theorem \ref{ThmGRAP} for $d=\Dap+1$}
We can now proceed with the proof of Theorem \ref{ThmGRAP}. In this subsection, we deal with the case $d=\Dap+1$ (recall  \eqref{EqnCriticalAP}). 

Thanks to the decomposition in \eqref{EqnDecompositionOfSingTimes}, for   $d=\Dap+1$,  it suffices to establish the following:
\begin{prop}
\label{PropEmptySingInCriticalDimAP}
For $d=\Dap+1$ and a family $\FamPhi$ satisfying Assumption~\ref{AssGR}, we have
$$
\mathcal{S}^*=\emptyset,
$$
where $\mathcal{S}^*$ is the reduced set of space-time singularities from \eqref{EqnFurtherReducedSing}.
\end{prop}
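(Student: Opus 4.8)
The plan is to run the argument of Proposition~\ref{PropEmptySingInCriticalDimAC}, substituting the Alt-Phillips analogues of each ingredient; the one genuinely new input is a classification of monotone minimizing cones replacing Lemma~\ref{LemMonotoneConesAC}, and it will rest on Lemma~\ref{LemTangentialFBAP} together with the boundary expansion of Appendix~\ref{AppExpansionNearRegularPoint}. First I would reduce to a monotone cone. Assuming $\mathcal{S}^*\neq\emptyset$, after translating in $(x,t)$ we may take $(0,0)\in\mathcal{S}^*$, so $0\in\Sing(u_0)$ for the unique $u_0\in\M[\EnAP,\varphi_0]$ (membership in $\mathcal{S}'$ rules out $0\in I\cup I_D$), and there is a nontrivial sequence $(x_k,t_k)\to(0,0)$ with $x_k\in\Sing(u_{t_k})$ and $\omega(x_k,t_k)\to\omega(0,0)$. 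Along a subsequence we are in one of $t_k\equiv0$, $t_k>0$, $t_k<0$: the case $t_k\equiv0$ contradicts the local discreteness of $\Sing(u_0)$ in Theorem~\ref{ThmSingAP}(2) (here $d=\Dap+1$), and $t_k>0$, $t_k<0$ are symmetric, so take $t_k>0$; then $r_k:=|x_k|>0$ by Corollary~\ref{CorNonOverLappingFBAP}. Setting $v_k(x):=r_k^{-\beta}u_{t_k}(x_k+r_kx)$ and $u_k(x):=r_k^{-\beta}u_0(r_kx)$, Proposition~\ref{PropBlowUpAP} (applied to $u_k$) and Lemma~\ref{LemVariableCenterAP} (applied to $v_k$) yield, along a further subsequence, $u_k\to u_\infty$ and $v_k\to v_\infty$ locally uniformly, with $u_\infty,v_\infty$ $\beta$-homogeneous minimizers, $W(u_\infty,0,1)=\omega(0,0)=W(v_\infty,0,1)$, and $0\in\Sing(u_\infty)\cap\Sing(v_\infty)$ by Proposition~\ref{PropStabSingAP}. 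Since $t_k>0$, Proposition~\ref{PropComparison} gives $u_{t_k}\ge u_0$, hence $v_k(\,\cdot-x_k/r_k\,)\ge u_k$; letting $k\to\infty$ with $x_k/r_k\to e_1$ (after a rotation), Lemma~\ref{LemHomogeneousFunctions} gives $v_\infty\ge u_\infty$, then Lemma~\ref{LemOrderedConesAP} (using the equality of Weiss energies, which is the key difference from the Alt-Caffarelli setting) gives $v_\infty=u_\infty$, and hence $u_\infty(\,\cdot-e_1\,)\ge u_\infty$; after a further rotation we record this as $\partial_1u_\infty\ge0$ in $\R^d$.

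It then remains to show no $\beta$-homogeneous minimizer $u_\infty$ in $\R^{\Dap+1}$ can satisfy $0\in\Sing(u_\infty)$ and $\partial_1u_\infty\ge0$. Blowing up $u_\infty$ at a point $p\in\Gamma(u_\infty)\setminus\{0\}$ and using Lemma~\ref{LemConeSplittingAP} (to split off the $p$-direction) together with Theorem~\ref{ThmSingAP}(1) in $\R^{\Dap}$ shows $\Gamma(u_\infty)\setminus\{0\}\subset\Gamma^*(u_\infty)$, so that Lemma~\ref{LemTangentialFBAP} applies to $u_\infty$ along any direction $e$ with $\partial_eu_\infty\ge0$. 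Let $K:=\{e\in\R^d:\partial_eu_\infty\ge0\}$, a closed convex cone containing $e_1$. The crucial step is an \emph{improvement-of-monotonicity} statement: if $e\in K$ and $\partial_eu_\infty\not\equiv0$, then $e$ is in the interior of $K$. Granting this, $K\neq\R^d$ (otherwise $u_\infty$ is constant) has a boundary ray, and any unit vector $e$ on it satisfies $\partial_eu_\infty\equiv0$ (contrapositive), i.e. $u_\infty$ is invariant along $e$; the dimension-reduced cone is then a $\beta$-homogeneous minimizer in $\R^{\Dap}$ still singular at the origin (were it a half-plane solution, so would be $u_\infty$, contradicting $0\in\Sing(u_\infty)$), which contradicts Theorem~\ref{ThmSingAP}(1). (If $\partial_1u_\infty\equiv0$ already, one applies the dimension reduction directly.)

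The improvement-of-monotonicity statement is the main obstacle, and is where new ideas are required, since the De Silva--Jerison/NTA machinery available for $\EnAC$ is missing for $\EnAP$. Given $e\in K$ with $\partial_eu_\infty\not\equiv0$, Proposition~\ref{PropBasicAP} gives $\Delta(\partial_eu_\infty)=\tfrac{\gamma(\gamma-1)}{2}u_\infty^{\gamma-2}\partial_eu_\infty\le0$ in $\{u_\infty>0\}$, so $\partial_eu_\infty>0$ there by the strong maximum principle (using the connectedness of the positivity set of a cone); then Lemma~\ref{LemTangentialFBAP} forces $\nu_p\cdot e>0$ for every $p\in\Gamma(u_\infty)\cap\partial B_1$, hence $\nu_p\cdot e\ge c>0$ uniformly by compactness. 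One then checks $\partial_{e'}u_\infty\ge0$ in $\R^d$ for $|e'-e|<c$: otherwise $\partial_{e'_k}u_\infty(x_k)<0$ for some $e'_k\to e$ and $x_k\in\{u_\infty>0\}$ (the complement carries $\nabla u_\infty\equiv0$), and normalizing $|x_k|=1$ by $(\beta-1)$-homogeneity one passes to a limit $x_*\in\partial B_1\cap\overline{\{u_\infty>0\}}$ with $\partial_eu_\infty(x_*)\le0$, forcing $x_*\in\Gamma(u_\infty)\cap\partial B_1\subset\Gamma^*(u_\infty)$; near this regular point Corollary~\ref{CorAppExpansionOfGradient} gives $\nabla(u_\infty^{1/\beta})\to\nu_{x_*}/\beta$, so $\partial_{e'}u_\infty=\beta u_\infty^{(\beta-1)/\beta}\,\partial_{e'}(u_\infty^{1/\beta})\ge0$ near $x_*$ once $e'\cdot\nu_{x_*}\ge c-|e'-e|>0$, a contradiction. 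Propagating positivity of directional derivatives all the way to the free boundary in this way is the delicate point, and the quantitative expansion of Appendix~\ref{AppExpansionNearRegularPoint} is what makes it possible.
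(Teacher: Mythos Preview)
Your proof is correct and follows essentially the same route as the paper. The paper isolates the classification of monotone minimizing cones in $\R^{\Dap+1}$ as a separate lemma (Lemma~\ref{LemMonotoneConesCriticalDimAP}), while you inline it; and in the improvement-of-monotonicity step the paper argues directly that $\partial_{e^*}u\ge\eps|\nabla u|$ (using Corollary~\ref{CorAppExpansionOfGradient} to control $\partial_{e^*}w$ uniformly near $\Gamma$, then splitting into a boundary strip and its compact complement), whereas you reach the same conclusion by a compactness/contradiction argument on a sequence $(e'_k,x_k)$---but the substance, including the crucial use of Lemma~\ref{LemTangentialFBAP} and the quantitative expansion of Appendix~\ref{AppExpansionNearRegularPoint}, is identical.
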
 

To this end, we need to rule out non-trivial monotone cones:
\begin{lem}
\label{LemMonotoneConesCriticalDimAP}
Suppose that $u$ is a minimizing cone for $\EnAP(\cdot)$ in $\R^{\Dap+1}$ with
$$
\partial_1u\ge0 \hem\text{ in }\R^{\Dap+1}.
$$
Then, up to a rotation, 
$$
u(x)=(x_1^+/\beta)^\beta.
$$
\end{lem}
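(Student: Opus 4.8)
The plan is to argue by contradiction: suppose $\partial_1 u \not\equiv 0$. By the dimension hypothesis $d = \Dap+1$ and Theorem~\ref{ThmSingAP}, the singular set $\SingU$ is at most locally discrete, so by homogeneity $\SingU = \{0\}$. Hence $\GU \cap \partial B_1 \subset \RedGU$, which is the hypothesis needed to invoke Lemma~\ref{LemTangentialFBAP}. So it suffices to produce a point $p \in \GU \cap \partial B_1$ at which the free boundary normal $\nu_p$ is orthogonal to $e_1$; Lemma~\ref{LemTangentialFBAP} then forces $\partial_1 u \equiv 0$, a contradiction. Once we know $\partial_1 u \equiv 0$, Lemma~\ref{LemHomogeneousFunctions} already tells us $u$ is monotone (indeed invariant) in $e_1$, and slicing as in Lemma~\ref{LemInduceInLowerDimensionAC}'s Alt-Phillips analogue produces a minimizing cone in $\R^{\Dap}$, which by the definition \eqref{EqnCriticalAP} of $\Dap$ must be a rotation of $[(x_d/\beta)^+]^\beta$. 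Reassembling, $u$ is a half-space solution $(x_1^+/\beta)^\beta$ up to rotation.

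The key geometric step is finding the point $p$ with $\nu_p \perp e_1$. Since $\partial_1 u \ge 0$ and $u$ is a $\beta$-homogeneous minimizing cone that is not invariant in $e_1$, the positivity set $\PosS$ is a genuine cone that is "monotone" in the $e_1$-direction: if $x \in \PosS$ then $x + s e_1 \in \PosS$ for all $s > 0$ (this follows from $\partial_1 u \ge 0$ together with nondegeneracy, or directly from Lemma~\ref{LemHomogeneousFunctions} applied to translates). Consider the "shadow boundary" of this cone in the $e_1$-direction: restrict attention to $\Sph = \partial B_1$ and look at $\partial(\PosS \cap \Sph)$ relative to $\Sph$. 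Because $\PosS$ is a proper open cone (it is not all of $\R^d$, since $0 \in \GU$, and it is not invariant in $e_1$), there must be a boundary point $p \in \GU \cap \partial B_1$ where the free boundary is "extremal" in the $e_1$-direction — concretely, a point where the outer normal $\nu_p$ to $\PosS$ satisfies $\nu_p \cdot e_1 = 0$. One clean way to locate such a $p$: let $p$ realize $\max\{ x \cdot e_1 : x \in \overline{\PosS} \cap \partial B_1\}$... but that maximum is attained on the "far" side and there $\nu_p \cdot e_1 > 0$ typically; instead one should take a point on the \emph{lateral} edge of the cone. Precisely, since $\PosS \cap \Sph$ is a relatively open, proper, nonempty subset of $\Sph$ that is monotone under the $e_1$-flow on the sphere, its relative boundary must contain a point $p$ where the $e_1$-flow is tangent to $\GU$, i.e. $e_1 \in T_p\GU$, equivalently $\nu_p \cdot e_1 = 0$; if no such point existed, the $e_1$-flow would push the boundary strictly inward everywhere, contradicting that $\PosS$ is scale-invariant (a cone) — flowing would then shrink the cone, impossible.

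I expect the main obstacle to be making the topological/geometric argument for the existence of $p$ fully rigorous, since it combines the $C^{1,\alpha}$-regularity of $\GU \cap \partial B_1$ (from $\SingU = \{0\}$ plus Theorem~\ref{ThmRegSmoothAP}) with the monotonicity and cone structure. A robust way to phrase it: suppose for contradiction that $\nu_p \cdot e_1 > 0$ for \emph{every} $p \in \GU \cap \partial B_1$ (the case $\nu_p \cdot e_1 < 0$ somewhere is handled symmetrically, or ruled out directly by $\partial_1 u \ge 0$ and Hopf's lemma, which forces $\nu_p \cdot e_1 \ge 0$ everywhere). Then along $\GU$ the vector field $e_1$ points strictly into $\PosS$, so for small $s>0$ the translate $(\PosS + s e_1) \cap B_{R}$ is compactly contained in $\PosS \cap B_{R}$ for each fixed large $R$ away from the origin; but combined with homogeneity (scaling) this would let us strictly nest $\PosS$ inside a dilate of itself, contradicting that $\PosS$ is a cone. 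One must be slightly careful near $0$, but $0$ is a single point and the nondegeneracy bound $\sup_{B_r(0)} u \ge c r^\beta$ controls it. Filling in this "cone cannot be strictly nested in its own translate" argument with the right compactness statement is the delicate part; everything after producing $p$ is a routine chain of previously established lemmas.

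\medskip

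\begin{proof}
Suppose, for contradiction, that $\partial_1 u \not\equiv 0$ in $\R^{\Dap+1}$.

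\emph{Step 1: reduction to a free boundary in $\RedGU$.}
Since $d = \Dap+1$, Theorem~\ref{ThmSingAP} gives that $\SingU$ is locally discrete; by the $\beta$-homogeneity of $u$, this forces $\SingU \subset \{0\}$. In particular,
$$
\GU \cap \partial B_1 \subset \RedGU.
$$

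\emph{Step 2: $\partial_1 u > 0$ in $\PosS$ and a Hopf bound.}
By Proposition~\ref{PropBasicAP}, in $\PosS$ the derivative $\partial_1 u$ solves
$$
\Delta \partial_1 u = \tfrac{\gamma}{2}(\gamma-1) u^{\gamma-2}\, \partial_1 u \le 0 \quad \text{in } \PosS,
$$
using $\gamma \in (0,1)$ and $\partial_1 u \ge 0$. As $\partial_1 u \ge 0$, $\partial_1 u \not\equiv 0$, and $\PosS$ is connected (by homogeneity and the structure of minimizing cones), the strong maximum principle yields $\partial_1 u > 0$ in $\PosS$.

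\emph{Step 3: existence of a tangential free boundary point.}
We claim there is $p \in \GU \cap \partial B_1$ with $\nu_p \cdot e_1 = 0$. By Lemma~\ref{LemHomogeneousFunctions} applied to the translates $u(\cdot + s e_1)$, the set $\PosS$ satisfies $\PosS + s e_1 \subset \PosS$ for all $s > 0$. Suppose, towards a contradiction, that $\nu_p \cdot e_1 \ne 0$ for every $p \in \GU \cap \partial B_1$; since $\nu_p$ points into $\PosS$ and $\PosS + s e_1 \subset \PosS$, we must in fact have $\nu_p \cdot e_1 > 0$ for every such $p$. As $\GU \cap \partial B_1 \subset \RedGU$ is a compact $C^{1,\alpha}$ hypersurface of $\partial B_1$ (Theorem~\ref{ThmRegSmoothAP}), there is $\mu > 0$ with $\nu_p \cdot e_1 \ge \mu$ for all $p \in \GU \cap \partial B_1$, and hence, by homogeneity, $\nu_p \cdot e_1 \ge \mu$ at every $p \in \GU \setminus \{0\}$. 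Therefore the vector field $e_1$ points strictly into $\PosS$ along all of $\GU \setminus \{0\}$, and a standard flow argument shows that for each $\rho \in (0,1)$ there is $s_\rho > 0$ such that
$$
\big(\PosS \cap (B_{1} \setminus B_\rho)\big) + s_\rho e_1 \;\Subset\; \PosS.
$$
Rescaling this inclusion by the homogeneity of $\PosS$ and letting $\rho \to 0$ (the origin being a single point, controlled by the nondegeneracy $\sup_{B_r(0)} u \ge c r^\beta$ of Proposition~\ref{PropBasicAP}), we would obtain $\PosS + s e_1 \Subset \PosS$ for some $s > 0$, which is impossible for a nonempty proper open cone. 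This proves the claim.

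\emph{Step 4: conclusion.}
By Steps 1 and 3 the cone $u$ satisfies the hypotheses of Lemma~\ref{LemTangentialFBAP}, so $\partial_1 u \equiv 0$ in $\R^{\Dap+1}$, contradicting our assumption. Hence $\partial_1 u \equiv 0$. Slicing $u$ along $\{x_1 = 0\}$ yields, by the Alt-Phillips analogue of Lemma~\ref{LemInduceInLowerDimensionAC}, a minimizing cone for $\EnAP(\cdot)$ in $\R^{\Dap}$. By the definition \eqref{EqnCriticalAP} of $\Dap$, this slice is, up to a rotation, $[(x_d/\beta)^+]^\beta$. Reassembling, $u(x) = (x_1^+/\beta)^\beta$ up to a rotation.
\end{proof}
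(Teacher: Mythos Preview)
Your overall strategy has a genuine flaw: you aim to prove $\partial_1 u\equiv 0$ (and then reduce dimension), but this is simply false for the half-space solution $u=(x_1^+/\beta)^\beta$ with normal $e_1$, which satisfies every hypothesis of the lemma. Correspondingly, your Step~3 claim --- that there exists $p\in\GU\cap\partial B_1$ with $\nu_p\cdot e_1=0$ --- is false for this $u$, since $\nu_p=e_1$ at every free boundary point. Your ``impossible for a nonempty proper open cone'' assertion is therefore wrong: for $\PosS=\{x_1>0\}$ one has $\overline{\PosS+se_1}=\{x_1\ge s\}\subset\PosS$ for every $s>0$, and no contradiction arises. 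The flow/nesting argument cannot distinguish the half-space (which you must allow) from a genuinely singular cone (which you must exclude).

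The paper fixes this by replacing $e_1$ with a direction $e^*$ chosen on the \emph{boundary} of the cone of monotone directions $\mathcal C:=\{e\in\Sph:\partial_e u\ge0\}$. Either $\partial_{e^*}u\equiv0$, and dimension reduction via Lemma~\ref{LemAppLowerDimension} finishes the proof; or $\partial_{e^*}u>0$ in $\PosS$, and one argues as you do that $\nu_p\cdot e^*\ge0$. The point is that now the assumption $\nu_p\cdot e^*\ge\delta>0$ for all $p$ \emph{does} lead to a contradiction, but via an improvement-of-monotonicity argument rather than a nesting one: using the expansion of $w=u^{1/\beta}$ from Corollary~\ref{CorAppExpansionOfGradient} near $\GU$, together with compactness away from $\GU$, one obtains $\partial_{e^*}u\ge\eps|\nabla u|$ in $\R^{\Dap+1}$, whence $\partial_e u\ge0$ for every $e$ with $|e-e^*|<\eps$, contradicting $e^*\in\partial\mathcal C$. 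Only after locating such a tangential point does Lemma~\ref{LemTangentialFBAP} apply to give $\partial_{e^*}u\equiv0$. Note how this handles the half-space case automatically: for $u=((x\cdot e)^+/\beta)^\beta$ one has $\mathcal C=\{v:v\cdot e\ge0\}$, any $e^*\in\partial\mathcal C$ satisfies $e^*\perp e$, and indeed $\partial_{e^*}u\equiv0$.
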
 
\begin{proof}
Denote the cone of monotone directions of $u$ by $\mathcal{C}$, that is, 
$$
\mathcal{C}:=\{e\in\partial B_1:\hem \partial_eu\ge0 \hem\text{ in }\R^{\Dap+1}\}.
$$
Then $\mathcal{C}$ is a non-empty proper subset of $\partial B_1$. Consequently, we can find 
\begin{equation}
\label{EqnEStarIsInTheBoundary}
e^*\in\partial\mathcal{C}.
\end{equation} 
Proposition \ref{PropBasicAP} implies that $\mathcal{C}$ is closed. As a result, 
$\partial_{e^*}u\ge 0 \text{ in }\R^{\Dap+1}.$ 

By the strong maximum principle as in the beginning of the proof of Lemma~\ref{LemTangentialFBAP}, $\partial_{e^*}u$  cannot vanish  in  $\PosS$ unless it is identically zero, in which case Lemma~\ref{LemAppLowerDimension} and the definition of $\Dap$ imply that $u$ is of the desired form.

As a result, it suffices to consider the case when 
$$
\partial_{e^*}u\ge 0 \text{ in }\R^{\Dap+1}, \text{ and }\partial_{e^*}u>0 \text{ in }\PosS.
$$
In particular, we have
$$
\nu_p\cdot e^*\ge 0 \hem\text{ for all }p\in\GU\cap\partial B_1.
$$
(Note that Theorem \ref{ThmSingAP} implies that $\GU\backslash\{0\}\subset\RedGU$.) We claim that:

\textit{Claim: }There is $p\in\GU\cap\partial B_1$ such that 
$$
\nu_p\cdot e^*=0.
$$

Once the \textit{Claim} is established, we apply Lemma \ref{LemTangentialFBAP} to get $\partial_{e^*}u\equiv 0$ in $\R^{\Dap+1}$. Again, by Lemma \ref{LemAppLowerDimension} and the definition of $\Dap$, we are done.

Let us show the \textit{Claim}:

Suppose by contradiction that $\nu_p\cdot e^*>0$ for all $p\in\GU\cap\partial B_1$. The $C^1$-regularity of $\GU\cap\partial B_1$ implies that we can find $\delta>0$ such that 
$$
\nu_p\cdot e^*\ge\delta \hem\text{ for all } p\in\GU\cap\partial B_1.
$$
For the function $w$ defined as
$$
w:=u^{1/\beta},
$$
Corollary \ref{CorAppExpansionOfGradient} implies that, for a small $\rho>0$, 
$$
\partial_{e^*}w\ge \delta/2\hem \text{ on }\{x\in\PosS\cap\partial B_1:\hem \mathrm{dist}(x,\GU)<\rho\}.
$$
With the Lipschitz regularity of $w$, this gives a small $\eps>0$ such that 
$$
\partial_{e^*}w\ge \eps|\nabla w| \hem\text{ on }\{x\in\PosS\cap\partial B_1:\hem \mathrm{dist}(x,\GU)<\rho\}.
$$
In terms of the original function $u$, we have
\begin{equation*}
\partial_{e^*}u\ge \eps|\nabla u| \hem\text{ on }\{x\in\PosS\cap\partial B_1:\hem \mathrm{dist}(x,\GU)<\rho\}.
\end{equation*} 

In the compact complementary region $ \{x\in\PosS\cap\partial B_1:\hem \mathrm{dist}(x,\GU)\ge\rho\}$, since $\partial_{e^*}u>0$ and $u$ is $C^1$ (Proposition~\ref{PropBasicAP}), we have
$\partial_{e^*}u\ge \eps|\nabla u|$ by choosing a possibly smaller $\eps$. 

Therefore, we have
$$
\partial_{e^*}u\ge \eps|\nabla u|\hem \text{ in }\R^{\Dap+1},
$$
since $u$ is homogeneous and both sides vanish in $\ConS$.
 
From here, we see that for $e\in\partial B_1$, we have
$$
\partial_eu=\partial_{e^*}u+\nabla u\cdot(e-e^*)\ge0
$$
if $|e-e^*|<\eps$. 
This contradicts the fact that $e^*\in\partial\mathcal{C}$.
\end{proof} 

We now give the proof of Proposition \ref{PropEmptySingInCriticalDimAP}:
\begin{proof}[Proof of Proposition \ref{PropEmptySingInCriticalDimAP}]
The strategy is similar to the proof of Proposition \ref{PropEmptySingInCriticalDimAC}. We sketch the argument.

Suppose this proposition is not true,  and assume $(0,0)\in\RedSing.$ That is, 
$$
0\in\Sing(u_0) \text{ for }u_0\in\mathcal{M}[\varphi_0],
$$
and there is a sequence $(x_k,t_k)\to(0,0)$ such that  
$$
x_k\in\Sing(u_{t_k}) \hem\text{where $u_{t_k}\in\mathcal{M}[\varphi_{t_k}]$, and }
\omega(x_k,t_k)\to\omega(0,0).
$$

There are three cases to consider:
\begin{enumerate}
\item{There is a subsequence of $t_k=0$;}
\item{There is a subsequence of $t_k>0$; and}
\item{There is a subsequence of $t_k<0$.}
\end{enumerate}
In the first case, the sequence $x_k\in\Sing(u_0)$ accumulates at $0\in\Sing(u_0)$, contradicting Theorem \ref{ThmSingAP} since $d=\Dap+1$. It remains to study the other two cases. 

Below we show how the second case leads to a contradiction. The same argument works for the third case.

Under the assumption $t_k>0$, Corollary \ref{CorNonOverLappingFBAP} implies $r_k:=|x_k|>0$. We consider the following two rescaled families:
$$
v_k(x):=\frac{1}{r_k^\beta}u_{t_k}(x_k+r_kx)
\hem\text{ and }\hem
u_k(x):=\frac{1}{r_k^\beta}u_0(r_kx).
$$
By Proposition \ref{PropBlowUpAP}, Proposition \ref{PropStabSingAP} and Lemma \ref{LemVariableCenterAP}, we have, up to a subsequence, 
$$
v_k\to v_\infty, \text{ and }u_k\to u_\infty 
\hem\text{ locally uniformly in }\R^d,
$$
where $v_\infty$ and $u_\infty$ are minimizing cones with
$$
0\in\Sing(v_\infty)\cap\Sing(u_\infty)
$$
and
$$
W(v_\infty,0,1)=W(u_\infty, 0,1).
$$

With $t_k>0$, Proposition \ref{PropComparison} gives $u_{t_k}\ge u_0$ in $B_1$. Consequently, we have
\begin{equation*}
v_\infty(x-e_1)\ge u_\infty(x) \text{ for all }x\in\R^d,
\end{equation*}
where we assume, without loss of generality, that $x_k/r_k\to e_1$.

By Lemma \ref{LemHomogeneousFunctions}, we have
$
v_\infty\ge u_\infty.
$
Lemma \ref{LemOrderedConesAP} implies $v_\infty=u_\infty$. Therefore,
$
u_\infty(\cdot-e_1)\ge u_\infty(\cdot).
$
Lemma \ref{LemHomogeneousFunctions} implies that 
$$
\partial_{-e_1}u_\infty\ge 0 \text{ in }\R^d.
$$
By Lemma \ref{LemMonotoneConesCriticalDimAP}, we obtain that $u_\infty=[(-x_1)^+/\beta]^\beta$, contradicting $0\in\Sing(u_\infty)$.
\end{proof}

\subsection{Proof of Theorem \ref{ThmGRAP} for $d\ge\Dap+2$}
In this subsection, we prove the   Theorem \ref{ThmGRAP} in dimensions $d\ge\Dap+2$. Thanks to Lemmas~\ref{LemRetractionAP} and~\ref{LemGenericReduction},  it suffices to establish the following proposition (cf. Proposition~\ref{PropDimensionOfSpaceTimeSingAC}). We recall that projection $\pi_x(\cdot)$ is defined in \eqref{EqnSpaceTimeProjections}, the set of space-time singularities $\mathcal{S}$ is defined in \eqref{EqnSpaceTimeSing}, and the dimension $\Dap$ is from \eqref{EqnCriticalAP}.

\begin{prop}
\label{PropDimensionOfSpaceTimeSingAP}
For $d\ge\Dap+2$ and a family $\FamPhi$ satisfying Assumption~\ref{AssGR}, we have
$$
\DimH(\pi_x(\mathcal{S}))\le d-\Dap-1.
$$
\end{prop}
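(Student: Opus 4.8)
Set $m_d:=d-\Dap-1$ (recall \eqref{EqnCriticalAP}). The plan is to follow the proof of Proposition~\ref{PropDimensionOfSpaceTimeSingAC} almost verbatim, replacing each Alt-Caffarelli ingredient by its Alt-Phillips analogue, and to close the argument with a new rigidity statement for monotone minimizing cones of $\EnAP(\cdot)$. Arguing by contradiction, suppose $\DimH(\pi_x(\mathcal{S}))>m_d$. Discarding the countably many times $t$ for which $\SingUT$ already has Hausdorff dimension at most $m_d$ (Theorem~\ref{ThmSingAP}) and applying Lemma~\ref{LemReifenberg} with $m=m_d$ to $\pi_x(\mathcal{S}^*)$, equipped with the function $x\mapsto\omega(x,t(x))$ (where $t(x)$ is the unique time with $x\in\Gamma(u_{t(x)})$, see Corollary~\ref{CorNonOverLappingFBAP}), the Reifenberg-type hypothesis must fail at some point of $\pi_x(\mathcal{S}^*)$, which after a translation we take to correspond to $(0,0)\in\mathcal{S}^*$. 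Exactly as in \emph{Step~1} of the proof of Proposition~\ref{PropDimensionOfSpaceTimeSingAC}, this produces, for each $k$, a scale $r_k\downarrow0$ and points $x_k^{(1)},\dots,x_k^{(m_d+1)}\in B_{r_k}$, with associated times $t_k^{(j)}\to0$ (by Lemma~\ref{LemRetractionAP}) and $\omega(x_k^{(j)},t_k^{(j)})\to\omega(0,0)$, such that $y_k^{(j)}:=x_k^{(j)}/r_k$ remain bounded away from $0$ and from every $m_d$-dimensional subspace; along a subsequence $y_k^{(j)}\to y^{(j)}$ with $\mathrm{dim}(\mathrm{span}\{y^{(j)}\})=m_d+1$. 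Setting $u_k(x):=r_k^{-\beta}u_0(r_kx)$, Propositions~\ref{PropBlowUpAP} and~\ref{PropStabSingAP} give, along a subsequence, $u_k\to u_\infty$ locally uniformly with $u_\infty$ a minimizing cone and $0\in\Sing(u_\infty)$; by Theorem~\ref{ThmWeissAP}, $W(u_\infty,0,1)=W(u_0,0,0+)=\omega(0,0)$.

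The first task is to show that $u_\infty$ is monotone (in one sense or the other) along each $y^{(j)}$. Fix $j$, pass to a subsequence along which $t_k^{(j)}$ has constant sign, and put $v_k(x):=r_k^{-\beta}u_{t_k^{(j)}}(x_k^{(j)}+r_kx)$; by Lemma~\ref{LemVariableCenterAP}, up to a subsequence $v_k\to v_\infty$ locally uniformly with $v_\infty$ a minimizing cone and $W(v_\infty,0,1)=\omega(0,0)$. If $t_k^{(j)}\equiv0$, then $v_k(x-y_k^{(j)})=u_k(x)$ forces $v_\infty(\,\cdot\,-y^{(j)})=u_\infty$, and since $u_\infty$ and $v_\infty$ are $\beta$-homogeneous this makes $v_\infty$, hence $u_\infty$, invariant along $y^{(j)}$. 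If $t_k^{(j)}>0$ (resp.\ $<0$), Proposition~\ref{PropComparison} gives $u_{t_k^{(j)}}\ge u_0$ (resp.\ $\le u_0$) in $B_1$, hence $v_\infty(\,\cdot\,-y^{(j)})\ge u_\infty$ (resp.\ $\le u_\infty$); Lemma~\ref{LemHomogeneousFunctions} promotes this to $v_\infty\ge u_\infty$ (resp.\ $\le u_\infty$), and then $W(v_\infty,0,1)=W(u_\infty,0,1)$ together with Lemma~\ref{LemOrderedConesAP} give $v_\infty=u_\infty$, so $u_\infty(\,\cdot\,-y^{(j)})\ge u_\infty$ (resp.\ $\le u_\infty$) and $u_\infty$ is monotone along $y^{(j)}$ by Lemma~\ref{LemHomogeneousFunctions}. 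Note that, unlike in the Alt-Caffarelli case, matching the Weiss densities (via Lemma~\ref{LemVariableCenterAP}) is indispensable here, since Lemma~\ref{LemOrderedConesAP} only applies to cones with the same Weiss density.

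Thus $u_\infty$ is a minimizing cone for $\EnAP(\cdot)$ in $\R^d$ with $0\in\Sing(u_\infty)$ that is monotone along $m_d+1=d-\Dap$ linearly independent directions; the contradiction will come from the Alt-Phillips counterpart of Lemma~\ref{LemConeMultiMonotoneAC}: \emph{if $u$ is a minimizing cone for $\EnAP(\cdot)$ in $\R^{d'}$ with $d'\ge\Dap+1$ that is monotone along $d'-\Dap$ linearly independent directions, then $u$ is a rotation of $[(x_1^+/\beta)]^\beta$} (which has no singular free boundary point). I would prove this by induction on $d'$, the base case $d'=\Dap+1$ being Lemma~\ref{LemMonotoneConesCriticalDimAP}. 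For $d'\ge\Dap+2$, let $W:=\{e:\partial_e u\equiv0\}$ and let $V$ be the span of the given monotone directions. If $W\neq\{0\}$, reduce $u$ along $W$ via Lemma~\ref{LemAppLowerDimension} to a minimizing cone $\bar u$ on $\R^{d'-\dim W}$ with $0\in\Sing(\bar u)$ and at least $(d'-\dim W)-\Dap$ independent monotone directions (the images of $V$): if $d'-\dim W\le\Dap$ this contradicts Theorem~\ref{ThmSingAP}, while if $d'-\dim W\ge\Dap+1$ the inductive hypothesis forces $\bar u$ to be a half-space solution, contradicting $0\in\Sing(\bar u)$. So we may assume $W=\{0\}$. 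By the strong maximum principle applied to $\partial_e u$ as at the start of the proof of Lemma~\ref{LemTangentialFBAP}, every nonzero monotone direction $e$ satisfies $\partial_e u>0$ in $\PosS$, so $\nu_p\cdot e\ge0$ at every $p\in\RedGU\cap\partial B_1$. Moreover $\Sing(u)\cap\partial B_1=\emptyset$: at any $q\in\Sing(u)\cap\partial B_1$, Lemma~\ref{LemConeSplittingAP} and Proposition~\ref{PropStabSingAP} give a minimizing cone $u_q$ invariant along $q$ with $0\in\Sing(u_q)$, and reducing along $q$ yields a minimizing cone on $\R^{d'-1}$ with $0$ in its singular set and at least $(d'-1)-\Dap$ independent monotone directions, contradicting the inductive hypothesis. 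Hence $\GU\cap\partial B_1=\RedGU\cap\partial B_1$ is compact. Since $u$ is not a half-space solution and $W=\{0\}$, the convex cone $\mathcal{C}:=\{e\in\partial B_1:\partial_e u\ge0\}$ is pointed with $\mathrm{dim}\,\mathrm{span}(\mathcal{C})\ge2$, so it possesses a boundary direction $e^*\in\partial B_1$, with $\partial_{e^*}u>0$ in $\PosS$. If $\nu_p\cdot e^*=0$ at some $p\in\RedGU\cap\partial B_1$, then Lemma~\ref{LemTangentialFBAP} (whose proof uses only the $C^{1,\alpha}$-regularity of $\GU$ near $p$) gives $\partial_{e^*}u\equiv0$, a contradiction; hence $\nu_p\cdot e^*\ge\delta>0$ uniformly on the compact set $\GU\cap\partial B_1$, and Corollary~\ref{CorAppExpansionOfGradient} then yields $\partial_{e^*}u\ge\eps|\nabla u|$ in $\R^{d'}$ exactly as in the proof of Lemma~\ref{LemMonotoneConesCriticalDimAP}. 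This forces $\partial_e u\ge0$ for all $e$ in a full neighborhood of $e^*$ on $\partial B_1$, contradicting the choice of $e^*$. With the rigidity lemma in hand the proposition follows, and by \eqref{EqnDecompositionOfSingTimes} and Lemma~\ref{LemGenericReduction} it gives Theorem~\ref{ThmGRAP} in the range $d\ge\Dap+2$.

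The main obstacle is the rigidity lemma, and within it the case $W=\{0\}$ in which the free boundary of the monotone cone is nowhere tangential to a monotone direction. For the Alt-Caffarelli functional this regime is handled by the monotone-solutions theory (Theorem~\ref{ThmGraphicalFBAC}), which forces the free boundary to be a smooth graph; no analogue is available for $\EnAP(\cdot)$. One must therefore go through the quantitative boundary expansion of Appendix~\ref{AppExpansionNearRegularPoint} (Corollary~\ref{CorAppExpansionOfGradient}) together with the improvement-of-monotonicity / extremal-direction mechanism above, being careful that this expansion is only available at regular boundary points---which is why the induction is used to rule out boundary singularities first. The remaining bookkeeping (tracking how many independent monotone directions survive each dimension reduction, and that $0$ remains a singular point throughout) requires attention but is routine.
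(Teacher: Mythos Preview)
Your proposal is correct and follows essentially the same route as the paper. The paper's proof of Proposition~\ref{PropDimensionOfSpaceTimeSingAP} simply transplants the proof of Proposition~\ref{PropDimensionOfSpaceTimeSingAC}, invoking the Alt-Phillips analogues (Corollary~\ref{CorNonOverLappingFBAP}, Lemmas~\ref{LemRetractionAP}, \ref{LemVariableCenterAP}, \ref{LemOrderedConesAP}, Propositions~\ref{PropBlowUpAP}, \ref{PropStabSingAP}, Theorem~\ref{ThmSingAP}) and a rigidity lemma for multi-monotone cones (Lemma~\ref{LemConeMultiMonotoneAP}); you reconstruct all of this, including the observation that equality of Weiss densities is needed to apply Lemma~\ref{LemOrderedConesAP}. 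The only difference is organizational: the paper's induction for the rigidity lemma blows up at \emph{every} $p\in\Gamma(u)\cap\partial B_1$ to show directly that each such $p$ is regular, then runs the extremal-direction argument of Lemma~\ref{LemMonotoneConesCriticalDimAP} (absorbing the case $\partial_{e^*}u\equiv0$ into a one-step dimension reduction), whereas you first split off the invariant subspace $W$ and then, when $W=\{0\}$, blow up only at putative singular points on the sphere. Both routes arrive at the same endgame (compact regular free boundary on the sphere plus Lemma~\ref{LemTangentialFBAP} and Corollary~\ref{CorAppExpansionOfGradient}), and the bookkeeping you flag---that at least $(d'-\dim W)-\Dap$ independent monotone directions survive each reduction, and that $0$ stays singular throughout---is handled correctly.
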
 

For simplicity, we introduce a parameter $m_d$ as
\begin{equation}
\label{EqnTheParameterMdAP}
m_d:=d-\Dap-1.
\end{equation} 
We begin by ruling out cones that are monotone in multiple directions (cf. Lemma~\ref{LemConeMultiMonotoneAC}):

\begin{lem}
\label{LemConeMultiMonotoneAP}
Let $u$ be a minimizing cone for the Alt-Phillips energy $\EnAP(\cdot)$ in $\R^d$.  

Suppose that there is a family of directions $\{\xi_j\}_{j=1,2,\dots,m_d+1}\subset\Sph$ satisfying
$$
\mathrm{dim}(\mathrm{span}\{\xi_j\})=m_d+1,
$$
and
$$
\partial_{\xi_j} u\ge 0 \text{ in }\R^d.
$$
Then, up to a rotation,  we have
$$
u=(x_1^+/\beta)^\beta.
$$
\end{lem}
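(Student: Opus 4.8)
The plan is to reduce to Lemma \ref{LemMonotoneConesCriticalDimAP} by a dimension reduction. First I would distinguish two cases. In the first case, suppose that $\partial_{\xi_j}u\equiv 0$ in $\R^d$ for every $j=1,\dots,m_d+1$. Since $\mathrm{dim}(\mathrm{span}\{\xi_j\})=m_d+1$, the cone $u$ is then invariant along $(m_d+1)$ orthogonal directions; up to a rotation we may assume $u(x_1,\dots,x_d)=u(0,\dots,0,x_{m_d+2},\dots,x_d)$. By Lemma \ref{LemAppLowerDimension} (the Alt-Phillips analogue of Lemma \ref{LemInduceInLowerDimensionAC}, established in the appendix), the induced function on $\R^{d-(m_d+1)}=\R^{\Dap}$ is a minimizing cone; by the definition of $\Dap$ in \eqref{EqnCriticalAP} it must be a rotation of $[(x_d/\beta)^+]^\beta$, so $u=(x_1^+/\beta)^\beta$ up to a rotation, as desired.

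In the second case, there is some $j_0$ and some point $x\in\R^d$ with $\partial_{\xi_{j_0}}u(x)>0$. Without loss of generality $j_0=1$. Here I would argue as in Lemma \ref{LemMonotoneConesCriticalDimAP}: since $\partial_{\xi_1}u\ge 0$ and, by Proposition \ref{PropBasicAP}, $\Delta\partial_{\xi_1}u=\frac{\gamma}{2}(\gamma-1)u^{\gamma-2}\partial_{\xi_1}u\le 0$ in $\PosS$ (using $\gamma\in(0,1)$), the strong maximum principle forces $\partial_{\xi_1}u>0$ throughout $\PosS$. Now I would run the same improvement-of-monotonicity argument used in the \emph{Claim} of Lemma \ref{LemMonotoneConesCriticalDimAP}, \emph{but applied to the minimizing cone $u$ in $\R^d$}: letting $\mathcal{C}=\{e\in\partial B_1: \partial_e u\ge 0 \text{ in }\R^d\}$, which by Proposition \ref{PropBasicAP} is closed and (being strictly larger than the single direction $\xi_1$ where monotonicity is strict somewhere, but still proper since $u$ is a nontrivial cone with $\partial_{-\xi_1}u$ not $\ge 0$) admits a boundary point $e^*\in\partial\mathcal{C}$; then $\partial_{e^*}u\ge 0$. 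If $\partial_{e^*}u\equiv 0$ we land in Case 1 with the single direction $e^*$ — but we also need the full $(m_d+1)$-dimensional invariance, so here I instead keep track of all the $\xi_j$: either at least one $\partial_{\xi_j}u$ is strictly positive somewhere in $\PosS$ and hence (after the $\partial\mathcal{C}$ argument) $u$ is monotone in a full neighborhood of some direction, or all $\partial_{\xi_j}u\equiv 0$ and we are in Case 1. The key point for Case 2 is then Lemma \ref{LemTangentialFBAP}: I would show there is $p\in\GU\cap\partial B_1$ with $\nu_p\cdot e^*=0$ (using the $C^1$-regularity of $\GU\cap\partial B_1$ away from the origin, which holds since $d\ge\Dap+2$ only guarantees $\DimH(\SingU)\le d-\Dap-2$ — one must be slightly careful, but the tangency point can be located on the open regular part by a connectedness/boundary argument on $\GU\cap\partial B_1$ exactly as in Lemma \ref{LemMonotoneConesCriticalDimAP}), and then Lemma \ref{LemTangentialFBAP} gives $\partial_{e^*}u\equiv 0$, reducing again to Case 1.

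The main obstacle I anticipate is the location of the tangency point $p\in\GU\cap\partial B_1$ with $\nu_p\cdot e^*=0$: in $\R^{\Dap+1}$ (Lemma \ref{LemMonotoneConesCriticalDimAP}) the free boundary on the sphere is entirely regular, so the argument "if $\nu_p\cdot e^*>0$ everywhere then $\partial_{e^*}u\ge\eps|\nabla u|$, contradicting $e^*\in\partial\mathcal{C}$" goes through cleanly via Corollary \ref{CorAppExpansionOfGradient}. In higher dimensions $\SingU$ may be nonempty, so I would need to argue that the improvement-of-monotonicity inequality $\partial_{e^*}u\ge\eps|\nabla u|$ can still be propagated across (or around) the singular set — or, more efficiently, avoid this entirely by first performing the dimension reduction: since $u$ is monotone along $m_d+1$ independent directions, if any such monotonicity is strict then, after reducing out the invariant directions that come for free, one is effectively working with a cone in a lower-dimensional space where eventually $d=\Dap+1$ is reached, at which point Lemma \ref{LemMonotoneConesCriticalDimAP} applies directly. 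I expect the cleanest writeup interleaves "if $\partial_{\xi_j}u\equiv 0$, reduce dimension" with "otherwise apply the $\partial\mathcal{C}$/Lemma \ref{LemTangentialFBAP} machinery in the current dimension," iterating until the base case.
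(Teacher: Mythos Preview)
Your Case 1 is fine, but Case 2 has a genuine gap that you yourself flag and do not resolve. Lemma \ref{LemTangentialFBAP} carries the explicit hypothesis $\GU\cap\partial B_1\subset\RedGU$; in $\R^{\Dap+1}$ this follows from Theorem \ref{ThmSingAP}, but for $d\ge\Dap+2$ you give no argument for it. Your proposed workaround---``reduce out the invariant directions that come for free'' and iterate down to $\Dap+1$---does not work in the bad case: if \emph{every} $\partial_{\xi_j}u$ is strictly positive somewhere in $\PosS$ (which nothing rules out), then none of the $\xi_j$ are invariant directions, there is nothing to reduce, and you remain in dimension $d$ with a possibly singular $\GU\cap\partial B_1$. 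The $e^*\in\partial\mathcal{C}$ argument then cannot invoke Lemma \ref{LemTangentialFBAP}, and the Corollary \ref{CorAppExpansionOfGradient} step (which needs the free boundary to be $C^{1,\alpha}$ near $p$) likewise breaks down.

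The paper closes this gap by a different dimension reduction, set up as an induction on $d$ with base case Lemma \ref{LemMonotoneConesCriticalDimAP}. For the inductive step in $\R^n$, take any $p\in\GU\cap\partial B_1$ and blow up there: by Lemma \ref{LemConeSplittingAP} the limit $u_0$ is a minimizing cone invariant in the $p$-direction, and the monotonicity $\partial_{\xi_j}u\ge 0$ passes to $u_0$. Quotienting by the $p$-direction (Lemma \ref{LemAppLowerDimension}) gives a cone $\overline{u_0}$ in $\R^{n-1}$ monotone along the projections of the $\xi_j$, which span at least $m_n=m_{n-1}+1$ dimensions. The induction hypothesis forces $\overline{u_0}$ (hence $u_0$) to be a half-space solution, so $p\in\RedGU$. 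Since $p$ was arbitrary, $\GU\cap\partial B_1\subset\RedGU$, and \emph{now} the argument of Lemma \ref{LemMonotoneConesCriticalDimAP} (including Lemma \ref{LemTangentialFBAP}) goes through verbatim. The point you are missing is that blow-up plus cone-splitting at sphere points, not invariance along the given $\xi_j$, is what drives the dimension reduction.
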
 
\begin{proof}
The proof is based on an induction on the dimension $d$. The base case $d=\Dap+1$ is the content of Lemma \ref{LemMonotoneConesCriticalDimAP}. Assume that we have established the result in dimensions $d=\Dap+1,\dots, n-1$, we proceed to prove the case when $d=n$.

Take $p\in\GU\cap\partial B_1$ and define
$$
u_r(x):=\frac{1}{r^\beta}u(p+rx),
$$
then with Proposition \ref{PropBlowUpAP} and Lemma \ref{LemConeSplittingAP}, we see that, up to a subsequence of $r_k\to0$, we have
$$
u_{r_k}\to u_0 \text{ locally uniformly in }\R^n,
$$
where $u_0$ is a minimizing cone independent of the $p$-direction. Lemma \ref{LemAppLowerDimension} implies that $u_0$ induces a minimizing cone in $\R^{n-1},$ denoted by $\overline{u_0}$.

Meanwhile, we have $\partial_{\xi_j} u_{r}\ge0$ for each $j=1,2,\dots,m_n+1$, implying the same property for the limit $u_0$. After reducing the dimension by $1$, the minimizer $\overline{u}_0$ is monotone along at least $m_n$ independent directions. By the result in dimension $(n-1)$, we see that $\overline{u}_0$ is a rotation of $(x_1^+/\beta)^\beta$. The limit $u_0$ is of the same form.

Therefore, we have $p\in\RedGU$ by Definition \ref{DefRegSingAP} for all $p\in\GU\cap\partial B_1$.  Theorem~\ref{ThmRegSmoothAP} implies that $\GU\cap\partial B_1$ is $C^1$. From here, the same argument as in the proof of Lemma \ref{LemMonotoneConesCriticalDimAP} gives the desired conclusion. 
\end{proof}

Now we give the proof of Proposition \ref{PropDimensionOfSpaceTimeSingAP}.

\begin{proof}[Proof of Proposition \ref{PropDimensionOfSpaceTimeSingAP}]
The proof is  the same as the proof of Proposition~\ref{PropDimensionOfSpaceTimeSingAC}, where instead of using Corollary~\ref{CorNonOverLappingFBAC}, Lemmas~\ref{LemRetractionAC}, \ref{LemConeMultiMonotoneAC}, \ref{LemVariableCenterAC}, and~\ref{LemOrderedConesAC},   Propositions~\ref{PropBlowUpAC} and~\ref{PropStabSingAC}, and Theorem~\ref{ThmRegOfFBAC}, we use, respectively, Corollary~\ref{CorNonOverLappingFBAP}, Lemmas~\ref{LemRetractionAP}, \ref{LemConeMultiMonotoneAP}, \ref{LemVariableCenterAP}, and~\ref{LemOrderedConesAP}, Propositions~\ref{PropBlowUpAP} and~\ref{PropStabSingAP}, and Theorem~\ref{ThmSingAP}.
\end{proof}

Finally, combining the previous results we obtain the proof of Theorem~\ref{ThmGRAP}:

\begin{proof}[Proof of Theorem~\ref{ThmGRAP}]
The case $d=\Dap+1$ follows from Proposition~\ref{PropEmptySingInCriticalDimAP} (together with Theorem \ref{ThmGU} and Lemma \ref{LemPointsOfContinuity}; see~\eqref{EqnDecompositionOfSingTimes}).

The case $d\ge\Dap+2$ is a consequence of Lemma~\ref{LemGenericReduction} thanks to Proposition~\ref{PropDimensionOfSpaceTimeSingAP} and Lemma~\ref{LemRetractionAP}.
\end{proof}

 And:

\begin{proof}[Proof of Corollary~\ref{CorGRAP}]
Follows directly from Theorem~\ref{ThmGRAP} (cf. the proof of Corollary~\ref{CorGU}). 
\end{proof}

\appendix

\section{Singular sets in the Alt-Phillips problem}
\label{AppSingAP}

In this appendix, we establish Theorem \ref{ThmSingAP} about dimensions of singular sets in the one-phase Alt-Phillips problem. Since the strategy is standard, certain parts of the proof are only sketched. For the one-phase Alt-Caffarelli problem, similar results were proved  in \cite{W}. The proofs in this appendix are adaptations of those in \cite[Chapter 10]{V}.

Recall the one-phase Alt-Phillips energy $\EnAP(\cdot)$ from \eqref{EqnAP}, the set of minimizers $\mathcal{M}[\cdot,\cdot]$ from \eqref{EqnSetOfMin}, as well as the dimension $\Dap$ from \eqref{EqnCriticalAP}. For a minimizer $u$, the free boundary $\GU$ is defined in \eqref{EqnFreeBoundary}. It  decomposes as $\GU=\RedGU\cup\SingU$ according to Definition \ref{DefRegSingAP}.

Suppose that $u\in\mathcal{M}[u]$ in $B_1\subset\R^d$ for $d\le\Dap.$ 
For a free boundary point $x_0\in\GU$, Proposition \ref{PropBlowUpAP} implies that the rescaled functions $u_{x_0,r}$ converge, along a subsequence of $r_k\to 0$, to a minimizing cone $u_{x_0}$. By the definition of $\Dap$, the limit $u_{x_0}$ is, up to a rotation, $(x_1^+/\beta)^\beta$ (see \eqref{EqnBeta} for the constant $\beta$). According to Definition \ref{DefRegSingAP}, this implies that $x_0\in\RedGU$. 

Consequently, we have 
\begin{equation}
\label{EqnAppEmptySing}
\SingU=\emptyset\hem \text{ if }d\le\Dap.
\end{equation}
To establish Theorem \ref{ThmSingAP}, it suffices to prove the following two propositions:
\begin{prop}
\label{PropAppCriticalDim}
Let $u\in\mathcal{M}[u]$  with $d=\Dap+1$. Then 
$
\SingU
$
is locally discrete.
\end{prop}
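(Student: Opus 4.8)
The plan is to argue by contradiction via a Federer-type dimension reduction, exactly in the spirit of the Alt--Caffarelli arguments in Section~\ref{SecGRAC} and of \cite[Chapter 10]{V}. Suppose $\SingU$ is not locally discrete, so there is a point $x_0\in B_1$ and a sequence of distinct points $x_k\in\SingU$ with $x_k\to x_0$. Since $\GU$ is closed, $x_0\in\GU$; moreover $x_0$ must itself be singular, since otherwise Lemma~\ref{LemEpsRegAP} (applied after rescaling around $x_0$) would produce a neighborhood of $x_0$ in which every free boundary point is regular, contradicting $x_k\to x_0$ with $x_k\in\SingU$. After a translation we may assume $x_0=0\in\SingU$. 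I would then set $r_k:=|x_k|\to0$ and consider the rescalings $u_{0,r_k}(x)=r_k^{-\beta}u(r_kx)$; by Proposition~\ref{PropBlowUpAP}, along a subsequence $u_{0,r_k}\to u_\infty$ locally uniformly, with $u_\infty$ a $\beta$-homogeneous minimizing cone, and $u_\infty$ is singular at $0$ (if $u_\infty$ were a rotation of $[(x_d^+/\beta)]^\beta$, then Lemma~\ref{LemEpsRegAP} would force $0\in\RedGU$, a contradiction).

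Next I would use the sequence $x_k$ to extract a singular point of the cone $u_\infty$ on $\partial B_1$. The rescaled points $y_k:=x_k/r_k$ lie on $\Sph$ and are singular points of $u_{0,r_k}$, since singularity of a free boundary point is preserved under the scaling $u\mapsto u_{0,r_k}$. Passing to a further subsequence, $y_k\to y_\infty\in\Sph$, and Proposition~\ref{PropStabSingAP} (applied to the minimizers $u_{0,r_k}\to u_\infty$) gives $y_\infty\in\Sing(u_\infty)$. After a rotation assume $y_\infty=e_1$, so $e_1$ is a nonzero free boundary point of the minimizing cone $u_\infty$ at which it is singular. Now I would apply Lemma~\ref{LemConeSplittingAP}: a subsequential blow-up $v$ of the rescalings $(u_\infty)_{e_1,r}$ is a $\beta$-homogeneous minimizing cone with $\partial_1v\equiv0$, and again $v$ is singular at $0$ --- for if $v$ were a rotated half-space solution, Lemma~\ref{LemEpsRegAP} applied to $(u_\infty)_{e_1,r}$ at small $r$ would make $e_1$ a regular point of $u_\infty$.

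Finally, since $\partial_1v\equiv0$, the restriction $\bar v(x'):=v(0,x')$ is a minimizing cone for $\EnAP(\cdot)$ in $\R^{d-1}$ by the dimension-reduction lemma (the Alt--Phillips analogue of Lemma~\ref{LemInduceInLowerDimensionAC}, namely Lemma~\ref{LemAppLowerDimension}), and $0\in\Sing(\bar v)$, because a half-space solution of $\bar v$ in $\R^{d-1}$ extends trivially to a half-space solution of $v$ in $\R^{d}$, against $0\in\Sing(v)$. But $d-1=\Dap$, and by \eqref{EqnAppEmptySing} every minimizer of $\EnAP(\cdot)$ in $\R^{\Dap}$ has empty singular set. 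This contradiction proves that $\SingU$ is locally discrete.

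I expect the main obstacle to be the careful bookkeeping in the claim that \emph{singular points remain singular after blow-up}: the openness of the regular set (Lemma~\ref{LemEpsRegAP}) must be invoked at two distinct stages --- to see $0\in\Sing(u_\infty)$ and to see $0\in\Sing(v)$ --- and one must be careful to apply the stability statement Proposition~\ref{PropStabSingAP} to the correct sequence of minimizers (the rescalings $u_{0,r_k}$, each of which minimizes $\EnAP$). A secondary technical point is the Alt--Phillips version of the dimension-reduction lemma, which requires that a translation-invariant minimizing cone descends to a minimizer one dimension lower; this is routine given the equation in Proposition~\ref{PropBasicAP} and the energy comparison, but should be recorded explicitly (as Lemma~\ref{LemAppLowerDimension}).
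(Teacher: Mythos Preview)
Your argument is correct and follows essentially the same Federer-type dimension reduction as the paper's own proof: rescale around the accumulation point to obtain a singular minimizing cone with a singular point on $\partial B_1$, blow up there to split off a direction via Lemma~\ref{LemConeSplittingAP}, and descend via Lemma~\ref{LemAppLowerDimension} to a minimizer in $\R^{\Dap}$ with nonempty singular set, contradicting \eqref{EqnAppEmptySing}. Your proof is in fact slightly more explicit than the paper's in justifying why the limit points are singular (invoking Lemma~\ref{LemEpsRegAP} directly rather than leaving it implicit in Proposition~\ref{PropStabSingAP}); the extra observation that $0\in\Sing(u_\infty)$ is not needed for the argument, but it does no harm.
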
 
\begin{prop}
\label{PropAppGeneralDim}
Let $u\in\mathcal{M}[u]$ with $d\ge\Dap+2$. Then 
$
\DimH(\SingU)\le d-\Dap-1.
$
\end{prop}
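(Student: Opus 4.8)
The plan is to run Federer's dimension-reduction scheme, using the Weiss monotonicity formula (Theorem~\ref{ThmWeissAP}) to control blow-ups and delegating the measure-theoretic step to Lemma~\ref{LemReifenberg}. Set $m:=d-\Dap-1\ (\ge 1)$ and let $\omega(x):=W(u,x,0+)$ denote the Weiss density on $\GU$. First I would discard, via Lemma~\ref{LemPointsOfContinuity}, the (countable, hence $0$-dimensional) set of points of $\SingU$ that are not continuity points of $\omega|_{\SingU}$, so that it suffices to apply Lemma~\ref{LemReifenberg} to the remaining set $E\subset\SingU$ with $f=\omega$ and the above $m$. Two structural facts are needed as input. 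First, \emph{blow-up rigidity}: if $x_0\in\SingU$ then every blow-up $u_{x_0}$ of $u$ at $x_0$ (Proposition~\ref{PropBlowUpAP}) is a $\beta$-homogeneous minimizing cone with $0\in\Sing(u_{x_0})$ --- otherwise Lemma~\ref{LemEpsRegAP} would force $x_0\in\RedGU$ --- and $W(u_{x_0},0,\cdot)\equiv\omega(x_0)$ (by Lemma~\ref{LemDiriConvAP}, the scaling of $W$, and Theorem~\ref{ThmWeissAP}). Second, \emph{a symmetric cone in few dimensions is a half-space solution}: a minimizing cone invariant along $k$ independent directions descends by $k$ applications of Lemma~\ref{LemAppLowerDimension} to a minimizing cone in $\R^{d-k}$, which --- if $d-k\le\Dap$ --- is a rotation of $[(x_{d-k}/\beta)^+]^\beta$ by the very definition \eqref{EqnCriticalAP} of $\Dap$, so the original cone is a half-space solution and has empty singular set.

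With these, I would argue by contradiction. If the hypothesis of Lemma~\ref{LemReifenberg} failed at some $x_0\in E$, a standard greedy/compactness extraction produces $\eps_0>0$, scales $r_k\downarrow 0$, and points $x_k^{(1)},\dots,x_k^{(m+1)}\in\SingU\cap B_{r_k}(x_0)$ with $\omega(x_k^{(j)})\to\omega(x_0)$ whose rescalings $y_k^{(j)}:=(x_k^{(j)}-x_0)/r_k$ converge, along a subsequence, to vectors $y^{(1)},\dots,y^{(m+1)}$ spanning an $(m+1)$-dimensional subspace. Blowing up $u$ at $x_0$ along $r_k$ (compactness from Proposition~\ref{PropBasicAP}) gives a $\beta$-homogeneous minimizing cone $u_\infty$ with $0\in\Sing(u_\infty)$ and $W(u_\infty,0,1)=\omega(x_0)$. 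Proposition~\ref{PropStabSingAP} yields $y^{(j)}\in\Sing(u_\infty)$. The key point is then $\omega(u_\infty,y^{(j)})=\omega(x_0)$: the inequality ``$\ge$'' comes from $W(u_\infty,y^{(j)},r)=\lim_k W(u,x_k^{(j)},rr_k)\ge\lim_k\omega(x_k^{(j)})=\omega(x_0)$ (monotonicity plus Lemma~\ref{LemDiriConvAP}), while ``$\le$'' follows because blowing $u_\infty$ \emph{down} at $y^{(j)}$ recovers the cone $u_\infty$ itself, so $W(u_\infty,y^{(j)},\rho)\to W(u_\infty,0,1)$ as $\rho\to\infty$. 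Thus $r\mapsto W(u_\infty,y^{(j)},r)$ is constant, and Theorem~\ref{ThmWeissAP} makes $u_\infty$ $\beta$-homogeneous about $y^{(j)}$ as well; subtracting the two Euler identities $x\cdot\nabla u_\infty=\beta u_\infty$ and $(x-y^{(j)})\cdot\nabla u_\infty=\beta u_\infty$ forces $\partial_{y^{(j)}}u_\infty\equiv 0$. Hence $u_\infty$ is invariant along the span of the $y^{(j)}$, a subspace of dimension $m+1=d-\Dap$, so by the second structural fact $\Sing(u_\infty)=\emptyset$, contradicting $0\in\Sing(u_\infty)$. Therefore the hypothesis of Lemma~\ref{LemReifenberg} holds and $\DimH(E)\le m$, which gives $\DimH(\SingU)\le d-\Dap-1$.

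I expect the main obstacle to be the continuity/semicontinuity bookkeeping for the Weiss density --- in particular making the identity $\omega(u_\infty,y^{(j)})=\omega(x_0)$ rigorous, which requires both the strong $H^1_{\rm loc}$ (indeed $C^1_{\rm loc}$) convergence of the rescalings from Lemma~\ref{LemDiriConvAP} and Proposition~\ref{PropBasicAP}, and the elementary but easily mishandled computation that the blow-\emph{down} of a cone at an interior point returns that cone. The geometric extraction of $m+1$ quantitatively independent points from the failure of the Reifenberg condition is routine but should be spelled out. Finally, Proposition~\ref{PropAppCriticalDim} (the case $d=\Dap+1$) runs the same scheme with $m=0$; there one can even bypass the density computation, since a second singular point $y\in\Sph$ of a blow-up cone $u_\infty$ can itself be blown up and, via the cone-splitting Lemma~\ref{LemConeSplittingAP} together with Lemma~\ref{LemAppLowerDimension}, yields a half-space solution that is singular at $0$ --- a contradiction.
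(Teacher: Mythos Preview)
Your argument is correct, but it is genuinely different from the paper's own proof in Appendix~\ref{AppSingAP}. The paper runs the classical Federer scheme by induction on $d$: assuming $\mathcal{H}^{m+s}(\SingU)>0$ for some $s>0$, it passes to a density point via Lemma~\ref{LemExistenceOfDensityPoints}, blows up to a cone $u_\infty$, invokes the upper semicontinuity of Hausdorff measure (Lemma~\ref{LemAppUSCHausM}) to keep $\mathcal{H}^{m+s}(\Sing(u_\infty))>0$, finds a second density point $p\neq 0$ on the cone, blows up once more and uses cone-splitting (Lemma~\ref{LemConeSplittingAP}) together with Lemma~\ref{LemAppLowerDimension} to descend to $\R^{d-1}$, contradicting the inductive hypothesis. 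No Reifenberg criterion, no tracking of the Weiss density along sequences, and no preliminary removal of a countable set is needed.

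Your route instead mirrors the machinery the paper deploys in Sections~\ref{SecGRAC}--\ref{sec:5} for the \emph{space-time} singular set (Propositions~\ref{PropDimensionOfSpaceTimeSingAC} and~\ref{PropDimensionOfSpaceTimeSingAP}): you verify the hypothesis of Lemma~\ref{LemReifenberg} by extracting $m+1$ quantitatively independent singular points at scale $r_k$, pass to a blow-up cone, and then argue that the cone is $\beta$-homogeneous about each limit point $y^{(j)}$ --- your computation $W(u_\infty,y^{(j)},\rho)=W(u_\infty(\cdot+y^{(j)}/\rho),0,1)\to W(u_\infty,0,1)$ for the upper bound, combined with monotonicity for the lower bound, is exactly right --- and conclude $\partial_{y^{(j)}}u_\infty\equiv 0$ from the two Euler relations. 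The payoff is that you avoid both the induction and Lemma~\ref{LemAppUSCHausM}, and you get $(m+1)$-fold invariance in a single blow-up rather than peeling off one direction at a time; the cost is the bookkeeping with $\omega$ and the appeal to Lemma~\ref{LemReifenberg}. Two minor remarks: the preliminary discarding via Lemma~\ref{LemPointsOfContinuity} is harmless but in fact unnecessary here, since $\omega$ is upper semicontinuous on $\GU$ and the hypothesis of Lemma~\ref{LemReifenberg} is vacuous at points where no nearby singular point has nearby density; and you do not actually need $y^{(j)}\in\Sing(u_\infty)$, only $u_\infty(y^{(j)})=0$, though Proposition~\ref{PropStabSingAP} gives the stronger statement anyway.
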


We begin with some preparatory results.
\begin{lem}
\label{LemAppUSCHausM}
For $s>0$, let $\mathcal{H}^s(\cdot)$ denotes the $s$-dimensional Hausdorff measure. 

Suppose that the sequence $u_k\in\mathcal{M}[u_k]$  converge to $u\in\mathcal{M}[u]$ locally uniformly in $B_1$. 
Then
$$
\mathcal{H}^s(\SingU\cap\overline{B_{1/2}})\ge\limsup\mathcal{H}^s(\mathrm{Sing}(u_k)\cap\overline{B_{1/2}}).
$$
\end{lem}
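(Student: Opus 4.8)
The plan is to read the inequality off the stability of the singular set, Proposition~\ref{PropStabSingAP}, by a covering argument. Write $S:=\SingU\cap\overline{B_{1/2}}$ and $S_k:=\mathrm{Sing}(u_k)\cap\overline{B_{1/2}}$; since singular sets are relatively closed and $\overline{B_{1/2}}\subset B_1$, both are compact. Passing to a subsequence we may assume $\mathcal{H}^s(S_k)\to\ell:=\limsup_k\mathcal{H}^s(S_k)$, and we must show $\mathcal{H}^s(S)\ge\ell$; we may assume $\ell>0$.

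The first step is a confinement statement: for every open set $U\supset S$ one has $S_k\subset U$ for all $k$ large. Indeed, otherwise there are $x_k\in S_k\setminus U$ converging, along a subsequence, to some $x_\infty\in\overline{B_{1/2}}\setminus U$; but Proposition~\ref{PropStabSingAP} forces $x_\infty\in\SingU$, i.e.\ $x_\infty\in S\subset U$, a contradiction. (One may also argue through Lemma~\ref{LemEpsRegAP}: each point of $\overline{B_{1/2}}\setminus U$ is not a singular free boundary point of $u$, hence has a fixed‑size ball free of singular points of $u$ — trivially if it lies in $\{u\neq 0\}$ or in the interior of $\ConS$, and by $\bar\eps$‑flatness and Lemma~\ref{LemEpsRegAP} if it is a regular free boundary point; by uniform convergence the same holds for $u_k$ on a slightly smaller ball, and a finite cover of $\overline{B_{1/2}}\setminus U$ by such balls gives $S_k\subset U$ for $k$ large.) Next, fix $\eta>0$; by the definition of $\mathcal{H}^s$ we cover $S$ by countably many balls $B_{r_i}(x_i)$ with $x_i\in S$, $r_i$ as small as we please, and $\sum_i r_i^s\le\mathcal{H}^s(S)+\eta$, then extract a finite subcover $i=1,\dots,N$ by compactness and enlarge it slightly to an open set $U=\bigcup_{i=1}^N B_{r_i}(x_i)$ still satisfying $\sum_{i=1}^N r_i^s\le\mathcal{H}^s(S)+2\eta$. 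By the confinement step, $S_k\subset U$ for $k$ large.

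The main — and essentially only non‑routine — difficulty is the final passage: the inclusion $S_k\subset\bigcup_{i=1}^N B_{r_i}(x_i)$ by itself only controls the pre‑measure $\mathcal{H}^s_\delta(S_k)$ at scale $\delta\simeq\max_i r_i$, not the full measure $\mathcal{H}^s(S_k)$, since $\mathrm{Sing}(u_k)$ could a priori be strongly concentrated at small scales inside those balls. To close this I would use a \emph{uniform upper $s$-density estimate} for singular sets of minimizers, $\mathcal{H}^s(\mathrm{Sing}(v)\cap B_\rho(x))\le C_0\,\rho^s$ for all $v\in\M[\EnAP,v]$ and $x\in\mathrm{Sing}(v)$, with $C_0=C_0(d,\gamma)$ (in particular a uniform local finiteness of $\mathcal{H}^s\llcorner\mathrm{Sing}(v)$ at the sharp exponent $s=d-\Dap-1$); this can be extracted from the Weiss monotonicity formula of Theorem~\ref{ThmWeissAP} together with the energy gap between regular and singular densities. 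Granting it, rescaling $u_k$ on each $B_{2r_i}(x_i)$ to the unit ball (using Proposition~\ref{PropBasicAP} to keep the family compact) gives $\mathcal{H}^s(\mathrm{Sing}(u_k)\cap B_{r_i}(x_i))\le C_0(2r_i)^s$, whence $\mathcal{H}^s(S_k)\le C_0 2^s(\mathcal{H}^s(S)+2\eta)$ for $k$ large; a standard geometric‑measure‑theoretic refinement — passing to the weak‑$*$ limit of the finite measures $\mathcal{H}^s\llcorner S_k$, which is supported in $S$ by confinement and has upper $s$-density at most $1$ almost everywhere — then removes the constant $C_0 2^s$ and, with $\eta\downarrow 0$, yields $\ell\le\mathcal{H}^s(S)$. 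I expect this uniform density/local‑finiteness input to be the crux of the proof. (We remark that for the only use of this lemma in the appendix — the Federer dimension reduction in Proposition~\ref{PropAppGeneralDim} — it suffices to know that \emph{positivity} of $\mathcal{H}^s(\SingU)$ passes to blow‑up limits, and for that one may run the entire scheme with the Hausdorff content $\mathcal{H}^s_\infty$ in place of $\mathcal{H}^s$, for which the covering step above is already conclusive and no density estimate is needed.)
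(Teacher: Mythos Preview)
Your confinement step via Proposition~\ref{PropStabSingAP} \emph{is} the paper's proof: the paper reduces the lemma to the claim that any finite open cover of $\SingU\cap\overline{B_{1/2}}$ by balls eventually covers $\mathrm{Sing}(u_k)\cap\overline{B_{1/2}}$, proves that claim exactly as you do, and then simply asserts that this suffices ``by the definition of Hausdorff measures''. The paper does not address the scale issue you raise.

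You are right to flag it: the covering step only bounds $\mathcal{H}^s_\delta(S_k)$ at the fixed scale $\delta=\max_i 2r_i$, hence only the content $\mathcal{H}^s_\infty(S_k)$, and the paper's argument as written establishes the lemma with $\mathcal{H}^s_\infty$ in place of $\mathcal{H}^s$. Your proposed upgrade via a uniform Ahlfors-type upper bound $\mathcal{H}^s(\mathrm{Sing}(v)\cap B_\rho)\le C_0\rho^s$ is not available from anything in the paper (it is a Naber--Valtorta / quantitative-stratification estimate, not proved here for Alt--Phillips), and your weak-$*$ refinement does not remove the constant: the limit measure inherits upper $s$-density at most $C_0$, not at most $1$, so the density--comparison theorem only yields $\ell\le C\,C_0\,\mathcal{H}^s(S)$, not $\ell\le\mathcal{H}^s(S)$. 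The clean resolution is the one in your parenthetical: run the Federer reduction of Proposition~\ref{PropAppGeneralDim} with $\mathcal{H}^s_\infty$ throughout (positivity of $\mathcal{H}^s$ and of $\mathcal{H}^s_\infty$ are equivalent, and the density-point lemma holds for content as well), for which the confinement step is already conclusive. That is the standard route in the literature, and it is what the paper's short proof actually delivers.
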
 

\begin{proof} 
By the definition of Hausdorff measures, it suffices to prove the following:

\textit{Claim:} Suppose that we have a collection of open balls $\{B^j\}_{j=1,2,\dots,N}$ such that 
$$
\bigcup_j B^j\supset\SingU\cap\overline{B_{1/2}},
$$
then 
$$
\bigcup_j B^j\supset\mathrm{Sing}(u_k)\cap\overline{B_{1/2}} \text{ for all large }k.
$$

Suppose not, we find $x_k\in\mathrm{Sing}(u_k)\cap\overline{B_{1/2}}$ but $x_k\not\in\bigcup_j B^j.$
Up to a subsequence, we have $x_k\to x_\infty\in\overline{B_{1/2}}$. Openness of $B^j$'s implies that $x_\infty\not\in\bigcup_j B^j$. 

Meanwhile, Proposition \ref{PropStabSingAP} gives $x_\infty\in\SingU$, contradicting our assumption that $\bigcup_j B^j\supset\SingU\cap\overline{B_{1/2}}.$
\end{proof} 

The following lemma says that a minimizer, if independent of one variable, induces a minimizer in a lower dimensional space.
\begin{lem}
\label{LemAppLowerDimension}
Suppose that $u$ minimizes the energy $\EnAP(\cdot)$ in $\R^d$ and satisfies
$$
u(x',x_d)=u(x',0) \text{ for all }x_d\in\R.
$$

Define $\bar{u}:\R^{d-1}\to\R$ by $\bar{u}(x')=u(x',0)$, then $\bar{u}$ minimizes $\EnAP(\cdot)$ in $\R^{d-1}$.
\end{lem}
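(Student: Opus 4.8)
The plan is to argue by contradiction using a transition-layer competitor in one extra dimension, exactly parallel to the proof of Lemma~\ref{LemInduceInLowerDimensionAC} for the Alt-Caffarelli functional. For a bounded open set $\Omega$ and $v\in H^1(\Omega)$ write $\En(v;\Omega):=\int_\Omega\big(|\nabla v|^2+(v^+)^\gamma\big)$. Suppose $\bar u$ failed to be a minimizer in $\R^{d-1}$. Then there would be a ball $B'_R\subset\R^{d-1}$, a competitor $\bar v\in H^1(B'_R)$ with $\bar v-\bar u\in H^1_0(B'_R)$, and a constant $\delta>0$ with $\En(\bar v;B'_R)\le\En(\bar u;B'_R)-\delta$. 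Since the admissible competitors are nonnegative (alternatively, replace $\bar v$ by $\bar v^+$, which preserves the boundary values because $\bar u\ge0$ and does not increase the energy), we may assume $\bar v\ge0$.

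First I would record that a local minimizer in $\R^d$ minimizes $\En(\,\cdot\,;\Omega)$ on every bounded open set $\Omega$ among competitors agreeing with $u$ on $\partial\Omega$: extending such a competitor by $u$ outside $\Omega$ and enclosing $\Omega$ in a large ball reduces this to minimality on balls. This allows me to work on the cylinder $C_{R,M}:=B'_R\times(-M,M)\subset\R^d$, with $M$ to be taken large. Since $u(x',x_d)=\bar u(x')$ is independent of $x_d$, one has $\En(u;C_{R,M})=2M\,\En(\bar u;B'_R)$.

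Next I would build the competitor. Fix $\eta\in C^\infty(\R)$ with $\eta\equiv1$ on $[-(M-1),M-1]$, $\eta\equiv0$ outside $(-M,M)$, $|\eta'|\le2$, and set $w(x',x_d):=\eta(x_d)\bar v(x')+(1-\eta(x_d))\bar u(x')$. Then $w=\bar u$ on the caps $B'_R\times\{\pm M\}$, and $w=\bar u$ on the lateral part $\partial B'_R\times(-M,M)$ because $\bar v=\bar u$ there, so $w-u\in H^1_0(C_{R,M})$ and $w$ is admissible. On the slab $\{|x_d|\le M-1\}$ one has $w(x',x_d)=\bar v(x')$, contributing $2(M-1)\,\En(\bar v;B'_R)$; on the two transition layers $\{M-1\le|x_d|\le M\}$ we have $\nabla' w=\eta\,\nabla'\bar v+(1-\eta)\nabla'\bar u$ and $\partial_d w=\eta'\,(\bar v-\bar u)$, so the energy density is bounded by $2(|\nabla'\bar v|^2+|\nabla'\bar u|^2)+4|\bar v-\bar u|^2+\bar v^\gamma+\bar u^\gamma$, which lies in $L^1(B'_R)$ (using $\bar u\in L^\infty$ and $\bar v\in H^1(B'_R)\subset L^2$, Hölder with $\gamma<2$, and subadditivity of $t\mapsto t^\gamma$). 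Hence the transition layers contribute at most a constant $K=K(R,\bar u,\bar v)$ that is \emph{independent of $M$}. Combining,
$$
\En(w;C_{R,M})\le 2(M-1)\,\En(\bar v;B'_R)+K\le 2M\,\En(\bar u;B'_R)-2(M-1)\delta+K,
$$
using $\En(\bar u;B'_R)\ge0$ in the last step. For $M$ large this is strictly smaller than $\En(u;C_{R,M})=2M\,\En(\bar u;B'_R)$, contradicting the minimality of $u$ on $C_{R,M}$. Hence $\bar u$ minimizes in $\R^{d-1}$.

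The argument is essentially routine; the only point needing care is that the transition layers carry energy bounded uniformly in $M$, which is precisely why one interpolates over a layer of fixed unit width — so that its energy is a translate of a fixed finite integral over $B'_R$ — rather than over a layer that rescales with $M$. The nonlinearity $u^\gamma$ creates no real difficulty, since $0<\gamma<1$ makes $t\mapsto t^\gamma$ subadditive and the potential term is lower order.
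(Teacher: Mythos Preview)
Your proof is correct and follows essentially the same approach as the paper's: both argue by contradiction, build a competitor on a long cylinder by interpolating between $\bar v$ and $\bar u$ over a transition layer of fixed unit width, and exploit that the transition-layer energy is bounded independently of the cylinder's length. The only cosmetic differences are notation ($M$ versus $T$) and your slightly more explicit bookkeeping of the integrability of each term in the transition layer.
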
 
\begin{proof} 
Suppose not, we find $R>0$ and $\bar{v}$ such that 
$$\bar{v}=\bar{u}\text{ outside }B_R'\subset\R^{d-1},$$
and 
$$
\int_{B_R'}\left( |\nabla\bar{u}|^2+\bar{u}^\gamma\right) \ge\int_{B_R'}(|\nabla\bar{v}|^2+\bar{v}^\gamma)+\delta
$$
for some $\delta>0$.

Given $T>0$ large, we denote by $\eta_T\ge 0$ a one-dimensional cut-off function such that $\eta_T=1$ on $[-T,T]$, $\eta_T=0$ outside $(-T-1,T+1)$, and $|\eta_T'|\le 2$ on $\R$. The following function
$$
v(x',x_d):=\bar{v}(x')\eta_T(x_d)+\bar{u}(x')(1-\eta_T(x_d))
$$
agrees with $u$ outside $\Omega_T:=B_R'\times(-T-1,T+1)$.

A direct computation gives that 
$$
\int_{\Omega_T}\left(|\nabla u|^2+u^\gamma\right) =(2T+2)\int_{B_R'}\left(|\nabla\bar{u}|^2+\bar{u}^\gamma\right),
$$
and
$$
\int_{\Omega_T}\left(|\nabla v|^2+v^\gamma\right)= 2T \int_{B_R'}(|\nabla\bar{v}|^2+\bar{v}^\gamma)+I,$$
where 
$$
I\le C \int_{B_R'}(|\nabla\bar{u}|^2+|\nabla\bar{v}|^2+\bar{u}^\gamma+\bar{v}^\gamma+\bar{u}^2+\bar{v}^2)
$$
for a dimensional constant $C$.

As a result, 
\begin{align*}
\int_{\Omega_T}(|\nabla u|^2+u^\gamma)-\int_{\Omega_T}(|\nabla v|^2+v^\gamma)&\ge 2T\left[\int_{B_R'}(|\nabla\bar{u}|^2+\bar{u}^\gamma)-\int_{B_R'}(|\nabla\bar{v}|^2+\bar{v}^\gamma)\right]-I\\
&\ge 2T\delta-I.
\end{align*}
Choosing $T$ large enough, we have $\int_{\Omega_T}(|\nabla u|^2+u^\gamma)>\int_{\Omega_T}(|\nabla v|^2+v^\gamma)$, contradicting the minimizing property of $u$ in $\Omega_T$.
\end{proof} 

\vem

With these preparations, we give the proof of Proposition \ref{PropAppCriticalDim}:
\begin{proof}[Proof of Proposition \ref{PropAppCriticalDim}]  
Suppose the statement is false. Without loss of generality, we   assume that $0\in\SingU$ and that there is a sequence $x_k\neq 0$ such that $x_k\in\SingU$ and $x_k\to 0$. 

Define $r_k:=|x_k|$, $y_k:=x_k/r_k$, and 
$$
u_k(x):=\frac{1}{r_k^\beta}u(r_kx).
$$
Then $y_k\in\mathrm{Sing}(u_k)$ for each $k$. 

By Proposition \ref{PropBlowUpAP}, up to a subsequence, the sequence $u_k$ converge locally uniformly to a minimizing cone $u_\infty$. Since $|y_k|=1$ for all $k$, up to a subsequence, $y_k\to y_\infty\in\Sph$. Without loss of generality, we assume $y_\infty=e_1$. Proposition \ref{PropStabSingAP} implies that
$
e_1\in\mathrm{Sing}(u_\infty).
$

Define the rescaled function
$$
v_r(x):=\frac{1}{r^\beta}u(e_1+rx).
$$
Proposition \ref{PropBlowUpAP} implies that, up to a subsequence, $v_r\to v_0$, a minimizing cone, and Lemma \ref{LemConeSplittingAP} gives that $v_0$ is constant along the $e_1$-direction. 

Let $\bar{v}_0(x_2,x_3,\dots,x_d)=v_0(0,x_2,x_3,\dots,x_d)$. By Lemma \ref{LemAppLowerDimension}, $\bar{v}_0$ is a minimizer in $\R^{\Dap}$.
Since $0\in\mathrm{Sing}(v_r)$ for each $r>0$, we have $0\in\mathrm{Sing}(\bar{v}_0)$, contradicting~\eqref{EqnAppEmptySing}.
\end{proof} 

Below we give
\begin{proof}[Proof of Proposition \ref{PropAppGeneralDim}]
We prove the result by an induction on the dimension $d$, with the base case provided by Proposition \ref{PropAppCriticalDim}. Assuming, for $d=1,2,\dots,n-1$, the statement in the proposition is true, we will prove the estimate for $d=n$.

Suppose, on the contrary, that 
\begin{equation}
\label{EqnAppDefOfm}
\DimH(\SingU)>n-\Dap-1=:m
\end{equation} for some $u\in\mathcal{M}[u]$ in $B_1\subset\R^n$, then we find $s>0$ such that 
$$
\mathcal{H}^{m+s}(\SingU)>0.
$$
Without loss of generality, we can assume by Lemma \ref{LemExistenceOfDensityPoints} that
$$
\frac{\mathcal{H}^{m+s}(\SingU\cap B_{r_k})}{r_k^{m+s}}\ge\delta>0
$$
along a sequence $r_k\to0$.

For the rescaled functions
$
u_k(x):=\frac{1}{r_k^\beta}u(r_kx),
$
we have
$$
\mathcal{H}^{m+s}(\mathrm{Sing}(u_k)\cap \overline{B_1})\ge\delta.
$$
Along a subsequence, Proposition \ref{PropBlowUpAP} implies $u_k\to u_\infty$, a minimizing cone. Lemma \ref{LemAppUSCHausM} leads to 
$$
\mathcal{H}^{m+s}(\mathrm{Sing}(u_\infty)\cap \overline{B_1})\ge\delta.
$$

\vem

Since we are considering the case when $m> 0$, this implies $\mathcal{H}^{m+s}(\mathrm{Sing}(u_\infty)\cap \overline{B_1}\backslash\{0\})\ge\delta$. Lemma \ref{LemExistenceOfDensityPoints} implies, for some $p\neq 0$, 
\begin{equation}
\label{EqnAppALowerBound}
\frac{\mathcal{H}^{m+s}(\mathrm{Sing}(u_\infty)\cap B_{r_k}(p))}{r_k^{m+s}}\ge\delta>0
\end{equation}
along a sequence $r_k\to0$. With homogeneity of $u_\infty$, we assume $p=e_1$.

If we take the rescaled functions 
$$
v_k(x):=\frac{1}{r_k^\beta}u_\infty(e_1+r_kx),
$$
then the lower bound \eqref{EqnAppALowerBound}, together with Proposition \ref{PropBlowUpAP}, Lemmas~\ref{LemConeSplittingAP}, \ref{LemAppUSCHausM}, and~\ref{LemAppLowerDimension}, imply that
$$
\mathcal{H}^{m+s-1}(\mathrm{Sing}(\bar{v}))>0
$$
for a minimizer $\bar{v}$ in $\R^{n-1}$. This contradicts our induction hypothesis. 
\end{proof}

\section{Behavior of minimizers near a regular point in the Alt-Phillips problem}
\label{AppExpansionNearRegularPoint}

Suppose that $u$ is a minimizer of the one-phase Alt-Phillips functional  \eqref{EqnAP} with  the parameter $\gamma$ and the corresponding $\beta$ from \eqref{EqnBeta} satisfying
$$
\gamma\in(0,1) \text{ and }\beta = {\textstyle\frac{2}{2-\gamma}}\in(1,2).
$$
To apply similar techniques from the Alt-Caffarelli problem, Alt-Phillips \cite{AP} studied the  function 
\begin{equation}
\label{EqnTransformU}
w:=u^{1/\beta}.
\end{equation}
In this section, we are interested in the behavior of $u$ near a regular point on the free boundary. Around such a point,  Theorem \ref{ThmRegSmoothAP} implies that  the free boundary is a $C^{1,\alpha}$-hypersurface. In this setting,  it is not difficult to see that $w$ satisfies
\begin{equation}
\label{EqnSystemForTransformedU}
\begin{cases}
\Delta w=\frac{\frac{\gamma}{2\beta}-(\beta-1)|\nabla w|^2}{w} &\text{ in }\{w>0\},\\
|\nabla w|= 1/\beta&\text{ on }\partial\{w>0\}
\end{cases}
\end{equation}
in the viscosity sense as in De Silva-Savin \cite{DS}.

In De Silva-Savin \cite{DS}, the authors developed the framework to study similar problems with a large class of nonlinearities. Among their results is the following expansion  near a `flat point'. In our context, it reads:
\begin{prop}[By iterations of Proposition 6.1 of De Silva-Savin \cite{DS}]
\label{PropThePropByDS}
Suppose that $w$ is a solution to \eqref{EqnSystemForTransformedU} with $0\in\partial\{w>0\}$. If, for some small $\eps>0$, we have
$$
(x_d/\beta-\eps)^+\le w\le (x_d/\beta+\eps)^+ \text{ in }B_1,
$$
then, for some $e\in\Sph$ with $|e-e_d|\le C\eps$, 
we have
$$
|w-(x\cdot e/\beta)^+|\le C\eps r^{1+\alpha} \text{ in }B_r
$$
for all small $r\in(1/2)$ and some $\alpha\in(0,1)$.
\end{prop}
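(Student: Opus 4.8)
The plan is to deduce Proposition~\ref{PropThePropByDS} by iterating the improvement-of-flatness step of De Silva--Savin \cite{DS} (their Proposition 6.1) along a geometric sequence of scales, exactly as in the classical derivation of $C^{1,\alpha}$ regularity of flat one-phase free boundaries. The structural fact that powers the iteration is the \emph{scaling invariance} of the system \eqref{EqnSystemForTransformedU}: if $w$ solves it, then so does $w_r(x):=r^{-1}w(rx)$, because $\nabla w_r(x)=(\nabla w)(rx)$ and $\Delta w_r(x)=r\,(\Delta w)(rx)$, so that both $\Delta w=\big(\tfrac{\gamma}{2\beta}-(\beta-1)|\nabla w|^2\big)/w$ and $|\nabla w|=1/\beta$ on $\partial\{w>0\}$ are preserved; moreover $0\in\partial\{w_r>0\}$ whenever $0\in\partial\{w>0\}$. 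Combining this invariance with Proposition 6.1 of \cite{DS} produces constants $\bar\rho\in(0,1)$, $\bar\eps>0$ and $C_0$, depending only on $d$ and $\gamma$, with the following property: if $v$ solves \eqref{EqnSystemForTransformedU} with $0\in\partial\{v>0\}$ and $(x\cdot e/\beta-\delta)^+\le v\le (x\cdot e/\beta+\delta)^+$ in $B_1$ for some $e\in\Sph$ and $\delta\le\bar\eps$, then there is $e'\in\Sph$ with $|e'-e|\le C_0\delta$ such that $v_{\bar\rho}$ is $\tfrac{\delta}{2}$-flat in $B_1$ in the direction $e'$, i.e. $(x\cdot e'/\beta-\tfrac{\delta}{2})^+\le v_{\bar\rho}\le (x\cdot e'/\beta+\tfrac{\delta}{2})^+$ in $B_1$.

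First I would run the induction. We may assume $\eps\le\bar\eps$, since otherwise the conclusion is trivial after enlarging $C$. Set $e_0:=e_d$; by hypothesis $w=w_{\bar\rho^0}$ is $\eps$-flat in $B_1$ in the direction $e_0$. Assuming $w_{\bar\rho^k}$ is $2^{-k}\eps$-flat in $B_1$ in the direction $e_k$, note that $w_{\bar\rho^k}$ still solves \eqref{EqnSystemForTransformedU} and has $0$ on its free boundary, so the step above applies (with $\delta=2^{-k}\eps\le\bar\eps$) and yields $e_{k+1}\in\Sph$ with $|e_{k+1}-e_k|\le C_0 2^{-k}\eps$ such that $w_{\bar\rho^{k+1}}=(w_{\bar\rho^k})_{\bar\rho}$ is $2^{-(k+1)}\eps$-flat in $B_1$ in the direction $e_{k+1}$. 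This closes the induction. Summing the geometric series, $(e_k)$ is Cauchy and converges to some $e\in\Sph$ with $|e-e_d|\le C_0\eps\sum_{j\ge0}2^{-j}\le C\eps$, and in addition $|e_k-e|\le 2C_0\,2^{-k}\eps$ for every $k$.

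Next I would convert dyadic flatness into the claimed decay. Unwinding $w_{\bar\rho^k}(x)=\bar\rho^{-k}w(\bar\rho^k x)$, the $2^{-k}\eps$-flatness of $w_{\bar\rho^k}$ in the direction $e_k$ reads $|w-(x\cdot e_k/\beta)^+|\le 2^{-k}\eps\,\bar\rho^k$ in $B_{\bar\rho^k}$. Fix any $\alpha\in(0,1)$ small enough that $\bar\rho^\alpha\ge\tfrac12$ (possible since $\bar\rho<1$). Given $r\in(0,\tfrac12)$, let $k\ge0$ be the unique integer with $\bar\rho^{k+1}<r\le\bar\rho^k$. Then in $B_r\subset B_{\bar\rho^k}$,
\[
|w-(x\cdot e/\beta)^+|\le |w-(x\cdot e_k/\beta)^+|+\tfrac{1}{\beta}|e_k-e|\,|x|\le 2^{-k}\eps\,\bar\rho^k+\tfrac{2C_0}{\beta}\,2^{-k}\eps\,r\le C\,2^{-k}\eps\,\bar\rho^k,
\]
and since $2^{-k}\le\bar\rho^{k\alpha}$ and $\bar\rho^k<\bar\rho^{-1}r$ we get $2^{-k}\bar\rho^k\le\bar\rho^{k(1+\alpha)}\le\bar\rho^{-(1+\alpha)}r^{1+\alpha}$, hence $|w-(x\cdot e/\beta)^+|\le C\eps\,r^{1+\alpha}$ in $B_r$, which is the assertion.

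The main obstacle I anticipate lies not in this iteration, which is routine bookkeeping, but in justifying at the outset that Proposition 6.1 of \cite{DS} genuinely applies to \eqref{EqnSystemForTransformedU}: one must verify that the operator $w\mapsto\Delta w-\big(\tfrac{\gamma}{2\beta}-(\beta-1)|\nabla w|^2\big)/w$ together with the gradient constraint $|\nabla w|=1/\beta$ falls within the class of one-phase free boundary problems covered by the De Silva--Savin framework --- in particular that the nonlinearity is smooth and satisfies their structural hypotheses near the reference profile $w=(x_d/\beta)^+$, where $|\nabla w|^2\equiv\beta^{-2}$ and the right-hand side is a smooth bounded function of $x_d$ --- that their notion of viscosity solution is preserved under the rescaling $w\mapsto w_r$ and under normalisation at a flat point, and that the constants in their statement can be taken uniform along the iteration (they depend only on $d$ and $\gamma$, not on the individual $w_{\bar\rho^k}$). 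Once this verification is in place, the quantitative iteration above yields the proposition.
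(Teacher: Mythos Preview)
Your proposal is correct and follows exactly the approach the paper indicates: the paper does not give its own proof of this proposition at all, but merely attributes it to iterating Proposition~6.1 of De~Silva--Savin, which is precisely the scheme you carry out. The paper does, however, perform the same iteration explicitly a few lines later when deducing the sharper Proposition~\ref{PropSharpAlpha} from Lemma~\ref{LemIOF}, and your bookkeeping (geometric decay of flatness, Cauchy sequence of directions $e_k$, interpolation between dyadic scales) matches that argument line for line, with the only cosmetic difference that the paper's single-step improvement has a variable factor $Ar_0$ (needed to reach every $\alpha\in(0,1)$) whereas your factor $1/2$ suffices here since only \emph{some} $\alpha$ is required.
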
 
For the class of nonlinearities studied by De Silva-Savin,  the exponent $\alpha$  depends on properties of the nonlinearity; in our context, we need to quantify it. 

Specifically for the Alt-Phillips problem with $\gamma\in(0,1)$, we show that the expansion in Proposition \ref{PropThePropByDS} holds for all $\alpha\in (0,1)$:
\begin{prop}
\label{PropSharpAlpha}
Suppose that $w$ solves \eqref{EqnSystemForTransformedU} with $0\in\partial\{w>0\}$. 

Given $\alpha\in(0,1)$, there is a small $\bar{\eps}=\bar\eps(\alpha,\gamma,d)>0$ such that if 
$$
(x_d/\beta-\eps)^+\le w\le (x_d/\beta+\eps)^+ \text{ in }B_1 \text{ for some }\eps<\bar\eps,
$$
then, for some $e\in\Sph$ with $|e-e_d|\le C\eps$, 
we have
$$
|w-(x\cdot e/\beta)^+|\le C\eps r^{1+\alpha} \text{ in }B_r
$$
for all $r\in(0,1/2)$ and a constant $C$ depending on $\alpha$, $d$ and $\gamma.$
\end{prop}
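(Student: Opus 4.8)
The plan is to run the classical improvement-of-flatness iteration of De Silva--Savin (Proposition~\ref{PropThePropByDS}) but to track the decay exponent carefully, showing that a \emph{single} improvement step gains a factor $\rho^{1+\alpha}$ (in the right normalization) for any prescribed $\alpha\in(0,1)$, provided the flatness is small enough depending on $\alpha$. The point is that the loss of the sharp exponent in the generic statement comes only from the fixed geometric decay rate built into one iteration; if instead we fix a target $\alpha<1$ and ask only for that rate, the linearized problem allows it. Concretely, I would first set up the normalized quantities: for a solution $w$ of \eqref{EqnSystemForTransformedU} with $0\in\partial\{w>0\}$ that is $\eps$-flat in $B_1$ in the direction $e_d$, rescale to $w_\rho(x):=\frac1{\rho}\,w(\rho x)$ (which again solves a system of the same type, with the nonlinear right-hand side scaled favorably since the $1/w$ term gains a power of $\rho$), and aim to prove: there is $\rho=\rho(\alpha,d,\gamma)\in(0,1/2)$ and $\bar\eps$ such that if $w$ is $\eps$-flat in $B_1$ with $\eps<\bar\eps$, then $w_\rho$ is $\rho^{\alpha}\eps$-flat in $B_1$ in some direction $e'$ with $|e'-e_d|\le C\eps$.

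The heart of the matter is the linearized (Neumann-type) problem. Following De Silva--Savin, after the change of variables the flatness function $\tilde w:=\beta\big(w-(x_d/\beta)^+\big)/\eps$, restricted to $\{w>0\}$ and suitably extended/flattened near the free boundary, converges (as $\eps\to0$, along compactness and a Harnack-type oscillation decay) to a limit $\bar w$ solving
\begin{equation*}
\begin{cases}
\Delta \bar w = 0 &\text{ in }B_{1/2}\cap\{x_d>0\},\\
\partial_d \bar w = 0 &\text{ on }B_{1/2}\cap\{x_d=0\},
\end{cases}
\end{equation*}
with $\|\bar w\|_{L^\infty}\le 1$. By even reflection across $\{x_d=0\}$, $\bar w$ is harmonic in $B_{1/2}$, hence $C^\infty$; in particular $|\bar w(x)-\bar w(0)-\nabla'\bar w(0)\cdot x'|\le C_0 r^2$ in $B_r$, and one can absorb the affine part $\bar w(0)+\nabla'\bar w(0)\cdot x'$ into a tilt of the hyperplane direction and a vertical translation of $w$. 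This gives, at the limit, decay by $r^2$; at the $\eps$-level, a standard contradiction-compactness argument (exactly the scheme of Proposition~6.1 in \cite{DS}) upgrades it to: for any fixed $\alpha<1$ one can choose $\rho$ small so that $C_0\rho^2\le \tfrac12\rho^{1+\alpha}$, and then choose $\bar\eps$ small enough (depending on $\rho$, hence on $\alpha$) so that the error terms from the nonlinearity and from the $C^{1,\alpha}$ free-boundary flattening — all of which are $o(1)$ as $\eps\to0$ — are controlled below $\tfrac12\rho^{1+\alpha}\eps$. Crucially, since $\rho$ is a fixed number once $\alpha$ is fixed, there is no competition between ``$\rho$ small'' and ``$\eps$ small'': we pick $\rho$ first, $\bar\eps$ second.

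Once the one-step improvement is in hand, I would iterate: apply it on $B_1$, then rescale to $w_\rho$ on $B_1$ and reapply, etc., obtaining a sequence of directions $e_k\in\Sph$ with $|e_{k+1}-e_k|\le C\rho^{k\alpha}\eps$ (a geometric series, so $e_k\to e$ with $|e-e_d|\le C\eps$) and
$\|w - (x\cdot e_k/\beta)^+\|_{L^\infty(B_{\rho^k})}\le \rho^{k(1+\alpha)}\eps$. Standard interpolation over dyadic-in-$\rho$ scales then yields $|w-(x\cdot e/\beta)^+|\le C\eps\, r^{1+\alpha}$ for all $r\in(0,1/2)$, with $C=C(\alpha,d,\gamma)$, which is the claim. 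The main obstacle is the quantitative bookkeeping in the compactness step: one must verify that every error contribution in the linearization of \eqref{EqnSystemForTransformedU} — the nonlinear bulk term $\big(\tfrac{\gamma}{2\beta}-(\beta-1)|\nabla w|^2\big)/w$, which near the free boundary behaves like a bounded perturbation because $|\nabla w|\to1/\beta$, and the error from replacing the true $C^{1,\alpha}$ free boundary by a hyperplane — is genuinely $o(1)$ \emph{uniformly} as $\eps\to0$ and does not secretly degrade the exponent; this is where the restriction $\gamma\in(0,1)$ (equivalently $\beta\in(1,2)$, so $\beta-1\in(0,1)$) is used, guaranteeing the right sign/size of the lower-order terms. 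Everything else is a faithful rerun of \cite{DS}.
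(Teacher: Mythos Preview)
Your overall architecture (compactness, linearization, one-step improvement of flatness with a gain of $\rho^{1+\alpha}$, then iteration) matches the paper's exactly. However, there is a genuine gap in the key linearization step.

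You claim that the limit $\bar w$ of the normalized functions $\tilde w=\beta\big(w-(x_d/\beta)^+\big)/\eps$ solves the pure Neumann problem $\Delta\bar w=0$ in $B_{1/2}\cap\{x_d>0\}$, $\partial_d\bar w=0$ on $\{x_d=0\}$, and then argue by even reflection. This is incorrect. Linearizing $\Delta w=\big(\tfrac{\gamma}{2\beta}-(\beta-1)|\nabla w|^2\big)/w$ around $w_0=x_d^+/\beta$ produces a \emph{singular drift term}: the limit equation is
\[
\Delta\bar w+\frac{c\,\partial_d\bar w}{x_d}=0\quad\text{in }B_{3/4}\cap\{x_d>0\},\qquad \partial_d\bar w=0\quad\text{on }\{x_d=0\},
\]
with $c>0$ (the paper records $c=\gamma$). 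Even reflection does not turn this into a harmonic function, and the regularity up to $\{x_d=0\}$ is genuinely subtle: the homogeneous ODE $v''+cv'/t=0$ has the non-smooth solution $t^{1-c}$, which a priori obstructs a quadratic expansion.

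The paper's fix is precisely to analyze this drift equation: first obtain tangential $C^2$ bounds for $\bar w$ (via results in \cite{DS}), then solve the resulting ODE in the normal variable $t=x_d$ to see that $\bar w(x',t)=c_1t^{1-\gamma}+c_2+O(t^2)$, and finally use the Neumann condition to force $c_1=0$. This yields the needed estimate $|\bar w(x)-\bar w(0)-\nabla'\bar w(0)\cdot x'|\le Cr^2$, after which your iteration scheme goes through verbatim. The restriction $\gamma\in(0,1)$ enters \emph{here}, to control the homogeneous solution $t^{1-\gamma}$; your vague appeal to ``sign/size of the lower-order terms'' misses this mechanism entirely.
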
 

This proposition is a consequence of the following:
\begin{lem}
\label{LemIOF}
Under the same assumptions as in Proposition \ref{PropSharpAlpha}, there are constants $A=A(\gamma,d)>1$ and $r_0=r_0(\alpha,\gamma,d)\in(0,1/2)$ such that 
$$
Ar_0<\frac12, \hem \log_{r_0}A>\alpha-1, 
$$
and
$$
(x\cdot e/\beta-Ar_0^2\eps)^+\le w\le (x\cdot e/\beta+Ar_0^2\eps)^+ \text{ in }B_{r_0}
$$
for some $e\in\Sph$ with $|e-e_d|\le C\eps.$
\end{lem}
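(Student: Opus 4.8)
The plan is to read Lemma~\ref{LemIOF} as a single-step improvement-of-flatness estimate, sharpened so as to exploit a cancellation specific to the Alt--Phillips nonlinearity. The starting observation is that the numerator on the right-hand side of \eqref{EqnSystemForTransformedU} vanishes along the free boundary: there $|\nabla w|=1/\beta$ and $\beta=\frac{2}{2-\gamma}$, and a direct computation gives $\frac{\gamma}{2\beta}-(\beta-1)\beta^{-2}=\frac{\gamma(2-\gamma)}{4}-\frac{\gamma(2-\gamma)}{4}=0$. Hence $w$ solves $w\,\Delta w=-(\beta-1)(|\nabla w|^2-\beta^{-2})$ in $\{w>0\}$, a right-hand side that is lower order near $\partial\{w>0\}$. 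The claim I would actually prove is: there is a universal constant $A=A(\gamma,d)>1$ such that for every $\rho$ below a universal threshold there is $\eps_0=\eps_0(\rho,\gamma,d)>0$ with the property that if $(x_d/\beta-\eps)^+\le w\le(x_d/\beta+\eps)^+$ in $B_1$ for some $\eps<\eps_0$, then $(x\cdot e/\beta-A\rho^2\eps)^+\le w\le(x\cdot e/\beta+A\rho^2\eps)^+$ in $B_\rho$ for some $e\in\Sph$ with $|e-e_d|\le C\eps$. Granting this, the lemma follows: given $\alpha\in(0,1)$, choose $r_0=r_0(\alpha,\gamma,d)$ small enough that $Ar_0<\tfrac12$, that $r_0$ lies below the universal threshold, and that $r_0<A^{-1/(1-\alpha)}$, the last being equivalent to $\log_{r_0}A>\alpha-1$.

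The improvement step I would obtain by the usual compactness-contradiction scheme. If it failed, there would be a $\rho$, solutions $w_k$, and $\eps_k\downarrow0$ with $w_k$ being $\eps_k$-flat in $B_1$ in the direction $e_d$ but with $\inf_e\|w_k-(x\cdot e/\beta)^+\|_{L^\infty(B_\rho)}>A\rho^2\eps_k$ for every $e\in\Sph$. The nondegeneracy and the uniform $C^{1,\beta-1}$ bound from Proposition~\ref{PropBasicAP} for $u_k=w_k^\beta$, together with the $C^{1,\alpha_0}$ free-boundary expansion from Proposition~\ref{PropThePropByDS}, give that $\partial\{w_k>0\}\cap B_{1/2}$ is a graph $\{x_d=\eps_k g_k(x')\}$ with $g_k$ uniformly bounded in $C^{1,\alpha_0}$, and that the normalized functions $\tilde w_k:=\eps_k^{-1}(w_k-(x_d/\beta)^+)$ are uniformly bounded and, by standard interior estimates, equicontinuous on compact subsets of $\{x_d>0\}\cap B_{1/2}$; pass to a locally uniform limit $\tilde w_\infty$. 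Writing $\nabla w_k=\beta^{-1}e_d+\eps_k\nabla\tilde w_k$ gives $|\nabla w_k|^2-\beta^{-2}=\tfrac2\beta\eps_k\partial_d\tilde w_k+O(\eps_k^2)$, while $w_k\to x_d/\beta$; dividing the bulk equation by $\eps_k$ and letting $k\to\infty$ yields $\Delta\tilde w_\infty+\frac{2(\beta-1)}{x_d}\partial_d\tilde w_\infty=0$ in $\{x_d>0\}\cap B_{1/2}$, that is, $\mathrm{div}(x_d^{2(\beta-1)}\nabla\tilde w_\infty)=0$; and, as in De Silva's argument for the one-phase Bernoulli problem, the free-boundary relation $|\nabla w_k|=1/\beta$ linearizes to $\partial_d\tilde w_\infty=0$ on $\{x_d=0\}$.

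It remains to quantify the regularity of $\tilde w_\infty$. Reflecting evenly across $\{x_d=0\}$ (legitimate thanks to the homogeneous conormal condition, automatic here since $2(\beta-1)>0$) produces a global weak solution of $\mathrm{div}(|x_d|^{2(\beta-1)}\nabla\tilde w_\infty)=0$ in $B_{1/2}$; an \emph{even} weak solution of this equation is smooth up to $\{x_d=0\}$, because in the normal variable it is governed by a one-dimensional Bessel-type operator whose only weak solutions are the smooth ones (the singular mode $|x_d|^{3-2\beta}$ is ruled out by the weak formulation, and no Muckenhoupt restriction is needed, so the range $\gamma\in[2/3,1)$ — where $2(\beta-1)\ge1$ — is covered). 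Consequently $|\tilde w_\infty(x)-\ell(x)|\le C_0\rho^2$ in $B_\rho$, with $C_0=C_0(\gamma,d)$ and $\ell(x)=\tilde w_\infty(0)+\nabla_{x'}\tilde w_\infty(0)\cdot x'$ the first-order Taylor polynomial of $\tilde w_\infty$ at $0$ (its $x_d$-derivative there vanishes by the Neumann condition). Translating back, for large $k$ the function $w_k$ is within $2C_0\rho^2\eps_k$ in $L^\infty(B_\rho)$ of the half-plane solution $(x\cdot e_k/\beta)^+$ with $e_k:=(e_d+\eps_k\nabla_{x'}\tilde w_\infty(0))/|e_d+\eps_k\nabla_{x'}\tilde w_\infty(0)|$, so $|e_k-e_d|\le C\eps_k$; upgrading this $L^\infty$-closeness to the two-sided trapping between $(x\cdot e_k/\beta\mp A\rho^2\eps_k)^+$ by the nondegeneracy estimate exactly as in the proof of Lemma~\ref{LemEpsRegAP}, and taking $A>2C_0$, contradicts the failure assumption.

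The step I expect to be the main obstacle is the passage to the linearized problem: making precise the convergence $\tilde w_k\to\tilde w_\infty$ up to the moving (only $C^{1,\alpha_0}$) free boundary and correctly identifying the limiting conormal condition, and then establishing the sharp boundary regularity of $\tilde w_\infty$ for the degenerate weight $x_d^{2(\beta-1)}$ throughout the range $\gamma\in(0,1)$ — in particular when $2(\beta-1)\ge1$, where the standard degenerate-elliptic theory does not apply verbatim and one must exploit that the reflected limit is even. The algebraic cancellation, the linearization, the bookkeeping relating $A$, $r_0$ and $\alpha$, and the direction estimate are routine by comparison.
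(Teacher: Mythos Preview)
Your proposal is correct and follows the same compactness--linearization scheme as the paper. A few points of comparison are worth recording.

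For the compactness step you anticipate as the main obstacle, the paper does not argue from Proposition~\ref{PropThePropByDS} or interior estimates; it simply invokes the partial Harnack inequality of De~Silva--Savin \cite[Theorem~5.1]{DS}, which gives equicontinuity of $\tilde w_k$ up to $\{x_d=0\}$ directly, and then cites \cite[Step~2, proof of Proposition~6.1]{DS} to identify the limiting problem. Your route via the a~priori $C^{1,\alpha_0}$ flatness would also work, but the Harnack route is cleaner and avoids any appearance of circularity.

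For the regularity of the limit, your divergence-form/even-reflection argument and the paper's argument are two presentations of the same fact. The paper first bounds the tangential second derivatives $|\partial^2_{kk}\tilde w|\le C$ for $k\le d-1$ via \cite[Theorem~7.7]{DS}, and then, at a fixed $x'$, solves the one-dimensional ODE $v''+c\,v'/t=g$ explicitly; the singular homogeneous mode is discarded by the boundary condition, and the particular solution is $O(t^2)$, yielding the quadratic Taylor expansion. Your even-reflection argument for $\mathrm{div}(x_d^{2(\beta-1)}\nabla\tilde w_\infty)=0$ is exactly this ODE analysis dressed in weighted-Sobolev language, and your remark that no $A_2$ hypothesis is needed (so that $\gamma\in[2/3,1)$ is covered) is precisely the point that the explicit ODE computation makes manifest.

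Finally, your linearized coefficient $2(\beta-1)$ is the one obtained by direct computation from \eqref{EqnSystemForTransformedU}; the paper writes the coefficient as $\gamma$, but the argument is insensitive to the exact positive constant.
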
 
Indeed, once we establish Lemma \ref{LemIOF}, an iteration gives
$$
|w-(x\cdot e_k/\beta)^+|\le C\eps(Ar_0)^kr_0^k \hem\text{ in }B_{r_0^k}
$$
with $|e_k-e_{k-1}|\le C\eps(Ar_0)^k.$ In particular, there is $e\in\Sph$ such that 
$$
|e-e_k|\le C\eps(Ar_0)^k.
$$

For any $r\in(0,1/2)$, find $k\in\N$ with $r_0^{k+1}\le r<r_0^{k}$, then we have
$$
|w-(x\cdot e/\beta)^+|\le |w-(x\cdot e_k/\beta)^+|+|x\cdot(e_k-e)|\le C\eps(Ar_0)^kr_0^k\hem \text{ in }B_r.
$$
That is, in $B_r$, we have
$$
|w-(x\cdot e/\beta)^+|\le C\eps r(Ar_0)^{\log_{r_0}r}\le C\eps r\cdot r^{1+\log_{r_0}A}.
$$
With $\log_{r_0}A>\alpha-1$, we see that 
$$
|w-(x\cdot e/\beta)^+|\le C\eps r^{1+\alpha} \text{ in }B_r,
$$
which is the desired conclusion of Proposition \ref{PropSharpAlpha}.

\vem

Below we give the proof of Lemma \ref{LemIOF}.

\begin{proof}[Proof of Lemma \ref{LemIOF}]
The strategy is similar to De Silva-Savin \cite{DS}. We only give a brief sketch. 

For $A$ and $r_0$ to be chosen, suppose that the statement is false, then we find a sequence $\eps_k\to0$ and a sequence of solutions $w_k$, satisfying the hypothesis of the lemma, but for which the improvement is not achieved.

With the Harnack inequality in Theorem 5.1 of De Silva-Savin \cite{DS}, the normalized solutions 
$$
\tilde{w}_k:=\frac{1}{\eps_k}(w_k-x_d^+/\beta)
$$
converge to some $\tilde{w}$, solving 
$$
\begin{cases}
\Delta\tilde{w}+\gamma\frac{\partial_d\tilde{w}}{x_d}=0 &\text{ in }B_{3/4}\cap\{x_d>0\},\\
\partial_d\tilde{w}=0 &\text{ on }B_{3/4}\cap\{x_d=0\}.
\end{cases}
$$
This is achieved with arguments in  \textit{Step 2} from the proof of Proposition 6.1 in \cite{DS}. Compared with theirs, the drift term in our equation is purely in the $e_d$-direction. This is the main reason why we get an improvement in the H\"older exponent $\alpha$.

For this linearized equation, we argue as in the proof of Theorem 7.2 of \cite{DS}. 

We apply Theorem 7.7 in \cite{DS}  to see that 
$$
|\partial^2_{kk}\tilde{w}|\le C(\gamma, d)\hem \text{ in }B_{1/2}\cap\{x_d>0\}\text{ for all }k\le d-1.
$$
If we define, for a fixed $x'$, 
$$
v(t):=\tilde{w}(x',t),
$$
then $v$ solves
$$
v''+\gamma v'/t=-\sum_{k}\partial^2_{kk}\tilde{w}(x',t)\hem \text{ for }t>0.
$$
Such a function is of the form 
$$
v=c_1t^{1-\gamma}+c_2+f(t),
$$
where $f$ satisfies $$|f|\le Ct^2.$$

With the initial condition at $t=0$, we have the following expansion:
$$
|\tilde{w}(x',x_d)-\tilde{w}(0,0)-\nabla'\tilde{w}(0,0)\cdot x'|\le C(\gamma,d)r^2\hem \text{ in }B_r\cap\{x_d>0\}.
$$
Back to the original sequence $w_k$, this implies
$$
|w_k-x_d^+/\beta-\eps_k \nabla'\tilde{w}(0,0)\cdot x'|\le C(\gamma,d)\eps_k r^2+\eps_ko(1)\hem \text{ in }B_r \text{ as }k\to\infty. 
$$

\vem

We choose $A>0$ and $r_0>0$ such that 
$$
A=2C(\gamma, d), \hem Ar_0<1/2 \hem \text{ and }\log_{r_0}A>\alpha-1.
$$
Once $A$ and $r_0$ are fixed,  for $k$ large such that  $\eps_ko(1)<\eps_k C(\gamma,d)r_0^2,$ we have
$$
|w_k-x_d^+/\beta-\eps_k \nabla'\tilde{w}(0,0)\cdot x'|\le A\eps_k r_0^2\hem \text{ in }B_{r_0}.
$$
This gives the desired improvement. 
\end{proof}

With Proposition \ref{PropSharpAlpha}, we have the following expansion at the level of the gradient:
\begin{cor}
\label{CorAppExpansionOfGradient}
Under the same assumptions as in Proposition \ref{PropSharpAlpha}, we have
$$
|\nabla w(re)-e/\beta|\le C\eps r^\alpha \text{ for }r\in(0,1/4).
$$
\end{cor}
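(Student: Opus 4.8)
The plan is to localize the estimate around the point $re$, which by Proposition~\ref{PropSharpAlpha} sits at distance comparable to $r$ from the free boundary, and to view $w$ there as a small perturbation of the linear function $P(x):=x\cdot e/\beta$, which is itself an exact solution of \eqref{EqnSystemForTransformedU}. First I fix $r\in(0,1/4)$. Since $|e-e_d|\le C\eps$, on $B_{r/2}(re)$ one has $x\cdot e\ge r/2$; combined with the bound $|w-(x\cdot e/\beta)^+|\le C\eps|x|^{1+\alpha}\le C\eps r^{1+\alpha}$ from Proposition~\ref{PropSharpAlpha}, this gives $w\ge \tfrac{r}{2\beta}-C\eps r^{1+\alpha}\ge cr>0$ on $B_{r/2}(re)$ once $\bar\eps$ is taken small. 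Hence $B_{r/2}(re)\subset\{w>0\}$, where $u=w^\beta$ solves the smooth semilinear equation $\Delta u=\tfrac{\gamma}{2}u^{\gamma-1}$ with $u\ge (cr)^\beta$, so $u$, and therefore $w$, is smooth there. Finally, using $\|u\|_{C^{1,\beta-1}(B_{3/4})}\le C$ from Proposition~\ref{PropBasicAP} together with $u=|\nabla u|=0$ on $\partial\{u>0\}$, for $x\in B_{r/2}(re)$ we get $|\nabla u(x)|\le C\,\mathrm{dist}(x,\partial\{u>0\})^{\beta-1}\le Cr^{\beta-1}$ (the origin is a free boundary point within distance $\tfrac32 r$ of $x$), hence $|\nabla w|=\tfrac1\beta u^{(1-\beta)/\beta}|\nabla u|\le C$ on $B_{r/2}(re)$.

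The key step is the linearization. Writing $\beta=\tfrac{2}{2-\gamma}$, one has $\gamma=\tfrac{2(\beta-1)}{\beta}$, hence the algebraic identity $\tfrac{\gamma}{2\beta}=\tfrac{\beta-1}{\beta^2}=(\beta-1)|\nabla P|^2$. This rewrites the interior equation in \eqref{EqnSystemForTransformedU} as $\Delta w=-\tfrac{\beta-1}{w}\bigl(|\nabla w|^2-|\nabla P|^2\bigr)$. Since $\Delta P=0$, the difference $h:=w-P$ solves on $B_{r/2}(re)$
$$
\Delta h=-\frac{\beta-1}{w}\,(\nabla w+\nabla P)\cdot\nabla h=:\vec b\cdot\nabla h,\qquad |\vec b|\le\frac{(\beta-1)\bigl(|\nabla w|+1/\beta\bigr)}{w}\le\frac{C}{r},
$$
using $w\ge cr$ and $|\nabla w|\le C$, while $|h|\le C\eps r^{1+\alpha}$ on $B_{r/2}(re)$ by the first step.

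It remains to run a rescaled interior gradient estimate. Setting $H(y):=\tfrac1r h(re+ry)$ for $y\in B_{1/2}$, one finds $\Delta H=\vec B\cdot\nabla H$ with $\vec B(y):=r\,\vec b(re+ry)$ satisfying $|\vec B|\le C$ a dimensional constant, while $\|H\|_{L^\infty(B_{1/2})}\le\tfrac1r\cdot C\eps r^{1+\alpha}=C\eps r^{\alpha}$. By standard interior estimates for uniformly elliptic equations with bounded first-order coefficients (a Caccioppoli inequality followed by $W^{2,p}$ bootstrapping, which gives $H\in C^{1,\sigma}_{\mathrm{loc}}$),
$$
|\nabla w(re)-e/\beta|=|\nabla h(re)|=|\nabla H(0)|\le\|\nabla H\|_{L^\infty(B_{1/4})}\le C\,\|H\|_{L^\infty(B_{1/2})}\le C\eps r^{\alpha},
$$
which is the assertion of Corollary~\ref{CorAppExpansionOfGradient}.

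The only genuine difficulty is the cancellation exploited in the linearization: a crude bound $|\Delta h|\le C/r$ — valid merely because $w\gtrsim r$ and the numerator in \eqref{EqnSystemForTransformedU} is bounded — would, after rescaling, only produce $|\nabla w(re)-e/\beta|\le C$, which is useless. One must use that $P$ solves the equation exactly, equivalently the identity $\tfrac{\gamma}{2\beta}=(\beta-1)/\beta^2$, to see that the right-hand side is controlled by $|\nabla h|$ rather than by an $O(1/r)$ constant; with that in hand the remaining steps are routine elliptic regularity at scale $r$.
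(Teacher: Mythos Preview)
Your proof is correct and follows essentially the same route as the paper: rescale around $re$ to unit scale, observe that both $w$ and the affine function $P(x)=x\cdot e/\beta$ solve the interior equation in \eqref{EqnSystemForTransformedU} with a uniform lower bound, and apply interior elliptic estimates to the difference. Your explicit use of the identity $\gamma/(2\beta)=(\beta-1)/\beta^2$ to show that the linearized equation for $h=w-P$ has the pure drift form $\Delta h=\vec b\cdot\nabla h$ (with no zero-order term) is a clean way to make precise what the paper compresses into ``studying the equation for $(w_r-v)$, we apply standard elliptic estimate''; the one small shortcut is your appeal to Proposition~\ref{PropBasicAP} for the bound $|\nabla w|\le C$, which presumes $u=w^\beta$ is a minimizer rather than merely a viscosity solution of \eqref{EqnSystemForTransformedU}, but this is harmless here (and the bound follows in any case from interior regularity once $w\ge cr$).
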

\begin{proof}
For $r\in(0,1/4)$, define
$$
w_r(x):=\frac{1}{r}w(rx+re), \text{ and }v(x):=(x+e)\cdot e/\beta.
$$
For small $\eps>0$, Proposition \ref{PropSharpAlpha} implies that both $w_r$ and $v$ solve
$$
\Delta u=\frac{\frac{\gamma}{2\beta}-(\beta-1)|\nabla u|^2}{u}\hem\text{ and } \hem u\ge\frac{1}{2\beta} \text{ in }B_{1/4}.
$$
Moreover, we have
$$
|w_r-v|\le C\eps r^\alpha \text{ in }B_{1/4}.
$$

Studying the equation for $(w_r-v)$, we apply standard elliptic estimate to conclude
$$
|\nabla w_r(0)-e/\beta|\le C\eps r^\alpha,
$$
which is equivalent to the desired estimate. 
\end{proof}


\end{document}